\DeclareMathAlphabet{\mathbbold}{U}{bbold}{m}{n}
\newcommand{\bi}{\bibitem}
\providecommand{\customgenericname}{}
\newcommand{\newcustomtheorem}[2]{%
  \newenvironment{#1}[1]
  {%
   \renewcommand\customgenericname{#2}%
   \renewcommand\theinnercustomgeneric{##1}%
   \innercustomgeneric
  }
  {\endinnercustomgeneric}
}
  \def\R{{\mathbb R}}
  \def\Z{{\mathbb Z}}
  \def\N{{\mathbb N}}
  \def\H{{\mathbb H}}
  \newcommand{\<}{\langle}
  \renewcommand{\>}{\rangle}
\newtheorem{theorem}{Theorem}
\numberwithin{theorem}{section}
\newtheorem{proposition}[theorem]{Proposition}
\newtheorem{lemma}[theorem]{Lemma}
\newtheorem{corollary}[theorem]{Corollary}
\theoremstyle{definition}
\newtheorem{definition}[theorem]{Definition}
\newtheorem{remark}[theorem]{Remark}
\newtheorem{example}[theorem]{Example}
\theoremstyle{remark}
    \title{Combinations of parabolically geometrically finite groups and their geometry}
\author{Brian Udall }
\date{\vspace{-5ex}}%to get rid of the spacing from the omitted date
\begin{document}
\maketitle

\begin{abstract}
    In this paper, we study the class of parabolically geometrically finite (PGF) subgroups of mapping class groups, introduced by Dowdall--Durham--Leininger--Sisto. We prove a combination theorem for graphs of PGF groups (and other generalizations) by utilizing subsurface projection to obtain control on the geometry of fundamental groups of graphs of PGF groups, generalizing and strengthening methods of Leininger--Reid. From this result, we construct new examples of PGF groups and provide methods for how to apply the combination theorem in practice. We also show that PGF groups are undistorted in their corresponding mapping class groups.
\end{abstract}

%THINGS TO DO
% Add definitions used in thm 1 in intro (at least some brief version
% Include all names of people referenced in intro

\section{Introduction}
Much of the study of subgroups of mapping class groups of surfaces and their actions on Teichmüller space has been motivated by various partial analogies with the actions of subgroups of $\text{Isom}(\H^n)$ on hyperbolic space $\H^n$. Farb--Mosher \cite{FM02} introduced a robust notion of convex cocompactness for subgroups of mapping class groups in terms of Teichmüller space analogous to the definition of convex cocompactness in $\text{Isom}(\H^n)$. Kent--Leininger \cite{KL08} and Hamenstädt \cite{H05} both gave an equivalent condition in terms of the action of the group on the curve graph of the surface. Namely, if $\Sigma$ is a hyperbolic surface of finite area, a finitely generated subgroup $G$ of the mapping class group MCG$(\Sigma)$ is \textit{convex cocompact} if and only if there is an equivariant map from $G$ to the curve graph of $\Sigma$, $C(\Sigma)$, that is a quasi-isometric embedding.
\par 
In Definition 1.10 of \cite{DDLS21}, Dowdall--Durham--Leininger--Sisto define a notion of geometric finiteness in mapping class groups motivated by the characterization of convex cocompactness in terms of $C(\Sigma)$ given above. A finitely generated subgroup $G$ of MCG$(\Sigma)$ is \textit{parabolically geometrically finite} (PGF) if it is relatively hyperbolic, relative to a finite collection of twist groups (groups containing finite index abelian subgroup generated by multitwists), and so the coned off graph of $G$ equivariantly quasi-isometrically embeds into $C(\Sigma)$ (see Definition \ref{PGFGrp} for a precise definition). 
\par 
There are few known classes of examples of PGF groups. Results of Tang \cite{T21} imply that finitely generated Veech groups are PGF, and Loa \cite{L21} showed that free products of multitwist groups on sufficiently far apart multicurves are PGF as well. 
\par 
The first result of this paper provides a method for building new PGF groups from old ones. First some brief definitions. Fix a closed surface $\Sigma$. A \textit{normalized PGF graph of groups} $\mathcal{G}$ is a graph of groups whose vertex groups are PGF groups and twist groups in MCG$(\Sigma)$, arranged in a particular manner (see Definition \ref{PGFGraph} for more details). A \textit{compatible homomorphism} $\phi$ is a homomorphism from the fundamental group of a normalized PGF graph of groups to MCG$(\Sigma)$ which on vertex groups restricts to inclusion up to conjugation in MCG$(\Sigma)$. Given a twist group $H$ with finite index subgroup $H'$ generated by multitwists, we say that the multicurve $A$ is \textit{associated} to $H$ if $A$ is the multicurve containing all the curves that elements of $H'$ twist on.
\par 
 We say the pair $(\mathcal{G}, \phi)$ satisfies the $L$-\textit{local large projections property} for some $L\geq 0$ if the following holds. Take any two vertices $v_1$ and $v_2$ in the Bass--Serre tree of $\mathcal{G}$ with PGF stabilizers $G_1$ and $G_2$, respectively, so that there are no other vertices on $[v_1,v_2]$ which have a PGF stabilizer. Let $v_{12}$ be the vertex on $[v_1,v_2]$ neighboring $v_1$ with stabilizer $H_{12}$, which is a twist group. Let $B_{12}$ denote the multicurve associated to $\phi(H_{12})$. Then for all multicurves $B_1, B_2$ associated to $\phi(G_1), \phi(G_2)$, respectively, that are not equal to $B_{12}$, we have for any component $S$ of $\Sigma\setminus B_{12}$ that
 $$d_S(B_1,B_2)\geq L.$$
 We now state the main theorem (see Theorem \ref{ComboQIEmb} for a more complete statement).

%Main Theorem
\begin{customthm}{1}
Suppose $G$ is the fundamental group of a normalized PGF graph of groups of a closed surface $\Sigma$, and $\phi:G \to \text{MCG}(\Sigma)$ is a compatible homomorphism that satisfies the $L$-local large projections property for some sufficiently large $L$. Then $\phi$ is injective, its image is a PGF group, and all infinite order elements not contained in a twist group are pseudo-Anosov.
\end{customthm}

This theorem is inspired by and generalizes the main results of Leininger--Reid in \cite{LR06}, and just like the results there Theorem $1$ is motivated by the classical Klein--Maskit combination theorems. Along the way we develop a more general framework to allow for other types of combination theorems using PGF groups. See Theorems \ref{ConvexCocompactCombo}, \ref{ConvexCoCptTwists}, and \ref{LoaAnalog} for examples of this. 
\par 
Here is a non-exhaustive list of the PGF groups that we produce in Section \ref{SectionExamplesApplications}.
\begin{itemize}
    \item Fundamental groups of books of $I$-bundles over a surface of a fixed genus (Corollary \ref{BookOfIBundleCombo}). Compare this to the results of \cite{LR06}.
    \item All isomorphism classes of RAAG's that could potentially be a PGF group (Lemma \ref{GenericPGFRAAGS} and the discussion after it, along with Example \ref{GeneralPGFRAAGS}).
    \item Convex cocompact groups generated by arbitrarily many pseudo-Anosov's with small dilatation (Theorem \ref{SuperIndependentpA}).
    \item Free products of convex cocompact groups and arbitrarily many surface groups (with restrictions on genus) (Example \ref{PGFFreeProductSurfaceGrp})
    \item PGF free groups with Dehn twist generators on curves which are at most $4$ apart in $C(\Sigma)$ (Corollary \ref{ComboMultitwistGroups} and Example \ref{PGFFreePrdocutsTwistGroupExamples}). Compare this to the results of \cite{L21}.
\end{itemize}
\par 
We also prove the following result about PGF groups, generalizing results of Tang \cite{T21} and Loa \cite{L21} (actually we prove this for a more general class of groups, see Section \ref{SectionStatementsProofsUndistorted} and Theorem \ref{Undistorted}).

%Undistorted Theorem
\begin{customthm}{2}
If $G$ is a PGF subgroup of $\text{MCG}(\Sigma)$ with $\Sigma$ closed, then $G$ is undistorted in $\text{MCG}(\Sigma)$.
\end{customthm}

There are other potential notions of geometric finiteness in MCG$(\Sigma)$ that one may wish to study. In \cite{DDLS21} Dowdall--Durham--Leininger--Sisto propose another such notion in terms of surface group extensions (see the first section, in particular the discussion after Conjecture 1.11). They suggest that a subgroup $G$ of MCG$(\Sigma)$ should be geometrically finite if the associated $\pi_1 \Sigma$ extension group of $G$ is a hierarchically hyperbolic group. This is motivated by results of Hamenstädt \cite{H05}, generalizing a result of Farb and Mosher in \cite{FM02}, which show that a subgroup $G$ of a mapping class group of a closed hyperbolic surface $\Sigma$ is convex cocompact if and only if the associated $\pi_1(\Sigma)$ extension of $G$ is a hyperbolic group (see \cite{FM02} and \cite{H05} for the definition of these extensions). In Proposition 5.17 of \cite{MS12}, Mj--Sadar also prove a more general result for pure mapping class groups, allowing for $\Sigma$ to have punctures. Here if there are punctures then the extension group is relatively hyperbolic. 
\par 
It was conjectured in \cite{DDLS21} that PGF groups are geometrically finite in the sense given above, and they proved this result for lattice Veech groups. On the other hand, there are examples of groups satisfying this definition of geometrically finite that are not PGF. For example, MCG$(\Sigma)$ itself, along with multicurve stabilizers as proved by Russell \cite{R21}. There are other groups that are not PGF but are natural candidates for being considered geometrically finite, see the discussion after Conjecture 1.11 in \cite{DDLS21} for more about this. 
\par 
There is also another notion of geometric finiteness defined using the boundary of hierarchically hyperbolic groups given by Durham--Hagen--Sisto in \cite{DHS17}. There the authors show that Veech groups and Leininger--Reid surface groups \cite{LR06} are examples of such groups, so it seems natural to conjecture that all PGF groups are as well, or at least all PGF groups with a suitable restriction. See Section 6 of Loa for some discussion about this in the context of free products of multitwist groups \cite{L21}. 
\par 
Regardless of the definitions given so far, it may be possible that no single notion of geometric finiteness is sufficient to handle every case one might wish to study without being too general to be useful. Instead, perhaps different versions should be adapted to handle different scenarios. As of now our knowledge is limited, but the groups constructed from the techniques proving Theorem $1$ provide many new examples to help explore these notions in greater detail. 

\vskip 10pt

\noindent\textbf{Outline}: In Section \ref{SectionDefinitionsAndFacts}, we give the necessary background and definitions for the paper, focusing on subsurface projection and relative hyperbolicity. We continue this in Section \ref{SectionParabolicallyGeometricallyFinite} by giving the definition of parabolically geometrically finite groups and the graphs of groups we will be working with. We also state the full version of Theorem $1$ and prove some basic lemmas. In Section \ref{SectionWorkingTowardsComboTheorem} we prove several technical results towards the proof of Theorem $1$, with the goal of getting control on the geometry on relevant geodesics in $C(\Sigma)$. We give the proof of Theorem $1$ in Section \ref{SectionProofMainThemOtherCases}, and we also discuss other combination theorems and their proofs. Section \ref{SectionStatementsProofsUndistorted} introduces a class of groups containing PGF groups and shows that they are undistorted in MCG$(\Sigma)$, in particular proving Theorem $2$. Finally, in Section \ref{SectionExamplesApplications}, we give a variety of applications and examples of the combination theorems proven in Section \ref{SectionProofMainThemOtherCases}.

\vskip 5pt
\noindent\textbf{Acknowledgements}: The author would like to give a big thanks to his advisor Christopher J. Leininger for introducing him to the problem, and for his guidance and support throughout the entire process of writing this paper. He'd also like to thank Jacob Russell for helpful conversations involving relatively hyperbolic groups, as well as his comments on the paper. Thanks to Dan Margalit for helpful comments as well. A big thanks to the fellow graduate students who supported him, in particular Junmo Ryang whose conversations with the author helped to inspire some of the ideas in the proof of Lemma \ref{InductiveProjBound}. We also thank the anonymous referee for their many helpful suggestions, in particular for their advice to include Lemma \ref{lem:admSequenceQIlowerbound} as a way to streamline the proofs of the results in Section 5. In addition, the author acknowledges partial support from NSF grant DMS-1745670.

\section{Definitions and facts}\label{SectionDefinitionsAndFacts}

In this section we will lay out all the basic definitions and results that will be used throughout the rest of the paper. The reader already familiar with subsurface projection, as well as standard definitions of relative hyperbolicity and basic results about that, can feel free to simply reference this section when needed.
\par 
Throughout this paper we consider a surface $\Sigma$ which is closed, oriented, connected and of genus $g\geq 2$. Whenever convenient, we may assume $\Sigma$ is equipped with a hyperbolic metric. In general, given any compact surface $S$ of genus $g$ and $b$ boundary components, we denote by $\xi(S)$ the \textit{complexity} of the surface $S$, defined by
$$\xi(S)=3g-3+b.$$
\par 
\subsection{Hyperbolicity and basic notation}\label{SubsectionHyperbolicityBasicNotation}
Given a geodesic space $X$ and two points $x,y \in X$, we write $[x,y]$ for an arbitrary geodesic between $x$ and $y$. Occasionally, we will actually fix such a geodesic and continue to use the same notation to denote that fixed path.

% Definition 2.1
\begin{definition}
    A geodesic space $X$ is said to be $\delta$-hyperbolic for $\delta\geq 0$ if for all points $x,y,z$, we have
    $$[x,y] \subset N_{\delta}([x,z] \cup [y,z]).$$
    Here, for any $A\subset X$,
    $$N_{\delta}(A)=\{x\in X\ | \ d(x, a)\leq \delta \text{ for some } a\in A\}.$$
\end{definition}

As all spaces of interest are geodesic, this definition will suffice. In fact, we will typically be working with graphs, and unless otherwise stated we will use the standard combinatorial path metric on graphs given by letting edges have length $1$. Such spaces are always geodesic spaces.
\par 
Let $\lambda\geq 1, \kappa\geq 0$. Given two nonnegative quantities $A$ and $B$, we use the notation $\approx_{\lambda, \kappa}$ and $\preceq_{\lambda, \kappa}$ as follows. We write $A\approx_{\lambda, \kappa} B$ if
$$\frac{1}{\lambda}B-\kappa \leq A \leq \lambda B + \kappa$$
and $A\preceq_{\lambda, \kappa} B$ if 
$$A \leq \lambda B + \kappa.$$
The notation $\succeq_{\lambda, \kappa}$ is used symmetrically. Oftentimes, we let $\lambda=\kappa$ to reduce the number of constants being used, and in this case we write $\approx_{\kappa}$ or $\preceq_{\kappa}$. We also often drop $\lambda$ and $\kappa$ from the notation of $\preceq_{\lambda, \kappa}$ and $\approx_{\lambda, \kappa}$ if we don't care about the actual constants.
\par
In particular, if $f:(X, d_X)\to (Y, d_Y)$ is a map between two metric spaces, and there is a pair $\lambda, \kappa$ so that for all $x_1, x_2\in X$, $$d_Y(f(x_1),f(x_2))\approx_{\lambda, \kappa} d_X(x_1,x_2),$$ we say that $f$ is a $(\lambda, \kappa)$-\textit{quasi-isometric embedding}, or a $(\lambda, \kappa)$-QI embedding. A $(\lambda, \kappa)$-quasi-isometric embedding $f$ is a $(\lambda,\kappa)$-\textit{quasi-isometry} if its image is \textit{quasi-dense}. That is, there is a constant $B$ so that for all $y\in Y$ there exists an $x\in X$ so that $d_Y(f(x),y)\leq B$. Similarly, $f$ is $(\lambda,\kappa)$-\textit{coarsely Lipschitz} if  $$d_Y(f(x_1),f(x_2))\preceq_{\lambda, \kappa} d_X(x_1,x_2).$$
We will often say that there is a coarse Lipschitz upper bound on a quantity $A$ in terms of another quantity $B$ if $A \preceq B$, and a coarse Lipschitz lower bound if the opposite relation holds. A map $c:[a,b]\to X$ is a $(\lambda, \kappa)$-\textit{quasi-geodesic} if it is a $(\lambda, \kappa)$-quasi-isometric embedding. Note that (parameterized) geodesics are exactly the $(1,0)$-quasi-geodesics. We will often drop the $(\lambda, \kappa)$ prefix.
\par 
We will also be making extensive use of Bass--Serre theory. The reader is referred to \cite{S02} for background, for example.
\par 
 We recall a classical fact about hyperbolic spaces and their quasi-geodesics. First, given a metric space $X$, the \textit{Hausdorff distance} between two subset $A, B\subset X$ is

$$d_{Haus}(A,B) = \inf \{r\ | \ A\subset N_r(B) \text{ and } B \subset N_r(A)\}.$$
%

%Proposition 2.2
\begin{proposition}[{\cite[Theorem III.H.1.7]{BH99}}]\label{Morse}
Let $X$ be a $\delta$-hyperbolic geodesic metric space. For all $K, C$, there exists a number $N=N(\delta, K, C)$ such that for any two points $x,y \in X$ and any $(K,C)$-quasi geodesic $c$ with $x$ and $y$ as endpoints, $d_{Haus}([x,y], im(c))\leq N$.
\end{proposition}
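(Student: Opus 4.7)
The plan is to prove this classical Morse lemma via the standard two-step strategy. First, I reduce to the case of a continuous quasi-geodesic: given $c\colon [a,b]\to X$, I define a piecewise-geodesic path $c'$ by concatenating geodesic segments between $c(i)$ and $c(i+1)$ for each integer $i$ in $[a,b]$ (with short interpolations at the endpoints). A direct estimate using the triangle inequality and the quasi-geodesic condition shows that $c'$ is a continuous $(K',C')$-quasi-geodesic with the same endpoints $x,y$, and that $d_{Haus}(im(c), im(c')) \leq K+C$, with $K',C'$ depending only on $K,C$. So it suffices to prove the Hausdorff bound for $c'$.

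Next, I show that every point of $[x,y]$ lies within some distance $D_0 = D_0(\delta, K, C)$ of $im(c')$. Let $p \in [x,y]$ be a point maximizing the distance $D := d(p, im(c'))$. Choose $p^-, p^+$ on $[x,y]$ at distance $2D$ from $p$ on either side (or take the endpoints $x, y$ if they come first, noting $x,y \in im(c')$). By the choice of $p$, there exist $q^\pm \in im(c')$ with $d(p^\pm, q^\pm) \leq D$. Concatenating a geodesic $[p^-, q^-]$, the sub-arc of $c'$ from $q^-$ to $q^+$, and a geodesic $[q^+, p^+]$ produces a continuous path $\gamma$ from $p^-$ to $p^+$ whose length is at most linear in $D$, with constants depending only on $K, C$.

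The main obstacle, and the geometric heart of the proof, is a classical exponential-divergence estimate in $\delta$-hyperbolic spaces: if $\gamma$ is a continuous path joining the endpoints of a geodesic $[u,v]$ and $m \in [u,v]$ satisfies $d(m, im(\gamma)) \geq D$, then $\ell(\gamma) \geq 2^{D/\delta}$ up to an additive constant depending on $\delta$. This is proved by an iterated halving argument: thinness of triangles forces any path avoiding a large ball about the midpoint of $[u,v]$ to stay far from the midpoints of each half-subsegment as well, and one recurses. Applied to our $\gamma$ with $m = p$ on the sub-segment $[p^-, p^+]\subset [x,y]$, this yields that $2^{D/\delta}$ is bounded above by a linear function of $D$, which forces $D \leq D_0$ for an explicit $D_0(\delta, K, C)$. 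Thus $[x,y]\subset N_{D_0}(im(c'))$.

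Finally, the reverse inclusion $im(c')\subset N_N([x,y])$ follows from the first and the continuity of $c'$: if $c'(s)$ has $d(c'(s), [x,y]) > D_0 + 1$, take the maximal interval $[s^-, s^+]$ containing $s$ on which this strict inequality holds; since $c'(a) = x$ and $c'(b) = y$ lie on $[x,y]$, by continuity $d(c'(s^\pm), [x,y]) = D_0 + 1$. Projecting $c'(s^\pm)$ to nearby points of $[x,y]$ and applying the $(K,C)$-quasi-geodesic condition to $c'|_{[s^-, s^+]}$ bounds $|s^+ - s^-|$ and hence $d(c'(s), [x,y])$ in terms of $\delta, K, C, D_0$. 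Combining both inclusions and undoing the reduction yields $N = N(\delta, K, C)$ as required.
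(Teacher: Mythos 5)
The paper does not prove this statement at all: it is quoted directly from Bridson--Haefliger \cite{BH99} (Theorem III.H.1.7), so there is no internal proof to compare against. Your outline is essentially the standard proof from that reference: tame the quasi-geodesic into a continuous piecewise-geodesic one at bounded Hausdorff distance, bound the maximal distance $D$ from a point $p\in[x,y]$ to $im(c')$ by building a concatenated path $\gamma$ of length linear in $D$ that avoids the open ball $B(p,D)$, and play this against the exponential lower bound for paths that join the endpoints of a geodesic while avoiding a ball around a point of it. That is the right architecture, and the first two steps are correctly set up (note only that the linear length bound on the subarc of $c'$ uses the fact that the taming gives $\ell(c'|_{[t,t']})$ bounded linearly in $d(c'(t),c'(t'))$, which your piecewise-geodesic construction does provide and should be stated).

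The one step that is too quick as written is the reverse inclusion. From $d(c'(s^\pm),[x,y])=D_0+1$ you cannot directly bound $|s^+-s^-|$ by ``projecting $c'(s^\pm)$ to nearby points of $[x,y]$ and applying the quasi-geodesic condition'': quasi-geodesicity bounds $|s^+-s^-|$ only once $d(c'(s^-),c'(s^+))$ is bounded, and that in turn requires knowing the two projection points $w^\pm\in[x,y]$ are close to each other, which is not automatic. The standard fix (as in \cite{BH99}) is a connectedness argument: every point of the subsegment $[w^-,w^+]\subset[x,y]$ lies within $D_0$ of $im(c')$, but not of $c'((s^-,s^+))$, so the closed sets of points of $[w^-,w^+]$ that are $D_0$-close to $c'([a,s^-])$ and to $c'([s^+,b])$ cover a connected segment and hence meet; at a common point one gets parameters $u\le s^-\le s^+\le u'$ with $d(c'(u),c'(u'))\le 2D_0$, and then quasi-geodesicity bounds $|u'-u|\ge|s^+-s^-|$ and hence the excursion. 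With that insertion the argument is complete and matches the cited source.
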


\subsection{Curves, arcs, and the Mapping Class Group}\label{SubsectionCurvesArcsMappingClassGroup}
%Definition 2.4
\begin{definition}[Mapping Class Group]
Fix a compact surface $S$, and let Homeo$(S,\partial S)$ denote the group of homeomorphisms of $S$ restricting to the identity on $\partial S$. Let this group be given the compact open topology, and write Homeo$_0(S, \partial S)$ to be the connected component of the identity. We define the \textit{mapping class group} MCG$(S)$ of $S$ as
$$\text{MCG}(S):=\text{Homeo}(S,\partial S)/\text{Homeo}_0(S, \partial S).$$
\end{definition}

As paths in Homeo$(S,\partial S)$ are the same things as isotopies fixing $\partial S$, elements of MCG$(S)$ can also be considered as elements of Homeo$(S,\partial S)$ up to isotopy. It is well known that the induced quotient topology on MCG$(S)$ is discrete when $S$ is compact, potentially with finitely many marked points. In fact, for such $S$, MCG$(S)$ is finitely generated, see Section 4.3 of \cite{FM11}. We fix a finite generating set for MCG$(S)$.

%Definition 2.5
\begin{definition}\label{Curves and arcs}[Curves and arcs]
Fix a compact surface $S$ that is not an annulus. 
\begin{enumerate}[label=\alph*)]
    \item A \textit{closed curve} $c$ in $S$ is the image of a continuous mapping of $S^1$ into $S$. A curve is \textit{simple} if the map is an embedding, and it is \textit{essential} if it is not homotopic to a point, or homotopic into a component of $\partial S$. 
    \item A \textit{multicurve} is a collection $A$ of distinct homotopy classes of essential simple closed curves that are pairwise disjoint. A \textit{tubular neighborhood} $N(A)$ of a multicurve $A$ is a neighborhood of $A$ so that the neighborhoods of each component of $A$ are pairwise disjoint from each other. 
    \item An \textit{arc} $a$ of $S$ is the image of an embedding $\alpha:[0,1]\to S$ such that $\alpha^{-1}(\partial S)=\{0,1\}$. An arc $a$ is \textit{essential} if $\alpha$ is not homotopic rel endpoints to a map with image in $\partial S$. We will consider homotopy classes of essential arcs where homotopies can move the end points, but they always stay in the boundary.
    \item Suppose $\alpha$ and $\beta$ denote two homotopy classes of essential simple closed curves or essential arcs (one may be a curve and one may be an arc). We denote by $i(\alpha, \beta)$ the smallest number of intersections that isotopy representatives of $\alpha$ and $\beta$ can have.
    \item Two multicurves $A$ and $B$ \textit{fill} a subsurface $R$ of $S$ if every essential curve in $R$ intersects some component of $A$ or $B$. Similarly, two subsurfaces $R_1$ and $R_2$ of $S$ \textit{fill} a subsurface $R$ of $S$ if every essential curve in $R$ intersects $R_1$ or $R_2$.
    \item  The \textit{curve graph} of $S$, denoted $C(S)$, is a graph whose vertices are isotopy classes of essential simple closed curves, with an edge between two classes $\alpha_1$ and $\alpha_2$ if there exist representatives of each class with the minimal possible intersection of distinct essential curves in $S$ (i.e. the minimal possible value of $i(\cdot, \cdot)$ for $S$). We denote the induced graph metric by $d_S$.
    % \item The \textit{arc-and-curve graph} of $S$, denoted $AC(S)$, is a graph whose vertices are represented by isotopy classes of essential simple closed curves, and essential simple arcs. There is an edge between two such classes if they admit representatives with minimal intersection in $S$. We take the induced graph metric on $AC(S)$ as well, denoted by $d_{AC(S)}$.
    \item Let $R$ be an annulus. The curve graph of $R$, also denoted $C(R)$ consists of vertices that are isotopy classes of essential simple arcs up to isotopy fixing the boundary, and edges given by disjointness in the interior of the annulus. We denote the induced graph metric by $d_{R}$. Letting $|\alpha_1 \cdot \alpha_2|$ denote the algebraic intersection number of two arcs in $C(R)$, it is straightforward to see that when $\alpha_1$ and $\alpha_2$ are distinct,
    $$d_R(\alpha_1, \alpha_2)=|\alpha_1\cdot \alpha_2|+1.$$
\end{enumerate}
\end{definition}

We remark here the well known fact that as long as $S$ has complexity at least $2$, then minimal intersection of essential curves in $S$ is given by disjointness. In the case of the torus or torus with one boundary component, minimal intersection is $1$ intersection point, and in the case of the sphere with four boundary components, the minimal intersection is $2$. In the case of pairs of pants (sphere with three boundary components) the curve graph is empty. In all these cases, it is known that $C(S)$ is connected and has infinite diameter \cite{MM98, MM00}.
 \par 
 In the future, whenever we consider a "curve", it will always be an essential simple closed curve, typically up to isotopy, unless stated otherwise.
 \par 
 If $A$ is a multicurve in a compact surface $S$, we let $S\setminus A$ denote the compact subsurface of $S$ obtained by cutting $S$ along $A$. That is, take the complement of the union of a set of pairwise disjoint open tubular neighborhoods of the components of $A$. This surface is a compact and possibly disconnected surface with two boundary components for every curve in $A$. We can rebuild $S$ by gluing up the components of $\partial(S\setminus A)$ which are homotopic to a component of $A$. Then $S\setminus A$ naturally embeds as a subsurface of $S$ so that its boundary components that are not components of $\partial S$ are each homotopic to some curve in $A$.
 \par 
 A multicurve $A$ is \textit{sparse} if every component of $S\setminus A$ has complexity at least $1$. In other words, no component is a pair of pants.
 \par 
 We say $R\subset S$ is an \textit{essential subsurface} of $S$ if $R$ is a connected surface and its boundary components are curves that are either essential in $S$ or homotopic to a component of $\partial S$, and is further not a pair of pants or an annulus with boundary homotopic to a component of $\partial S$. The choice to remove pairs of pants from this definition is just to simplify language later on, to remove phrases like "except for pairs of pants" and the like. 
 \par 
 We have the following key result about the curve complex of compact surfaces.
 
 % Proposition 2.6
\begin{proposition}[{\cite[Theorem 1.1]{MM98}}]\label{MMHyp}
For any compact surface $S$, $C(S)$ is $\delta$-hyperbolic for some $\delta$.
\end{proposition}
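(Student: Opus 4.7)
The plan is to prove hyperbolicity of $C(S)$ by producing, for every pair of vertices $\alpha,\beta \in C(S)$, an explicit path from $\alpha$ to $\beta$ coming from Teichmüller theory, showing that these paths are uniform unparameterized quasi-geodesics, and then checking a thin-triangles condition only for these preferred paths; a standard argument (Bowditch-style) then upgrades thin triangles for a dense family of quasi-geodesics to genuine $\delta$-hyperbolicity of $C(S)$.

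First I would dispose of the low-complexity exceptional cases. If $S$ is a once-punctured torus or a four-holed sphere, then $C(S)$ is (up to rescaling the intersection convention) the Farey graph, which can be checked directly to be hyperbolic. Annular curve graphs are quasi-isometric to $\mathbb{Z}$ and hence trivially hyperbolic. So I may assume $\xi(S)\geq 2$, in which case edges of $C(S)$ come from disjointness and one can apply Teichmüller theory.

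The main construction is as follows. Fix a Bers constant $B=B(S)$ so that any hyperbolic structure on $S$ admits some simple closed curve of length at most $B$. Given $\alpha,\beta$, choose $X,Y$ in Teichmüller space $\mathcal{T}(S)$ at which $\alpha,\beta$ respectively are short, let $G\colon [0,T]\to\mathcal{T}(S)$ be the Teichmüller geodesic from $X$ to $Y$, and let $\pi\colon\mathcal{T}(S)\to C(S)$ send a hyperbolic structure to (any) curve of length $\leq B$. Sampling $\pi\circ G$ along $G$ yields a discrete sequence of curves from $\alpha$ to $\beta$. I would then argue that this sequence is an unparameterized quasi-geodesic in $C(S)$ with uniform constants, using the fact that once a curve $\gamma$ becomes $B$-short along $G$, it stays short on a definite interval (the active interval for $\gamma$) whose length is controlled by $\log(1/\ell_\gamma)$; this yields a well-ordered sequence of short curves with controlled overlap and gives coarse Lipschitz bounds via combinatorial length estimates against twisting.

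The crux, and the main obstacle, is verifying the thin-triangles condition for these preferred paths. Given three such Teichmüller paths forming a triangle with vertices $\alpha,\beta,\gamma$ in $C(S)$, I would show that each side lies in a bounded neighborhood of the union of the other two. The key tool is a Masur--Minsky contraction property: whenever a curve $\eta$ is $B$-short at some $Z\in \mathcal{T}(S)$ lying on one of the geodesics, any other structure $W$ at which $\eta$ is $B$-short must satisfy that $\pi(W)$ is within bounded $C(S)$-distance of $\pi(Z)$. The hardest analytic work is establishing this contraction, which relies on extremal length comparisons between hyperbolic and flat metrics on $S$, estimates on the behaviour of annular collars under Teichmüller deformation, and the thick-thin decomposition near the boundary of $\mathcal{T}(S)$. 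Once this "projection is coarsely constant on Bers regions" statement is in hand, fellow-travelling of triangle sides follows by pigeonholing on short curves along each Teichmüller geodesic, completing the verification of thin triangles and hence $\delta$-hyperbolicity.
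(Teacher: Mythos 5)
You are proposing a proof of a result the paper does not prove at all: Proposition \ref{MMHyp} is imported verbatim from \cite{MM98}, so the only meaningful comparison is with the original Masur--Minsky argument, whose outline (Teichm\"uller geodesics, a Bers/systole map $\pi:\mathcal{T}(S)\to C(S)$, a contraction property, and a criterion upgrading a family of paths to hyperbolicity) your sketch does follow.

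The genuine gap is in the step you yourself identify as the crux. The ``contraction property'' you state --- if $\eta$ is $B$-short at a point $Z$ on the geodesic and also $B$-short at some other $W\in\mathcal{T}(S)$, then $\pi(W)$ is within bounded $C(S)$-distance of $\pi(Z)$ --- is a triviality with no contracting content: any two curves of hyperbolic length at most $B$ on the same surface have intersection number bounded in terms of $B$ by the collar lemma, hence bounded $C(S)$-distance (e.g.\ by the bound $d\leq 2+2\log_2 i$ of Lemma \ref{IntersectionBound}), so $d_{C(S)}(\pi(Z),\eta)$ and $d_{C(S)}(\pi(W),\eta)$ are automatically bounded and the conclusion follows from the triangle inequality, using nothing about Teichm\"uller geodesics. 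The actual Masur--Minsky contraction concerns points \emph{off} the path: a coarse projection of an arbitrary curve in $C(S)$ (defined via balance points of extremal length along the Teichm\"uller geodesic) is coarsely Lipschitz, coarsely idempotent, and maps balls disjoint from the path to sets of uniformly bounded diameter. All of the analytic work (extremal length comparisons, behaviour of flat and hyperbolic lengths along the geodesic) goes into that statement, and both of your remaining steps --- the thin-triangle/pigeonhole argument and the claim that the sampled systole sequences are uniform unparameterized quasi-geodesics --- depend on it; neither follows from your stated version. Note also that asserting the quasi-geodesity of these paths before hyperbolicity is established is exactly what the contraction criterion is for in \cite{MM98} (hyperbolicity and quasi-geodesity are deduced simultaneously), so as written your argument is circular at that point. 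Using a Bowditch/guessing-geodesics criterion instead is a legitimate alternative shell, but its hypotheses are precisely what remain unverified.
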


In fact, $\delta$ can be taken to be independent of $S$, as proved in \cite{A13}, \cite{B14}, \cite{HPW15}, \cite{PS17}, although we will never need this.
\par 

%Definition 2.7
\begin{definition}\label{Markings}[Markings]
 Fix a nonannular compact surface $S$. A \textit{marking} $\mu$ of $S$ consists of two pieces of data. First, we choose a maximal multicurve, and adjoin to it the components of $\partial S$ to form the set $b$, which is called the \textit{base} of $\mu$ (in general such a collection $b$ is called a \textit{pants decomposition}). Next, for each curve $\alpha\in b$, fix an annular neighborhood $Y_{\alpha}$, and choose a diameter $1$ set $t_{\alpha}$ in $A(Y_{\alpha})$, which we call the \textit{transversals} of the marking. 
 \end{definition}

 Markings are also considered up to isotopy. One can form a graph whose vertices are the set of (isotopy classes of) markings of a surface, but as we will not need to work with this graph we won't give a formal definition. The \textit{marking graph} of a surface $S$ is denoted by $\mathcal{M}(S)$. For more about $\mathcal{M}(S)$, see \cite{MM00}.
\par 

 %Definition 2.8
 \begin{definition}\label{SSProjC}[Subsurface Projection]
  Suppose $S$ is a compact surface and $R$ an essential nonannular subsurface. We define a map $\pi_R:C(S) \to \mathcal{P}(C(R))$ (the power set of $C(R)$) as follows. Fix an element $\alpha\in C(S)$, and pick a representative $a$. Perform an isotopy of $a$ so that $a$ intersects $\partial R$ minimally. There are three cases to consider.
 \begin{enumerate}
     \item If $a\cap R = \varnothing$, then $\pi_R(\alpha)=\varnothing$.
     \item If $a\subset R$, then $\pi_R(\alpha)=\alpha$.
     \item If $a$ intersects $\partial R$, then for each component $a_0$ of $a\cap R$, take the tubular neighborhood in $R$ of $a_0 \cup \partial R$ and consider the collection of boundary components of such neighborhoods. The set $\pi_R(\alpha)$ is the union of the resulting curves. 
 \end{enumerate}
 \end{definition}
 %

 %Definition 2.9
 \begin{definition}\label{SSProjAnnulus}[Projection to annuli on closed surfaces]
 Let $S$ be a closed surface, and assume $R$ is an essential annulus. Take the cover $\tilde{S}$ of $S$ corresponding to $\pi_1(R)$. The surface $\tilde{S}$ is an open annulus, and we can identify the natural compactification of $\tilde{S}$ with $R$. Fix $\alpha\in C(S)$, and let $a$ denote a representative. Consider all the lifts of $a$ to the closure of $\tilde{S}$. We define $\pi_R:S\to \mathcal{P}(C(R))$ as follows.
 \begin{enumerate}
     \item If there is a lift of $a$ connecting the two boundary components of the closure of $\tilde{S}$, let $\pi_R(\alpha)$ be the set of all such arcs. (Such a lift exists if $a$ intersects $R$ nontrivially).
     \item Otherwise, $\pi_R(\alpha)$ is empty.
 \end{enumerate}
 \end{definition}
We can also project multicurves and markings to subsurfaces. If $B$ is a multicurve then $\pi_R(B)$ is a the union of the projections of the components of $B$. If $\mu$ is a marking with base curves $b$, then $\pi_R(\mu)=\pi_R(b)$, unless $R$ is an annulus whose core curve is a component of $b$. In this case $\pi_R(\mu)$ is the transversal of $\mu$ at the core curve of $R$. Note then that for any marking $\mu$ and essential subsurface $R$, $\pi_R(\mu)\neq \varnothing$.
\par 
Given a compact surface $S$ with essential subsurface $R$, and two multicurves or markings $\mu_1$ and $\mu_2$ of $S$, we will want to understand the distance of the projections of $\mu_1$ and $\mu_2$ to $R$. We make the following definition for notational convenience.
\begin{definition}\label{projDistDef}
    Suppose in the case that $\mu_1$ and $\mu_2$ are multicurves that they both have nonempty projection to $R$. Then we define
$$d_R(\mu_1, \mu_2)=\text{diam}_{C(R)}(\pi_R(\mu_1)\cup \pi_R(\mu_2)).$$
\end{definition}

\par 
This notion of distance satisfies the triangle inequality, which we will make frequent use of. We see in the following few lemmas that subsurface projections "respects" the geometry of $C(S)$ in a variety of ways. 

\begin{lemma}[\cite{MM00}]\label{ProjLip}
    Let $S$ be a compact surface, and suppose $R$ is an essential subsurface of $S$. Fix a curve or marking $\mu$. Then
    $$\text{diam}_R(\pi_R(\mu))\leq 2.$$
    In particular, if $\alpha_1$ and $\alpha_2$ are disjoint curves with $\pi_R(\alpha_i)\neq \varnothing$, then
    $$d_R(\alpha_1, \alpha_2)\leq 2.$$
    If $\alpha_i$ is a base curve the marking $\mu_i$ and $\pi_R(\alpha_i)\neq \varnothing$,
    $$d_R(\mu_1, \mu_2)-4\leq d_R(\alpha_1, \alpha_2) \leq d_R(\mu_1, \mu_2).$$
    Lastly, the intersection number of components of $\pi_R(\alpha_1)$ and $\pi_R(\alpha_2)$ is bounded above by a function of $i_S(\alpha_1, \alpha_2)$.
\end{lemma}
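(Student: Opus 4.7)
The plan is to prove the four bullets in sequence, with the later bullets reducing to the first via the triangle inequality for $d_R$. For the diameter bound I would separate cases based on whether $R$ is annular. When $R$ is non-annular and $\mu$ is a curve represented by $a$ meeting $\partial R$ minimally, the components of $a \cap R$ are pairwise disjoint embedded arcs. Taking pairwise disjoint regular neighborhoods in $R$ of each arc together with $\partial R$ yields boundary curves that are mutually disjoint essential simple closed curves in $R$, so any two lie at distance at most $1$ in $C(R)$. When $R$ is annular, distinct lifts of an embedded simple closed curve $\alpha$ to the annular cover are themselves pairwise disjoint embedded arcs joining the two boundary circles, and hence pairwise have algebraic intersection zero, so they lie at distance $1$ in $C(R)$. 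The upgrade from curves to markings is routine: for a marking $\mu$ with base $b$, the components of $b$ are pairwise disjoint curves so one projects each separately; a small extra slack of $+1$ absorbs the low-complexity degeneracies (once-punctured torus, four-holed sphere) where curve-graph edges arise from minimal nonzero intersection rather than disjointness. If $R$ is an annulus with core in $b$, then $\pi_R(\mu)$ is a transversal of diameter $1$ by definition.

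The second bullet follows by picking disjoint representatives of $\alpha_1$ and $\alpha_2$, whose union intersects $R$ in a disjoint multi-arc; the construction above yields projection components that can all be chosen mutually disjoint, giving $d_R(\alpha_1, \alpha_2) \leq 2$. The third bullet is a triangle inequality chase: since $\alpha_i$ is a base curve of $\mu_i$ with nontrivial projection we have $\pi_R(\alpha_i) \subset \pi_R(\mu_i)$, giving $d_R(\alpha_1, \alpha_2) \leq d_R(\mu_1, \mu_2)$; in the reverse direction,
$$d_R(\mu_1, \mu_2) \leq d_R(\mu_1, \alpha_1) + d_R(\alpha_1, \alpha_2) + d_R(\alpha_2, \mu_2) \leq 2 + d_R(\alpha_1, \alpha_2) + 2$$
by applying the first bullet to both outer terms.

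For the fourth bullet on intersection numbers, a local combinatorial analysis of the projection construction is needed. In the non-annular case, the new curves produced by the projection agree with $\partial R$ outside a small neighborhood of the original arcs, so each intersection between a component of $\alpha_1 \cap R$ and a component of $\alpha_2 \cap R$ creates only a bounded number of intersections between the projected curves; the total intersection of the projections is then linearly controlled by the number of intersections between $\alpha_1$ and $\alpha_2$ inside $R$, which is at most $i_S(\alpha_1,\alpha_2)$. In the annular case, intersections of lifts come from intersections of the original curves on $S$, so the same type of bound applies. The main obstacle is careful bookkeeping in the low-complexity cases where edges in $C(R)$ do not correspond to disjointness, but in each such case there are only finitely many isotopy classes of curves within a given bounded intersection number, so a universal additive slack handles all degeneracies uniformly.
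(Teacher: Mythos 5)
Your reduction of the later bullets to the first via inclusions and the triangle inequality is fine (and the paper itself offers no proof to compare against -- it simply cites Masur--Minsky), but the core step of your first bullet contains a genuine error. You claim that for the distinct arcs $a_1, a_2$ of $a \cap R$ one can take ``pairwise disjoint regular neighborhoods in $R$ of each arc together with $\partial R$'' and that the resulting projection curves are ``mutually disjoint.'' This cannot be arranged: each of the two neighborhoods contains a collar of $\partial R$, so they are never disjoint, and more importantly the surgered curves of two disjoint arcs need not be realizable disjointly. If $a_1$ and $a_2$ both have endpoints on the same component $\partial_0$ of $\partial R$ and these endpoints are linked along $\partial_0$ (which happens already for two disjoint nonisotopic essential arcs in a one-holed torus, and persists in higher-complexity subsurfaces), then the boundary curve of $N(a_1 \cup \partial_0)$ contains a strand parallel to the segment of $\partial_0$ between the endpoints of $a_1$, and the curve coming from $a_2$ must cross it essentially. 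Indeed, the constant $2$ in the lemma is sharp, so any argument that makes all projection components pairwise disjoint proves a statement that is false in general; the same flaw propagates into your second bullet.

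The standard repair (this is Lemma 2.3 of Masur--Minsky's second paper) does not try to disjoin the projections of $a_1$ and $a_2$ from each other, but instead produces a curve adjacent to both: take a regular neighborhood $N$ of the whole union $a_1 \cup a_2 \cup \partial R$ (or of all arcs of $a\cap R$ at once). Each component of $\pi_R(a_1)$ and of $\pi_R(a_2)$ can be isotoped into $N$, hence made disjoint from $\partial N$; if some component of $\partial N$ is essential in $R$ it serves as a midpoint, giving $\operatorname{diam}_R \le 2$, and if no component is essential then $N$ fills $R$, forcing $R$ to have small complexity, where one checks the bound directly using the intersection-number description of edges (this is also where your ``$+1$ slack'' remark would need to become an actual case analysis, since there edges are not given by disjointness). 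Alternatively, one can bound the intersection number between the surgered curves of disjoint arcs by a universal constant and convert that to distance, which is essentially the bookkeeping you sketch for your fourth bullet; that bullet, and your annular-cover discussion, are otherwise fine.
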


In particular, if two curves have sufficiently far apart projections to a subsurface, then the pair must intersect in the original surface. 
\par 
The following allows us to control distance of projections to $R$, a subsurface of a compact surface $S$, in terms of intersection number in $S$. 

%Lemma 2.11
\begin{lemma}[{\cite[Lemma 2.1]{H01}}]\label{IntersectionBound}
Given vertices $\alpha, \beta \in C(S)$ with $i(\alpha, \beta)>0$,
$$d_{R}(\alpha, \beta)\leq 2+2\log_2(i(\alpha, \beta)).$$
\end{lemma}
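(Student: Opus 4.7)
The plan is to combine Hempel's classical iterated-surgery bound in a curve graph with the observation that subsurface projection does not blow up intersection numbers. Implicitly we must have $\pi_R(\alpha), \pi_R(\beta) \neq \varnothing$ for $d_R(\alpha,\beta)$ to be defined, so assume this. The bulk of the work is reducing the claim to a Hempel-style estimate applied to curves living inside $R$ itself.

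First I would control the intersection of the projected curves in $R$. When $R$ is non-annular, each component of $\alpha\cap R$ is a properly embedded arc of $R$, and likewise for $\beta$. Any transverse intersection of such an arc of $\alpha$ with an arc of $\beta$ is also an intersection of $\alpha$ with $\beta$ in $S$, so the total arc-arc intersection count in $R$ is at most $i_S(\alpha,\beta)$. Replacing each arc $a_0$ with the boundary components of a regular neighborhood of $a_0 \cup \partial R$ (the definition of $\pi_R$) can at most double the intersection count, since each original arc contributes two parallel boundary pieces. When $R$ is annular, lifts of $\alpha$ and $\beta$ to the annular cover $\widetilde S$ are arcs, and each transverse intersection in $\widetilde S$ projects injectively to a point of $\alpha \cap \beta$ in $S$. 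In either case we obtain components $a \in \pi_R(\alpha)$ and $b \in \pi_R(\beta)$ with $i_R(a,b) \le 2\,i_S(\alpha,\beta)$.

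Next I would run the standard Hempel halving argument inside $R$: given curves $a,b$ with $i_R(a,b) = k \ge 1$, pick two consecutive points of $a \cap b$ along $b$ and surger $a$ along the intervening subarc of $b$. This produces two simple closed curves in a regular neighborhood of $a \cup b$, each disjoint from $a$, whose intersection numbers with $b$ sum to at most $k$; take the smaller, which intersects $b$ at most $\lfloor k/2 \rfloor$ times and is adjacent to $a$ in $C(R)$. Iterating yields a sequence $a = a_0, a_1, \ldots, a_m$ in $C(R)$ with consecutive terms disjoint, $m \le \log_2 k$, and $a_m$ disjoint from $b$. Hence $d_R(a,b) \le \log_2 k + 1$, and combining with the first step gives
$$d_R(\alpha,\beta) \le \log_2(2\,i_S(\alpha,\beta)) + 1 + O(1) \le 2 + 2\log_2 i_S(\alpha,\beta),$$
where the constant slack absorbs both the factor of $2$ in the intersection transfer and the diameter of $\pi_R(\alpha), \pi_R(\beta)$ (bounded by Lemma~\ref{ProjLip}).

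The main obstacle is the annular case: in $C(R)$ the distance is linear in the geometric (or algebraic) intersection rather than logarithmic, so Hempel's halving does not literally apply to a closed curve. However, the same surgery idea applied to the arcs of $\pi_R(\alpha)$ inside $\widetilde S$ — cutting at pairs of consecutive intersections with a lift of $\beta$ and retaining the half with fewer crossings — produces an adjacent arc in $C(R)$ with at most half the intersections, yielding the same logarithmic decay. Once that case is dispatched the remainder of the estimate is essentially bookkeeping of constants already present in Lemma~\ref{ProjLip}.
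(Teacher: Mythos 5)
The paper offers no proof of this lemma; it is quoted from Hempel \cite{H01}, where it is a statement about distance in the curve graph of the ambient surface (i.e.\ the case $R=S$ here). Your non-annular argument is essentially Hempel's surgery/halving induction, and in spirit it is fine: the generous constant $2+2\log_2 i$ absorbs the issues you elide (the surgered curve may be inessential or boundary-parallel, forcing an extra step, which is where Hempel's factor of $2$ per halving comes from), and your claim that components of $\pi_R(\alpha)$ and $\pi_R(\beta)$ intersect at most $2\,i_S(\alpha,\beta)$ times is too optimistic as stated (the pieces of the surgered curves running along $\partial R$ can create additional crossings), but some linear function of $i_S(\alpha,\beta)$ does bound it, so only the shape of the constants changes. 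Those are minor points.

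The genuine gap is the annular case, and it cannot be repaired the way you propose. By the paper's own Definition 2.4(g), for distinct arcs $a,b$ in an annular curve graph one has $d_R(a,b)=|a\cdot b|+1$: distance there is \emph{linear} in intersection number, so no surgery on arcs in $\widetilde{S}$ can produce logarithmic decay, and your final paragraph asserts something false. Concretely, let $R$ be an annulus with core $\gamma$, let $\alpha$ satisfy $i(\alpha,\gamma)=2$, and set $\beta=\tau_\gamma^n(\alpha)$; then $i(\alpha,\beta)$ grows like $4|n|$ while $d_R(\alpha,\beta)$ grows like $|n|$, so for large $|n|$ the bound $2+2\log_2 i(\alpha,\beta)$ fails. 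Thus the statement with a logarithmic bound simply does not hold for annular $R$: Hempel's lemma is the non-annular statement (applied to $C(R)$, or to $C(S)$ itself), and for annuli the only available estimate is the linear one $d_R\leq i+1$, bounded by a function of $i_S(\alpha,\beta)$ as in the last part of Lemma \ref{ProjLip} --- which is in fact all that the paper's later applications use. The correct resolution is to prove the logarithmic bound only for non-annular $R$ and to settle for a (non-logarithmic) function of the intersection number in the annular case, rather than to claim the halving argument transfers.
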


The next result, known as the Masur--Minsky Bounded Geodesic Image Theorem for curve complexes, provides strong control on both the local and global behavior of geodesics in $C(\Sigma)$. Note that we phrase this result more explicitly, and in terms of the contrapositive which is how we will typically make use of the result.

%Proposition 2.12
\begin{proposition}[{\cite[Theorem 3.1]{MM00}}]\label{MMBGI}
Given a compact surface $S$ and any essential subsurface $R$, there is a number $M$ depending only on $S$ so that for any pair of markings or curves $\mu_1$ and $\mu_2$, if $d_R(\mu_1,\mu_2)\geq M$, then every geodesic in $C(S)$ between $\mu_1$ and $\mu_2$ (between any pair of base curves with nonempty projection to $R$ if these are markings) must have a vertex $\alpha$ with $\pi_R(\alpha)=\varnothing$. In particular, $\alpha$ is disjoint from $\partial R$. 
\end{proposition}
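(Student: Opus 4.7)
The plan is to prove the contrapositive: if $\gamma = \alpha_0, \alpha_1, \ldots, \alpha_n$ is a geodesic in $C(S)$ (between $\mu_1$ and $\mu_2$ in the curve case, or between base curves with nonempty projection to $R$ in the marking case) and every $\alpha_i$ has $\pi_R(\alpha_i) \neq \varnothing$, then $d_R(\mu_1, \mu_2) \leq M$ for a constant $M$ depending only on $S$. First I would reduce from markings to curves: if $\mu_i$ has base curve $\alpha_i$ with $\pi_R(\alpha_i) \neq \varnothing$, Lemma \ref{ProjLip} gives $d_R(\mu_i, \alpha_i) \leq 4$, so by the triangle inequality any bound for $d_R(\alpha_0, \alpha_n)$ yields a bound for $d_R(\mu_1, \mu_2)$ up to an additive error of $8$. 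Hence it suffices to treat the curve case.

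The naive attempt uses Lemma \ref{ProjLip} directly: consecutive vertices $\alpha_i, \alpha_{i+1}$ on $\gamma$ are disjoint in $S$ and both project nontrivially, so $d_R(\alpha_i, \alpha_{i+1}) \leq 2$, giving $d_R(\alpha_0, \alpha_n) \leq 2n$ where $n = d_S(\alpha_0, \alpha_n)$. Since $n$ can be arbitrarily large, this linear bound is not uniform. Producing a bound independent of the length of the geodesic is the main obstacle.

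To overcome this, I would combine the hyperbolicity of $C(S)$ (Proposition \ref{MMHyp}) with a contraction property of the subsurface projection. The key idea is that the \emph{bad set} $B_R = \{c \in C(S) : c \cap \partial R = \varnothing\}$ of curves with empty projection to $R$ behaves coarsely like a quasi-convex subset of $C(S)$, and the map $\pi_R$ serves as a coarse closest-point projection onto $B_R$: curves with far-apart projections to $R$ in $C(R)$ must have every $C(S)$-geodesic between them pass near $B_R$. The cleanest way to make this rigorous is via the Masur--Minsky hierarchy machinery. One constructs a hierarchy of tight geodesics from $\alpha_0$ to $\alpha_n$ supported by $\mu_1, \mu_2$; large distance $d_R(\alpha_0, \alpha_n)$ forces $R$ to appear as a domain in the hierarchy, which in turn forces a component of $\partial R$ to appear as a vertex on the top-level geodesic in $C(S)$. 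Such a vertex lies in $B_R$, contradicting the hypothesis. Establishing this appearance (the tight-geodesic hierarchy theorem) is the hardest part; once it is in place, the uniform bound $M$ follows from the Lipschitz constants of $\pi_R$, the hyperbolicity constant $\delta$ of $C(S)$, and the additive error between a marking and its base curves already recorded in Lemma \ref{ProjLip}.
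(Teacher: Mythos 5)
First, note that the paper does not prove Proposition \ref{MMBGI} at all: it is quoted directly from \cite{MM00} (Theorem 3.1), so there is no internal argument to compare against, and the relevant comparison is with Masur--Minsky's proof. Your framing of the contrapositive, the reduction from markings to base curves via Lemma \ref{ProjLip}, and the diagnosis that the edge-by-edge bound $d_R(\alpha_i,\alpha_{i+1})\leq 2$ only gives a linear (hence useless) estimate are all correct, and your first instinct --- hyperbolicity of $C(S)$ (Proposition \ref{MMHyp}) combined with a coarse contraction property of $\pi_R$ along geodesics that avoid the $1$-neighborhood of $\partial R$ --- is exactly the shape of the original argument.

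The genuine gap is in the step you call ``the cleanest way to make this rigorous.'' Invoking hierarchies of tight geodesics is circular relative to \cite{MM00}: the hierarchy machinery there is developed after, and by means of, Theorem 3.1, and the assertion you need --- that a large value of $d_R(\mu_1,\mu_2)$ forces $R$ to appear as a domain supporting a geodesic of the hierarchy (the Large Link Lemma) --- is itself proved using the bounded geodesic image theorem. So this route cannot establish the proposition without an independent construction of hierarchies, which you do not supply and which is not available in the cited sources. Moreover, even granting the hierarchy theorem, it concerns the tight geodesics built into the hierarchy, whereas Proposition \ref{MMBGI} is a statement about \emph{every} geodesic in $C(S)$ between $\mu_1$ and $\mu_2$: knowing that the main tight geodesic passes through (or near) $\partial R$ does not bound $\pi_R$ of your given geodesic, and fellow-traveling only places it within $\delta$ of $\partial R$, which yields neither a vertex with empty projection nor a projection bound. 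What is actually needed is a direct estimate for an arbitrary geodesic all of whose vertices cut $R$; Masur--Minsky obtain this from $\delta$-hyperbolicity of $C(S)$ together with the Lipschitz/contraction behavior of $\pi_R$, and Webb later gave an elementary surgery (unicorn-path) proof producing a uniform constant with no hyperbolicity at all. As written, your sketch defers precisely this content to ``the tight-geodesic hierarchy theorem,'' which cannot supply it.
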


Next we state the Behrstock inequality. It will be key to ensure that certain bounds on the distance between projections of collections of curves are sufficiently large. We use an explicit version due to Mangahas given in \cite{M10}, although the original version is due to Behrstock \cite{B06}.

%Proposition 2.13
\begin{proposition}[{\cite[Lemma 2.5]{M10}}]\label{Behrstock}
Let $S$ be a compact surface, and fix $R_1$ and $R_2$ two distinct essential proper subsurfaces so that $\pi_{R_1}(\partial R_2)$ and $\pi_{R_2}(\partial R_1)$ are nonempty. If $A$ is a multicurve, then 
$$d_{R_1}(A, \partial R_2)\geq 10 \implies d_{R_2}(A, \partial R_1)\leq 4.$$
\end{proposition}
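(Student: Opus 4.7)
The plan is to prove the statement via a reduction to a single curve followed by a surgery analysis in both subsurfaces. First I would handle the degenerate case: if every component of $A$ were disjoint from $R_2$, each component would also be disjoint from $\partial R_2$, so Lemma \ref{ProjLip} would give $d_{R_1}(A, \partial R_2) \leq 2$, contradicting the hypothesis $d_{R_1}(A, \partial R_2) \geq 10$. Hence some component of $A$ meets $R_2$ essentially, $\pi_{R_2}(A)$ is nonempty, and the right-hand side $d_{R_2}(A, \partial R_1) \leq 4$ is well-defined.

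Next I reduce to a single curve. Using the triangle inequality together with the diameter bound $\diam_{R_1}(\pi_{R_1}(\alpha)) \leq 2$ from Lemma \ref{ProjLip}, I select a single component $\alpha$ of $A$ with $d_{R_1}(\alpha, \partial R_2) \geq 8$. It then suffices to bound $d_{R_2}(\alpha, \partial R_1) \leq 2$, since the remaining diameter contributions for $A$ and $\partial R_1$ in $C(R_2)$ add at most $2$ to yield the desired bound of $4$.

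The core is the surgery analysis. For each arc $c$ of $\alpha \cap R_1$, the associated component of $\pi_{R_1}(\alpha)$ is a boundary component of a regular neighborhood $N(c \cup \partial R_1)$. If both endpoints of $c$ lie on a single component of $\partial R_1$ that is disjoint from $R_2$, the resulting surgery curve lies within distance $1$ in $C(R_1)$ of a curve parallel to that boundary component, which in turn is uniformly close in $C(R_1)$ to every component of $\pi_{R_1}(\partial R_2)$. Were this the case for every arc of $\alpha \cap R_1$, then $d_{R_1}(\alpha, \partial R_2)$ would be bounded by a small absolute constant, contradicting $d_{R_1}(\alpha, \partial R_2) \geq 8$. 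Therefore some arc of $\alpha \cap R_1$ must terminate on a component of $\partial R_1$ that meets $R_2$, so $\alpha$ transitions from $R_1$ into $R_2$ across $\partial R_1 \cap R_2$. The adjacent arc of $\alpha \cap R_2$ then has an endpoint on a component of $\partial R_1 \cap \partial R_2$, and its surgery with $\partial R_2$ produces a curve in $\pi_{R_2}(\alpha)$ that shares a sub-arc or endpoint with some component of $\pi_{R_2}(\partial R_1)$, forcing $d_{R_2}(\alpha, \partial R_1) \leq 2$.

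The main obstacle is the precise surgery bookkeeping, especially when one or both of $R_1, R_2$ is an annulus. In that case the projection from Definition \ref{SSProjAnnulus} is set up via the annular cover, and distance is computed as algebraic intersection plus one. Tracking how lifts of $\alpha$ into the annular cover accumulate twists as $\alpha$ transitions between $R_1$, $R_2$, and the complementary regions, and bounding this twisting by a universal constant, is the delicate step that pins down the explicit numerical constants $10$ and $4$ appearing in Mangahas's version. A careful case split based on whether arc endpoints lie on shared boundary components of $R_1$ and $R_2$, and whether arcs of $\alpha$ have both endpoints on the same component of $\partial R_i$, is needed to rule out intermediate configurations and extract the stated bound.
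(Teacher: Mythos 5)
First, a point of comparison: the paper does not prove this statement at all --- it is quoted directly from Mangahas \cite{M10} (Lemma 2.5) --- so there is no in-paper argument for your sketch to match, and your proposal has to stand on its own as a proof of the Behrstock inequality. Your preliminary reductions are fine: the degenerate case and the reduction to a single component $\alpha$ of $A$ via the diameter bound of Lemma \ref{ProjLip} are standard and correct (modulo small constant bookkeeping). The problem is the core surgery step, where there are genuine gaps. Your claim that an arc $c$ of $\alpha\cap R_1$ with both endpoints on a component of $\partial R_1$ disjoint from $R_2$ yields a surgery curve ``within distance $1$ of a curve parallel to that boundary component, which is uniformly close to every component of $\pi_{R_1}(\partial R_2)$'' does not hold: boundary-parallel curves are not vertices of $C(R_1)$, and being disjoint from a component of $\partial R_1$ is vacuous (every vertex of $C(R_1)$ is isotopic into the interior of $R_1$), so nothing forces proximity to $\pi_{R_1}(\partial R_2)$. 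Hence the dichotomy you extract is not established; and even granting it, an arc of $\alpha\cap R_1$ ending on a boundary component that \emph{meets} $R_2$ need not exit through a point of $\partial R_1\cap R_2$, so the claimed ``transition into $R_2$'' does not follow either.

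The decisive error is the final step: you bound $d_{R_2}(\alpha,\partial R_1)\le 2$ because a component of $\pi_{R_2}(\alpha)$ ``shares a sub-arc or endpoint'' with a component of $\pi_{R_2}(\partial R_1)$. Sharing points --- i.e.\ intersecting --- gives no upper bound on curve-graph distance; upper bounds come from disjointness or a bound on intersection number (Lemmas \ref{ProjLip} and \ref{IntersectionBound}). As written, this step never uses the hypothesis $d_{R_1}(A,\partial R_2)\ge 10$ and would ``prove'' that any curve crossing $\partial R_1$ inside $R_2$ has small projection distance from $\partial R_1$ in $R_2$, which is false: an arc of $\alpha\cap R_2$ can wind arbitrarily many times around the arcs of $\partial R_1\cap R_2$. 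Ruling out simultaneous large winding on both sides is exactly the content of the inequality, and Mangahas's argument achieves it by using largeness on one side to manufacture, via surgeries of pieces of $\alpha$ with pieces of $\partial R_1$ and $\partial R_2$, a curve with \emph{uniformly bounded} intersection with the relevant projection on the other side, producing a contradiction; that mechanism is absent from your sketch. Finally, you explicitly defer the annular case (Definition \ref{SSProjAnnulus}), which is where the explicit constants $10$ and $4$ are actually pinned down, so even apart from the issues above the proof is incomplete.
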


We will also need the following distance formula of Masur-Minsky to show that PGF groups are undistorted \cite{MM00}. We introduce some notation. Let $\sigma \in \R_+$. For $M\in \R_+$, define
$$ \{\!\{M\}\!\}_{\sigma}=\begin{cases}
M & \text{if } M\geq \sigma \\
0 & \text{otherwise}
\end{cases} 
$$
We give a basic lemma using this notation.
\begin{lemma}[{\cite[Lemma 2.1]{L21}}]\label{DoubleBracketQIBounds}
    Let $\{x_i\}_{i=1}^N$ and $\{y_i\}_{i=1}^N$ be two finite sequences of nonnegative numbers. Suppose 
    $$x_i\approx_{K,C} y_i$$
    with $K\geq 1$, $C\geq 0$, If $\kappa \geq 2KC$, then 
    $$\sum_{i=1}^N \{\!\{x_i\}\!\}_{\kappa} \preceq_{2K,0} \sum_{i=1}^N \{\!\{y_i\}\!\}_C$$
\end{lemma}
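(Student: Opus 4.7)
The plan is to reduce the inequality to a pointwise comparison and then handle two cases determined by whether $x_i$ clears the threshold $\kappa$. Since both sides of the claimed inequality are sums of nonnegative terms indexed over the same set, it suffices to prove that for each $i$,
$$\{\!\{x_i\}\!\}_\kappa \leq 2K \{\!\{y_i\}\!\}_C.$$
Once this pointwise bound is established, summing over $i$ immediately yields the statement.

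For the pointwise bound, first I would dispose of the trivial case $x_i < \kappa$. Here $\{\!\{x_i\}\!\}_\kappa = 0$, and the right-hand side is nonnegative, so the inequality holds.

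Next I would address the main case $x_i \geq \kappa$. By the upper bound in $x_i \approx_{K,C} y_i$, we have $x_i \leq Ky_i + C$, so
$$y_i \geq \frac{x_i - C}{K} \geq \frac{\kappa - C}{K} \geq \frac{2KC - C}{K} = 2C - \frac{C}{K} \geq C,$$
using $\kappa \geq 2KC$ in the third step and $K \geq 1$ in the last step. Therefore $\{\!\{y_i\}\!\}_C = y_i$. From $y_i \geq C$ (hence $C \leq Ky_i$ since $K \geq 1$) we then get
$$x_i \leq Ky_i + C \leq Ky_i + Ky_i = 2Ky_i = 2K \{\!\{y_i\}\!\}_C,$$
which completes the pointwise bound since $\{\!\{x_i\}\!\}_\kappa = x_i$ in this case.

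There is essentially no obstacle beyond the bookkeeping; the whole point of the hypothesis $\kappa \geq 2KC$ is precisely to guarantee that whenever $x_i$ exceeds the cutoff $\kappa$, the corresponding $y_i$ is large enough that the additive constant $C$ in the quasi-isometric comparison can be absorbed into a multiplicative factor of $2K$, and simultaneously large enough to survive its own cutoff $C$. So the only step requiring care is verifying that the chain of inequalities at $\kappa \geq 2KC$ really does force $y_i \geq C/K$ (to absorb the $+C$) and $y_i \geq C$ (to preserve $\{\!\{y_i\}\!\}_C = y_i$), both of which come out of the same computation above.
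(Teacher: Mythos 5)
Your proof is correct and complete; the pointwise case split on whether $x_i \geq \kappa$, using only the upper bound $x_i \leq Ky_i + C$ to force $y_i \geq C$ (so the cutoff $\{\!\{y_i\}\!\}_C = y_i$ survives) and then absorbing the additive $C$ into a factor of $2K$, is exactly the natural argument. Note that the paper does not prove this lemma itself but cites it as Lemma 2.1 of \cite{L21}, so there is no in-paper proof to compare against; your argument is the standard one and fills that role without issue.
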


%Proposition 2.14
\begin{proposition}[{\cite[Theorem 6.12]{MM00}}]\label{MMDistance}
Fix a marking $\mu$ on a compact surface $S$. There exists a constant $\sigma_0>0$ so that for all $\sigma\geq \sigma_0$, there exists a constant $\kappa$ so that for all $f, g\in \text{MCG}(S)$,
$$d_{MCG(S)}(f,g) \approx_{\kappa} \sum_{R\subset S}\{\!\{d_R(f(\mu),g(\mu))\}\!\}_{\sigma}$$
where the sum is over all isotopy classes of essential subsurfaces of $S$.
\end{proposition}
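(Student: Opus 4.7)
The plan is to reduce from $\text{MCG}(S)$ to the marking graph $\mathcal{M}(S)$. Since $\text{MCG}(S)$ acts properly and cocompactly on $\mathcal{M}(S)$, the orbit map $f \mapsto f(\mu)$ is a quasi-isometry, and the statement reduces to bounding $d_{\mathcal{M}(S)}(f(\mu), g(\mu))$ above and below by the thresholded sum $\sum_R \{\!\{d_R(f(\mu),g(\mu))\}\!\}_\sigma$, where the $R$ range over isotopy classes of essential subsurfaces of $S$.

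For the upper bound, I would construct an explicit path in $\mathcal{M}(S)$ from $f(\mu)$ to $g(\mu)$ by what is essentially a Masur--Minsky hierarchy. Start with a geodesic in $C(S)$ between base curves of $f(\mu)$ and $g(\mu)$; this contributes a path of length at most a constant times $d_S(f(\mu),g(\mu))$ in the marking graph. Whenever a proper subsurface $R$ satisfies $d_R(f(\mu),g(\mu))\geq M$, Proposition~\ref{MMBGI} forces the top-level geodesic to pass through $\partial R$, and one recursively inserts a geodesic in $C(R)$ of length roughly $d_R(f(\mu),g(\mu))$, continuing down to annular subsurfaces. Each ``slot'' in the resulting path corresponds to a bounded number of elementary marking moves, so the total length is bounded by a constant times the sum of projection distances over the domains that actually appear. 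Choosing $\sigma_0$ at least $M$ (plus the constants from Lemma~\ref{ProjLip}) ensures that the subsurfaces with $d_R<\sigma$ do not need to be represented in the path, so one gets exactly the thresholded sum on the right side.

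For the lower bound, I would use that each elementary marking move changes $\pi_R(\cdot)$ by $O(1)$ in $C(R)$, so $d_R$ is coarsely Lipschitz as a function on $\mathcal{M}(S)$. This gives $d_R(f(\mu),g(\mu))\preceq d_{\mathcal{M}(S)}(f(\mu),g(\mu))$ for each single $R$, but only a max-bound rather than a sum. To promote it to a sum, fix a geodesic $\gamma$ in $\mathcal{M}(S)$ from $f(\mu)$ to $g(\mu)$. For each $R$ with $d_R(f(\mu),g(\mu))\geq \sigma$, the coarse Lipschitz property produces a subsegment $\gamma_R$ of $\gamma$ realizing most of the projection distance, and whose length is at least $d_R/K-C$. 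The key step is to show that the segments $\gamma_R$ are essentially pairwise disjoint along $\gamma$: if two subsurfaces $R_1\neq R_2$ both have $d_{R_i}(f(\mu),g(\mu))\geq \sigma$, the Behrstock inequality (Proposition~\ref{Behrstock}) forces their ``active times'' along $\gamma$ to be temporally ordered rather than overlap, once $\sigma$ exceeds $10$ plus Lemma~\ref{ProjLip}'s constants. Adding the lengths of the disjoint $\gamma_R$ then yields the sum lower bound, with the threshold suppressing the error terms.

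The main obstacle is making the ``active subsurface partition'' of $\gamma$ precise, since a priori the $\gamma_R$ could interact nontrivially; a careful order-theoretic argument driven by the Behrstock inequality is needed, together with choosing $\sigma_0$ large enough (depending on both $M$ from Proposition~\ref{MMBGI} and the $10$ from Proposition~\ref{Behrstock}) that distinct large-projection subsurfaces genuinely behave independently. The upper bound's recursive hierarchy construction, while conceptually clean, also requires nontrivial bookkeeping to pass constants through the levels without accumulating blow-up, which is where the threshold's role in folding small projections into the additive error is essential.
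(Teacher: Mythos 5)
You should first note that the paper does not prove Proposition \ref{MMDistance} at all: it is quoted as background from \cite[Theorem 6.12]{MM00}, so there is no internal argument to compare yours against. What you have sketched is essentially the proof that exists in the literature, in hybrid form: your upper bound follows the Masur--Minsky hierarchy construction and its resolution into elementary moves, while your lower bound is in the style of later Behrstock-inequality/ordering arguments rather than the original route through the Large Link Lemma and the size of a hierarchy.

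As a standalone proof, however, the sketch has genuine gaps, and they sit exactly where the content of \cite{MM00} lies. For the upper bound, ``recursively insert geodesics'' plus ``each slot corresponds to boundedly many elementary moves'' is precisely the existence and structure theory of hierarchies: tightness, the Large Link Lemma (needed not only to know which domains must be visited, but also to control how many domains appear at all, so the path length is bounded by the thresholded sum rather than by an unbounded count of small-projection domains), and the slice/resolution argument; this is the bulk of a long paper, not bookkeeping that can be waved through. For the lower bound there is a concrete error: the pairwise disjointness (or temporal ordering) of the active segments $\gamma_R$ cannot come from Proposition \ref{Behrstock} alone, because that inequality applies only to pairs of subsurfaces that overlap, i.e.\ with $\pi_{R_1}(\partial R_2)$ and $\pi_{R_2}(\partial R_1)$ nonempty. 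Relevant domains may instead be disjoint or nested: for disjoint domains the active intervals can literally coincide (a single flip move changes the projections to every subsurface crossed by the flipped curves, so a path can advance many projections simultaneously), and for nested domains the inequality is likewise silent. This also shows why the per-domain coarse Lipschitz bound cannot simply be summed. The gap can be repaired---by bounded-multiplicity arguments for disjoint families and nested chains (their sizes are controlled by $\xi(S)$), or, as in \cite{MM00}, by the time order on hierarchy domains---but as written the ordering claim is false in general and the sum lower bound does not follow. Given that this is a deep external theorem, citing it, as the paper does, is the appropriate move.
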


One last fact that we will often use is that distance between projections is equivariant with respect to MCG$(S)$. Specifically, we note the following, which we will typically use without reference.

\begin{remark}
    If $R$ is a subsurface, then for all $\alpha$ and $\beta$ in $C(S)$ with nonempty projection to $R$ and all $f\in \text{MCG}(S)$,

$$d_{f(R)}(f(\alpha), f(\beta))=d_{R}(\alpha, \beta).$$
\end{remark}

We recall the Nielsen--Thurston classification of elements of $\text{MCG}(S)$, as well as the notion of support of a mapping class which arises from the classification. 

%Definition 2.15
\begin{theorem}\label{NTClassification}[{Nielsen--Thurston Classification}]
    Let $S$ be a compact surface, and fix an element $f\in \text{MCG}(S)$. 
\begin{enumerate}
    \item $f$ is \textit{elliptic} if it has finite order.
    \item $f$ is \textit{reducible} if it preserves some multicurve $A$. Such a multicurve is called a \textit{reducing system} for $f$. 
    \item $f$ is \textit{pseudo-Anosov} if it is not elliptic or reducible.
\end{enumerate}
\end{theorem}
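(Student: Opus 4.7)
The statement partitions $\text{MCG}(S)$ into three classes defined by mutually exclusive conditions (finite order; infinite order preserving a multicurve; infinite order preserving no multicurve), so the trichotomy itself is tautological. The substantive Nielsen--Thurston content is the dynamical description of the third class: every non-elliptic, non-reducible $f$ admits a pair of transverse measured foliations $(\mathcal{F}^s, \mu^s)$ and $(\mathcal{F}^u, \mu^u)$, together with a dilatation $\lambda > 1$, satisfying $f \cdot (\mathcal{F}^s, \mu^s) = (\mathcal{F}^s, \lambda^{-1}\mu^s)$ and $f \cdot (\mathcal{F}^u, \mu^u) = (\mathcal{F}^u, \lambda\mu^u)$. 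The plan is to establish this dynamical characterization via Thurston's compactification argument, which simultaneously recovers the three-way split.

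First I would recall that $\text{MCG}(S)$ acts by isometries on Teichm\"uller space $\mathcal{T}(S)$, and that this action extends continuously to Thurston's compactification $\overline{\mathcal{T}}(S) = \mathcal{T}(S) \cup \mathcal{PMF}(S)$, which is homeomorphic to a closed ball of dimension $6g-6+2n$. Applying the Brouwer fixed point theorem to the homeomorphism $f \colon \overline{\mathcal{T}}(S) \to \overline{\mathcal{T}}(S)$ produces at least one fixed point.

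Next I would perform case analysis on the location of that fixed point. If $f$ fixes a point in the interior $\mathcal{T}(S)$, then $f$ is an isometry of a finite-area hyperbolic surface and so has finite order, i.e. $f$ is elliptic. Otherwise select a fixed point $[\mathcal{F}] \in \mathcal{PMF}(S)$. If $\mathcal{F}$ is non-filling, the components or boundary leaves of its minimal non-arational support carry a multicurve which $f$ must permute; after passing to a power, this multicurve is preserved, so $f$ is reducible. If $\mathcal{F}$ is filling and arational, applying the same argument to $f^{-1}$ yields a second fixed projective class $[\mathcal{F}']$, and one argues $\mathcal{F}'$ must be transverse to $\mathcal{F}$ in $\mathcal{PMF}(S)$ (else $f$ would fix a multicurve, contradicting irreducibility). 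Passing from the projective level to measured representatives produces the sought transverse foliation pair, with $f$ stretching one and contracting the other by reciprocal scalars.

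The main obstacle is the final step: verifying transversality of the two fixed foliations and the relation $\lambda \lambda' = 1$ between their stretch factors. This uses the continuous intersection pairing $i \colon \mathcal{MF}(S) \times \mathcal{MF}(S) \to \R_{\geq 0}$: for distinct arational classes $[\mathcal{F}]$ and $[\mathcal{F}']$ one has $i(\mathcal{F}, \mathcal{F}') > 0$, and then $f$-invariance of $i$ together with $f \cdot \mathcal{F} = \lambda^{-1} \mathcal{F}$ and $f \cdot \mathcal{F}' = \lambda' \mathcal{F}'$ forces $\lambda^{-1}\lambda' \cdot i(\mathcal{F}, \mathcal{F}') = i(\mathcal{F}, \mathcal{F}')$, i.e. $\lambda' = \lambda$, after which one rescales so that $(\mathcal{F}, \mathcal{F}')$ plays the role of $(\mathcal{F}^s, \mathcal{F}^u)$ with the correct reciprocal stretches. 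Establishing continuity and non-degeneracy of $i$ on $\mathcal{PMF}(S)$, and ruling out the degenerate possibilities for $\mathcal{F}'$, is the technical heart of the argument; these follow from Thurston's train-track coordinates and the classification of minimal components of measured foliations.
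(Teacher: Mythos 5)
The paper does not prove this statement at all: it records the Nielsen--Thurston trichotomy as a classical fact, citing \cite{FLP12}, and as phrased it is really a set of definitions --- ``pseudo-Anosov'' is defined as neither elliptic nor reducible, so exhaustiveness is built in, exactly as you observed. The paper only ever uses the consequences (every non-pseudo-Anosov element has a power fixing a curve, and pseudo-Anosovs are the loxodromic elements on $C(S)$), and it explicitly says it will not need the finer foliation structure. So your decision to prove the substantive dynamical statement goes well beyond what the paper does; what you sketch is the standard Thurston compactification argument from the very sources the paper cites, not an alternative to anything in the paper.

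As a sketch of that classical proof, your outline is essentially right but has one genuine gap and one large deferral. The gap: in the arational case you assert that applying the argument to $f^{-1}$ produces a \emph{second} fixed projective class $[\mathcal{F}']$ transverse to $[\mathcal{F}]$, but nothing you have said rules out the degenerate situation where the scaling factor of $f$ on $[\mathcal{F}]$ equals $1$, in which case $[\mathcal{F}]$ is itself fixed by $f^{-1}$ and your intersection-number argument collapses (indeed $i(\mathcal{F},\mathcal{F})=0$ for an arational foliation, so transversality cannot be extracted this way). The standard treatments handle the arational, $\lambda=1$ case by a separate argument (showing $f$ then fixes a point of Teichm\"uller space, hence is elliptic, or translates along a Teichm\"uller line), and your case analysis needs that branch to be complete. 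The deferral: continuity of the extended action on $\overline{\mathcal{T}}(S)$, the ball structure of the compactification, the structure theory of non-arational foliations yielding a canonical invariant multicurve, and the properties of the intersection pairing are precisely the technical content of \cite{FLP12}; citing them is legitimate (the paper itself does exactly that), but then the honest conclusion is that your argument is an outline of the cited proof rather than an independent one.
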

The Nielsen--Thurston classification gives further structure to pseudo-Anosov elements \cite{FLP12}, but we will not need this. Our interest in pseudo-Anosovs is that they are exactly the elements of MCG$(S)$ that act loxodromically on $C(S)$ \cite{MM98}.
\par 
An important example of a reducible mapping class is a Dehn twist. The \textit{Dehn twist} on a curve $\alpha$, denoted $\tau_{\alpha}$, is the image of a generator under the inclusion of $\text{MCG}(A)\cong \Z$ into $\text{MCG}(S)$, where $A$ is the annulus with core curve $\alpha$. In particular, it has a representative homeomorphism that is the identity outside of $A$. We will make the convention that our Dehn twists are right handed. More generally, if we fix a multicurve $A$ with components $\alpha_1,\ldots, \alpha_m$, a \textit{multitwist} $\tau$ is an element of MCG$(S)$ of the form
$$\tau=\tau_{\alpha_1}^{n_1}\cdots \tau_{\alpha_n}^{n_m}$$
where $n_i\in \Z$. A \textit{multitwist group} is a group generated by multitwists on multicurves whose components all lie in some fixed multicurve. In particular, a multitwist group is abelian and all its elements are multitwists. Whenever we write a multitwist as a product of Dehn twists, we assume that the given element is fully reduced.
\par 
Given a reducible $f\in \text{MCG}(\Sigma)$, by \cite{I96} there is a \textit{canonical reducing system} $A$, that is a multicurve, unique up to isotopy, so that there is a representative homeomorphism $\tilde{f}\in \text{Homeo}(S, \partial S)$ of $f$ so that some power $\tilde{f}^n$ of $\tilde{f}$ fixes each component of a tubular neighborhood $N(A)$ of $A$ and stabilizes each component of $S\setminus N(A)$. Further, the restriction of $\tilde{f}^n$ to each such component of $S\setminus N(A)$ is the identity or a pseudo-Anosov, and the restriction to each component of $N(A)$ is a (possibly trivial) power of a Dehn twist.
\par 
The \textit{support} of a mapping class $f\in \text{MCG}(S)$ is  defined as follows. If $f$ is not reducible, then the support of $f$ is $S$. Otherwise, given a canonical reducing system $A$ for $f$, a representative homeomorphism $\tilde{f}$ for $f$ that has a power $\tilde{f}^n$ stabilizing $N(A)$ and every component of $S\setminus N(A)$, the support is defined as the union of the isotopy classes of the $\tilde{f}$ orbits of the collection of components $R$ of $N(A)$ and $S\setminus N(A)$ so that the the induced action of $\tilde{f}^n$ on $C(R)$ is loxodromic. 
\par 
We note the following special type of reducible element, which we will make use of in Section \ref{SectionStatementsProofsUndistorted}.
\begin{definition}\label{pureRed}
    An element $f\in \text{MCG}(S)$ is a \textit{pure reducible mapping class} if it is reducible with reducing system $A$ so that there is a representative $\tilde{f}\in \text{MCG}(S)$ of $f$ that stabilizes each component of a tubular neighborhood $N(A)$ and each component of $S\setminus N(A)$. We further say that $f$ is a \text{partial pseudo-Anosov} if $f$ acts as a pseudo-Anosov on each component of $S\setminus N(A)$.
\end{definition}
In other words, $f$ is pure reducible if $\tilde{f}$ already stabilizes the components of $N(A)$ and $S\setminus N(A)$, instead of having to take a power.

% \par 
% If $S\subset \Sigma$ is a subsurface which is the union of components $S_1,\ldots, S_n$, then a collection of pseudo-Anosovs $f_i$ on $S_i$ fixing the boundary of $S_i$, for $i=1,\ldots n$ ($f_i$ is a power of a Dehn twist if $S_i$ is an annulus), together extend via the identity to a single map on all of $\Sigma$. A map arising from this construction is called a \textit{pure partial pseudo-Anosov} (pure because we are assuming the subsurfaces are not permuted amongst each other). If $S$ is a proper subsurface then such $f$ are examples of reducible mapping classes. Note that in particular, this we are considering multitwists as examples of pure partial pseudo-Anosovs.
\par 
\subsection{Relative Hyperbolicity}\label{SubsectionRelativeHyperbolicity}
We now discuss the formulation of relatively hyperbolic groups that we will use. Relatively hyperbolic groups were originally introduced by Gromov \cite{G87}, and expanded upon by Farb \cite{F98} and Bowditch \cite{B99}. The definition we give here is equivalent to a definition given in \cite{B99}. See \cite{H10} and \cite{S10} for more discussion on the various equivalent definitions of relatively hyperbolic groups and the definitions and facts discussed here.

 %Definition 2.18
\begin{definition}\label{RelGenSets}[Relative generating sets]
Let $G$ be a group and $\mathcal{H}=\{H_i\}_{i\in I}$ a (possibly empty) collection of subgroups of $G$. A set $X\subset G$ with $X=X^{-1}$ is said to \textit{generate} $G$ \textit{relative to the collection} $\mathcal{H}$ if $X\cup \bigcup_{i\in I} \{H_i\}$ is a generating set for $G$. In this case we say that $X$ is a \textit{relative generating set} of $G$. Note that it is possible for $X$ to be empty if $\mathcal{H}$ is nonempty.
\end{definition}
%

%Definition 2.19
\begin{definition}\label{RelCayleyGraph}[Relative Cayley graph]
Suppose $G$ is generated by $X$ relative to a collection of subgroups $\{H_i\}$. Fix a generating set $X_i=X_i^{-1}$ of $H_i$ for all $i$. We can define the (right) \textit{relative Cayley Graph} $C(G, X, \{X_i\})$ of $G$ to be the graph whose vertices are elements of $G$, and two vertices $g_1, g_2\in G$ are connected by an edge if there exists an element $s$ of $X \cup \bigcup X_i$ so that $g_1s=g_2$. We make $C(G,X, \{X_i\})$ into a geodesic metric space by letting all edges have length $1$. The induced metric on $G$ will be denoted by $d_G$, and word length by $|\cdot |_{G}$.
\end{definition}

Of course, this is just the normal Cayley graph for $G$ with generating set $X\cup \bigcup X_i$. Also, if the collection $\{H_i\}$ is empty, then this is just the Cayley graph of $G$ with the generating set $X$. We prefer to think of it like this because in the future we will want to think of elements of $X$ and elements of the $H_i$'s as being distinct (see the next definition).

%Definition 2.20
\begin{definition}\label{ConedOffCG}[Coned off Cayley graph]
Let $G$ be a group with generating set $X$ relative to a collection of subgroups $\{H_i\}$, each with a fixed generating set $X_i$. For every coset $gH_i$, let $\nu(gH_i)$ denote a point. Let us form a new graph from $C(G, X, \{X_i\})$, which we will denote as $\widehat{G}$. The graph $\widehat{G}$ is obtained from $C(G,X,\{X_i\})$ by adding one edge of length $1/2$ from $\nu(gH_i)$ to every vertex in $gH_i$. We call $\widehat{G}$ the \textit{coned off Cayley graph} of $G$. The induced graph metric making $\widehat{G}$ into a geodesic space is denoted by $d_{\widehat{G}}$. We will write $|g|_{\widehat{G}}$ for $d_{\widehat{G}}(id, g)$. We also use $\widehat{B}(g, R)$ to denote the ball of radius $R$ in $\widehat{G}$ centered at $g\in G$. 
\end{definition}

We make three remarks.
\begin{enumerate}
    \item The graph $\widehat{G}$ is easily seen to be quasi-isometric to the relative Cayley graph $C(G, X, \{H_i\})$, and the relative Cayley graph $C(G, X, \{X_i\})$ naturally embeds as a subgraph of $\widehat{G}$.
    \item While $\widehat{G}$ depends on the chosen generating sets, there will be no ambiguity if we don't include it in our notation, as the generating set will always be clear or unimportant.
    \item For PGF groups, we will slightly modify the definition of the coned off Cayley graph to potentially have more than $1$ cone point for a given peripheral subset, to account for the assumption of equivariance in the definition (see Definition \ref{PGFGrp}). This modified graph is uniformly quasi-isometric to $\widehat{G}$ as defined here.
\end{enumerate}
To define relatively hyperbolic groups, we require one more graph theoretic condition.

%Definition 2.21
\begin{definition}\label{Fine}[Fine graphs]
A graph $X$ is said to be \textit{fine} if for all vertices $x, y\in X$ and all $n\in \N$, the number of embedded paths between $x$ and $y$ of length $n$ is finite.
\end{definition}

We never use fineness directly, but its consequences are of key importance for many of the following results. 

%Definition 2.22
\begin{definition}\label{RelHypGroups}[Relatively hyperbolic groups]
A group $G$ is \textit{relatively hyperbolic} if there exists a finite collection of proper finitely generated subgroups $\{H_i\}$ and a finite relative generating set $X$ so that $\widehat{G}$ is fine and hyperbolic. In this case we say that $G$ is \textit{hyperbolic relative} to $\{H_i\}$, and the collection of all cosets of the $H_i$'s are called the \textit{peripheral subsets} of $G$, while the conjugates of the $H_i$'s are the \textit{peripheral subgroups}. We will always give $G$ as above the metric given by the Cayley graph $C(G, X, \{X_i\})$ where $X_i$ is some choice of finite generating set for $H_i$. 
\end{definition}

A useful tool for studying relatively hyperbolic group $G$ are the closest point projection maps to its peripheral sets. These will be important to understand the geometry of $G$ and $\widehat{G}$, as well as maps from them into various spaces. See \cite{S13} for more about these projection maps. 

%Definition 2.24
\begin{definition}\label{ProjRelHyp}[Peripheral projections]
Let $G$ be a relatively hyperbolic group, and let $P$ be a peripheral subset. For $g\in G$, we denote by $\pi_P(g)$ the set of points of $P$ that are within $d_G(g, P)+1$ from $g$. If $g_1, g_2 \in G$, we let
$$d_P(g_1,g_2):= \text{diam}(\pi_P(g_1)\cup \pi_P(g_2))$$
where the diameter is measured in the word metric on the peripheral subgroup that has $P$ as a coset.
\end{definition}

We will want to be able to "lift" geodesics in $\widehat{G}$ to paths in $G$. 

%Definition 2.25
\begin{definition}\label{LiftRelHyp}[Lifts]
Let $\widehat{\gamma}$ be a geodesic in $\widehat{G}$. We define a \textit{lift} $\gamma$ of $\widehat{\gamma}$ as follows. The geodesic $\widehat{\gamma}$ can only pass through a given peripheral $P$ at most once (a path entering $P$ twice can be shortened). If $\widehat{\gamma}$ passes through a peripheral $P$, and it also passes through $\nu(P)$, then $\widehat{\gamma}$ passes through exactly two vertices $p_1,p_2$ of $P$. Replace the length $1$ subpath of $\widehat{\gamma}$ between these two points passing through $\nu(P)$ with a shortest length path in $C(G, X, \{X_i\})$ all of whose vertices are in $P$. Doing this for all peripherals, we obtain a path $\gamma$ in $C(G, X, \{X_i\})$, which we call a \textit{lift} of $\widehat{\gamma}$. We parameterize any lift by arc length.
\end{definition}

Here we list some lemmas about the projection maps and these lifts. In Lemmas \ref{SistoEndpointBound} and \ref{SistoBGI}, one should think of part (a) as a "coarse" version of the statement, and part (b) as an "exact" version of the statement. This intuition can be made more precise via asymptotic cones, see \cite{DS05}. 
\par 
Let us fix a relatively hyperbolic group $G$ with relative generating set $X$, and a peripheral subset $P$.

%Lemma 2.26
\begin{lemma} [{\cite[Lemma 1.13]{S13}}]\label{SistoEndpointBound}
\begin{enumerate}[label=(\alph*)]
    \item If $\alpha$ is a continuous $(K,C)$-quasi geodesic connecting a point $x\in G$ to $P$, then there is a $D_0=D_0(K,C)$ so that for $D\geq D_0$, there is an $E$ so that the first point in $\alpha \cap N_D(P)$ is at a distance less than or equal to $E$ from $\pi_P(x)$. 
    \item There is an $E$ so that if $\widehat{\gamma}$ is a geodesic in $\widehat{G}$ connecting $x\in G$ to $P$ then the first point in $\widehat{\gamma}\cap P$ is at most $E$ from $\pi_P(x)$.
\end{enumerate}
\end{lemma}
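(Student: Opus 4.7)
The plan is to leverage hyperbolicity and fineness of $\widehat{G}$ to control how any path from $x$ to $P$ enters the peripheral set, with the target being that the entry point is coarsely the closest-point projection $\pi_P(x)$. I would prove (b) first and then bootstrap to (a).

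For (b): let $p$ be the first vertex of $\widehat{\gamma} \cap P$ and let $\pi \in \pi_P(x)$. Since the traverse $\pi \to \nu(P) \to p$ has length $1$ in $\widehat{G}$, there is a path from $x$ to $p$ of length $d_{\widehat{G}}(x,\pi) + 1$, so the $\widehat{G}$-geodesic $\widehat{\gamma}$ satisfies $d_{\widehat{G}}(x,p) \leq d_{\widehat{G}}(x,\pi) + 1$. Form a quadrilateral in $\widehat{G}$ whose sides are $\widehat{\gamma}$, a $\widehat{G}$-geodesic from $x$ to $\pi$, and the length-$1$ detour from $\pi$ through $\nu(P)$ to $p$; hyperbolicity of $\widehat{G}$ and this near-equality of side lengths force the three nontrivial sides to fellow-travel, producing a closed loop in $\widehat{G}$ of uniformly bounded combinatorial length that contains both $\pi$ and $p$. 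At this point fineness enters: in a fine hyperbolic graph only finitely many embedded paths of a given bounded length can join any two vertices, so the collection of pairs $(\pi, p) \in P \times P$ that can arise from such a short loop is finite up to the $G$-action, and the peripheral distance $d_P(\pi,p)$ is uniformly bounded by a constant $E$.

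For (a): I would reduce to (b) via the Morse property. The identity map $G \to \widehat{G}$ is $1$-Lipschitz, so $\alpha$ descends to a continuous $(K',C')$-quasi-geodesic $\widehat{\alpha}$ in $\widehat{G}$. By Proposition \ref{Morse} applied in $\widehat{G}$, the image of $\widehat{\alpha}$ stays within bounded $\widehat{G}$-Hausdorff distance of any $\widehat{G}$-geodesic $\widehat{\gamma}$ from $x$ to $P$. Choose $D_0$ exceeding this Morse constant together with the quasi-geodesic constants and the constant $E$ from (b). Let $y$ be the first point of $\alpha \cap N_D(P)$, so $d_G(y,q) \leq D$ for some $q \in P$. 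By $\widehat{G}$-fellow-traveling with $\widehat{\gamma}$, $y$ lies within bounded $\widehat{G}$-distance of the initial segment of $\widehat{\gamma}$; taking $D \geq D_0$ ensures that $\alpha$ has truly committed to entering $P$ and cannot make a comparable peripheral detour just before reaching $y$, so $y$ is close (in $\widehat{G}$) to the entry point of $\widehat{\gamma}$ into $P$. Applying (b) to $\widehat{\gamma}$ gives the desired bound on $d_P(\pi_P(x), q)$, and then the $G$-bound $d_G(y,q) \leq D$ combined with fineness converts this into the claimed bound on the $G$-distance from $y$ to $\pi_P(x)$.

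The main obstacle I anticipate is the gap between the coarse $\widehat{G}$-geometry, which hyperbolicity controls, and the finer peripheral word metric $d_P$. A short $\widehat{G}$-loop does not a priori bound $d_P$, since any two elements of $P$ are already within $\widehat{G}$-distance $1$ via the cone point $\nu(P)$. Fineness is exactly what turns the qualitative bound into a quantitative one: the number of distinct embedded paths of a bounded length realizing such a loop is finite, forcing the peripheral displacement to be uniformly bounded. Calibrating the Morse constant, the fineness bound, and the lower threshold $D_0$ so they interact correctly is the technical heart of the argument.
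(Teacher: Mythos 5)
This lemma is not proved in the paper at all: it is quoted from Sisto \cite{S13}, so there is no internal argument to compare yours against, and your sketch has to stand on its own. It does not: part (b) contains a genuine gap, and it is structural rather than cosmetic. Observe that your argument uses $\pi\in\pi_P(x)$ only as an arbitrary point of $P$. The inequalities $d_{\widehat{G}}(x,p)\leq d_{\widehat{G}}(x,\pi)+1$ and its reverse hold for \emph{every} $\pi\in P$, since any two points of $P$ are joined through $\nu(P)$ by a path of length $1$; hence the thin-triangle construction produces a bounded-length closed walk through $p$, $\nu(P)$ and $\pi$ for every $\pi\in P$. If your fineness step were valid, it would therefore bound $d_P(p,\pi)$ uniformly for all $\pi\in P$, i.e.\ it would show that an infinite peripheral subgroup has finite diameter in its word metric, which is absurd. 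The defect sits in the sentence ``the collection of pairs $(\pi,p)$ \ldots is finite up to the $G$-action'': fineness bounds the number of embedded paths of a given length between two \emph{fixed} vertices (equivalently, embedded cycles of given length through a fixed edge), so to use it you must, after translating $p$ to the identity, exhibit an embedded cycle of bounded length through the fixed cone edge from $p$ to $\nu(P)$ which also contains $\pi$ --- in particular a bounded path from $p$ to $\pi$ that avoids a second passage through $\nu(P)$. Your walk does not supply this: when $\pi$ is $d_G$-far from the entry region, the $\widehat{G}$-geodesic $[x,\pi]$ itself will typically reach $P$ elsewhere and hop to $\pi$ through $\nu(P)$, the walk degenerates, and no informative embedded cycle survives. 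Excluding that degeneration is exactly where the unused hypothesis that $\pi$ is a $d_G$-nearest point must enter; a correct proof has to couple the word metric to the coned-off metric (via lifts of coned-off geodesics, bounded coset penetration, or Sisto's own machinery), not argue purely in $\widehat{G}$ and convert to $d_P$ at the last moment.

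Part (a) has a further unjustified step. The identity map being $1$-Lipschitz from the word metric to $\widehat{G}$ does not make the image of a $(K,C)$-quasi-geodesic a quasi-geodesic in $\widehat{G}$: a word-metric quasi-geodesic can spend an arbitrarily long parameter interval inside a single peripheral coset, whose $\widehat{G}$-image has diameter about $1$, so the lower quasi-geodesic bound fails. That images of word-metric geodesics are (unparametrized) quasi-geodesics in $\widehat{G}$ is itself a nontrivial theorem in relative hyperbolicity, so your appeal to Proposition \ref{Morse} inside $\widehat{G}$ needs that input, and the closing step (``fineness converts this into the claimed bound on the $G$-distance'') repeats the same unsupported conversion from coned-off data to word-metric data that undermines (b).
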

We note that the $E$ in the two parts of the previous lemma are different. The $E$ in (a) depends on $D$, while the $E$ is (b) is absolute. We will never make use of part (a), however, so there is no risk of confusion. We only state it for completeness.

%Lemma 2.27
\begin{lemma}[{\cite[Lemma 1.15]{S13}}]\label{SistoBGI}
There is an $L$ and $R=R(K,C)$ so that if $d_P(x,y)\geq L$, then
\begin{enumerate}[label=(\alph*)]
    \item All $(K,C)$-quasi geodesics connecting $x$ and $y$ intersect both  $B_G(\pi_P(x), R)$ and $B_G(\pi_P(y),R)$.
    \item All geodesics in $\widehat{G}$ connecting $x$ to $y$ pass through $\nu(P)$.
\end{enumerate}
\end{lemma}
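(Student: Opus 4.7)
I would prove (b) first using the $\delta$-hyperbolicity of $\widehat{G}$, Lemma~\ref{SistoEndpointBound}(b), and fineness of $\widehat{G}$, and then deduce (a) from (b) via an application of the Morse lemma (Proposition~\ref{Morse}) in $\widehat{G}$.

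For (b), I argue by contradiction: suppose $\widehat{\gamma}$ is a geodesic in $\widehat{G}$ from $x$ to $y$ that avoids $\nu(P)$. The path $x \to \pi_P(x) \to \nu(P) \to \pi_P(y) \to y$ in $\widehat{G}$ has length at most $d_G(x, \pi_P(x)) + 1 + d_G(y, \pi_P(y))$, which bounds the length of $\widehat{\gamma}$ from above. Let $p_1$ and $p_2$ denote the first and last points of $\widehat{\gamma} \cap P$ (the case $\widehat{\gamma} \cap P = \emptyset$ is handled similarly by a thin-triangle comparison on $(x, y, \nu(P))$ together with Lemma~\ref{SistoEndpointBound}(b) applied to a closest-approach point). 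Lemma~\ref{SistoEndpointBound}(b) applied to the subpaths of $\widehat{\gamma}$ from $x$ to $p_1$ and from $y$ to $p_2$ yields $d_G(p_1, \pi_P(x)), d_G(p_2, \pi_P(y)) \leq E$. The key technical claim is that the subpath of $\widehat{\gamma}$ from $p_1$ to $p_2$, which avoids $\nu(P)$, has $\widehat{G}$-length coarsely bounded below by $d_P(p_1,p_2)$; I expect this to follow from fineness and $\delta$-hyperbolicity of $\widehat{G}$. Converting the $d_G$-closeness $d_G(p_i,\pi_P(\cdot))\leq E$ to a $d_P$-closeness (again by fineness) and combining the upper and lower bounds yields $d_P(x,y)\leq L_0$ for a universal $L_0$; taking $L>L_0$ completes the contradiction.

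For (a), let $\gamma$ be a $(K,C)$-quasi-geodesic in $G$ from $x$ to $y$. Viewing $\gamma$ as a path in $\widehat{G}$ via the inequality $d_{\widehat{G}}\leq d_G$, I plan to combine (b) --- which forces every $\widehat{G}$-geodesic between $x$ and $y$ to pass through $\nu(P)$ --- with the Morse property in $\widehat{G}$ to conclude that $\gamma$ comes within some controlled $\widehat{G}$-distance $N=N(K,C)$ of $\nu(P)$. Lemma~\ref{SistoEndpointBound}(a) applied to the subpath of $\gamma$ from $x$ (resp.\ $y$) up to the first (resp.\ last) such close approach then furnishes the desired $G$-bound $R=R(K,C)$ between that approach and $\pi_P(x)$ (resp.\ $\pi_P(y)$).

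\textbf{Main obstacle.} The crux is the coarse lower bound: every path in $\widehat{G}$ between $p_1,p_2\in P$ avoiding $\nu(P)$ has $\widehat{G}$-length at least a linear function of $d_P(p_1,p_2)-O(1)$. A priori, shortcuts through other peripheral cone points $\nu(Q)$ for $Q\neq P$ could circumvent this; excluding them needs $\delta$-hyperbolicity of $\widehat{G}$ combined with fineness, so that alternative peripheral excursions cannot accumulate into a short bypass of $\nu(P)$. The conversion from $d_G$-closeness to $d_P$-closeness between elements of a common peripheral similarly leans on fineness. In (a), a delicate point is that $\gamma$ need not be a quasi-geodesic in $\widehat{G}$ in the strict sense, so the Morse lemma must be invoked carefully --- likely by working with the image of $\gamma$ and its coarse Lipschitz behavior rather than the original parameterization.
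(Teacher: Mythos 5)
The paper does not prove this statement at all: it is imported verbatim from Sisto \cite[Lemma 1.15]{S13}, so a citation is what is expected here, and your sketch has to stand on its own. As it stands it has two genuine gaps. First, in (b), your ``key technical claim'' --- that a path in $\widehat{G}$ between $p_1,p_2\in P$ avoiding $\nu(P)$ has $\widehat{G}$-length bounded below linearly in $d_P(p_1,p_2)$ --- is precisely the nontrivial content of the lemma (a bounded-coset-penetration/contraction estimate), and it does not follow from $\delta$-hyperbolicity and fineness invoked as black boxes. Note that $d_P$ is measured in the peripheral subgroup's own word metric, which is essentially invisible to the geometry of $\widehat{G}$ (the whole coset $P$ has $\widehat{G}$-diameter at most $1$); any such bound has to be routed through $d_G$, the closest-point projections, and the undistortion of peripheral subgroups in $(G,d_G)$ --- the same issue infects your conversion of ``$d_G(p_i,\pi_P(\cdot))\le E$'' into a $d_P$-bound, which is not ``by fineness'' but a separate (true, but nontrivial) input. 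The case $\widehat{\gamma}\cap P=\varnothing$, which you defer to ``a thin-triangle comparison,'' is not a degenerate case: bounding $d_P(x,y)$ when the geodesic never meets $P$ is the same difficulty again, and Lemma \ref{SistoEndpointBound}(b) does not apply since it concerns geodesics that terminate on $P$.

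Second, the deduction of (a) from (b) via the Morse lemma in $\widehat{G}$ does not work, and you half-acknowledge this. A $(K,C)$-quasi-geodesic of $(G,d_G)$ is only coarsely Lipschitz as a path in $\widehat{G}$, so Proposition \ref{Morse} cannot be applied to it (upgrading it to an unparameterized $\widehat{G}$-quasi-geodesic is itself a substantial theorem, not a careful invocation). Worse, even granting that the path passes $\widehat{G}$-close to $\nu(P)$, this yields no bound on its $d_G$-distance to $P$, since huge $d_G$-balls sit within bounded $\widehat{G}$-distance of a cone point; so you cannot supply Lemma \ref{SistoEndpointBound}(a) with the hypothesis that the path actually reaches $N_D(P)$, and conclusion (a) is an intrinsically $d_G$-statement. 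In Sisto's development the logic runs in the opposite direction: one first proves that peripheral sets are uniformly contracting for $d_G$-quasi-geodesics, from which (a) follows directly, and (b) is then obtained by comparing $\widehat{G}$-geodesics with their lifts, which are uniform quasi-geodesics in $G$. So your plan identifies the right objects but leaves the two load-bearing steps unproven, and the (b)$\Rightarrow$(a) reduction as described would fail.
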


The following lemma shows that distinct peripheral subsets cannot "fellow travel".

\begin{lemma}[{\cite[Lemma 1.9]{S13}}]\label{lem:PeripheralsFellowTravel}
    For all $H\geq 0$ there is a $B\geq 0$ so that for all pairs of peripheral subsets $P$ and $Q$ with $P\neq Q$, we have that $\text{diam}(N_H(P)\cap N_H(Q))\leq B$.
\end{lemma}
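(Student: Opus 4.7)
The plan is to argue by contradiction using peripheral projections together with the hyperbolicity and fineness of $\widehat{G}$. Suppose there exist $H \geq 0$, peripheral subsets $P_n \neq Q_n$, and points $x_n, y_n \in N_H(P_n) \cap N_H(Q_n)$ with $d_G(x_n, y_n) \to \infty$. Translating on the left by $x_n^{-1}$ preserves the setup, and since the ball of radius $H$ about $id$ in $G$ is finite and each of its elements lies in only finitely many peripheral subsets, passing to a subsequence I may assume $x_n = id$ and $P_n = P$, $Q_n = Q$ are constant, with $P \neq Q$ and $d_G(id, y_n) \to \infty$.

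The upper bound $d_{\widehat{G}}(id, y_n) \leq 2H+1$ is immediate from the path $id \to p \to \nu(P) \to p' \to y_n$ with $p, p' \in P$ of $G$-distance at most $H$ from $id, y_n$ respectively. Let $\widehat{\gamma}_n$ be a $\widehat{G}$-geodesic from $id$ to $y_n$, and split into cases according to which of $\nu(P), \nu(Q)$ lie on $\widehat{\gamma}_n$.

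If some $\nu(R)$ with $R \in \{P, Q\}$ does not lie on $\widehat{\gamma}_n$, the contrapositive of Lemma \ref{SistoBGI}(b) yields $d_R(id, y_n) < L$. Distances in $R$'s peripheral Cayley graph dominate $G$-distances on $R$, so $d_G(\pi_R(id), \pi_R(y_n)) \leq L$. Combined with $d_G(id, \pi_R(id)) \leq H + 1$ and $d_G(y_n, \pi_R(y_n)) \leq H + 1$, the triangle inequality gives $d_G(id, y_n) \leq 2(H+1) + L$, contradicting $d_G(id, y_n) \to \infty$ for large $n$.

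The main obstacle is when $\widehat{\gamma}_n$ passes through both $\nu(P)$ and $\nu(Q)$. Assume WLOG that $\nu(P)$ precedes $\nu(Q)$ along $\widehat{\gamma}_n$ (else reverse the geodesic and swap the roles of $P, Q$). The sub-path of $\widehat{\gamma}_n$ from $\nu(P)$ to $\nu(Q)$ is an embedded path in $\widehat{G}$ of length at most $2H+1$, since sub-paths of geodesics are embedded. By fineness of $\widehat{G}$ (applied to each integer edge-count up to the bound), only finitely many such embedded paths exist, so the union $V$ of their vertices is finite. The vertex $p^{(n)} \in P$ on $\widehat{\gamma}_n$ immediately following $\nu(P)$ lies on this sub-path, hence in $V$. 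A shortcut argument shows $p^{(n)}$ is actually the last visit of $\widehat{\gamma}_n$ to $P$ (otherwise one could route through $\nu(P)$ a second time to produce a strictly shorter path), so Lemma \ref{SistoEndpointBound}(b) applied to the reversed geodesic gives $d_G(p^{(n)}, \pi_P(y_n)) \leq E$. Therefore $\pi_P(y_n)$, and hence $y_n$, lies in a bounded $G$-neighborhood of the finite set $V$, contradicting $d_G(id, y_n) \to \infty$.
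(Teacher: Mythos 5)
The paper never proves this statement: it is imported verbatim as \cite[Lemma 1.9]{S13}, so there is no internal proof to compare yours against, only Sisto's external one. Judged on its own terms, your argument is correct and is a legitimate self-contained reconstruction from the other quoted ingredients (fineness of $\widehat{G}$, Lemma \ref{SistoEndpointBound}(b), Lemma \ref{SistoBGI}(b)) rather than from the machinery of \cite{S13}. The translation-and-subsequence reduction works because only finitely many peripheral subsets meet the finite ball of radius $H$ about $id$; the case where a cone point is missed by $\widehat{\gamma}_n$ is correctly dispatched by the contrapositive of Lemma \ref{SistoBGI}(b) together with the fact that peripheral word metrics dominate $d_G$ and that $\pi_R(id),\pi_R(y_n)$ lie within $H+1$ of $id,y_n$; and in the main case the exit vertex $p^{(n)}$ does lie on the embedded subpath from $\nu(P)$ to $\nu(Q)$, so fineness makes your set $V$ finite and independent of $n$, your ``last visit'' claim is exactly the shortening observation already recorded in Definition \ref{LiftRelHyp}, and Lemma \ref{SistoEndpointBound}(b) applied to the reversed terminal subgeodesic from $p^{(n)}$ to $y_n$ pins $y_n$ into a bounded neighborhood of $V$, giving the contradiction.

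One small wording slip, cosmetic rather than a gap: in the symmetric subcase where $\nu(Q)$ precedes $\nu(P)$, the correct reduction is simply to exchange the labels of $P$ and $Q$, i.e.\ run the argument with whichever cone point occurs first along $\widehat{\gamma}_n$, using $y_n\in N_H(Q)$ in place of $y_n\in N_H(P)$. Your parenthetical ``reverse the geodesic and swap the roles of $P,Q$'' would, if both operations are performed, restore the unwanted order, and reversing alone would bound $d_G(id,V)$ rather than $d_G(y_n,V)$, which yields no contradiction. Since both endpoints lie in both $H$-neighborhoods, the plain relabeling is available and the rest of your argument goes through verbatim.
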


This implies the following simple result, which we will use in the proof of Theorem \ref{Undistorted}.

\begin{lemma}\label{lem:distincePeripherals}
    Suppose $G$ is hyperbolic relative to $\mathcal{H}$, Fix $H_1, H_2 \in \mathcal{H}$ and assume $f_1H_1f_1^{-1}=f_2H_2f_2^{-1}$ for some $f_1, f_2\in G$. Then $f_1H_1=f_2H_2$. 
\end{lemma}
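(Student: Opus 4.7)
The plan is to argue by contradiction using the geometric fellow-traveling bound provided by Lemma \ref{lem:PeripheralsFellowTravel}. Let $P := f_1H_1$ and $Q := f_2H_2$ be the two peripheral subsets, and let $K := f_1H_1f_1^{-1} = f_2H_2f_2^{-1}$ be their common conjugate (a single peripheral subgroup). My strategy is to exhibit $K$ as a common stabilizer that would force $P$ and $Q$ to coincide along an infinite subset, contradicting the bound from Lemma \ref{lem:PeripheralsFellowTravel} when $P\neq Q$.

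First I would record the key observation that $K$ acts on $G$ by left multiplication and preserves both $P$ and $Q$ setwise: for $k = f_1hf_1^{-1}\in K$ and $p = f_1h'\in P$, we compute $kp = f_1hh'\in f_1H_1 = P$, and symmetrically $K$ stabilizes $Q$ using the alternative description $K = f_2H_2f_2^{-1}$. Now I would pick a nearest pair $p_0\in P$, $q_0\in Q$ (realizing $d(P,Q)$, which exists since vertex distances in the Cayley graph are integers) and set $H_0 := d(p_0,q_0)$. Because left multiplication is an isometry of $G$, for every $k\in K$ we have $kp_0\in P$ and $d(kp_0, kq_0) = d(p_0,q_0) = H_0$ with $kq_0\in Q$, so
$$Kp_0 \subset N_{H_0}(P)\cap N_{H_0}(Q).$$
Now suppose for contradiction that $P\neq Q$. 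Then Lemma \ref{lem:PeripheralsFellowTravel} applied with $H = H_0$ gives a constant $B$ with $\diam(Kp_0)\leq B$.

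On the other hand, left multiplication acts freely on $G$, so $|Kp_0| = |K|$, and since $K$ is conjugate to $H_1\in\mathcal{H}$, the subgroup $K$ is infinite under the standing assumption (valid in particular in the PGF setting, where peripherals are twist groups). Because the Cayley graph of $G$ is locally finite, an infinite subset cannot be contained in any bounded ball, so $Kp_0$ has infinite diameter, contradicting the bound above. Hence $P = Q$. The only subtle input is the infiniteness of $K$, which I take to be implicit in the peripheral structure being used throughout the paper; modulo that, the argument reduces cleanly to the fellow-travel dichotomy of Lemma \ref{lem:PeripheralsFellowTravel}.
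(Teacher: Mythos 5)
Your proof is correct and follows essentially the same route as the paper: both arguments reduce to Lemma \ref{lem:PeripheralsFellowTravel} by exhibiting a subset of infinite diameter inside $N_H(f_1H_1)\cap N_H(f_2H_2)$, the paper doing so even more directly by observing that $f_1H_1=(f_2H_2)f_2^{-1}f_1$ is a right translate of $f_2H_2$ and hence at bounded Hausdorff distance from it, while you use the orbit of a nearest pair under the common conjugate $K=f_1H_1f_1^{-1}$. The infiniteness of the peripheral subgroup that you flag is likewise implicit in the paper's proof (it is needed for the intersection to have infinite diameter) and holds for every peripheral considered in the paper, so there is no gap.
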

\begin{proof}
    We have that $f_1H_1=(f_2H_2)f_2^{-1}f_1$. This implies that there is some $H\geq 0$ so that $\text{diam}(N_H(f_1H_1)\cap N_H(f_2H_2))=\infty$. By Lemma \ref{lem:PeripheralsFellowTravel}, this is only possible if $f_1H_1=f_2H_2$.
\end{proof}

\par 
We have a distance formula analogous to Proposition \ref{MMDistance} for relatively hyperbolic groups. Comparing it and Proposition \ref{MMDistance} will be the final step in showing that PGF groups are undistorted.

% Proposition 2.29
\begin{proposition}[{\cite[Theorem 0.1]{S13}}]\label{SistoDistance}
Let $G$ be a relatively hyperbolic group and let $\mathcal{P}$ denote its collection of peripheral subsets. Then there is a $\sigma_0$ so that for $\sigma\geq \sigma_0$, there is a $\kappa$ so that for all $g_1, g_2\in G$,
$$d_G(g_1,g_2)\approx_{\kappa} d_{\widehat{G}}(g_1,g_2)+\sum_{P \in \mathcal{P}} \{\!\{d_P(g_1,g_2)\}\!\}_{\sigma}.$$
\end{proposition}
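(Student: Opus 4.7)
The plan is to prove the two inequalities $d_G \preceq d_{\widehat{G}} + \sum_P \{\!\{d_P\}\!\}_\sigma$ and its reverse separately, modeling the argument on the Masur--Minsky distance formula (Proposition \ref{MMDistance}) but replacing subsurface projections with the peripheral projections of Definition \ref{ProjRelHyp}. I would fix $\sigma_0$ larger than the threshold $L$ from Lemma \ref{SistoBGI}, the additive error $E$ from Lemma \ref{SistoEndpointBound}, and the fellow-travel bound from Lemma \ref{lem:PeripheralsFellowTravel}, absorbing the remaining multiplicative errors into $\kappa$ at the end via Lemma \ref{DoubleBracketQIBounds}.

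For the upper bound $d_G \preceq d_{\widehat{G}} + \sum \{\!\{d_P\}\!\}_\sigma$, I would take a geodesic $\widehat{\gamma}$ in $\widehat{G}$ from $g_1$ to $g_2$ and construct a lift $\gamma$ in $C(G, X, \{X_i\})$ as in Definition \ref{LiftRelHyp}. Its length equals the portion of $\widehat{\gamma}$ avoiding cone points (at most $d_{\widehat{G}}(g_1,g_2)$) plus the sum, over peripheral subsets $P$ traversed through $\nu(P)$, of the word-length distance in the corresponding peripheral subgroup between the two entry points $p_1, p_2 \in P$. By Lemma \ref{SistoEndpointBound}(b), $p_i$ lies within $E$ of $\pi_P(g_i)$, so each such contribution matches $d_P(g_1,g_2)$ up to uniform additive error. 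Peripherals through which $\widehat{\gamma}$ does not pass have uniformly bounded $d_P(g_1,g_2)$ by Lemma \ref{SistoBGI}(b), so they are zeroed out by the threshold. Lemma \ref{DoubleBracketQIBounds} then converts the exact sum to the thresholded sum.

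For the lower bound $d_{\widehat{G}} + \sum \{\!\{d_P\}\!\}_\sigma \preceq d_G$, the estimate $d_{\widehat{G}} \leq d_G$ is immediate since $C(G,X,\{X_i\})$ embeds as a subgraph of $\widehat{G}$. To bound the sum, I would fix a $G$-geodesic $\gamma$ from $g_1$ to $g_2$. For each peripheral $P$ with $d_P(g_1,g_2) \geq \sigma$, Lemma \ref{SistoBGI}(a) applied to $\gamma$ (viewed as a $(1,0)$-quasi-geodesic) produces points of $\gamma$ within uniform distance $R$ of $\pi_P(g_1)$ and $\pi_P(g_2)$, carving out a subsegment of $G$-length at least $d_P(g_1,g_2) - 2R$. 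Summing these contributions and passing to the thresholded sum via Lemma \ref{DoubleBracketQIBounds} gives the bound.

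The main obstacle is overcounting in the lower bound: a priori, different peripherals could contribute overlapping subsegments of $\gamma$. This is where Lemma \ref{lem:PeripheralsFellowTravel} is essential. For distinct peripherals $P \neq Q$, the $G$-diameter of $N_R(P) \cap N_R(Q)$ is bounded by some $B$, so once $\sigma$ is chosen much larger than $R + B$, the carved-out subsegments associated to distinct peripherals overlap in a region of length at most $B$. A clean way to encode this is to order the peripherals by the order in which $\widehat{\gamma}$ crosses their cone points, then telescope the resulting subsegment lengths along $\gamma$; the additive overcount is absorbed into the threshold via a second application of Lemma \ref{DoubleBracketQIBounds}. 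A secondary subtlety is that the constant $R$ in Lemma \ref{SistoBGI}(a) depends on the quasi-geodesic constants $(K,C) = (1,0)$, so this dependence needs to be frozen before $\sigma_0$ is chosen.
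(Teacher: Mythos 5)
You should first be aware that the paper contains no proof of this statement: Proposition \ref{SistoDistance} is imported verbatim from Sisto \cite{S13} (Theorem 0.1 there) and used as a black box in the proof of Theorem \ref{Undistorted}, so what you have written is an attempt to reprove Sisto's distance formula rather than something to be checked against an argument in this paper. Your outline does follow the natural route. The upper bound half is essentially sound: lifting a $\widehat{G}$-geodesic as in Definition \ref{LiftRelHyp}, controlling each traversed peripheral via Lemma \ref{SistoEndpointBound}(b), discarding non-traversed peripherals via the contrapositive of Lemma \ref{SistoBGI}(b), and absorbing the sub-threshold contributions multiplicatively into $d_{\widehat{G}}$ (since the number of cone points crossed is at most $d_{\widehat{G}}(g_1,g_2)$) is correct modulo routine bookkeeping.

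The genuine gap is in your treatment of overcounting in the lower bound. Lemma \ref{lem:PeripheralsFellowTravel} bounds $\operatorname{diam}(N_H(P)\cap N_H(Q))$ for distinct peripheral subsets, but the subsegment of the $G$-geodesic $\gamma$ that you carve out for $P$ is only known to have its two \emph{endpoints} within $R$ of $\pi_P(g_1)$ and $\pi_P(g_2)$ (Lemma \ref{SistoBGI}(a)); nothing you have invoked shows that the whole subsegment stays in a uniform neighborhood of $P$. Consequently the overlap of the subsegments attached to $P$ and to $Q$ need not lie in $N_R(P)\cap N_R(Q)$, and the claimed length-$B$ bound on overlaps does not follow from the cited lemma. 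To repair this you need an additional standard input: either uniform quasiconvexity of peripheral cosets in $(G,d_G)$ (so that a geodesic segment with endpoints $R$-close to $P$ remains in a uniform neighborhood of $P$, after which your pairwise-overlap bound, together with $\sigma$ chosen larger than $2B+4R$ to exclude triple coverage, does give $\sum_P(d_P-2R)\preceq d_G(g_1,g_2)$), or a Behrstock-type inequality for the projections $\pi_P$, which is how Sisto actually organizes the checkpoints along $\gamma$. Both facts are true and available in \cite{S13} and \cite{DS05}, but neither is among the statements quoted in this paper, so as written this step is a real gap rather than a routine omission. A secondary point in the same direction: Definition \ref{ProjRelHyp} measures $d_P$ in the word metric of the peripheral subgroup, while your carved-out lengths are measured in $d_G$; identifying the two (in both halves of the argument) silently uses that peripheral subgroups are undistorted in $G$, again a true but uncited ingredient that should be made explicit.
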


Here we give the necessary results due to Dahmani required to show that the groups arising in Theorem \ref{ComboQIEmb} are actually relatively hyperbolic. Specifically, we reference parts (2),(3), and $(3')$ of Theorem 0.1 of \cite{D03}.
\begin{proposition}[{\cite[Theorem 0.1]{D03}}]\label{DahmaniCombination}
\hphantom
\\
    \begin{enumerate}
        \item Let $G_1$ and $G_2$ be hyperbolic relative to finite collections $\mathcal{H}_1$ and $\mathcal{H}_2$ respectively. Fix $H$ a conjugate of an element of $\mathcal{H}_1$. Suppose $H$ embeds as a subgroup of some $H'\in \mathcal{H}_2$. Then $\Gamma= G_1*_H G_2$ with the amalgamation defined via the embedding of $H$ into $H'$ is hyperbolic relative to $\mathcal{H}_1'\cup \mathcal{H}_2$, where $\mathcal{H}_1'$ is $\mathcal{H}_1$ with the group conjugate to $H$ removed.  
         \item Let $G$ be a group that is hyperbolic relative to a finite collection of subgroups $\mathcal{H}$. Let $H$ be a $G$ conjugate of some element of $\mathcal{H}$, and let $A$ be a finitely generated group that $H$ embeds into. Then $\Gamma = A *_{H} G$ is hyperbolic relative to $\mathcal{H}' \cup \{A\}$, where $\mathcal{H}'$ is $\mathcal{H}$ with the group conjugate to $H$ removed.
        \item Let $G$ be hyperbolic relative to $\mathcal{H}$, and suppose we take distinct $H_1, H_2\in \mathcal{H}$ with an isomorphism $\phi:H_1\to H_2$. Let $\Gamma=G*_{\phi}$ be the corresponding HNN extension. Then $\Gamma$ is hyperbolic relative to $\mathcal{H}-\{H_1\}$.
    \end{enumerate}
\end{proposition}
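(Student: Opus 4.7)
The plan is to prove all three cases uniformly by constructing, for $\Gamma$, a fine and hyperbolic coned-off Cayley graph (in the sense of Definitions \ref{ConedOffCG} and \ref{RelHypGroups}) realizing the claimed peripheral structure. In each case $\Gamma$ is the fundamental group of a finite graph of groups, so it acts on a Bass--Serre tree $T$ with the appropriate vertex and edge stabilizers, and I would use $T$ as the combinatorial backbone for assembling copies of the coned-off Cayley graphs of the vertex groups. Concretely, for case (1), I would fix finite relative generating sets $X_i \subset G_i$ making each $\widehat{G_i}$ fine and hyperbolic, take $X_1 \cup X_2$ as a relative generating set for $\Gamma$, and form $\widehat{\Gamma}$ as a tree of graphs over $T$: at each vertex of $T$ with stabilizer $gG_ig^{-1}$, attach a copy of $\widehat{G_i}$; each edge of $T$, stabilized by a conjugate of $H$, is used to identify the cone point $\nu(gH) \in \widehat{G_1}$ with the cone point $\nu(gH') \in \widehat{G_2}$ via the embedding $H \hookrightarrow H'$. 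Cases (2) and (3) admit analogous constructions: in case (2), $A$ contributes its ordinary Cayley graph at its vertex of $T$, with $A$ itself becoming a new coned-off peripheral of $\Gamma$; in case (3), copies of $\widehat{G}$ are placed at each vertex of $T$, and along each edge the cone points $\nu(gH_1), \nu(gH_2)$ are identified via $\phi$. A direct check shows that in each case $\widehat{\Gamma}$ is $\Gamma$-equivariantly isomorphic to the coned-off Cayley graph of $\Gamma$ for the claimed peripheral family.

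The first substantive step is to prove that $\widehat{\Gamma}$ is hyperbolic. Because the edge spaces in this tree-of-spaces decomposition are single points (the identified cone points), this reduces to the statement that a tree of uniformly $\delta$-hyperbolic graphs glued along points is itself hyperbolic, with constant depending only on $\delta$ and the hyperbolicity constant of $T$. A direct thin-triangle argument suffices: any geodesic triangle in $\widehat{\Gamma}$ either lies near a single vertex space, where slimness is inherited, or traverses a nontrivial segment in $T$, where slimness follows by combining slimness of the tree projection with slimness in each vertex space that the triangle enters. Alternatively, one may appeal to a combination theorem of Bestvina--Feighn type in the special case of point-like edge spaces, where the hallways flare condition is vacuous.

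The main obstacle, and the more delicate half of relative hyperbolicity in the sense used here, is fineness of $\widehat{\Gamma}$. The strategy is to show that any embedded path of length $n$ in $\widehat{\Gamma}$ projects to an embedded path in $T$ of length at most $n$ and decomposes into boundedly many sub-paths, each lying in a single vertex space with endpoints at identified cone points or at the original endpoints of the path. Fineness of each $\widehat{G_i}$ (in case (2), fineness is automatic for the ordinary Cayley graph of $A$ at the new peripheral) then bounds the number of candidates for each sub-piece, and the tree combinatorics bounds the number of decompositions, so the total count is finite.

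Finally, the peripheral structure is read off by bookkeeping on the infinite-stabilizer cone points of $\widehat{\Gamma}$. In case (1), the $H$-coset cone points in $\widehat{G_1}$ are identified with their corresponding $H'$-coset cone points in $\widehat{G_2}$, so the remaining infinite-stabilizer cone points of $\widehat{\Gamma}$ are in bijection with cosets of subgroups in $\mathcal{H}_1' \cup \mathcal{H}_2$. Cases (2) and (3) are handled by the same accounting, with $A$ introduced as a new peripheral in case (2) and with the $H_1$-cone point being removed in favor of the identified $H_2$-cone point in case (3). Throughout, the hardest technical point is the path-counting argument for fineness, since hyperbolicity in this setting is essentially formal once the tree-of-spaces picture is set up, whereas fineness genuinely requires care to propagate from the factor graphs through the gluing.
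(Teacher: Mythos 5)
You should first be aware that the paper does not prove this proposition at all: it is quoted verbatim (up to notation) from Dahmani \cite{D03}, where it is established by entirely different, dynamical means — one builds a boundary for the amalgam/HNN extension out of the Bowditch boundaries of the vertex groups and the Bass--Serre tree, verifies that the action is a geometrically finite convergence action, and invokes Bowditch's characterization of relative hyperbolicity. Your proposal, by contrast, attempts a direct combinatorial proof by assembling coned-off Cayley graphs of the vertex groups into a tree of graphs over the Bass--Serre tree; this is a genuinely different (and in principle viable) route, and both your hyperbolicity step (point edge spaces, so any flaring condition is vacuous) and the cut-point/branch idea behind your fineness count are sound in outline.

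As written, though, there are two genuine gaps. First, in case (2) you attach the \emph{ordinary} Cayley graph of $A$ as a vertex space and note that its fineness is automatic by local finiteness; but $A$ is only assumed finitely generated, so this vertex space need not be hyperbolic and the glued graph then fails to be hyperbolic (take $A=\Z^2$). Since $A$ is to become peripheral, the correct vertex space is the cone over $A$ (its Cayley graph coned off along $A$ itself), which is a fine hyperbolic star; your phrasing conflates the two. Second, your glued complex is not $\Gamma$-equivariantly isomorphic to the coned-off Cayley graph of Definition \ref{ConedOffCG}: in cases (1) and (2) each group element appears once in every vertex space whose coset contains it and these copies are never identified, and in case (3) the stable-letter edges are missing, so at best the two graphs are equivariantly quasi-isometric. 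This matters because fineness is \emph{not} a quasi-isometry invariant, so you cannot transfer it for free to the graph demanded by Definition \ref{RelHypGroups}; you must either conclude via Bowditch's fine-graph characterization (your glued graph does have finitely many edge orbits, trivial edge stabilizers, and infinite-valence vertices stabilized exactly by conjugates of the claimed peripherals) or supply a separate argument passing fineness and hyperbolicity to the actual coned-off Cayley graph. A smaller inaccuracy: in case (1), when $H$ has infinite index in $H'$, infinitely many $G_1$-vertex spaces are glued at a single cone point, so an embedded path need not project to an embedded path in $T$ of length at most $n$ as claimed; the cut-point argument still yields fineness, but the bookkeeping has to be stated with more care (including the identification of $H$-cosets with cosets of the $\mathcal{H}_1$-representative that $H$ is conjugate to).
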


We next give the following proposition due to Osin, which among other things will be essential to show that the groups in Theorem \ref{ComboQIEmb} will actually inject into MCG$(\Sigma)$.
\begin{proposition}[{\cite[Theorem 1.14]{O06}}]\label{NonperiLoxodromic}
Let $G$ be a relatively hyperbolic group, and suppose $g\in G$ has infinite order and is not conjugate into any peripheral subgroup. Then there exists a $\lambda>0$ such that
$$d_{\widehat{G}}(e, g^n)\geq \lambda |n|$$
for all $n\in \Z$. In particular, $g$ acts loxodromically on $\widehat{G}$.
\end{proposition}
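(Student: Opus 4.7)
The plan is to apply the standard elliptic/parabolic/loxodromic trichotomy for isometries of the $\delta$-hyperbolic space $\widehat{G}$ to the action of $\langle g\rangle$, and use the hypotheses on $g$ to rule out the first two cases. Once $g$ is shown to be loxodromic, the bound $d_{\widehat{G}}(e,g^n)\geq \lambda|n|$ follows from the positivity of the stable translation length $\tau:=\lim_{n\to\infty} d_{\widehat{G}}(e,g^n)/n$: subadditivity of $n\mapsto d_{\widehat{G}}(e,g^n)$ gives $d_{\widehat{G}}(e,g^n)\geq \tau|n|-C$ for a constant $C$ absorbing the finitely many small $|n|$, and since $g$ has infinite order we have $g^n\neq e$ for $n\neq 0$, so a uniform $\lambda>0$ can be chosen to handle all $n$.

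To rule out the elliptic case, I would suppose the orbit $\{g^n\cdot e\}_{n\in\Z}$ is bounded in $\widehat{G}$. The crucial input is fineness of $\widehat{G}$, which underlies the Bowditch formulation of relative hyperbolicity. A standard Helly-type consequence in the fine hyperbolic setting is that any group acting with bounded orbit on $\widehat{G}$ must fix a vertex; applying this to $\langle g\rangle$ produces a fixed vertex $v$ of $\widehat{G}$. Since $G$ acts freely on the group-element vertices of $\widehat{G}$ and $g\neq e$, the vertex $v$ must be a cone point $\nu(fH_i)$ for some coset. But the stabilizer of $\nu(fH_i)$ in $G$ equals $fH_if^{-1}$, so $g\in fH_if^{-1}$, contradicting the hypothesis that $g$ is not conjugate into any peripheral subgroup.

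To rule out the parabolic case, recall that a parabolic isometry of a Gromov hyperbolic space fixes a unique boundary point $\xi\in\partial\widehat{G}$. In the fine hyperbolic setting, Bowditch's analysis of $\partial\widehat{G}$ identifies two types of boundary points: conical limit points, whose stabilizers contain no parabolics, and bounded parabolic points, whose stabilizers are exactly the conjugates of peripheral subgroups. A parabolic element of $G$ necessarily fixes a bounded parabolic point, so $g$ lies in the associated stabilizer, a conjugate of some $H_i$, again contradicting the hypothesis. Therefore $g$ acts loxodromically on $\widehat{G}$, completing the argument.

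The main obstacle is packaging the two structural facts invoked above: (i) bounded orbits on a fine hyperbolic graph yield a fixed vertex, and (ii) parabolic fixed points on $\partial\widehat{G}$ correspond to peripheral cosets with peripheral stabilizers. Both are central to Bowditch's theory \cite{B99} and are repackaged in Osin's Theorem 1.14 from \cite{O06}, so for the purposes of the present paper they can be quoted directly; the linear lower bound on $d_{\widehat{G}}(e,g^n)$ then follows from standard hyperbolic geometry as outlined in the opening paragraph.
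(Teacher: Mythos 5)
Your proposal ultimately defers to Osin's Theorem 1.14, which is exactly what the paper does: Proposition \ref{NonperiLoxodromic} is stated purely as a citation to \cite{O06}, with no proof supplied in the text, so quoting the result is the intended treatment. The trichotomy sketch you add is a reasonable outline of why the statement holds, but if it were meant to stand on its own the two structural inputs would need real proofs rather than being invoked as standard: the ``bounded orbit on a fine hyperbolic graph forces a fixed vertex'' claim is not a freestanding Helly-type lemma (bounded subsets of $\widehat{G}$ are typically infinite, and the usual route to this conclusion passes through the very machinery of \cite{B99}/\cite{O06} being cited), and the conical versus bounded-parabolic dichotomy you use for the parabolic case lives on the Bowditch boundary of the relatively hyperbolic group rather than literally on $\partial\widehat{G}$, so the translation between ``parabolic isometry of $\widehat{G}$'' and ``parabolic element of the convergence action'' would also have to be justified.
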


We end with a proposition which is a consequence of some of the main results of Dahmani--Guirardel--Osin in \cite{DGO17}. We provide a short proof, without completely defining all the relevant terms. Those interested in more detail should look at the referenced work.
\begin{proposition}\label{DGORotatingSubgroups}
    Given a relatively hyperbolic group $G$, hyperbolic relative to $\mathcal{H}=\{H_1,\ldots, H_n\}$, then there are finite sets $F_i\subset H_i$ so that if $N_i\lhd H_i$ is a normal subgroup with $N_i \cap F_i=\varnothing$, then the smallest normal subgroup in $G$ containing each $N_i$ is equal to a free product of (possibly infinitely many) $G$-conjugates of the $N_i$'s. Further, every element of this subgroup is either conjugate into $N_i$ or acts loxodromically on $\widehat{G}$.
\end{proposition}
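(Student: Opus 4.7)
The plan is to quote the framework of very rotating families developed by Dahmani--Guirardel--Osin in \cite{DGO17}. My approach has three main steps.

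First, I would recall that since $G$ is relatively hyperbolic with peripheral collection $\mathcal{H}=\{H_1,\ldots,H_n\}$, a foundational equivalence in \cite{DGO17} says that $\mathcal{H}$ is a hyperbolically embedded collection of subgroups of $G$. This places the problem inside the general setting where their rotating family theorems apply, taking $G$ to act on (a space quasi-isometric to) the coned-off Cayley graph $\widehat{G}$.

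Second, I would invoke the existence portion of their main theorem on rotating families: for any hyperbolically embedded collection there exist finite subsets $F_i\subset H_i$ such that whenever $N_i\lhd H_i$ satisfies $N_i\cap F_i=\varnothing$ for every $i$, the family of pointwise stabilizers obtained from all $G$-conjugates of the $N_i$'s forms a \emph{very rotating family} on the relevant $G$-equivariant hyperbolic space. The role of $F_i$ is precisely to enforce a large rotation angle: removing finitely many elements guarantees that every nontrivial element of $N_i$ moves points far enough in the hyperbolic space for the Gromov rotating family machinery to apply.

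Third, I would apply the structural conclusions for very rotating families also established in \cite{DGO17}. These give simultaneously that the normal closure $\langle\!\langle N_1,\ldots,N_n\rangle\!\rangle$ in $G$ is naturally isomorphic to the free product of the distinct $G$-conjugates of the $N_i$'s, and that every element of this subgroup either lies in such a conjugate or acts loxodromically on the ambient hyperbolic space. Since the space on which DGO construct the rotating family is $G$-equivariantly quasi-isometric to $\widehat{G}$ (both being hyperbolic, with the same parabolic cone-point structure), loxodromicity on it is equivalent to loxodromicity on $\widehat{G}$, which is exactly the stated conclusion.

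The main obstacle is not conceptual but bookkeeping: locating the theorem in \cite{DGO17} that packages both the free product conclusion and the loxodromic-or-peripheral dichotomy into a single statement, and verifying that the hyperbolic space DGO construct may be identified with $\widehat{G}$ up to equivariant quasi-isometry so that their notion of loxodromic coincides with ours. No new hyperbolic-geometric input beyond \cite{DGO17} is required; the proof is essentially a translation of their rotating family theorems into the language used here.
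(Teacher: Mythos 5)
Your proposal is correct and follows essentially the same route as the paper: translate the relatively hyperbolic setting into the hyperbolically embedded framework of \cite{DGO17} (Proposition 4.28 there), use their rotating-family existence result (Corollary 6.36) to produce the finite sets $F_i$, and then apply their structural theorem on rotating families (Theorem 5.3) for the free product decomposition and the conjugate-into-$N_i$-or-loxodromic dichotomy. The only difference is cosmetic bookkeeping about identifying the relevant hyperbolic space with $\widehat{G}$, which the paper handles implicitly.
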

\begin{proof}
    Proposition 4.28 of \cite{DGO17} allows us to phrase both Theorem 5.3 and Corollary 6.36 of the same paper in terms of relatively hyperbolic groups, instead of hyperbolically embedded subgroups as they are given there. Then Corollary 6.36 gives for every $\alpha>0$ and every $i$ a finite set $F_i\subset H_i$ so that if $N_i\lhd H_i$ and $N_i\cap F_i=\varnothing$, then the collection $\{N_i\}$ is $\alpha$-rotating, as defined in Definition 5.2 of the paper. Then by taking $\alpha$ sufficiently large, Theorem 5.3 gives the desired result about the smallest normal subgroup in $G$ containing each $N_i$.
\end{proof}

%Section 3 
\section{Parabolically Geometrically Finite Groups}\label{SectionParabolicallyGeometricallyFinite}

\subsection{Definitions and Examples}\label{SubsectionDefinitionsExamples}

Recall that $\Sigma$ is a closed oriented surface of genus $g\geq 2$. 

%Definition 3.1
\begin{definition}\label{TwistGrp}[Twist group]
A subgroup $H<\text{MCG}(\Sigma)$ is a \textit{twist group} if it is infinite and contains a finite index subgroup that is a multitwist group. Any finite index multitwist subgroup consists of elements twisting on the components of some maximal multicurve $A$. We call $A$ the multicurve \textit{associated} to $H$.
\end{definition}

Here we give the definition of the main object of study of this paper, proposed in \cite{DDLS21}.

%Definition 3.2
\begin{definition}\label{PGFGrp}[PGF group]
A subgroup $G<\text{MCG}(\Sigma)$ is a \textit{parabolically geometrically finite group}, or a \textit{PGF group} if 
\begin{enumerate}
    \item $G$ is hyperbolic relative to a finite collection $\{H_i\}$ of twist subgroups on the multicurves $\{A_i\}$.
    \item $\widehat{G}$ admits an equivariant QI embedding into $C(\Sigma)$.
\end{enumerate}
Here, $\widehat{G}$ is a modification of Definition \ref{ConedOffCG}, where instead of one cone point, $\nu(gH_i)$ consists of $|A_i|$ points all of which are connected to the points of $gH_i$ by edges of length $1/2$. If $\alpha \in gA_i$, we let $\nu(\alpha)$ denote the corresponding cone point, and $\nu(gH_i)=\bigcup_{\alpha \in gA_{i}} \nu(\alpha)$. The group $G$ acts on $\widehat{G}$ in a natural way, via permutations of cone points defined by $g\nu(\alpha)=\nu(g(\alpha))$.
\par 
We will say that $G$ is \textit{PGF relative to} $\{H_i\}$ if it is PGF using this collection of twist groups. Whenever a PGF group is given, it is implicitly assumed that such a collection has already been chosen. 
\end{definition}

In particular, convex cocompact groups are PGF groups relative to the empty set. We note that the issue of having extra cone points is only for equivariance. Namely, twist groups can permute the components of their associated multicurve, and if this happens then there would be no way to guarantee equivariance if we only used a single cone point. 
\par 
Definition \ref{PGFGrp} does not specify the image of cone points, but there is a natural choice for what their image should be. Namely, $\nu(\alpha)$ should be sent to $\alpha$. The following lemma shows that the modification of an equivariant quasi-isometric embedding to take on these values on the cone points will still be a quasi-isometric embedding. Note that it suffices to look only at the vertices, as a quasi isometric embedding defined on the vertices extends equivariantly to a quasi-isometric embedding on the whole graph. 

\begin{lemma}\label{ConePointImages}
Let $G$ be a PGF group relative to $\{H_1,\ldots, H_n\}$. Fix any $\gamma \in C(\Sigma)$. Then the map $\psi:\widehat{G}\to C(\Sigma)$ given by $\psi(g)=g\gamma$ and $\psi(\nu(\alpha))=\alpha$, where $\alpha$ is any component of the multicurve associated to the peripheral subset $P$, is a quasi-isometric embedding. 
\end{lemma}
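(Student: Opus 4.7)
The plan is to leverage the equivariant quasi-isometric embedding $\phi\colon\widehat{G}\to C(\Sigma)$ guaranteed by Definition \ref{PGFGrp}, and show that $\psi$ is at uniformly bounded distance from $\phi$ on the vertex set of $\widehat{G}$. Since the remark preceding the lemma reduces the problem to vertices, and a bounded perturbation of a quasi-isometric embedding is again a quasi-isometric embedding (with constants worsening only additively), this will suffice.

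First I would verify that $\psi$ is itself $G$-equivariant. For $h,g\in G$, $\psi(gh)=gh\gamma=g\psi(h)$, and for a cone point $\nu(\alpha)$, the $G$-action on $\widehat{G}$ satisfies $g\cdot\nu(\alpha)=\nu(g\alpha)$, so $\psi(g\cdot\nu(\alpha))=g\alpha=g\psi(\nu(\alpha))$. Since $\phi$ is equivariant by hypothesis and $G$ acts on $C(\Sigma)$ by isometries, the pointwise distance $v\mapsto d_{C(\Sigma)}(\phi(v),\psi(v))$ is a $G$-invariant function on the vertex set of $\widehat{G}$.

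Next I would bound this invariant function. The vertex set of $\widehat{G}$ splits into finitely many $G$-orbits: one orbit consisting of $G$ itself, and one orbit of cone points $\{\nu(g\alpha):g\in G\}$ for each component $\alpha$ of each multicurve $A_i$ associated to the peripheral subgroup $H_i$. The finiteness of $\{H_i\}$ and each $A_i$ is essential here. Picking representatives $e$ and $\nu(\alpha_{i,j})$, one per orbit, the finitely many quantities
$$d_{C(\Sigma)}(\phi(e),\gamma),\qquad d_{C(\Sigma)}(\phi(\nu(\alpha_{i,j})),\alpha_{i,j})$$
are all finite, and by $G$-invariance the pointwise distance on each orbit equals the value at its representative. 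Let $D$ be the maximum.

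Since $\phi$ is a $(\lambda,\kappa)$-QI embedding on vertices and $\psi$ is within $D$ of $\phi$ pointwise, two applications of the triangle inequality yield
$$d_{C(\Sigma)}(\psi(v_1),\psi(v_2))\approx_{\lambda,\kappa+2\lambda D}d_{\widehat{G}}(v_1,v_2)$$
on all pairs of vertices, so $\psi$ is a quasi-isometric embedding on vertices, and by the remark before the lemma extends equivariantly to a quasi-isometric embedding of $\widehat{G}$. There is essentially no hard step: the content of the lemma is just that the finiteness of the peripheral data in a PGF group forces the discrepancy between any equivariant embedding and the specific normalized map $\psi$ to be uniformly bounded.
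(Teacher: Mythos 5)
Your proof is correct and follows essentially the same route as the paper: both arguments rest on the equivariance of the definitional embedding of $\widehat{G}$ into $C(\Sigma)$, the finiteness of the peripheral data, and the triangle inequality to absorb a uniformly bounded discrepancy, then reduce to vertices via the remark preceding the lemma. The only cosmetic difference is that you bound the pointwise distance between $\psi$ and the abstract equivariant embedding over the finitely many $G$-orbits of vertices (including cone points), whereas the paper first notes that the orbit map on group-element vertices is a QI embedding for any choice of $\gamma$ and then observes that each cone point's image lies within a uniform distance $D$ of the $\psi$-images of the adjacent group vertices.
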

\begin{proof}
By the definition of PGF groups, the restriction of $\psi$ to the vertices in $G$ (with the $\widehat{G}$ metric) is a QI embedding for any choice of $\gamma\in C(\Sigma)$. Indeed, the definition implies that there is some choice of $\gamma$ so that this restriction is a QI embedding, and the triangle inequality implies that it is true for any. Let $D$ denote the maximum distance in $C(\Sigma)$ from $\gamma$ to any component of any multicurve associated to one of $H_1,\ldots, H_n$. Fix a peripheral $gH_i$. It follows that $g\gamma$ is at most $D$ from any component of $gA_i$, where $A_i$ is the multicurve associated to $H_i$. Thus $gA_i$ is a uniformly finite distance from $gH_i \cdot \gamma$, so it follows that $\psi$ is also a QI embedding. 
\end{proof}
\par 
 We give here the two types of examples of PGF groups that were known before the writing of this paper.

%Proposition 3.3
\begin{proposition}[{\cite[Theorem 1.1]{L21}}]\label{LoaCombo}
    There exists a constant $D_0\geq 3$ independent of $\Sigma$ with the following property. Let $A$ and $B$ denote two multicurves with $d_{\Sigma}(A,B)\geq D_0$. Fix multitwist groups $H_A$, $H_B$ generated by multitwists on multicurve subsets of $A, B$ respectively. Then the natural homomorphism $\Phi: H_A * H_B \to \<H_A, H_B\>$ is injective and hence an isomorphism, and $\<H_A, H_B\>$ is PGF relative to $\{H_A, H_B\}$. Any element not conjugate into a factor is pseudo-Anosov.
\end{proposition}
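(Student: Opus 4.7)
The proof strategy is to realize $\langle H_A, H_B \rangle$ as a ping-pong-style combination via the Bass--Serre tree of the free product $H_A \ast H_B$. Abstractly, $H_A \ast H_B$ is hyperbolic relative to $\{H_A, H_B\}$ (by Proposition \ref{DahmaniCombination}(2) applied with trivial amalgamating subgroup, for example), so to prove the proposition it suffices to show that this abstract relatively hyperbolic structure descends to $\langle H_A, H_B \rangle$ via $\Phi$ and that the coned-off Cayley graph of $H_A \ast H_B$ admits an equivariant quasi-isometric embedding into $C(\Sigma)$. Let $\widehat{G}$ denote this coned-off Cayley graph. I would fix a basepoint $\gamma \in C(\Sigma)$ and define $\psi \colon \widehat{G} \to C(\Sigma)$ equivariantly by $\psi(g) = \Phi(g) \gamma$ on group elements and, following Lemma \ref{ConePointImages}, $\psi(\nu(\alpha)) = \alpha$ on cone points, where $\alpha$ is a component of the multicurve associated to the containing peripheral.

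The upper bound (coarse Lipschitz) is routine: each peripheral subgroup fixes its associated multicurve setwise, so orbits within a peripheral coset lie in a uniformly bounded neighborhood of the corresponding translate of $A$ or $B$, while the finite generating set of $H_A \ast H_B$ moves $\gamma$ a bounded amount. The main work is the lower bound, a linear lower bound for $d_\Sigma(\Phi(g_1) \gamma, \Phi(g_2) \gamma)$ in terms of $d_{\widehat{G}}(g_1, g_2)$. Since $\widehat{G}$ is quasi-isometric to the Bass--Serre tree, a $\widehat{G}$-geodesic from $g_1$ to $g_2$ is encoded by its alternating sequence of peripheral cosets, whose associated multicurves I call $M_0, M_1, \ldots, M_n$. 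A short computation using that elements of $H_A$ and $H_B$ respectively fix $A$ and $B$ setwise shows that consecutive pairs satisfy $d_\Sigma(M_i, M_{i+1}) = d_\Sigma(A, B) \geq D_0$.

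The main technical claim is that, once $D_0$ is chosen sufficiently large, the sequence $M_0, \ldots, M_n$ forms a uniform quasi-geodesic in $C(\Sigma)$. The tool is the Bounded Geodesic Image Theorem (Proposition \ref{MMBGI}): for each interior multicurve $M_i$ I would seek an essential subsurface $R$ with $\partial R \subset M_i$, such as an annular neighborhood of a component of $M_i$ or a component of $\Sigma \setminus M_i$, and show that $d_R(M_{i-1}, M_{i+1})$ exceeds the BGI threshold. By equivariance this reduces to projection distances between translates of $A$ and $B$, and Behrstock's inequality (Proposition \ref{Behrstock}) together with Lemma \ref{ProjLip} can be used to bound these distances below in terms of $d_\Sigma(A, B)$; the key point is that $D_0$ must be taken large enough to absorb the BGI and Behrstock constants. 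Then BGI forces a $C(\Sigma)$-geodesic from $M_{i-1}$ to $M_{i+1}$ to pass within bounded distance of a component of $M_i$, and iterating this together with the $\delta$-hyperbolicity of $C(\Sigma)$ (Proposition \ref{MMHyp}) yields the required linear lower bound. This is the main obstacle.

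The remaining conclusions follow readily from the QI embedding. Injectivity of $\Phi$ is immediate: a reduced alternating word of length $n$ in $H_A \ast H_B$ maps to a mapping class with $C(\Sigma)$-displacement growing linearly in $n$, hence nontrivial for $n$ large, while elements of a single factor inject by hypothesis. This realises $\langle H_A, H_B \rangle$ as PGF relative to $\{H_A, H_B\}$. Finally, for $g \in H_A \ast H_B$ not conjugate into $H_A$ or $H_B$, Proposition \ref{NonperiLoxodromic} ensures $g$ acts loxodromically on $\widehat{G}$; composing with $\psi$ shows $\Phi(g)$ acts loxodromically on $C(\Sigma)$, hence is pseudo-Anosov by the Nielsen--Thurston classification.
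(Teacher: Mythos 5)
You should first note that the paper contains no proof of Proposition \ref{LoaCombo}: it is quoted from \cite{L21}, and the author explicitly remarks that the results of this paper do not prove it (the in-paper relative, Theorem \ref{LoaAnalog}, has a genuinely different hypothesis, phrased in terms of subsurface projections to the complement of a fixed multicurve rather than in terms of $d_\Sigma(A,B)$). So your proposal must stand on its own. Its skeleton is the right template --- relative hyperbolicity of $H_A * H_B$, an equivariant orbit map on $\widehat{G}$, the easy Lipschitz upper bound, a linear lower bound along the alternating coset sequence, and then injectivity and the pseudo-Anosov statement via Lemma \ref{EquivQIEmb} and Proposition \ref{NonperiLoxodromic}; the computation $d_\Sigma(M_i,M_{i+1})=d_\Sigma(A,B)$ is also fine. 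But the central step, your ``main technical claim,'' fails as proposed.

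At a pivot, after translating, the configuration is $A$, $B$, $h(A)$ with $h\in H_B$ a nontrivial multitwist on components of $B$, and you want a subsurface $R$ with $\partial R\subset B$ for which $d_R(A,h(A))$ exceeds the BGI constant $M$. No such $R$ need exist: if $h=T_\gamma$ is a single Dehn twist on a component $\gamma$ of $B$ (allowed by the hypotheses), then $h$ acts coarsely trivially on $C(S)$ for every nonannular component $S$ of $\Sigma\setminus B$, so $d_S(A,h(A))\leq 4$, while for an annulus $Y$ with core in $B$ one has $d_Y(A,T_\gamma(A))$ roughly $|n|+2=3$; every candidate projection is uniformly small no matter how large $D_0$ is. Moreover, the mechanism you invoke to produce largeness runs the tools backwards: Proposition \ref{Behrstock} and Lemma \ref{ProjLip} only give upper bounds (or transfer largeness from one subsurface to smallness in another), and largeness of $d_\Sigma(A,B)$ never yields a lower bound on any fixed proper subsurface projection. (A lesser issue: the constant $M$ of Proposition \ref{MMBGI} depends on $\Sigma$, whereas $D_0$ must not.) This is precisely why Loa's argument does not route through BGI at the pivots: its engine is his Lemma 4.2, quoted in this paper as Lemma \ref{LoaFarTwists}, which uses hyperbolicity of $C(\Sigma)$ to show that any nontrivial multitwist on $A$ moves a curve at distance $D$ from $A$ to distance at least $2D-2((N+1)\delta+2)$ from itself; with that, the alternating sequence is a broken path with long jumps and bounded backtracking at each pivot, and a standard local-to-global argument in the $\delta$-hyperbolic $C(\Sigma)$ gives the linear lower bound. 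Your subsurface-projection version does prove something, but it is essentially Theorem \ref{LoaAnalog}, whose hypothesis $d_S(h_1(A),h_2(A))\geq M+18$ on complementary components is not implied by $d_\Sigma(A,B)\geq D_0$; as written, your proof has a genuine gap exactly in the basic case $H_A=\langle T_\alpha\rangle$, $H_B=\langle T_\gamma\rangle$.
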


In Example \ref{FreeProductMultitwists}, we prove a related result (note that the results of this paper do not prove Proposition \ref{LoaCombo}). 
\par
We have the following result due to Tang \cite{T21}, although the referenced paper does not state the result in the language of PGF groups. 

%Proposition 3.4
\begin{proposition}[{\cite[Theorem 1.3]{T21}}]\label{TangVeechPGF}
    Finitely generated Veech groups of MCG$(\Sigma)$ are PGF relative to any maximal parabolic subgroup.
\end{proposition}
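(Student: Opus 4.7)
The plan is to split into cases based on the classification of finitely generated Veech groups. A finitely generated Veech group $V \subset \text{MCG}(\Sigma)$ is either elementary (finite, virtually cyclic generated by a pseudo-Anosov, or virtually cyclic generated by a multitwist arising from a parabolic) or, by Smillie--Weiss, a lattice in $\text{PSL}(2,\mathbb{R})$. In the elementary cases, PGF is immediate: a finite group is convex cocompact (take the empty peripheral collection), a virtually cyclic pseudo-Anosov group is convex cocompact by \cite{FM02}, and a virtually cyclic group generated by a parabolic is a twist group and hence PGF relative to itself. So the real content is the lattice case.

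Assume then that $V$ is a lattice Veech group acting on a Teichmüller disk $D \cong \mathbb{H}^2$ embedded totally geodesically in $\mathcal{T}(\Sigma)$. Standard facts about lattices in $\text{PSL}(2,\mathbb{R})$ give that $V$ is relatively hyperbolic with respect to its finite collection of maximal parabolic subgroups $H_1,\ldots,H_k$, so condition (1) of Definition \ref{PGFGrp} will follow once we identify each $H_i$ as a twist group. For this one uses Veech's structure theorem: every parabolic element of $V$ is realized by an affine map preserving a cylinder decomposition of the underlying half-translation surface, and the mapping class corresponding to such a parabolic is a multitwist on the union of core curves of that cylinder decomposition. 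Hence each $H_i$ contains a finite index subgroup generated by multitwists on a fixed multicurve $A_i$, so $H_i$ is a twist group associated to $A_i$.

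It remains to establish condition (2): the coned-off Cayley graph $\widehat{V}$ admits an equivariant quasi-isometric embedding into $C(\Sigma)$. Pick a basepoint $X_0 \in D$ that is not in any thin part, let $\gamma \in C(\Sigma)$ be any short curve at $X_0$, and consider the orbit map $\psi: V \to C(\Sigma)$ defined by $g \mapsto g\gamma$. By the Milnor--Schwarz lemma, $V \to D$ is a quasi-isometry. On the other hand, the systole map $D \to C(\Sigma)$ is coarsely Lipschitz, but collapses the horoballs around cusps of $D/V$ because points in a horoball have a definite short multitwist curve, namely the core curve of the cylinder decomposition associated to the parabolic fixing the cusp. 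The key geometric input, which is where Tang's work enters, is the reverse inequality: away from the cusps, distance in $D$ is controlled by $d_{\Sigma}$ of the associated short curves. Coning off horoballs on the $D$-side corresponds precisely to coning off cosets $gH_i$ in $V$, and defining $\psi(\nu(\alpha)) = \alpha$ for each component $\alpha$ of $gA_i$ gives a well-defined $V$-equivariant extension (by Lemma \ref{ConePointImages}). Putting these two inequalities together and invoking equivariance yields the desired QI embedding.

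The main obstacle is the lower bound in this final step: controlling $d_{\Sigma}$ from below by the electrified $D$-distance. One must show that if two points in $D$ lie outside all $V$-horoballs and their orbit curves in $C(\Sigma)$ are close, then they are close in $D$ as well. This is genuinely nontrivial because it requires ruling out long excursions in $D$ that project to bounded pieces of $C(\Sigma)$; the argument uses the thick--thin decomposition of the Teichmüller disk, finite covolume of $V \backslash D$, and Masur--Minsky-type control on subsurface projections along Teichmüller geodesics in $D$. Granting this (which is the content of Tang's theorem, cited as \cite{T21}), the proposition follows.
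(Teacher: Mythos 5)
The first thing to be clear about is that the paper does not prove this proposition at all: it is imported directly from Tang \cite{T21} (Theorem 1.3 there), with the only translation being the observation that the maximal parabolic subgroups of a finitely generated Veech group are generated by multitwists on the core curves of cylinder decompositions, i.e.\ are twist groups, so that Tang's statement is precisely Definition \ref{PGFGrp}. Since your write-up ultimately "grants" the crucial quasi-isometric lower bound as "the content of Tang's theorem," you and the paper are on the same footing for the analytic heart of the matter, and that part of your outline (relative hyperbolicity of a finitely generated Fuchsian group with respect to its maximal parabolics, identification of parabolics with multitwists via Veech's structure theorem, electrification of horoballs matching the coning of cosets, extension to cone points as in Lemma \ref{ConePointImages}) is a reasonable account of why Tang's theorem is the PGF statement.

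The genuine gap is in your opening reduction. The dichotomy you attribute to Smillie--Weiss --- that a finitely generated Veech group is either elementary or a lattice --- is not a theorem: Smillie--Weiss characterize lattice surfaces, they do not classify finitely generated Veech groups, and non-elementary finitely generated Veech groups of infinite covolume are not ruled out; this is exactly why Tang's theorem is stated for all finitely generated Veech groups rather than for lattices. As written, your argument silently drops these groups. The restriction to lattices is also unnecessary: every finitely generated Fuchsian group is geometrically finite, hence hyperbolic relative to its finitely many maximal parabolic subgroups, and the horoball-electrification argument you sketch runs just as well over the convex core as over all of the Teichm\"uller disk. A smaller but real issue is the claim that a cyclic group generated by a parabolic "is a twist group and hence PGF relative to itself": Definition \ref{RelHypGroups} requires peripheral subgroups to be proper, and an infinite twist group has bounded orbits in $C(\Sigma)$, so it cannot satisfy Definition \ref{PGFGrp} with respect to any admissible peripheral collection; that elementary case should be excluded (or the statement read accordingly) rather than disposed of this way.
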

Before going into the details of the combination theorem, we give the following result of Leininger--Reid which serves as a direct inspiration for it. See \cite{LR06} for the relevant notation and definitions.

%Proposition 3.5
\begin{proposition} [{\cite[Theorem 6.1]{LR06}}]\label{LeiningerReidVeechCombo}
    Suppose $G(q_1)$ and $G(q_2)$ are finitely generated Veech groups, and $h$, $G_0$, $G(q_1)$ and $G(q_2)$ are compatible along the sparse multicurve $A_0$. Then there exists a $K$ so that the natural map
    $$G(q_1)*_{G_0}h^KG(q_2)h^{-K} \to \text{MCG}(\Sigma)$$
    is injective. Moreover, every element not conjugate into an elliptic or parabolic subgroup of either factor is pseudo-Anosov.
\end{proposition}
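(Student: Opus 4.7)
The plan is to realize the amalgam $G(q_1)*_{G_0}h^K G(q_2)h^{-K}$ as the fundamental group of a normalized PGF graph of groups, equipped with a compatible homomorphism, and apply Theorem \ref{ComboQIEmb} for sufficiently large $K$. By Proposition \ref{TangVeechPGF}, each Veech group $G(q_i)$ is PGF relative to its finite collection of maximal parabolic subgroups, each of which is a twist group on a well-defined multicurve. The Leininger--Reid compatibility hypothesis provides that $A_0$ is the multicurve of a maximal parabolic of both $G(q_1)$ and $G(q_2)$, that $G_0$ is (a finite-index subgroup of) this common parabolic twist group, that $h$ preserves $A_0$ and normalizes $G_0$, and that $h$ restricts to a pseudo-Anosov on each component of $\Sigma\setminus A_0$.

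Build the graph of groups $\mathcal{G}$ as a path $v_1 - v_0 - v_2$, where $G_{v_1}=G(q_1)$ and $G_{v_2}=G(q_2)$ are the PGF vertices and $G_{v_0}$ is the twist group associated to $A_0$ that naturally contains $G_0$; take both edge groups to be $G_0$, so that $\pi_1(\mathcal{G})\cong G(q_1)*_{G_0}G(q_2)$. Define $\phi:\pi_1(\mathcal{G})\to\text{MCG}(\Sigma)$ by taking inclusion on $v_1$ and $v_0$, and inclusion composed with conjugation by $h^K$ on $v_2$; the edge identifications are forced through by the compatibility hypothesis (in particular by $h$ normalizing $G_0$), making $\phi$ a compatible homomorphism whose image is $G(q_1)*_{G_0}h^K G(q_2)h^{-K}$.

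The main task is verifying the $L$-local large projections property by choosing $K$ large. In the Bass--Serre tree, every pair of adjacent PGF vertices (in the sense of Theorem \ref{ComboQIEmb}) is separated by precisely one twist vertex, with associated multicurve $A_0$. A multicurve $B_1\neq A_0$ associated to $\phi(G(q_1))$ lies in the $G(q_1)$-orbit of one of finitely many non-$A_0$ parabolic multicurves $A_1^{(j)}$ of $G(q_1)$, and similarly $B_2 = h^K g A_2^{(k)}$ for some $g\in G(q_2)$ and non-$A_0$ parabolic multicurve $A_2^{(k)}$ of $G(q_2)$. Since distinct parabolic directions in a Veech group pairwise fill $\Sigma$, both $B_1$ and $h^{-K}B_2$ project nontrivially to every component $S$ of $\Sigma\setminus A_0$. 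For each fixed $S$, the $G(q_i)$-orbit of a single parabolic multicurve projects to $C(S)$ with uniformly bounded diameter: elements fixing $A_0$ are multitwists on $A_0$ and produce bounded displacement by the Behrstock inequality (Proposition \ref{Behrstock}), while elements moving $A_0$ are constrained to finitely many cosets through the equivariant QI embedding $\widehat{G(q_i)}\hookrightarrow C(\Sigma)$ of Definition \ref{PGFGrp}. Since $h|_S$ is pseudo-Anosov it acts loxodromically on $C(S)$ with positive asymptotic translation length $\tau_S$, so
$$d_S(B_1,B_2) \geq d_S\bigl(A_1^{(j)},\, h^K A_2^{(k)}\bigr) - O(1) \geq \tau_S K - O(1),$$
which exceeds any fixed $L$ once $K$ is sufficiently large, uniformly over the finitely many choices of $(j,k)$ and of $S$.

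With the hypothesis of Theorem \ref{ComboQIEmb} verified, $\phi$ is injective with PGF image, and every infinite order element of $\pi_1(\mathcal{G})$ not contained in a twist-group peripheral has pseudo-Anosov image. The peripherals of the combined PGF structure arise as conjugates of the parabolic subgroups of the two factors, and elliptic subgroups of Veech groups consist of finite order elements, so any element of the amalgam not conjugate into an elliptic or parabolic subgroup of either factor is infinite order and avoids every twist-group peripheral; its image is therefore pseudo-Anosov. The main obstacle is the projection estimate: one must simultaneously control the $G(q_i)$-orbits of all non-$A_0$ parabolic multicurves in every component of $\Sigma\setminus A_0$, which is precisely where the Behrstock inequality and the PGF structure of each Veech group are essential for isolating the linear growth rate supplied by the partial pseudo-Anosov character of $h$.
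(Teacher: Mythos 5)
You should first be aware that the paper does not prove this proposition at all: it is quoted verbatim from Leininger--Reid (\cite{LR06}, Theorem 6.1), whose proof proceeds through Teichm\"uller-geometric arguments about Veech groups rather than the curve-complex machinery developed here. Your route --- realizing the amalgam as a normalized PGF graph of groups and invoking Theorem \ref{ComboQIEmb} --- is exactly the re-derivation the paper only gestures at (``It will follow from Theorem \ref{ComboQIEmb} that the groups from Proposition \ref{LeiningerReidVeechCombo} are PGF, with $K$ potentially taken to be larger''), and it is the template carried out in Section \ref{SectionExamplesApplications}, in particular Proposition \ref{PGFTreeCompatibleHom}. Since the statement only asserts existence of some $K$, losing control of the LR06 value of $K$ is harmless, and your approach even yields the stronger conclusion that the image is PGF. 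There is no circularity, as Theorem \ref{ComboQIEmb} is proved independently of this proposition.

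However, two steps in your write-up have genuine gaps. First, your justification of the key estimate --- that for each component $S$ of $\Sigma\setminus A_0$ the projections to $C(S)$ of all $G(q_i)$-translates of the non-$A_0$ parabolic multicurves have uniformly bounded diameter --- is not a proof: the orbit is not ``constrained to finitely many cosets,'' and the Behrstock inequality by itself does not bound the projection of the full orbit. The bounded-diameter claim is precisely Proposition \ref{PGFProjBound}, whose proof needs the inductive Lemmas \ref{InductiveProjBound} and \ref{InductiveProjBound2} together with Morse stability and Proposition \ref{MMBGI}; you should invoke it, as Proposition \ref{PGFTreeCompatibleHom} does, rather than re-argue it ad hoc. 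Second, the graph of groups as you describe it does not satisfy Definition \ref{PGFGraph}: in a type (1) edge the edge group must \emph{equal} the twist vertex group (the map $\phi^{e^-}$ is the identity) and must map onto one of the finitely many peripheral twist subgroups that the PGF vertex group is PGF relative to. Taking the middle vertex group to be ``the twist group associated to $A_0$ that naturally contains $G_0$'' with both edge groups $G_0$ would give fundamental group $G(q_1)*_{G_0}G_{v_0}*_{G_0}G(q_2)$, not the amalgam in the statement. You must instead take the middle vertex group to be $G_0$ itself and use the compatibility hypothesis of \cite{LR06} to identify $G_0$ (up to the conjugation by $h^K$) with the full maximal parabolic twist subgroup of each factor along $A_0$; relatedly, for $\phi$ as you defined it to respect the edge relations you need $h$ to centralize $G_0$, not merely normalize it --- again exactly what compatibility supplies and what Propositions \ref{PGFTreeCompatibleHom} and \ref{TangVeechPGF} are set up to use. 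With those repairs the argument goes through.
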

It will follow from Theorem \ref{ComboQIEmb} that the groups from Proposition \ref{LeiningerReidVeechCombo} are PGF (with $K$ potentially taken to be larger). 
\par 
The main results of this paper vastly generalize Proposition \ref{LeiningerReidVeechCombo}. Not only does it provide a combination theorem for PGF groups which also allows for more general graphs of groups, the techniques we prove also give a framework to prove other combination theorems in other contexts as well. For example, see Theorems \ref{ConvexCocompactCombo}, \ref{ConvexCoCptTwists}, and \ref{LoaAnalog}.

\subsection{PGF Graphs of Groups}\label{SubsectionPGFGraphsofGroups}

We now introduce the types of combinations of PGF groups we will be working with. For simplicity, we will assume the underlying graphs of the graphs of groups are of a particular form, which we detail in the following definition. This definition is given to be as general as reasonably possible for Theorem \ref{ComboQIEmb} to still hold. It is much easier in practice to consider simpler examples, such as those that appear in Example \ref{GraphOfGroupsExamples}. We suggest that the reader only worry about parts (a) and (b) of the following definition when reading Section \ref{SectionExamplesApplications}, as they are not at all relevant in the proof of Theorem \ref{ComboQIEmb}.

\begin{definition}\label{PGFGraph}[PGF graph of groups]
    Let $\mathcal{G}$ be a graph of groups. We allow for multiple edges between pairs of vertices, but disallow loop edges. We say that $\mathcal{G}$ is a \textit{normalized PGF graph of groups} if its vertex groups are all PGF groups and twist groups and its edge groups are all twist groups, and so that all the following holds. We assume there is at least one vertex with a PGF vertex group. Further, for every edge $e$ with associated edge group $H_e$ and vertex groups $G_{e^\pm}$, along with monomorphisms $\phi^{e^\pm}:H_e\to G_{e^{\pm}}$, we assume that $H_e$ has a sparse associated multicurve, and that the following two conditions hold up to change of orientation on $e$.
    \begin{enumerate}[label={(\arabic*)}]
        \item $G_{e^+}$ is a PGF group, $G_{e^-}$ is a twist group, the map $\phi^{e^-}$ is the identity, and there is some $g\in \text{MCG}(\Sigma)$ so that $\phi^{e^+}(h)=ghg^{-1}$ for all $h\in H_e$. Further, we assume that $\phi^{e^+}(H_e)$ is one of the finitely many twist groups that $G^{e^+}$ is PGF with respect to.
        \item $G_{e^{\pm}}$ are both twist groups. In this case $\phi^{e^-}$ is the identity and $\phi^{e^+}$ is inclusion. The image of this inclusion is a direct factor having complementary factor generated by multitwist in a multicurve disjoint from the multicurve associated to $H_e$. Further, no edge other than $e$ contains the vertex with vertex group $G_{e^+}$, and every other edge containing the other vertex of $e$ is of type (1).
    \end{enumerate}
    We impose two more constraints. (a) If $e$ and $e'$ are two distinct edges of type (1) that share a vertex with a PGF vertex group, then the images of the edge groups in the PGF vertex group are distinct. (b) For any edge $e$ as in (2), we assume that every complementary component of the multicurve associated to $G_{e^-}$ contains a component of the multicurve associated to $G_{e^+}$. Further, we also assume that for every such complementary component $S$, every multitwist element of $G_{e^+}\setminus G_{e^-}$ twists on some curve contained in $S$.
    \par 
    We call a vertex $v$ of the associated Bass--Serre tree a \textit{PGF vertex} if the stabilizer of $v$ is a PGF group. \textit{Twist vertices} are defined similarly. Vertices of $\mathcal{G}$ with PGF vertex groups will also be called \textit{PGF vertices}, and similarly for those with twist vertex groups. Vertices whose groups extend a twist vertex group as in (2) will be called \textit{extension vertices} and all other twist vertices will be called \textit{base vertices}, and we use this terminology in both $T$ and $\mathcal{G}$. Lastly, if $H$ and $H'$ are vertex groups as in (2) so that $H'$ contains $H$ as a direct factor, then we say that $H'$ \textit{extends} $H$ \textit{by multitwists}, and that $H'$ is an \textit{extension} of $H$. If $\tau\in H'$ can be written as a composition of Dehn twists on curves not in the multicurve associated to $H$, we say that $\tau$ is a \textit{word in the new multitwists of} $H'$. 
\end{definition}
%
% Two examples of normalized PGF graphs
\begin{figure}[H]
    \centering
    \includegraphics[scale=.64]{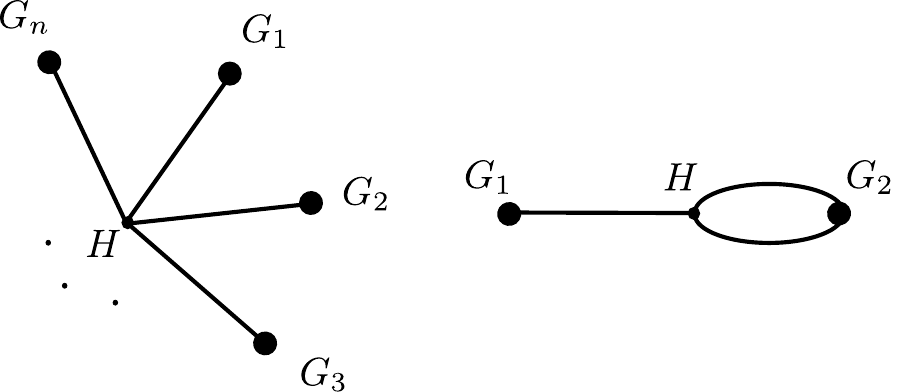}
    \caption{Two different examples of normalized PGF graphs of groups}
    \label{FigurePGFExamples}
\end{figure}
%

%Example 3.7
\begin{example} \label{GraphOfGroupsExamples}
Here we give a few examples of normalized PGF graphs of groups, along with a nonexample. In Figure \ref{FigurePGFExamples}, on the left we have $n$ PGF groups $G_1,\ldots G_n$ with one twist group in each being identified with every other via a single twist group $H$. On the right, $H$ is identified with one twist group in $G_1$ and two twist groups of $G_2$, and by definition the latter two twist groups must be distinct and nonconjugate. In Figure \ref{FigurePGFExtension}, $H'$ is an extension of $H$ as in Definition \ref{PGFGraph}(2). Note that in the Bass--Serre tree, such vertices as in Figure \ref{FigurePGFExtension} give pairs of PGF vertices of the tree that have no other PGF vertices between them (that is, there are no other PGF vertices along the geodesic between them), but have more than one vertex between them. Occasionally the language has to account for this (see Lemma \ref{LocalLargeEquiv} for example). See Figure \ref{FigurePGFExtensionBSTree}.
\par 
% Twist group extension
\begin{figure}[H]
    \centering
    \includegraphics[scale=.65]{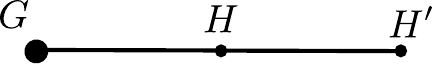}
    \caption{An example of an extension as in Definition \ref{PGFGraph}(2)}
    \label{FigurePGFExtension}
\end{figure}
%
% Twist group extension BS Tree
\begin{figure}[H]
    \centering
    \includegraphics[scale=.6]{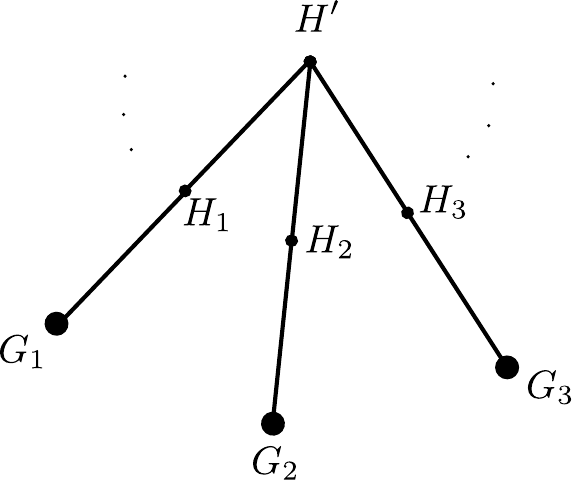}
    \caption{A piece of the Bass--Serre tree from the graph in Figure \ref{FigurePGFExtension}. Here each $G_i$ is conjugate to $G_j$ by an element in the new multitwists of $H'$. Every $H_i$ is equal, but the cosets associated to the labelled vertices are distinct.} 
    \label{FigurePGFExtensionBSTree}
\end{figure}
\end{example}

\par 
On the other hand, Figure \ref{FigurePGFNonExample} gives a nonexample. We assume here that every pair of twist groups map to the same twist subgroup of $G_i$. That is, $H_{12}$ and $H_{13}$ map to the same twist subgroup of $G_1$, and similarly for $G_2$ and $G_3$. There are two issues illustrated in this example. The first is from having the same twist subgroup in each $G_i$ identified with two different twist vertices. This violates Definition \ref{PGFGraph}(a). The issue that arises is that the stable letter of the loop in Figure \ref{FigurePGFNonExample} would have to map to a reducible element in MCG$(\Sigma)$, as it will have to commute with the common images of $H_{12}, H_{13}$ and $H_{23}$. But stable letters, being nonperipheral, must map to pseudo-Anosov elements if the image is PGF (see Corollary \ref{InfOrderpA}).
% Nonexample
\begin{figure}[H]
    \centering
    \includegraphics[scale=.75]{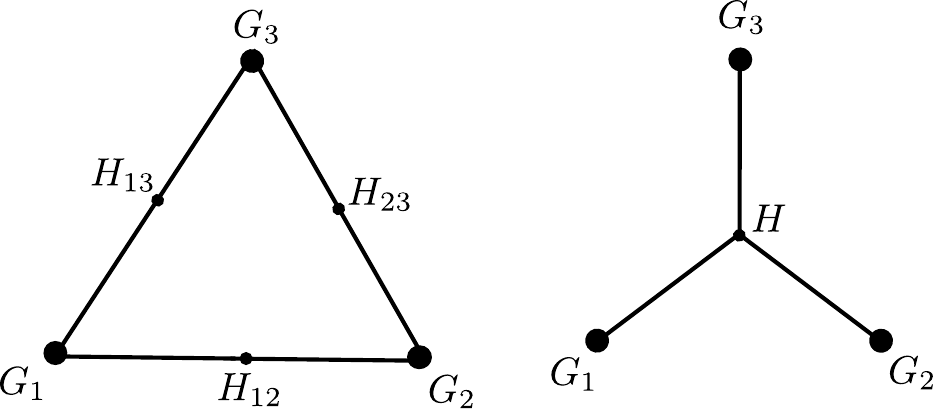}
    \caption{On the left an example of a graph of PGF groups that is not a normalized. The right side involves the same groups and homomorphisms, but is now normalized.}
    \label{FigurePGFNonExample}
\end{figure}
\par 
Instead, if one wishes to combine the three PGF groups $G_1$, $G_2$, and $G_3$, one may for example use the graph on the right side of the figure, which combines all the twist vertices into one vertex with vertex group $H$, eliminating the stable letter. 
\par

We note the following proposition.

\begin{proposition}\label{ComboRelHyp}
    Fix a normalized PGF graph of groups $\mathcal{G}$ with fundamental group $G$. Then $G$ is hyperbolic relative to the collection of twist subgroups that the PGF vertices of $\mathcal{G}$ are hyperbolic relative to, with some twist groups identified with the vertex group of a twist vertex as in Definition \ref{PGFGraph}(1), or extended as in Definition \ref{PGFGraph}(2). 
\end{proposition}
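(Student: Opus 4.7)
The plan is to build $G = \pi_1(\mathcal{G})$ step by step by iteratively applying Dahmani's combination theorem (Proposition \ref{DahmaniCombination}). Each vertex group starts out as a relatively hyperbolic group: PGF vertex groups are relatively hyperbolic with respect to their given twist peripherals by Definition \ref{PGFGrp}, and twist vertex groups, being finitely generated virtually abelian, are trivially relatively hyperbolic with respect to themselves (the coned-off Cayley graph being a single point, hence trivially fine and hyperbolic).

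First I would choose a spanning tree $T$ of the underlying graph of $\mathcal{G}$ that contains every edge of type (2); this is always possible because part (2) of Definition \ref{PGFGraph} forces the extension vertex of such an edge to be a leaf. I would then process the edges in three phases.

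\textbf{Phase 1: type (1) tree edges.} For each such edge $e$, the edge group $H_e$ equals the twist vertex group $G_{e^-}$ (which is a peripheral of itself) and is identified via a conjugation in $\text{MCG}(\Sigma)$ with a peripheral twist subgroup of the PGF vertex group $G_{e^+}$. Part (1) of Dahmani's theorem applies to the amalgamated product $G_{e^+} *_{H_e} G_{e^-}$, yielding a relatively hyperbolic group whose peripheral collection is the remaining peripherals of $G_{e^+}$ together with $\{G_{e^-}\}$. Iterating along the type (1) edges of $T$ produces a group whose peripheral collection consists of the original twist peripherals of PGF vertices, with each peripheral identified with an edge group replaced by the corresponding twist vertex group.

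\textbf{Phase 2: non-tree edges (all of type (1)).} Each such edge $e$ corresponds to an HNN extension. In the group built so far, the twist vertex group $G_{e^-}$ sits as a peripheral, and the image of $H_e$ in the PGF vertex at the other end is a peripheral that has not yet been identified via $T$; by constraint (a) in Definition \ref{PGFGraph}, this latter peripheral is distinct from every other edge-group image in the same PGF vertex. Hence the two peripherals being identified by the HNN extension are distinct elements of the peripheral collection, and Dahmani's part (3) applies, yielding a relatively hyperbolic group with one of the two identified peripherals removed.

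\textbf{Phase 3: type (2) tree edges.} Each such edge $e$ connects an extension vertex (a leaf of $\mathcal{G}$) to a base twist vertex. In the group built so far, the base twist vertex group is a peripheral equal to $H_e$, and $H_e$ embeds into the extension vertex group $G_{e^+}$ as a direct factor. Dahmani's part (2) applies with $A = G_{e^+}$, replacing the peripheral $H_e$ with the extension $G_{e^+}$. Since extension vertices are leaves and each belongs to a unique type (2) edge, these steps commute with each other and with Phase 2.

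The resulting peripheral collection is exactly the one described in the statement. The main technical check is verifying inductively that at each stage the edge group being amalgamated actually coincides with a peripheral subgroup of the group constructed so far, so that the hypotheses of Dahmani's theorem apply cleanly; this is precisely where the normalization constraints of Definition \ref{PGFGraph} (in particular constraint (a) for Phase 2) are used.
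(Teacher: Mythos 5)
Your proposal is correct and follows essentially the same route as the paper: choose a maximal tree of $\mathcal{G}$ and inductively apply the three parts of Proposition \ref{DahmaniCombination} (part (1) along tree edges through the twist vertices, part (2) at the extension vertices, part (3) for the non-tree edges, which contain no extension vertices). The only cosmetic differences are your phase ordering and your treatment of a twist vertex group as ``hyperbolic relative to itself,'' a degenerate case not literally allowed by Definition \ref{RelHypGroups} (peripherals must be proper subgroups), but this is harmless since amalgamating a PGF vertex group with its twist vertex group over the full edge group changes nothing, or can instead be routed through Proposition \ref{DahmaniCombination}(2) with $A$ the twist vertex group.
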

\begin{proof}
    The proof is effectively just inductively applying Proposition \ref{DahmaniCombination}. More specifically, one may first choose a maximal tree $\mathcal{T}$ of $\mathcal{G}$. Starting at a base PGF vertex $v$, one can inductively apply Proposition \ref{DahmaniCombination}(1) to pairs of PGF vertices in $\mathcal{T}$, and then apply \ref{DahmaniCombination}(2) to the extension vertices.  Any edge not in the maximal tree contains no extension vertices, and we may apply Proposition \ref{DahmaniCombination}(3) to each such edge one at a time. 
\end{proof}

\begin{remark}
There is still only one cone point collection for every twist group extension, and we will continue to denote the cone point of a peripheral subset using the original twist group, instead of its extension. (In particular, there may be infinitely many cosets giving the same cone points). By doing this we are somewhat abusing our convention that there should be one cone point for every component of the associated multicurve (as we are not including new cone points for the components of the multicurve being added in the extension), but this will not change the proof in a substantial way. 
\end{remark}

%Definition 3.8
\begin{definition}\label{AssMC}[Associated multicurves]
Let $G$ be a subgroup of MCG$(\Sigma)$, and suppose $H$ is a maximal twist subgroup of $G$. The multicurve $A$ associated to $H$ is also said to be \textit{associated} to the group $G$. 
\end{definition}

In particular, if $G$ is a PGF group relative to $\mathcal{H}$, the multicurves associated to $G$ are the $G$ orbits of the multicurves associated to the elements of $\mathcal{H}$.
\par 
%Definition 3.9
\begin{definition}\label{CompHom}[Compatible Homomorphism]
    Let $\mathcal{G}$ be a normalized PGF graph of groups with fundamental group $G$. A homomorphism $\phi:G \to \text{MCG}(\Sigma)$ is a \textit{compatible homomorphism} if its restriction to the vertex groups of $\mathcal{G}$ respects the inclusion of the vertex groups into MCG$(\Sigma)$, up to conjugation in MCG$(\Sigma)$.
\end{definition}
Note that this definition also implies that the vertex stabilizers of vertices in the Bass--Serre tree $T$ also have their $\phi$ image respecting the inclusion of the stabilizer in MCG$(\Sigma)$, up to conjugation. This is because each vertex stabilizer is conjugate in $G$ to a vertex group.
\par
In Section \ref{SectionExamplesApplications}, we will provide a way to construct desirable compatible homomorphisms for many examples of normalized PGF graphs of groups. Their actual existence is not important for the proof of Theorem \ref{ComboQIEmb}, however.
\par 
Suppose we have $\mathcal{G}$ a normalized PGF graph of groups with fundamental group $G$ and a compatible homomorphism $\phi:G \to \text{MCG}(\Sigma)$. We can define a map $\Psi :\widehat{G}\to C(\Sigma)$ as follows. We fix some $\gamma\in C(\Sigma)$, and for $g\in G$ we define 
$$\Psi(g)=\phi(g)(\gamma).$$
To define $\Psi$ on the cone points of $\widehat{G}$, for each peripheral subset $P$, we can choose a bijection $\theta_P$ from $\nu(P)$ to the components of the multicurve associated to the peripheral subset $\phi(P)\subset\text{MCG}(\Sigma)$ so that we can define $\Psi$ on $\nu(P)$ so it satisfies
$$\Psi(gx)=\phi(g)\theta_P(x),$$
where $x\in \nu(P)$ and $g\in G$. In other words, we can define $\Psi$ on $\nu(P)$ by matching up via $\theta_P$ the elements of $\nu(P)$ with the components of the multicurve of $\phi(P)$ in such a way so that this "matching up" is $\phi$-equivariant as above. The map $\theta_P$ may not be unique, but it suffices to simply make some choice of such a map. Only finitely many choices need to be made by equivariance. Lastly, we make some equivariant choice for the images of the edges, where again only finitely many choices have to be made.
\par 
This map $\Psi$ is the map we will utilize in order to show that the image of $G$ (under certain assumptions on $\mathcal{G}$ and $\phi$) by $\phi$ is a PGF group.
\par 
The next lemma allows us to simplify the proof of Theorem \ref{ComboQIEmb} so that we only need to study $\Psi$ restricted to the cone points of $\widehat{G}$. It follows trivially as the cone points of $\widehat{G}$ are $1$-dense.

\begin{lemma}\label{ConePointImages2}
 If $\Psi$ restricted to the cone points of $\widehat{G}$ is a quasi-isometric embedding with the metric on the cone points the restriction of that on $\widehat{G}$, then $\Psi$ itself is also a quasi-isometric embedding.
\end{lemma}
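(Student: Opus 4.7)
The plan uses the standard principle that a coarse Lipschitz map is a quasi-isometric embedding as soon as its restriction to a quasi-dense subset is one. The two ingredients needed are a coarse Lipschitz estimate on $\Psi$ and $1$-density of the cone points in $\widehat G$, and the latter is immediate: any $g\in G$ is joined by a length-$1/2$ edge to each cone point in $\nu(gH_i)$, so every vertex of $\widehat G$ lies within $1/2$ of a cone point and every point of $\widehat G$ within $1$.

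The coarse Lipschitz estimate follows from $\phi$-equivariance together with the fact that the $G$-action on the edges of $\widehat G$ has only finitely many orbits. Each Cayley edge belongs to the orbit of one labelled by a generator in the finite set $X\cup\bigcup_i X_i$, and each cone edge belongs to the orbit of one from the identity to some cone point $\nu(\alpha)\in \nu(H_i)$ with $\alpha$ a component of the multicurve $A_i$; there are $\sum_i|A_i|$ such orbits. Because $\Psi$ is defined equivariantly on edges and $\text{MCG}(\Sigma)$ acts on $C(\Sigma)$ by isometries, the image of every edge of $\widehat G$ has diameter bounded by some uniform constant $K$, so $\Psi$ is coarse Lipschitz.

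To finish, given $x,y\in\widehat G$, pick cone points $x',y'$ at distance at most $1$ from $x,y$, respectively. The coarse Lipschitz bound controls $d_\Sigma(\Psi(x),\Psi(x'))$ and $d_\Sigma(\Psi(y),\Psi(y'))$ by $K$, while $d_{\widehat G}(x,y)$ and $d_{\widehat G}(x',y')$ differ by at most $2$. Combining with the hypothesized quasi-isometric embedding inequalities for the pair $(x',y')$ via the triangle inequality yields the analogous inequalities, with adjusted constants, for $(x,y)$. No genuine obstacle arises; the lemma amounts to the observation that on a coarse Lipschitz map, quasi-isometric embedding estimates propagate from any quasi-dense subset to the whole domain.
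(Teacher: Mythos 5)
Your proposal is correct and takes essentially the same route as the paper: the paper treats this lemma as immediate from the $1$-density of the cone points in $\widehat{G}$, with the needed coarse Lipschitz estimate for $\Psi$ supplied separately by Lemma \ref{PGFGraphEmbLipschitz} (proved there via the uniform translation bound of twist-group elements on a fixed curve, rather than your finitely-many-edge-orbits count, though both are fine). In substance both arguments are the same standard propagation of quasi-isometric embedding bounds from a quasi-dense subset along a coarse Lipschitz map.
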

\par 
Throughout, we always assume that given $G$ a fundamental group of a normalized PGF graph of groups and a compatible homomorphism $\phi$, we have an associated equivariant map $\Psi:\widehat{G} \to C(\Sigma)$ that is defined on its cone points as above Lemma \ref{ConePointImages2}. We have the following lemma about $\Psi$, independent of any further assumptions about $\mathcal{G}$.

\begin{lemma}\label{PGFGraphEmbLipschitz}
    The map $\Psi$ as defined above is a coarse Lipschitz map.
\end{lemma}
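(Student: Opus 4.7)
The plan is to verify that for any two vertices $v,w$ adjacent in $\widehat{G}$, the distance $d_{\Sigma}(\Psi(v),\Psi(w))$ is uniformly bounded; this bound, together with the fact that edges of $\widehat{G}$ have length at least $1/2$, immediately yields a Lipschitz estimate on vertices, which then extends coarsely across edges (whose $\Psi$-images, chosen equivariantly, can be taken to lie within bounded distance of their endpoint images). There are two types of edge to analyze: Cayley edges (length $1$) and cone edges (length $1/2$).

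For a Cayley edge joining $g$ and $gs$, where $s$ lies in the finite relative generating set $X$ or in the finite generating set $X_i$ of some peripheral subgroup $H_i$, isometry of the $\text{MCG}(\Sigma)$ action on $C(\Sigma)$ gives
\[ d_{\Sigma}(\Psi(g),\Psi(gs)) \;=\; d_{\Sigma}(\gamma,\phi(s)\gamma). \]
Since there are finitely many peripherals and each is finitely generated, $X \cup \bigcup_i X_i$ is a finite subset of $G$, and so the right-hand side takes only finitely many values, giving a uniform bound.

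For a cone edge joining $\nu(x) \in \nu(gH_i)$ to some $h \in gH_i$, write $x = gx_0$ with $x_0 \in \nu(H_i)$ and $h = g\tilde{h}$ with $\tilde{h} \in H_i$. Equivariance of $\Psi$ reduces the distance to
\[ d_{\Sigma}(\Psi(\nu(x)),\Psi(h)) \;=\; d_{\Sigma}(\alpha,\phi(\tilde{h})\gamma), \]
where $\alpha = \theta_{H_i}(x_0)$ is a component of the multicurve $A_i$ associated to $\phi(H_i)$. The triangle inequality, combined with a second use of isometry, bounds the right-hand side by
\[ d_{\Sigma}(\alpha,\phi(\tilde{h})\alpha) \;+\; d_{\Sigma}(\alpha,\gamma). \]
The second summand is bounded by $\max_i \max_{\beta \in A_i} d_{\Sigma}(\gamma,\beta)$, a finite constant depending only on the finitely many multicurves $A_i$. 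The first summand is where the main (in fact only) content of the argument lies: although $\phi(\tilde{h})$ can be an arbitrarily high power of a multitwist and move $\gamma$ arbitrarily far in $C(\Sigma)$, the twist group $\phi(H_i)$ preserves its associated multicurve $A_i$ setwise, so $\phi(\tilde{h})\alpha$ is again a component of $A_i$, hence either equal to or disjoint from $\alpha$, and therefore at $C(\Sigma)$-distance at most $1$. Combining the two cases yields a uniform edge-bound $C$, from which the coarse Lipschitz estimate follows by the triangle inequality along any vertex-to-vertex path, which traverses at most $2d_{\widehat{G}}(v,w)$ edges.
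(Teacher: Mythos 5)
Your proof is correct and follows essentially the same route as the paper: both arguments reduce to a uniform bound on $\Psi$-displacement across single edges (equivalently, over elements at $\widehat{G}$-distance one), using finiteness of the generating data and of the associated multicurves together with the key fact that a twist group stabilizes its associated multicurve (a simplex in $C(\Sigma)$), and then conclude by equivariance and the triangle inequality. Your splitting into Cayley edges versus cone edges is only an organizational variant of the paper's bound on $d_{C(\Sigma)}(\gamma,g\gamma)$ for $|g|_{\widehat{G}}=1$ together with the bound on $d_{C(\Sigma)}(\gamma,\theta_H(x))$.
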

\begin{proof}
    The restriction of $\Psi$ to $G$ with the $\widehat{G}$ metric is an orbit map of some curve $\gamma\in C(\Sigma)$. There is an $C$ so that for all elements $g\in G$ with $|g|_{\widehat{G}}=1$ and any cone point $x$ of a twist group $H$ that $G$ is hyperbolic relative to,
$$d_{C(\Sigma)}(\gamma, g\gamma)\leq C$$
$$d_{C(\Sigma)}(\gamma, \theta_H(x)) \leq C.$$
Indeed, the first bound follows as $g$ is either an element of the finite relative generating set, or an element of one of the finitely many twist groups that the PGF vertices of $G$ are hyperbolic relative to. All the elements of a twist groups always have a uniform translation bound on any fixed curve as they stabilize a simplex in $C(\Sigma)$. The second inequality follows simply because there are only finitely many choices for $x$. The triangle inequality gives the required Lipschitz upper bound in general. The coarse Lipschitz bound on all of $\widehat{G}$, including on the edges, then follows by equivariance and the triangle inequality.
\end{proof}

  By combining Proposition \ref{ComboRelHyp} and Lemma \ref{PGFGraphEmbLipschitz}, it follows that to establish that the image under a compatible homomorphism $\phi$ of the fundamental group $G$ of a normalized PGF graph of groups $\mathcal{G}$ is a PGF group, we just need to establish that $\phi$ is injective and that $\Psi$ admits coarse Lipschitz lower bounds with respect to distance in $C(\Sigma)$.

We now give the last definition needed to make sense of Theorem $1$. 

%Definition 3.10
\begin{definition}\label{LocalLarge}[Local Large Projections]
    Given a normalized PGF graph of groups $\mathcal{G}$ with compatible homomorphism $\phi$, we say the pair $(\mathcal{G}, \phi)$ satisfies the $L$-\textit{local large projections} property if the following holds. Let $T$ denote the Bass--Serre tree of $\mathcal{G}$. Fix two PGF vertices $v_1$ and $v_2$ of $T$ with stabilizers $G_1$ and $G_2$ with no PGF vertices between them. Let $v_{12}$ denote a base twist vertex between $v_1$ or $v_2$, with stabilizer $H_{12}$ (there are potentially two choices of such a vertex as there may be an extension vertex between $v_1$ and $v_2$, but they have the same stabilizer, see Figure \ref{FigurePGFExtensionBSTree}). Let $A_{12}$ be the multicurve associated to $\phi(H_{12})$, and take $S$ to be any component of $\Sigma \setminus A_{12}$. For all multicurves $B_1$ and $B_2$ associated to $\phi(G_1)$ and $\phi(G_2)$ respectively, that are distinct from $A_{12}$, we have
    
    $$d_S(B_1,B_2)\geq L.$$
\end{definition}

We shall see in the future (see Lemma \ref{FillingMC}) that, following the notation of Definition \ref{LocalLarge}, every component of $B_1$ and $B_2$ has nonempty projection to at least one component of $\Sigma \setminus A_{12}$, and also that for every component $S$ of $\Sigma \setminus A_{12}$ some component of $B_1$ and $B_2$ projects nontrivially to $S$ (without any assumption on the distance between their projections). This second point will be essential in applications of Theorem \ref{ComboQIEmb} (see Section \ref{SectionExamplesApplications}).
\par

We now have the language to state a precise version of Theorem $1$.
\begin{theorem}\label{ComboQIEmb}
Suppose $(\mathcal{G}, \phi)$ satisfies the $L$-local large projection property for $L\geq M+18$ with $M$ as in Proposition \ref{MMBGI}. Then $\phi$ is injective. Further, its image is PGF relative to the $\phi$ images of the twists groups of the PGF vertex groups of $\mathcal{G}$, with any extension group replacing its base group, and some twist groups are removed if they are identified with another as in Proposition \ref{DahmaniCombination}(3). All infinite order elements not contained in a twist group are pseudo-Anosov.
\end{theorem}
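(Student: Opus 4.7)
By Lemma \ref{ConePointImages2}, it suffices to show that the restriction of $\Psi$ to the cone points of $\widehat{G}$ is a quasi-isometric embedding. Granted this, Lemma \ref{PGFGraphEmbLipschitz} supplies the coarse Lipschitz upper bound on all of $\widehat{G}$, while Proposition \ref{ComboRelHyp} identifies $G$ as relatively hyperbolic with the asserted peripheral structure. For injectivity, any $g \in G$ either fixes a vertex of the Bass--Serre tree $T$ (and then lies in a vertex stabilizer, on which $\phi$ is a conjugate of inclusion by compatibility, hence injective) or acts loxodromically on $T$. In the latter case $g$ is not conjugate into any peripheral subgroup of $G$ (since peripherals stabilize a vertex of $T$), so Proposition \ref{NonperiLoxodromic} gives $|g^n|_{\widehat{G}} \to \infty$, and the QI embedding forces $\Psi(g^n)$ to leave every ball in $C(\Sigma)$, whence $\phi(g)$ has infinite order. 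The same ingredients yield the final clause: if $\phi(g)$ is infinite order and not in a twist group of $\text{MCG}(\Sigma)$, then $g$ is not conjugate into a peripheral of $G$ (as all peripherals map under $\phi$ into twist groups), so Proposition \ref{NonperiLoxodromic} combined with the QI embedding $\Psi$ yields a loxodromic action of $\phi(g)$ on $C(\Sigma)$, which by \cite{MM98} forces $\phi(g)$ to be pseudo-Anosov.

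\textbf{Setting up the lower bound.} The core work is the coarse Lipschitz lower bound on $\Psi$ restricted to cone points. Fix cone points $x, y \in \widehat{G}$ and a $\widehat{G}$-geodesic $\widehat{\gamma}$ between them. Bass--Serre theory associates to $\widehat{\gamma}$ a path $v_0, v_1, \ldots, v_n$ in $T$ of length comparable to $d_{\widehat{G}}(x,y)$, with consecutive stabilizers meeting along twist edge groups and with the cone points along $\widehat{\gamma}$ corresponding to twist cosets in the $\stab(v_i)$. To each pair of consecutive PGF vertices $v_i, v_{i+1}$ on this path (with possibly intermediate twist or extension vertices between them) I attach a base twist vertex $v_{i,i+1}$, its associated multicurve $A_{i,i+1} \subset \Sigma$, and multicurves $B_i, B_{i+1}$ associated to the $\phi$-images of the adjacent PGF stabilizers.

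\textbf{No backtracking via BGI.} The plan is to show the sequence of $\Psi$-images of cone points along $\widehat{\gamma}$ is an unparameterized quasi-geodesic in $C(\Sigma)$ that does not backtrack. For each $i$, the $L$-local large projections property gives $d_S(B_i, B_{i+1}) \geq L$ on every component $S$ of $\Sigma \setminus A_{i,i+1}$. Since $L \geq M+18$, Proposition \ref{MMBGI} forces every $C(\Sigma)$-geodesic from a cone-point image at $v_i$ to one at $v_{i+1}$ to pass close to a component of $A_{i,i+1}$, i.e.\ close to the $\Psi$-image of the intermediate cone point. Behrstock's inequality (Proposition \ref{Behrstock}) together with Lemma \ref{ProjLip} prevents consecutive multicurves $A_{i-1,i}$ and $A_{i,i+1}$ from having overly small distance in $C(\Sigma)$, since each produces a large projection in a complementary subsurface of the other. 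Segments of $\widehat{\gamma}$ that stay in a single PGF stabilizer contribute via the intrinsic QI embedding of that stabilizer's coned-off Cayley graph from Definition \ref{PGFGrp}.

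\textbf{Main obstacle.} The central technical hurdle will be concatenating these segment-wise estimates into a single coarse Lipschitz lower bound on $d_{C(\Sigma)}(\Psi(x),\Psi(y))$ with constants independent of $n$. The specific threshold $L \geq M + 18$ is calibrated so that the $M$ from Proposition \ref{MMBGI}, the $2$'s from Lemma \ref{ProjLip}, and the $10$/$4$ gap in Proposition \ref{Behrstock} all fit inside the projection budget at each transition. Additional bookkeeping will be required to accommodate the extension vertices of Definition \ref{PGFGraph}(2), which introduce multitwist generators from the new multicurves along $\widehat{\gamma}$ that must be absorbed into the relevant peripheral coset without disturbing the local large projections estimates. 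I expect the cleanest organization is an induction on $n$ together with a Morse-type argument (Proposition \ref{Morse}) to translate the no-backtracking property of the $\Psi$-images of cone points into a genuine lower bound on the length of any $C(\Sigma)$-geodesic between $\Psi(x)$ and $\Psi(y)$, which yields the QI embedding and completes the proof.
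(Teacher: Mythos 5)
Your reduction, relative hyperbolicity via Proposition \ref{ComboRelHyp}, and the injectivity/pseudo-Anosov discussion (via Proposition \ref{NonperiLoxodromic}, the equivariant QI embedding, and vertex stabilizers for elliptic elements) match the paper's argument, which packages these into Lemma \ref{EquivQIEmb} and the final paragraph of its proof. However, the heart of the theorem — the coarse Lipschitz lower bound for $\Psi$ on cone points with constants independent of the length $n$ of the path in the Bass--Serre tree — is exactly the part you leave as a plan, and the plan as stated has a genuine gap. Showing that the images of the cone points form a ``non-backtracking unparameterized quasi-geodesic'' does not by itself give a lower bound: consecutive cone-point images can be uniformly close in $C(\Sigma)$ (short vertex-group excursions, or long runs of twist vertices whose multicurves make small progress), so one cannot simply sum segmentwise estimates, and an ``induction on $n$ plus Morse'' does not explain why the multiplicative constant stays bounded. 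Your appeal to Behrstock is also not how it enters: it does not bound $C(\Sigma)$-distance between consecutive edge multicurves; rather it propagates large subsurface projections along the whole sequence.

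The paper resolves this with machinery your sketch omits. First, the local large projections are upgraded to a global statement (Lemma \ref{LocalToGlobal}: $d_S(A_0,A_n)\geq L-8$ for every complementary component $S$ of every intermediate $B_i$) by an inductive Behrstock argument; this feeds into Proposition \ref{MMBGI} to produce, on any geodesic $[\delta_0,\delta_n]$, ordered footprints $\alpha_i\leq\omega_k$ near each $B_i$ (Lemma \ref{OrderingOnMulticurves}). Second — and this is where sparseness of the edge multicurves, which you never use, is essential — a filling/complexity argument (Lemmas \ref{FarCurvesIntersect}, \ref{FarCurvesFill}) shows that after a bounded number $s\leq 2(\xi(\Sigma)+1)$ of steps the relevant curves fill $\Sigma$, yielding a lower bound on $d_\Sigma(\delta_0,\delta_n)$ that is linear in $n$ (Lemma \ref{ShortSegmentsLowerBound}); without sparseness this counting can stall and no such bound exists. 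Finally, Lemma \ref{lem:admSequenceQIlowerbound} performs the concatenation you identify as the ``main obstacle'': it decomposes $[\Psi(p),\Psi(p')]$ along the ordered footprints into $D$-long segments (where the vertex groups' intrinsic $(C,C)$-QI lower bounds dominate) and maximal unions of $D$-short segments (where the linear-in-$n$ counting bound dominates), and balances the additive errors so the resulting constant $K$ depends only on $C$, $s$, and $E$, not on $n$. As written, your proposal identifies the right ingredients but does not supply this interpolation between the two regimes, which is the substantive content of the proof.
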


As stated after Lemma \ref{PGFGraphEmbLipschitz}, it suffices to prove that $\phi$ is injective and that $\Psi$ admits coarse Lipschitz lower bounds in terms of distance in $C(\Sigma)$. The majority of the work for finding this lower bound is done in Section \ref{SectionWorkingTowardsComboTheorem}. Along the way, we will develop a more general language providing other kinds of combination theorems (see Lemmas \ref{FarCurvesIntersect}-\ref{ShortSegmentsLowerBound} for the general language and Theorems \ref{ConvexCocompactCombo}, \ref{ConvexCoCptTwists}, and \ref{LoaAnalog} for the other examples of combinations).
\par 
We state the following two lemmas, the former of which will be used to show that the compatible homomorphism in Theorem \ref{ComboQIEmb} is injective.

\begin{lemma}\label{EquivQIEmb}
    Let $G$ be a relatively hyperbolic group, and $\phi:G\to \text{MCG}(\Sigma)$ a homomorphism. Fix a map $\Psi:\widehat{G}\to C(\Sigma)$ which is a $\phi$ equivariant quasi-isometric embedding, (that is, $\Psi(gx)=\phi(g)\Psi(x)$ for $g\in G$, $x\in \widehat{G}$). Suppose $f\in G$ has infinite order and is not conjugate into any peripheral subgroup of $G$. Then $\phi(f)$ is pseudo-Anosov. Further, if $\phi$ restricts to an injective map on each peripheral subgroup of $G$ and if every nontrivial finite order element has nontrivial image, then $\phi$ is injective.
\end{lemma}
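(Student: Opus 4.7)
The plan is to establish the pseudo-Anosov statement first, and then use it together with the hypotheses on peripheral subgroups and finite-order elements to conclude injectivity. The argument has no deep technical difficulty; the main point is to package the $\phi$-equivariance of $\Psi$ with Proposition \ref{NonperiLoxodromic} and the Masur--Minsky characterization of loxodromic mapping classes.

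For the pseudo-Anosov claim, I would apply Proposition \ref{NonperiLoxodromic} to the given $f$, obtaining a $\lambda>0$ with
$$d_{\widehat{G}}(e,f^n)\;\geq\;\lambda|n|\qquad\text{for all }n\in\Z.$$
Since $\Psi$ is $\phi$-equivariant, $\Psi(f^n)=\phi(f)^n\Psi(e)$, so the quasi-isometric embedding bound for $\Psi$ yields
$$d_{C(\Sigma)}\!\bigl(\Psi(e),\phi(f)^n\Psi(e)\bigr)\;\succeq\;d_{\widehat{G}}(e,f^n)\;\geq\;\lambda|n|.$$
Hence $\phi(f)$ has positive asymptotic translation length on $C(\Sigma)$, and in particular acts loxodromically. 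By the remark following Theorem \ref{NTClassification}, the loxodromic elements of MCG$(\Sigma)$ acting on $C(\Sigma)$ are precisely the pseudo-Anosov elements, so $\phi(f)$ is pseudo-Anosov.

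For injectivity, I would fix $f\in\ker\phi$ and split into three exhaustive cases. If $f$ has finite order, the hypothesis that every nontrivial finite-order element has nontrivial image forces $f=e$. If $f$ has infinite order and is not conjugate into any peripheral subgroup, then the previous paragraph shows $\phi(f)$ is pseudo-Anosov, contradicting $\phi(f)=e$. Finally, if $f$ has infinite order and lies in a peripheral subgroup $P$, which is a conjugate $gH_ig^{-1}$ of some $H_i$, then the hypothesis that $\phi|_P$ is injective (equivalently, $\phi|_{H_i}$ is injective) forces $f=e$, contradicting that $f$ has infinite order. Thus $\ker\phi$ is trivial.

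The only step that uses anything beyond formal bookkeeping is the translation-length calculation in the first paragraph, whose content is combining $\phi$-equivariance of $\Psi$ with its QI-embedding property to transport the linear lower bound from $\widehat{G}$ to $C(\Sigma)$; this will be the place to write carefully, but no subtle obstacle arises.
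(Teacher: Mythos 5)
Your proof is correct and follows essentially the same route as the paper: Proposition \ref{NonperiLoxodromic} plus the $\phi$-equivariant QI embedding gives loxodromic action of $\phi(f)$ on $C(\Sigma)$, hence pseudo-Anosov, and injectivity follows from the same exhaustive case split into finite order, infinite order nonperipheral, and peripheral elements. Your explicit translation-length estimate just spells out what the paper summarizes in one sentence, so there is nothing to add.
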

\begin{proof}
    By Proposition \ref{NonperiLoxodromic}, an infinite order nonperipheral element $f\in G$ acts loxodromically on $\widehat{G}$. As $\Psi$ is a $\phi$-equivariant quasi-isometric embedding, it follows that $\phi(f)$ will also act loxodromically on $C(\Sigma)$. But then $\phi(f)$ is pseudo-Anosov, as by Theorem \ref{NTClassification} any element which is not pseudo-Anosov has a power fixing a curve.
    \par 
    Every element $g\in G$ is either peripheral or not peripheral. If $g$ is peripheral, then by assumption $\phi(g)\neq id$. If $g$ is nonperipheral, then either it has infinite order and hence $\phi(g)$ is pseudo-Anosov (and in particular nontrivial), or $g$ has finite order and by assumption $\phi(g)$ in nontrivial. This shows that $\phi$ is injective.
\end{proof}

\begin{corollary}\label{InfOrderpA}
Let $G$ be a PGF group, and suppose $f$ has infinite order and is not contained in the conjugate of any twist subgroup of $G$. Then $f$ is pseudo-Anosov.
\end{corollary}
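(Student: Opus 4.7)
The plan is to apply Lemma \ref{EquivQIEmb} directly, with $G$ playing the role of the relatively hyperbolic group and the inclusion $\iota : G \hookrightarrow \text{MCG}(\Sigma)$ playing the role of $\phi$. Since $G$ is PGF, Definition \ref{PGFGrp} gives an equivariant quasi-isometric embedding $\widehat{G} \to C(\Sigma)$, which by Lemma \ref{ConePointImages} can be taken to be an orbit map of the form $g \mapsto g\gamma$ on $G$ and $\nu(\alpha) \mapsto \alpha$ on cone points. This orbit map is manifestly $\iota$-equivariant in the sense of Lemma \ref{EquivQIEmb}.

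Now, the hypothesis that $f$ has infinite order and is not contained in any conjugate of a twist subgroup of $G$ is exactly the statement that $f$ is a nonperipheral element of infinite order in the relatively hyperbolic group $G$. Lemma \ref{EquivQIEmb} then concludes that $\iota(f) = f$ is pseudo-Anosov, which is the desired result. In fact we only need the first conclusion of that lemma (the one concerning a single nonperipheral element), not the injectivity statement, so the hypotheses about $\iota$ restricting to an injection on each peripheral and being nontrivial on finite order elements are irrelevant here.

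There is no substantial obstacle: the whole content is already packaged in Lemma \ref{EquivQIEmb}, whose proof routes through Proposition \ref{NonperiLoxodromic} (to show $f$ acts loxodromically on $\widehat{G}$), then uses the equivariant QI embedding to transport this loxodromic action to $C(\Sigma)$, and finally invokes the Nielsen--Thurston classification (Theorem \ref{NTClassification}) together with the Masur--Minsky result that only pseudo-Anosovs act loxodromically on $C(\Sigma)$. The corollary is essentially the specialization of Lemma \ref{EquivQIEmb} to the canonical setup provided by the PGF definition itself, and the proof amounts to verifying that Lemma \ref{ConePointImages} provides the required $\Psi$.
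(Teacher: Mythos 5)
Your proposal is correct and is exactly the paper's intended argument: the corollary is stated immediately after Lemma \ref{EquivQIEmb} as its direct specialization, taking $\phi$ to be the inclusion $G\hookrightarrow \text{MCG}(\Sigma)$ and $\Psi$ the equivariant orbit-map QI embedding supplied by Definition \ref{PGFGrp} (via Lemma \ref{ConePointImages}), with "not conjugate into a twist subgroup" matching "nonperipheral." No gap; your observation that only the first conclusion of the lemma is needed is also accurate.
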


\par 

 The following result essentially says that, by equivariance of projections, to show that Definition \ref{LocalLarge} is satisfied, it suffices to check the condition on orbits. This effectively means you can reduce to checking in the graph of groups. The statement of the lemma makes this formal.
 \par 
 The Bass--Serre tree comes equipped with a projection map $\pi: T\to \mathcal{G}$. This map sends PGF vertices to PGF vertices, base vertices to base vertices, and extension vertices to extension vertices.  
 \par 
 For each base vertex $v$ of $\mathcal{G}$, choose a fixed vertex $\widetilde{v}\in T$ so that $\pi(\widetilde{v})=v$, and consider the closure of the component of the $\pi$ preimage of the open $1$ neighborhood of $v$ containing $\widetilde{v}$. This is a subgraph of $T$ consisting of all the edges and their vertices containing $\widetilde{v}$. For convenience let us call this set the \textit{star} with center $\widetilde{v}$. 
 \par 
 For each base vertex $v$ in $\widehat{G}$, we make a choice of a star with center $\widetilde{v}$ for some $\widetilde{v}$ with $\pi(\widetilde{v})=v$. If a given star contains an extension vertex, let $\tau$ be either the identity or denote a word in the new multitwists of the vertex. If there is not a extension vertex, assume $\tau=id$. We then have the following lemma.

%Lemma 3.16
\begin{lemma}\label{LocalLargeEquiv}
     Suppose that for each chosen star with center $\widetilde{v}$ with stabilizer $H$, and any pair of distinct PGF vertex groups $G_1$ and $G_2$ of vertices in this star, the following holds. Let $A$ denote the multicurve associated to $\phi(H)$. For all multicurves $B_1$ and $B_2$ associated to $\phi(G_1)$ and $\phi(G_2)$ not equal to $A$, we have for any component $S$ of $\Sigma \setminus A$ that
    $$d_S(B_1,\tau(B_2))\geq L.$$
    Then $(\mathcal{G}, \phi)$ satisfies the $L$-local large projections property.
\end{lemma}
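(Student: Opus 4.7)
The plan is to use transitivity of the $G$-action on the $\pi$-fiber of each vertex of $\mathcal{G}$ in $T$, together with equivariance of subsurface projection under $\phi$, to reduce an arbitrary configuration appearing in the $L$-local large projections property to one that lives inside a chosen star. Concretely, one should pick $g \in G$ so that $g\widetilde{v} = v_{12}$, verify that the adjacent PGF vertices of the configuration translate into PGF vertices of the star (using an extra multitwist when an extension vertex is present on $[v_1,v_2]$), and then invoke the star hypothesis after pushing the projection inequality through $\phi(g)$ and the multitwist.

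In the simplest case, the geodesic is $v_1 - v_{12} - v_2$ with no extension vertex, so $g^{-1}v_1$ and $g^{-1}v_2$ are PGF neighbors of $\widetilde{v}$ in the star. Writing $G_i' = \text{Stab}(g^{-1} v_i)$ and $H = \text{Stab}(\widetilde{v})$, conjugation by $g$ identifies $G_i = gG_i'g^{-1}$ and $H_{12} = gHg^{-1}$, hence $A_{12} = \phi(g)A$; every multicurve $B_i$ associated to $\phi(G_i)$ with $B_i \neq A_{12}$ has the form $\phi(g)B_i'$ with $B_i'$ associated to $\phi(G_i')$ and $B_i' \neq A$, and every component $S$ of $\Sigma \setminus A_{12}$ has the form $\phi(g)S'$ for a component $S'$ of $\Sigma \setminus A$. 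The hypothesis, applied with $\tau = \mathrm{id}$, then gives $d_{S'}(B_1', B_2') \geq L$, and equivariance of subsurface projection yields the required bound.

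In the extension case, the geodesic reads $v_1 - v_{12}' - w - v_{12} - v_2$ with $w$ an extension vertex and $\text{Stab}(v_{12}) = \text{Stab}(v_{12}') = H_{12}$. After translating by $g$ with $g\widetilde{v} = v_{12}$, the extension $g^{-1}w$ is adjacent to $\widetilde{v}$ and lies in the star, and its stabilizer decomposes as $H \times M$ with $M$ the new-multitwist factor; $M$ acts transitively on the set of base-vertex lifts adjacent to $g^{-1}w$, so we may choose $\tau \in M$ with $\tau\widetilde{v} = g^{-1}v_{12}'$. Then $(g\tau)^{-1}v_1$ becomes a PGF neighbor of $\widetilde{v}$ in the star. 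The key structural observation is that $\phi(\tau)$ is a composition of Dehn twists on curves disjoint from the multicurve $A$ associated to $\phi(H)$; consequently $\phi(\tau)$ fixes each curve of $A$ and preserves each component of $\Sigma \setminus A$. Writing $B_1 = \phi(g\tau)B_1'$ and $B_2 = \phi(g)B_2'$, the two preserved structures allow us to shuttle $\phi(\tau)$ between the two multicurves:
$$d_S(B_1, B_2) \;=\; d_{S'}\!\bigl(\phi(\tau)B_1',\, B_2'\bigr) \;=\; d_{S'}\!\bigl(B_1',\, \phi(\tau^{-1})B_2'\bigr),$$
and the hypothesis applied to the word $\tau^{-1}$ in the new multitwists gives the desired $d_S(B_1, B_2) \geq L$.

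The main obstacle is this extension case, specifically the bookkeeping needed to identify the correct $\tau \in M$ relating the two base-vertex lifts adjacent to a common extension vertex, and to verify that $\phi(\tau)$ both stabilizes $A$ setwise (so that the condition $B_i \neq A_{12}$ transfers to $B_i' \neq A$) and fixes each component of $\Sigma \setminus A$ (so that equivariance validly shifts $\tau$ from one multicurve to the other in the same subsurface). Both verifications rest directly on Definition \ref{PGFGraph}(2), which ensures the new multitwist curves of the extension lie in a multicurve disjoint from the one associated to the edge group $H_{12}$. Once these are in place, the translation of all data under $\phi(g)$ and $\phi(\tau)$ proceeds mechanically, and the conclusion reduces to the star hypothesis in both cases.
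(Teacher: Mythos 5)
Your proposal is correct and follows essentially the same route as the paper's (much terser) proof: translate the configuration into a chosen star by equivariance, settle the length-two case with $\tau=\mathrm{id}$, and in the extension case use a word $\tau$ in the new multitwists to move the second base-vertex lift to the star's center before invoking the star hypothesis. The only cosmetic difference is that you conjugate the $v_1$-side into the star and then shuttle $\phi(\tau)$ onto the other multicurve (using that it fixes $A$ and preserves each component of $\Sigma\setminus A$), whereas the paper applies the hypothesis directly in its stated form $d_S(B_1,\tau(B_2))\geq L$; both hinge on the same disjointness clause of Definition \ref{PGFGraph}(2).
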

\begin{proof}
    This follows easily from equivariance. Namely, given any two PGF vertices $w_1$ and $w_2$ of $T$ with no PGF vertices between them, we may translate so that the first two or last two edges of $[w_1,w_2]$ lies in one of the chosen stars. If $[w_1,w_2]$ is only length $2$, then we are done by equivariance (namely, the above inequality with $\tau=id$ suffices to give the required inequality between $w_1$ and $w_2$ as in Definition \ref{LocalLarge}). Otherwise, the translate of $[w_1,w_2]$ contains an extension vertex, and in this case we can apply the inequality using a nontrivial $\tau$, a word in the new multitwists of this vertex.
\end{proof}

It may seem initially that Lemma \ref{LocalLargeEquiv} requires checking an infinite number of conditions (coming from the different choices of multitwist $\tau$), but as we will see in Lemma \ref{FarMulticurvesFarTranslation}, there will be no issue in our applications.

\section{Projections and the Local to Global Property}\label{SectionWorkingTowardsComboTheorem}

\subsection{Multicurves of PGF Groups and their projections}\label{SubsectionMulticurvesPGFGroupsProjections}
We start with proving an important result (Proposition \ref{PGFProjBound}) which is the key ingredient to construct actual examples of PGF groups arising from applications of Theorem \ref{ComboQIEmb}. In the statement of Theorem \ref{ComboQIEmb}, it is required that various collections of sets have sufficiently large distance from each other (see Definition \ref{LocalLarge}). To construct examples where such sets exists as in Section \ref{SectionExamplesApplications}, it is useful to know that each of these sets have bounded diameter. Once this is known, we can push these sets to be arbitrarily apart from each other by applying a pseudo-Anosov on the curve complex of the subsurface that the sets lie in. We note however that this result is not actually used in the proof of Theorem \ref{ComboQIEmb}.
\par 
We first set the following notation. If $\mathcal{A}$ is a collection of curves and $\mathcal{T}\subset \text{MCG}(\Sigma)$, we write
$$\mathcal{T}(\mathcal{A})=\{\beta\in C(\Sigma)\ | \ \beta=\tau(\alpha), \tau\in \mathcal{T} \text{ and } \alpha \in \mathcal{A}\}.$$
and for a collection of multicurves $\mathcal{M}$, 
$$\mathcal{T}(\mathcal{M})=\{B\ | \ B=\tau(A), \tau\in \mathcal{T} \text{ and } A \in \mathcal{M}\}.$$
\begin{lemma}\label{InductiveProjBound}
   Fix $\mathcal{M}$ a finite collection of multicurves and $\mathcal{A}$ a finite collection of curves in $\Sigma$. Take $R\in \Z_{\geq 0}$, and let $\mathcal{T}_R$ be the set of all elements of MCG$(\Sigma)$ that can be written as a product of at most $R$ multitwists in the multicurves of $\mathcal{M}$. Let $\mathcal{S}_R$ be the collection of all essential subsurfaces of $\Sigma$ except annuli with core curves that are a component of a multicurve in $\mathcal{T}_R(\mathcal{M})$. Then there is a $K=K(R, \mathcal{M}, \mathcal{A})$ so that for all  $S\in \mathcal{S}_R$,
   $$\text{diam}(\pi_S(\mathcal{T}_R(\mathcal{A}))) \leq K.$$
   In particular, $K$ is independent of $S$. 
\end{lemma}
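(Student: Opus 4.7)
The plan is to prove the bound by induction on $R$. As a preliminary simplification, one should enlarge $\mathcal{A}$ to $\mathcal{A}' := \mathcal{A} \cup \{\text{components of multicurves in } \mathcal{M}\}$, which remains finite, and establish the bound for $\mathcal{A}'$; this suffices since $\mathcal{T}_R(\mathcal{A}) \subseteq \mathcal{T}_R(\mathcal{A}')$. The base case $R=0$ is immediate: $\mathcal{T}_0(\mathcal{A}') = \mathcal{A}'$ is finite, and Lemma \ref{IntersectionBound} bounds $d_S(\alpha_1, \alpha_2)$ uniformly in $S$ via the maximum of $2 + 2\log_2 i(\alpha_1, \alpha_2)$ over the finite collection of pairs in $\mathcal{A}'$ with nonempty $S$-projection.

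For the inductive step, assume the result with constant $K_{R-1}$ on $\mathcal{S}_{R-1}$, noting $\mathcal{S}_R \subseteq \mathcal{S}_{R-1}$. Decompose any $\tau \in \mathcal{T}_R$ as $\tau = t\sigma$ with $\sigma \in \mathcal{T}_{R-1}$ and $t$ a multitwist in some $M \in \mathcal{M}$, and set $\beta = \sigma \alpha$, so $\tau \alpha = t\beta$. For $S \in \mathcal{S}_R$ and any two elements $t_1\beta_1, t_2\beta_2 \in \mathcal{T}_R(\mathcal{A}')$, the triangle inequality gives
$$d_S(t_1\beta_1, t_2\beta_2) \leq d_S(t_1\beta_1, \beta_1) + d_S(\beta_1, \beta_2) + d_S(t_2\beta_2, \beta_2),$$
with the middle term at most $K_{R-1}$ by the inductive hypothesis (since $\beta_1, \beta_2 \in \mathcal{T}_{R-1}(\mathcal{A}')$). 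The crux is thus to bound each outer term $d_S(t\beta, \beta)$ uniformly. The key observation is that $t$ fixes every component of $M$ pointwise: if $M$ is disjoint from $S$ then $t$ fixes $S$ pointwise and $\pi_S(t\beta) = \pi_S(\beta)$, giving distance zero; otherwise some component $c$ of $M$ has $\pi_S(c) \neq \emptyset$ (either $c$ lies essentially in $S$ or $c$ crosses $\partial S$ essentially). Since $S \in \mathcal{S}_R$ is not the annulus $Y_c$ (as $c \in \mathcal{M} \subseteq \mathcal{T}_R(\mathcal{M})$), triangulating through $\pi_S(c)$ and using that $tc = c$ (so $\pi_S(tc) = \pi_S(c)$) yields a bound of the form
$$d_S(\beta, t\beta) \leq C_1 \cdot d_S(\beta, c) + C_2$$
for absolute constants $C_1, C_2$ independent of the twist powers of $t$. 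The inductive hypothesis then gives $d_S(\beta, c) \leq K_{R-1}$ (since $c \in \mathcal{A}'$, so $c = \text{id}\cdot c \in \mathcal{T}_{R-1}(\mathcal{A}')$), so the outer terms are uniformly controlled and the constant $K_R$ can be assembled.

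The main obstacle is establishing the inequality $d_S(\beta, t\beta) \leq C_1 d_S(\beta, c) + C_2$ uniformly in the twist powers. When $c$ lies essentially in $S$, the triangle inequality through $c$ gives $d_S(\beta, t\beta) \leq 2 d_S(\beta, c)$ directly, since $t$ preserves $S$ setwise and fixes $c$, so equivariance applies. The case where $c$ crosses $\partial S$ essentially is more delicate, since $t$ no longer preserves $S$; here one combines Lemma \ref{ProjLip} (which bounds $\operatorname{diam}_S(\pi_S(t\beta)) \leq 2$) with the fact that for large twist powers the $c$-parallel arcs produced by wrapping force $\pi_S(t\beta)$ to contain curves close to $\pi_S(c)$, and invokes Proposition \ref{Behrstock} between $S$ and the excluded annulus $Y_c$ to control the shift of the $S$-projection. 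The exclusion of annular subsurfaces with core in $\mathcal{T}_R(\mathcal{M})$ from $\mathcal{S}_R$ is precisely what enables this dual Behrstock argument, and once this case is resolved the induction closes with $K_R = K_R(R, \mathcal{M}, \mathcal{A})$ as required.
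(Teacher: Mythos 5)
Your overall skeleton (induction on $R$, enlarging $\mathcal{A}$ to contain the components of the multicurves of $\mathcal{M}$, and reducing everything to a uniform bound on the displacement $d_S(\beta,t\beta)$ for $t\in\mathcal{T}_1$ and $\beta\in\mathcal{T}_{R-1}(\mathcal{A}')$) is reasonable, but the step you yourself identify as the crux is exactly where the argument is not actually carried out, and it is a genuine gap. The inequality $d_S(\beta,t\beta)\leq C_1 d_S(\beta,c)+C_2$, with constants independent of the twisting powers, does not follow from "triangulating through $\pi_S(c)$ and using $tc=c$": the triangle inequality gives $d_S(\beta,t\beta)\leq d_S(\beta,c)+d_S(c,t\beta)$, and equivariance converts the second term into $d_{t^{-1}S}(c,\beta)$, a projection to a \emph{different} subsurface, which is not controlled by $d_S(\beta,c)$. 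Your "wrapping" heuristic does give $d_S(t\beta,c)\leq\mathrm{const}$, but only when the power of the twist on a component $c$ crossing $\partial S$ is above a threshold; it says nothing about small nonzero powers, and a multitwist in $\mathcal{T}_1$ may carry small powers on exactly the components meeting $S$. Moreover your "$c\subset S$" case uses that $t$ preserves $S$ setwise, which fails as soon as another component of the same multicurve (possibly carrying a nonzero power) crosses $\partial S$; and the appeal to Proposition \ref{Behrstock} is left at the level of a gesture — the Behrstock dichotomy only tells you that either $\pi_S(t\beta)$ is near $\pi_S(c)$ or $\pi_{Y_c}(t\beta)$ is near $\pi_{Y_c}(\partial S)$, and the second branch does not by itself control $\pi_S(t\beta)$. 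A smaller but real issue: in your chain $d_S(t_1\beta_1,t_2\beta_2)\leq d_S(t_1\beta_1,\beta_1)+d_S(\beta_1,\beta_2)+d_S(\beta_2,t_2\beta_2)$ the intermediate projections $\pi_S(\beta_i)$ may be empty, so the middle term is not even defined without the kind of augmentation the paper performs.

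For comparison, the paper's proof never needs any estimate on how a twist moves the projection of a varying curve. It bounds $\mathrm{diam}\,\pi_S(\tau(\mathcal{T}_R(\mathcal{A})))=\mathrm{diam}\,\pi_{\tau^{-1}(S)}(\mathcal{T}_R(\mathcal{A}))$ by the inductive hypothesis (checking that $\tau^{-1}(S)\in\mathcal{S}_R$), and then anchors each translate to a \emph{fixed} finite family: each multicurve of $\mathcal{M}$ is extended to two pants decompositions $A_1,A_2$ (so that one of them always projects nontrivially, including to annuli), these are preserved by the multitwist $\tau$, and hence $d_S(A_1,\tau(\alpha))=d_{\tau^{-1}(S)}(A_1,\alpha)$ is bounded by Lemmas \ref{ProjLip} and \ref{IntersectionBound} because $A_1$ and $\alpha$ are fixed curves with bounded intersection number, uniformly over all subsurfaces. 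That equivariance-plus-fixed-anchor device is precisely what lets the paper avoid the uniform twist-displacement estimate your proposal depends on; if you want to keep your route, you would need to state and prove (or correctly cite) such an estimate covering small powers, mixed components, and annular $S$, which is substantially more delicate than the paper's argument.
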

\begin{proof}
 We use induction on $R$. Fix $\alpha \in \mathcal{A}$. By adding a new curve to $\mathcal{A}$ we may assume there is a $\beta\in \mathcal{A}$ with $d_{\Sigma}(\alpha, \beta)\geq 3$. This will ensure that we can always find nonempty projections to a given subsurface.
 \par 
 We also need to extend every $A\in \mathcal{M}$ to two different pants decompositions $A_1$ and $A_2$ chosen so that for any essential subsurface $S$, either $A_1$ or $A_2$ has nonempty projection to $S$. The fact that one extension does not suffices is because we are also projecting to annuli. We do not twist on the new components of the multicurve, so in particular these new curves do not change the set of annuli excluded from $\mathcal{S}_R$.
\par 
Now, the base case $R=0$ is immediate, as $\mathcal{A}$ is a finite collection, so there is there is a bound on pairwise intersection numbers on pairs of elements of $\mathcal{A}$. The last statement of Lemma \ref{ProjLip} along with Lemma \ref{IntersectionBound} immediately gives a bound independent of $S$.
\par 
Suppose the result is true for $R\geq 0$. Note that 

$$\mathcal{T}_{R+1}(\mathcal{A})=\mathcal{T}_1\mathcal{T}_{R}(\mathcal{A})$$
so $\mathcal{T}_{R+1}(\mathcal{A})$ can be written as a union of translates of $\mathcal{T}_R(\mathcal{A})$ under multitwists on multicurves in $\mathcal{M}$. By equivariance we then obtain for any $\tau \in \mathcal{T}_1$ and $S\in \mathcal{S}_{R+1}$ that
\begin{equation}\label{UniformFiniteDiameter}
\text{diam}(\pi_S(\tau(\mathcal{T}_R(\mathcal{A})))=\text{diam}(\pi_{\tau^{-1}(S)}(\mathcal{T}_R(\mathcal{A}))).    
\end{equation}

The right hand side is uniformly bounded for nonannular $S$ by induction, as $\tau^{-1}(S)\in \mathcal{S}_R$ (it will often lie in $\mathcal{S}_{R+2}\subset \mathcal{S}_{R}$). Thus each translated copy of $\mathcal{T}_R(\mathcal{A})$ by some $\tau \in \mathcal{T}_1$ has uniformly bounded diameter projections to every $S\in \mathcal{S}_{R+1}$.
\par 
On the other hand, given $\tau\in \mathcal{T}_1$ which is a multitwist on $A$, take the extensions $A_1$ and $A_2$ as given above. Fix a subsurface $S\in \mathcal{S}_{R+1}$. Suppose $\pi_S(A_1)\neq \varnothing$ and $\pi_S(\tau(\alpha))\neq \varnothing$. then note that by equivariance
\begin{equation}\label{FiniteDistanceToMulticurves}
    d_{\tau^{-1}(S)}(A_1, \alpha)=d_S(A_1, \tau(\alpha)).
\end{equation}
In particular, as $\alpha\in \mathcal{A}\subset \mathcal{T}_{R+1}(\mathcal{A})$ is some fixed curve and $A_1$ comes from a finite set of multicurves, there is a bound on the left hand side, by Lemmas \ref{ProjLip} and \ref{IntersectionBound}. On the other hand, if $\pi_S(\tau(\alpha))= \varnothing$ or $\pi_S(A_1)=\varnothing$, then we may just do the same argument with $\beta$ and/or $A_2$. We thus obtain using Lemma \ref{IntersectionBound} a bound on the distance from all the projections of the translates of $\mathcal{T}_R(\mathcal{A})$ that make up $\mathcal{T}_{R+1}(\mathcal{A})$ to the projections of the extensions of the multicurves of $\mathcal{M}$.
\par
Thus the projection of $\mathcal{T}_{R+1}(\mathcal{A})$ to any surface in $\mathcal{S}_{R+1}$ is made up of a union of uniformly bounded diameter sets (which is due to Equation \eqref{UniformFiniteDiameter}) that are all some uniform distance from the collection of the projections of finitely many extensions of multicurves of $\mathcal{M}$ (which is due to Equation \eqref{FiniteDistanceToMulticurves}). Altogether this gives a uniform bound on the diameter of the projection of $\mathcal{T}_{R+1}(\mathcal{A})$ to $C(S)$, only depending on $\mathcal{M}$, $\mathcal{A}$, and $R$. 
\end{proof}
We remark here that we could also have started with a collection of curves with uniformly finite diameter projection to all subsurfaces. We phrased it in terms of only a finite collection because that is all we need. 

\begin{lemma}\label{InductiveProjBound2}
Let $G$ be any subgroup of $\text{MCG}(\Sigma)$ generated by a finite set $X$ relative to a collection of twist subgroups $\{H_1,\ldots, H_n\}$. Fix a finite set of curves $\mathcal{A}$ and an essential subsurface $S\subset \Sigma$. If $S$ is an annulus then assume it is not a neighborhood of a component of any of the associated multicurves of a $G$-coset of an element of $\{H_1,\ldots, H_n\}$. Then for every $R\geq 0$ there is a $K=K(R, G)$ independent of $S$ such that
$$\textup{diam}\bigg(\bigcup_{g\in \widehat{B}(e, R)}\pi_{S}(g\mathcal{A})\bigg) \leq K.$$
\end{lemma}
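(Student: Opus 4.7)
The plan is to adapt the induction in the proof of Lemma~\ref{InductiveProjBound} by converting the relative ball $\widehat{B}(e,R)$ into a bounded-length product of elements of a finite set together with iterated multitwists on a finite collection of multicurves. For each $i$, let $H_i' \trianglelefteq H_i$ be the finite-index multitwist subgroup on the multicurve $A_i$ associated to $H_i$, and fix a finite set $F_i$ of coset representatives for $H_i'$ in $H_i$. Let $\mathcal{M} = \{A_1, \ldots, A_n\}$. First I would enlarge $\mathcal{A}$ to include two fixed extensions $A_i^1, A_i^2$ of each $A_i$ to a pants decomposition, chosen so that every essential subsurface of $\Sigma$ is non-disjoint from at least one of them, together with one extra curve as in Lemma~\ref{InductiveProjBound} to guarantee nonempty projections. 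Note that the class of allowed subsurfaces (essential subsurfaces other than annular neighborhoods of curves in $G$-translates of the $A_i$'s) is $G$-invariant, so translating it by any element of $X \cup \bigcup_i H_i$ preserves the class.

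I would then induct on $R$. The base case $R=0$ reduces immediately to bounding $\textup{diam}(\pi_S(\mathcal{A}))$, which follows from Lemmas~\ref{ProjLip} and~\ref{IntersectionBound} since $\mathcal{A}$ is a fixed finite collection of curves and, by construction, always contains a pair with nonempty projection to $S$. For the inductive step, I would decompose
$$\widehat{B}(e, R+1) \cdot \mathcal{A} \;\subset\; \widehat{B}(e,R) \cdot \mathcal{A} \;\cup\; \bigcup_{s \in X \cup \bigcup_i H_i} s \cdot \widehat{B}(e,R) \cdot \mathcal{A},$$
since any $g \in \widehat{B}(e,R+1)$ factors as $g = s g'$ with $g' \in \widehat{B}(e,R)$ and $s$ either trivial, an element of $X$, or an element of some $H_i$ (corresponding to the first step in a $\widehat{G}$-geodesic from $e$ to $g$). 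By equivariance, each translated piece satisfies $\textup{diam}(\pi_S(s \widehat{B}(e,R)\mathcal{A})) = \textup{diam}(\pi_{s^{-1}(S)}(\widehat{B}(e,R)\mathcal{A})) \leq K_R$ by the inductive hypothesis, since $s^{-1}(S)$ remains in the allowed class. The $X$-pieces link easily to the base piece $\pi_S(\widehat{B}(e,R)\mathcal{A})$ because $X$ is finite and, for $R \geq 1$, each $s \in X$ itself lies in $\widehat{B}(e,R)$, so $s\mathcal{A} \subset \widehat{B}(e,R)\mathcal{A}$.

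The heart of the argument, which I expect to be the main obstacle, is linking the $H_i$-pieces, since $H_i$ may be infinite. The key observation is that each orbit $H_i \cdot A_i^j$ is \emph{finite}: elements of $H_i'$ are multitwists supported on $A_i$ and hence fix every curve of $A_i^j \setminus A_i$ (those curves are disjoint from $A_i$, as required by a pants decomposition extension), so $H_i' \cdot A_i^j = \{A_i^j\}$ and therefore $H_i \cdot A_i^j = F_i \cdot A_i^j$ is finite. Since $A_i^1, A_i^2 \in \mathcal{A}$, one has $h(A_i^j) \in h\mathcal{A} \subset h\widehat{B}(e,R)\mathcal{A}$, and applying Lemmas~\ref{ProjLip} and~\ref{IntersectionBound} to the fixed finite collection $\{A_i^j\} \cup F_i \cdot A_i^j$ gives a bound on $d_S(A_i^j, h(A_i^j))$ that is uniform in $h \in H_i$ and in $S$ (whenever both projections are nonempty). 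Using the two extensions to guarantee nonempty projections, I can select $j, j' \in \{1,2\}$ with $\pi_S(A_i^j) \neq \varnothing$ and $\pi_S(h(A_i^{j'})) \neq \varnothing$; then $A_i^j$ lies in the base piece, $h(A_i^{j'})$ lies in the translated piece, and their $S$-projection distance is bounded by a constant depending only on the finite collection.

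Combining these estimates yields a recursion of the form $K_{R+1} \leq 2K_R + C$ for a uniform constant $C$ depending only on $G$ and $\mathcal{A}$, which closes the induction and delivers the desired $K = K(R, G)$ independent of $S$.
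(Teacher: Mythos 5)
Your argument is correct, but it is organized differently from the paper's. The paper proves this lemma by reduction to Lemma \ref{InductiveProjBound}: it first enlarges $X$ by coset representatives of the finite-index multitwist subgroups so that each $H_i$ may be assumed to be a multitwist group, then rewrites each $g\in\widehat{B}(e,R)$ in the normal form $g=g_1\cdots g_m f$ (conjugating $X$-letters past multitwists), producing finite auxiliary collections $\mathcal{M}_R$, $\mathcal{A}_R$ to which Lemma \ref{InductiveProjBound} applies. You instead run a direct induction on the radius of the relative ball, peeling off the first letter $s\in X\cup\bigcup_i H_i$ of a $\widehat{G}$-geodesic, using left-invariance plus the $G$-invariance of the allowed class of subsurfaces for the translated piece, and taming the infinitely many letters $h\in H_i$ by the observation that $H_i\cdot A_i^j=F_i\cdot A_i^j$ is finite (the multitwist part fixes each pants extension of $A_i$ componentwise). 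This finite-orbit trick is exactly what absorbs the finite-index issue that the paper handles by enlarging the generating set, and your linking device (the two pants extensions of $A_i$, intersection-number bounds via Lemmas \ref{ProjLip} and \ref{IntersectionBound}, and equivariance) is the same mechanism the paper uses \emph{inside} the proof of Lemma \ref{InductiveProjBound}; so your route re-proves that content in the relative-ball setting rather than quoting it, which makes the proof of the present lemma self-contained at the cost of repeating the induction.

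Two small points to tighten, neither a gap. First, the two extensions cannot literally satisfy ``every essential subsurface of $\Sigma$ meets one of them'' in the sense needed: both contain $A_i$, so both have empty projection to an annulus about a component of $A_i$ (and mere non-disjointness is not enough for annular projections). The correct requirement is that the added curves of $A_i^1$ and $A_i^2$ cross each other, so that every \emph{allowed} subsurface (and this is precisely where the annular hypothesis on $S$, together with $G$-invariance applied to $h^{-1}S$, is used) receives a nonempty projection from one of them; your subsequent selection of $j,j'$ uses exactly this corrected statement, and the paper phrases the condition with the same looseness. Second, the recursion you get is of the form $K_{R+1}\leq 3K_R+2C$ rather than $2K_R+C$ (two non-base pieces are linked through the base piece), which of course changes nothing.
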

\begin{proof}
 We may assume by adding coset representatives of finite index multitwist subgroups of $H_i's$ to $X$ that each $H_i$ is a multitwist group. Using this generating set, the $\widehat{G}$-balls change, but given any $R$ there is an $R'$ so that the ball of radius $R$ in the original generating set is contained in the ball of radius $R'$ in the new generating set. 
\par 
Now, any $g\in \widehat{B}(e,R)$ (in the new generating set) can be written as $$g=g_1\cdots g_m f$$ with $m\leq R$. Here $f$ is a group element in at most $R$ letters of $X$, and $g_i$ is a multitwist on a multicurve that is the image of a multicurve of some $H_j$ by a word of length at most $R$ in the set $X$. We can write every such $g$ like this because we can move letters in $X$ past multitwists via conjugation. For example, we may write $fh$ as $h'f$, where $h$ is a multitwist on $B$ and $h'=fhf^{-1}$ is a multitwist on $f(B)$.
\par 
For every $R$, we obtain a finite collection $\mathcal{M}_R$ of multicurves coming from the images of the multicurves of $\{H_1,\ldots H_n\}$ by elements generated by $X$ of length at most $R$. Then in the notation of Lemma \ref{InductiveProjBound}, the collection we are projecting is a subset of $\mathcal{T}_R(\mathcal{A}_R)$, where $\mathcal{A}_R$ is the finite collection of curves that are images of elements of $\mathcal{A}$ by words in $X$ of length at most $R$, and $\mathcal{T}_R$ is as in Lemma \ref{InductiveProjBound}, on the collection $\mathcal{M}_R$. The assumption that the subsurface $S$ is not a neighborhood of any component of any of the associated multicurves of a $G$-coset of an element of $\{H_1,\ldots, H_n\}$ ensures that $S$ lies in the collection $\mathcal{S}_R$ as defined in the statement of Lemma \ref{InductiveProjBound}. This follows as by construction, for all $R$, every curve which appears as a component of a multicurve in $\mathcal{M}_R$ must lie a multicurve associated to a $G$ coset of an element of $\{H_1,\ldots, H_n\}$. Lemma \ref{InductiveProjBound} then gives the desired result (note that for each $R$ we are using a different collection of initial curves and a different collection of multicurves). 
\end{proof}

We now use Lemma \ref{InductiveProjBound2} to get a bound on the projection of all the multicurves associated to a PGF group $G$. 

\begin{proposition}\label{PGFProjBound}
Let $G$ be a PGF group on a closed surface $\Sigma$. Fix any proper essential subsurface $S\subset \Sigma$ that is not an annulus with core curve a component of a multicurve associated to $G$. For any nonempty finite set of curves $\mathcal{A}$, $\pi_S(G\cdot \mathcal{A})\neq \varnothing$ and
$$\mathrm{diam}(\pi_S(G\cdot \mathcal{A})) \leq K$$
for $K=K(\mathcal{A},G)$, which is independent of $S$.
\end{proposition}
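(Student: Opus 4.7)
The plan is to combine the PGF quasi-isometric embedding $\Psi:\widehat{G}\to C(\Sigma)$ (extended to cone points via Lemma \ref{ConePointImages}) with the Bounded Geodesic Image theorem (Proposition \ref{MMBGI}) and the Morse property for quasi-geodesics in hyperbolic spaces (Proposition \ref{Morse}), reducing control over the full $G$-orbit to the bounded-ball case already handled by Lemma \ref{InductiveProjBound2}. Before starting, I would enlarge $\mathcal{A}$ to include the finite set $\mathcal{B}$ of components of multicurves associated to the peripheral twist subgroups of $G$, together with a few extra filling curves to ensure $\pi_S(\mathcal{A})\neq\varnothing$ for every essential subsurface $S$. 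With this enlargement, $\Psi$-images of both group-element and cone-point vertices of $\widehat G$ lie in the orbit $G\cdot\mathcal{A}$, so the two cases can be treated uniformly. Nonemptiness is then immediate: $G$ is infinite, so the QI embedding forces $G\cdot\alpha_0$ to have infinite $C(\Sigma)$-diameter, while $N_1(\partial S)$ has $C(\Sigma)$-diameter at most $2|\partial S|+2$; hence some $g\alpha_0$ lies outside $N_1(\partial S)$ and so crosses $S$.

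For the diameter bound, fix $\alpha_0\in \mathcal{A}$ and $g\alpha_0\in g\mathcal{A}$ both with nonempty projection to $S$, and a $\widehat{G}$-geodesic $\widehat\gamma$ from $e$ to $g$. Let $N$ be the Morse constant associated to $\Psi$ and $M$ the BGI constant, and set
\[
V \;=\; \{\, v\in \widehat\gamma \,:\, d_{C(\Sigma)}(\Psi(v), \partial S)\leq N+1\,\}.
\]
If $V=\varnothing$, the Morse property implies every vertex of the $C(\Sigma)$-geodesic from $\alpha_0$ to $g\alpha_0$ lies at $C(\Sigma)$-distance greater than $1$ from $\partial S$, hence has nonempty projection to $S$; the contrapositive of Proposition \ref{MMBGI} then gives $d_S(\alpha_0, g\alpha_0)<M$. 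Otherwise let $v_-, v_+$ be the first and last elements of $V$ along $\widehat\gamma$. Both $\Psi(v_\pm)$ lie in $N_{N+1}(\partial S)$, a set of $C(\Sigma)$-diameter at most $2N+3$, so the QI embedding property of $\Psi$ forces $d_{\widehat{G}}(v_-, v_+)\leq C_2$ for a universal constant $C_2$.

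Now let $u_-, u_+$ be the vertices of $\widehat\gamma$ immediately adjacent to $v_-, v_+$ on the outside of the segment $[v_-, v_+]$ (taking $u_-=e$ or $u_+=g$ at the endpoints). By choice of $v_\pm$ as first and last members of $V$, every vertex of $\widehat\gamma$ on $[e, u_-]$ and on $[u_+, g]$ has $\Psi$-image at $C(\Sigma)$-distance greater than $N+1$ from $\partial S$. Morse pins every vertex of the $C(\Sigma)$-geodesic from $\alpha_0$ to $\Psi(u_-)$ (resp.\ from $\Psi(u_+)$ to $g\alpha_0$) at distance greater than $1$ from $\partial S$, so each such vertex has nonempty projection to $S$, and the contrapositive of BGI yields $d_S(\alpha_0, \Psi(u_-))<M$ and $d_S(\Psi(u_+), g\alpha_0)<M$. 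Since $u_-^{-1}u_+$ lies in the $(C_2+2)$-ball of $\widehat{G}$ about the identity and $u_-^{-1}S$ still satisfies the hypothesis of Lemma \ref{InductiveProjBound2} by $G$-equivariance, that lemma provides $d_S(\Psi(u_-), \Psi(u_+)) \leq K(C_2+2)$. Summing via the triangle inequality for $d_S$ yields the uniform bound
\[
d_S(\alpha_0, g\alpha_0) \;<\; 2M + K(C_2+2),
\]
which converts to a bound on $d_S(\mathcal{A}, g\mathcal{A})$ with a further uniform additive loss by Lemma \ref{ProjLip}.

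The principal difficulty I anticipate is keeping track of the cases where $v_\pm$ or $u_\pm$ are cone points rather than group elements, so that $\Psi(v_\pm)$ is a component of an associated multicurve rather than a translate of $\alpha_0$; this is what forces the preliminary enlargement of $\mathcal{A}$ to include $\mathcal{B}$, after which Lemma \ref{ConePointImages} lets $\Psi$ be treated uniformly. A secondary technical point is arranging $\alpha_0$ and $g\alpha_0$ to have nonempty projection to $S$ at the outset, which is handled by including enough filling curves in the enlargement of $\mathcal{A}$ so that some curve in $\mathcal{A}$ (and in $g\mathcal{A}$) always projects nontrivially.
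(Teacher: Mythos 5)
Your proposal is correct in substance and rests on exactly the same three ingredients as the paper's proof --- the Morse property (Proposition \ref{Morse}), the Bounded Geodesic Image theorem (Proposition \ref{MMBGI}), and the bounded-ball projection bound of Lemma \ref{InductiveProjBound2} transported by equivariance --- but you organize them differently. The paper argues globally: it uses Morse once to produce a single anchor $k\in G$ with $k\alpha$ uniformly close to $\partial S$, then bounds the projection of the whole orbit to $k^{-1}S$ by splitting $G$ into a large ball $\widehat{B}(e,R)$ (handled by Lemma \ref{InductiveProjBound2}) and its complement (for each far element it chooses a companion in the ball whose connecting geodesic misses the $1$-neighborhood of $k^{-1}\partial S$ and applies BGI), and finally translates back by $k$. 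You instead work per element, cutting each $\widehat{G}$-geodesic $[e,g]$ into a prefix and suffix whose $\Psi$-images stay far from $\partial S$ (controlled by Morse plus the contrapositive of BGI) and a middle segment of uniformly bounded $\widehat{G}$-length near $\partial S$ (controlled by Lemma \ref{InductiveProjBound2} at a translated basepoint), then summing with the triangle inequality for $d_S$; your nonemptiness argument (infinite orbit diameter forces a translate crossing $S$) also differs from the paper's (a pseudo-Anosov in $G$ via Corollary \ref{InfOrderpA}), and both rest on the same implicit non-elementarity of $G$. The two routes have the same strength; yours is more local and avoids the paper's case split over all pairs $g,h$, at the cost of the cone-point and endpoint bookkeeping you already flag. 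Two small points to tighten: first, the threshold $N+1$ defining $V$ should absorb the edge-images of $\Psi$ (or $V$ should be defined on vertices with a slightly larger constant), exactly as the paper's $N+1+3\lambda/2$ does; second, in converting the bound on $d_S(\alpha_0,g\alpha_0)$ into a bound on $\mathrm{diam}(\pi_S(G\cdot\mathcal{A}))$ you must handle the case where $\pi_S(g\alpha_0)=\varnothing$ while $\pi_S(g\beta)\neq\varnothing$ for some other $\beta\in\mathcal{A}$, since Lemma \ref{ProjLip} is then unavailable for the last comparison --- but in that case $g$ itself lies in $V$, so $u_+=g$ and Lemma \ref{InductiveProjBound2} bounds $d_S(\Psi(u_-),g\beta)$ directly, so the chain closes without passing through $g\alpha_0$. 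With these routine adjustments the argument goes through.
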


\begin{proof}
We first note that $\pi_S(G\cdot \mathcal{A})$ is nonempty because PGF groups are not twist groups, so there are two distinct conjugates of twist subgroups in $G$. Corollary \ref{InfOrderpA} then shows that $G$ has a pseudo-Anosov element, implying nonempty projection to any subsurface.
\par 
Fix $\delta$ so that $C(\Sigma)$ is a $\delta$-hyperbolic space, and choose an equivariant $(\lambda, \lambda)$-quasi-isometric embedding $\widehat{G}\to C(\Sigma)$ which restricts to the orbit map of $G$ on some element $\alpha$ of $\mathcal{A}$, for some $\lambda\geq 1$. Let $S$ be a subsurface as in the statement. Suppose first that for all $g, h \in G$ there is a geodesic between elements of $g\mathcal{A}$ and $h\mathcal{A}$ so that every curve on this geodesic has nonempty projection to $S$. For such $S$, by the contrapositive of Proposition \ref{MMBGI} and Lemma \ref{ProjLip}, there is a uniform bound on $d_S(g\mathcal{A}, h\mathcal{A})$ for all choices of $g$ and $h$.
\par 
Otherwise, suppose that there is some pair $g$ and $h$ in $G$ so that every geodesic between components of $g\mathcal{A}$ and $h\mathcal{A}$ contains a curve with empty projection to $S$. To deal with this case, we apply Lemma \ref{InductiveProjBound2} along with an argument using Proposition \ref{Morse} and the contrapositive of Proposition \ref{MMBGI} to obtain a bound on the diameter of the collection of projections of all the $G$ images of $\mathcal{A}$. 
\par 
By Proposition \ref{Morse} applied to $C(\Sigma)$ and the assumptions on $g, h$, and $S$, there is a constant $N$ independent of $g$ and $h$ so that the image of any geodesic $[g,h]$ in $\widehat{G}$ to $C(\Sigma)$ under the orbit map on $\alpha$ contains a point within $N+1$ of every component of $\partial S$. Fix one such geodesic, denoted again by $[g,h]$, and pick a point $p$ within $N+1$ of $\partial S$ on the image of $[g,h]$. Fix a vertex $k\in G$ within $\frac{1}{2}$ of a preimage of $p$ in $\widehat{G}$. Then as the orbit map on $\alpha$ is a $(\lambda, \lambda)$-quasi-isometric embedding, it follows that $k\alpha$ is within $\lambda/2+\lambda=3\lambda/2$ from $p$. By the triangle inequality it follows that every component of $\partial S$ is within $N+1+3\lambda/2$ of $k\alpha$. 
\par 
By Lemma \ref{InductiveProjBound2}, for any $R$ the projection of $\widehat{B}(e,R)\cdot A$ to $k^{-1}S$ is bounded independently of $S$. We may take $R=R(\lambda, \delta)$ large enough so that for any $g_1\notin \widehat{B}(e,R)$, there is a $g_2\in \widehat{B}(e,R)$ so that every geodesic between $g_1\alpha$ and $g_2\alpha$ doesn't intersect the $1$ neighborhood of $k^{-1}\partial S$. To see this, choose $R$ so that there is a $g_2\in \widehat{B}(e,R)$ with $[g_1,g_2]$ not intersecting a large neighborhood of the identity. If this neighborhood is large enough, then the image of $[g_1,g_2]$ avoids the $N+1$-neighborhood of $k^{-1}\partial S$, as this multicurve is within $N+1+3\lambda/2$ from $\alpha$, the image of the identity. In particular, by the definition of $N$, any geodesic from $g_1\alpha$ to $g_2\alpha$ does not intersect the $1$-neighborhood of $k^{-1}\partial S$. It follows by Proposition \ref{MMBGI} and Lemma \ref{ProjLip} that 
\begin{equation}\label{FarOutBounds}
\pi_{k^{-1}S}(g_1\mathcal{A}, g_2\mathcal{A}) \leq M+D(\mathcal{A})
\end{equation}
\par 
where $D(\mathcal{A})$ is a constant that only depends on the maximum intersection number between elements of $\mathcal{A}$. Combining the bound given by Lemma \ref{InductiveProjBound2} for multicurves $g\mathcal{A}$ with $g\in \widehat{B}(e, R)$ and the bound given in inequality \ref{FarOutBounds} for multicurves $g\mathcal{A}$ with $g\notin \widehat{B}(e,R)$ using the triangle inequality, we obtain a bound on the projection of $G\cdot \mathcal{A}$ to $k^{-1}S$, which does not depend on $S$. By equivariance (that is, multiplying by $k$, which is in $G$), this gives a bound on the projection to $S$ as well, which finishes the proof. 
\end{proof}

We note the following corollary of Proposition \ref{PGFProjBound} which we utilize in Section \ref{SectionExamplesApplications}.
\begin{corollary}\label{PGFNotQuasiDense}
    Let $G$ be a PGF group relative to $\mathcal{H}$. Then the collection of all the multicurves associated to $G$ is not quasi-dense in $C(\Sigma)$.
\end{corollary}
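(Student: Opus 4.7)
The plan is to argue by contradiction. Suppose the set $\mathcal{B}=G\cdot\mathcal{A}$ of all curves arising as components of multicurves associated to $G$ (with $\mathcal{A}$ the finite collection of components of the peripheral multicurves) is $R$-quasi-dense in $C(\Sigma)$ for some $R\geq 0$. I would first fix a proper essential non-annular subsurface $S\subset \Sigma$ (which exists since $\Sigma$ is closed of genus at least two), and then apply Proposition \ref{PGFProjBound} to obtain a constant $K=K(\mathcal{A},G)$ and a point $p\in C(S)$ with $\pi_S(\mathcal{B})\subset B_{C(S)}(p,K)$.

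The heart of the argument is to produce a single curve $\alpha\in C(\Sigma)$ satisfying simultaneously (i) $d_\Sigma(\alpha,\partial S)>R+3$ and (ii) $d_S(\alpha,\pi_S(\mathcal{B}))>M$, where $M$ is the constant from Proposition \ref{MMBGI}. I would set $\alpha=\psi^n\gamma_0$ for a well-chosen pseudo-Anosov $\psi\in\text{MCG}(\Sigma)$ and large $n$; since $\psi$ acts loxodromically on $C(\Sigma)$, (i) follows for all sufficiently large $n$. To obtain (ii), start with an arbitrary pseudo-Anosov $\phi$ of $\Sigma$, whose orbit $\phi^n\gamma_0$ has uniformly bounded $\pi_S$-image in $C(S)$ (because $\phi^n\gamma_0$ converges in $\mathcal{PML}(\Sigma)$ to the unstable lamination of $\phi$, a filling lamination whose projection to $S$ is a bounded set). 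Conjugating by a high power $\xi^k$ of a partial pseudo-Anosov $\xi$ supported on $S$ translates this bounded projection loxodromically on $C(S)$, using the equivariance $\pi_S\circ\xi=\xi\circ\pi_S$, which holds since $\xi S=S$. Taking $k$ large enough, the projection of the orbit of $\psi=\xi^k\phi\xi^{-k}$ sits at $d_S$-distance exceeding $M+K$ from $p$, so (ii) holds for every $n$.

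With such an $\alpha$ in hand, I would derive the contradiction via the Bounded Geodesic Image Theorem and the triangle inequality. For every $\beta\in\mathcal{B}$: if $\pi_S(\beta)=\varnothing$, then $\beta$ is disjoint from $\partial S$, forcing $d_\Sigma(\beta,\partial S)\leq 1$ and hence $d_\Sigma(\alpha,\beta)\geq d_\Sigma(\alpha,\partial S)-1>R+2$. If $\pi_S(\beta)\neq\varnothing$, then $d_S(\alpha,\beta)\geq d_S(\alpha,\pi_S(\mathcal{B}))>M$, so Proposition \ref{MMBGI} forces every $C(\Sigma)$-geodesic from $\alpha$ to $\beta$ to contain a vertex $c$ disjoint from $\partial S$, yielding $d_\Sigma(\alpha,\beta)\geq d_\Sigma(\alpha,c)\geq d_\Sigma(\alpha,\partial S)-1>R+2$. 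In either case $d_\Sigma(\alpha,\mathcal{B})>R$, contradicting $R$-quasi-density.

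The main obstacle is the construction of $\psi$ in the second paragraph: one must justify the uniform boundedness of $\pi_S$ on a pseudo-Anosov orbit and the loxodromic action of the partial pseudo-Anosov conjugator on $C(S)$, and verify that equivariance carries the bounded orbit projection far from $p$ after conjugation. The remaining steps are routine combinations of the BGI theorem with the triangle inequality, relying only on the diameter bound supplied by Proposition \ref{PGFProjBound}.
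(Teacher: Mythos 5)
Your overall strategy is legitimate and genuinely a variant of the paper's: instead of using quasi-density to approximate \emph{two} curves with far-apart $S$-projections by curves of $G\cdot\mathcal{A}$ and contradicting the diameter bound of Proposition \ref{PGFProjBound} (which is what the paper does), you build a single curve $\alpha$ that is $C(\Sigma)$-far from the whole set $G\cdot\mathcal{A}$ and contradict quasi-density directly. The second half of your argument --- the case split on whether $\pi_S(\beta)$ is empty, the use of Proposition \ref{MMBGI}, and the triangle inequality --- is correct as written.

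The gap is in the construction of $\alpha$. With $\psi=\xi^k\phi\xi^{-k}$ you have $\pi_S(\psi^n\gamma_0)=\xi^k\pi_S(\phi^n\xi^{-k}\gamma_0)$, and the inner set is the $S$-projection of the $\phi$-orbit of the curve $\xi^{-k}\gamma_0$, whose location and diameter bound in $C(S)$ depend on $k$; so translating by $\xi^k$ does not place the orbit projection uniformly far from $p$. Indeed your assertion that ``(ii) holds for every $n$'' is false: at $n=0$ the curve is $\gamma_0$ itself, whose projection need not be far from $p$. To salvage your construction you would have to fix $k$ first and then take $n$ large, which requires knowing that $\pi_S(\psi^n\gamma_0)$ is eventually coarsely equal to $\pi_S(\xi^k\lambda^+_\phi)$; this coarse convergence of projections along a pseudo-Anosov orbit is true but nontrivial, and your justification via convergence in $\mathcal{PML}(\Sigma)$ is not adequate as stated, since subsurface projection is not continuous there. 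Moreover, none of this machinery is needed: a partial pseudo-Anosov $\xi$ with support $S$ fixes $\partial S$, so for any fixed curve $\beta$ with $d_\Sigma(\beta,\partial S)\geq R+4$ the curve $\alpha=\xi^k\beta$ satisfies $d_\Sigma(\alpha,\partial S)=d_\Sigma(\beta,\partial S)$ while $\pi_S(\alpha)=\xi^k\pi_S(\beta)$ leaves every bounded subset of $C(S)$ as $k\to\infty$; choosing $k$ so that $d_S(\alpha,p)>K+M$ gives (i) and (ii) at once. This is essentially how the paper separates projections, and with this replacement your proof goes through.
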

\begin{proof}
    Suppose for a contradiction that every vertex of $C(\Sigma)$ is within $R$ of a $G$ translate of some multicurve associated to an element of $\mathcal{H}$. Let $\mathcal{A}$ denote the union of the multicurves associated to the twist groups in the finite set $\mathcal{H}$. Fix a nonannular connected essential proper subsurface $S$ of $\Sigma$, and take two curves $\alpha$ and $\beta$ which are both at least distance $R+2$ from the components of $\partial S$ and so that
    $$d_{S}(\alpha, \beta)\geq K+2M+1$$
    where $K=K(\mathcal{A}, G)$ is as in Proposition \ref{PGFProjBound} and $M$ is as in Proposition \ref{MMBGI}. Such a choice is possible as one can first pick two curves at least $R+2$ from the components of $\partial S$, and then modify one of them via applying a partial pseudo-Anosov supported in the complement of $\partial S$ to it sufficiently many times to produce the curves $\alpha$ and $\beta$ with the required distance between their projections to $S$. By assumption, there exist $\alpha’,\beta’ \in G \cdot \mathcal A$ within distance $R$ of $\alpha,\beta$, respectively.  All geodesics from $\alpha$ to $\alpha’$ and from $\beta$ to $\beta’$ are therefore entirely outside the $1$-neighborhood of $\partial S$.  Thus, by Proposition 2.12, $d_S(\alpha,\alpha’),d_S(\beta,\beta’) \geq M$, By the triangle inequality and Proposition \ref{MMBGI} it follows that
    $$d_{S}(\alpha', \beta')\geq d_S(\alpha, \beta)-d_S(\alpha, \alpha')-d_S(\beta, \beta')\geq K+1$$
    which is a contradiction by Proposition \ref{PGFProjBound}.
\end{proof}
It would be convenient in Section \ref{SectionExamplesApplications} if this corollary could be extended to the union of the collections of multicurves of finitely many PGF groups, but we leave it at this.
\par 
We will use the following lemma often. It gives a strong relationship between pairs of multicurves associated to a PGF group.
\begin{lemma}\cite[Lemma 5.3]{L21}\label{FillingMC}
Let $G$ be a PGF group relative to $\{H_1,\ldots, H_n\}$, and suppose $A_1$ and $A_2$ are distinct multicurves associated to $G$. Then $A_1$ and $A_2$ fill $\Sigma$, and they share no components in common. 
\end{lemma}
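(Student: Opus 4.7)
I argue by contradiction, assuming $A_1, A_2$ share a component or fail to fill $\Sigma$. Let $P_1, P_2$ denote the (distinct) peripheral subsets of $G$ corresponding to $A_1, A_2$, with stabilizer twist subgroups $H_1, H_2$ and finite-index multitwist subgroups $H_1' \leq H_1$ and $H_2' \leq H_2$. The argument relies on two standard features of relatively hyperbolic groups: (i) distinct peripheral subgroups are almost malnormal, so $H_1 \cap H_2$ is finite (a consequence of Lemma \ref{lem:PeripheralsFellowTravel}, since infinite intersection would force $N_H(H_1) \cap N_H(H_2)$ to have infinite diameter); and (ii) the multicurve-to-peripheral correspondence is injective, because two distinct peripherals stabilizing the same multicurve would contain the infinite multitwist subgroup on that multicurve in common, again contradicting almost malnormality.

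The cleanest case is when $A_1, A_2$ are completely disjoint as curve systems (no shared component, no intersections). Then every $h \in H_1'$, being a multitwist supported in a neighborhood of $A_1$ disjoint from $A_2$, fixes $A_2$ pointwise, so $H_1' \subseteq \mathrm{Stab}_G(A_2)$. By injectivity of the multicurve-to-peripheral correspondence and almost malnormality, $\mathrm{Stab}_G(A_2) \subseteq N_G(H_2) = H_2$: any $g \in \mathrm{Stab}_G(A_2)$ produces a peripheral $gH_2g^{-1}$ that also stabilizes $A_2$, forcing $gH_2g^{-1} = H_2$, and thus $g \in H_2$. Consequently $H_1' \subseteq H_2$, so $H_1' \cap H_2$ is infinite, contradicting almost malnormality.

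It remains to handle the cases where $A_1, A_2$ share a component $c$ or intersect but fail to fill. Here the strategy is to leverage the equivariant QI embedding $\Psi: \widehat{G} \to C(\Sigma)$. In the shared-component case, both cone points $\nu_1(c) \in \nu(P_1)$ and $\nu_2(c) \in \nu(P_2)$ map to $c$; since every $h \in H_1'$ fixes $c$, the orbit $\{h \cdot \nu_2(c) : h \in H_1'\}$ maps entirely to $c$ in $C(\Sigma)$ and therefore lies in a uniformly bounded $\widehat G$-neighborhood of $\nu_2(c)$ by the QI-embedding property, while the almost malnormality from step (i) guarantees that this orbit contains infinitely many distinct cone points corresponding to distinct peripherals $hP_2$. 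The contradiction would then come from the fineness of $\widehat G$ together with Lemma \ref{lem:PeripheralsFellowTravel}, which jointly forbid clustering of infinitely many distinct peripheral cone points in a bounded region. For the remaining case (intersecting but not filling), one picks an essential curve $\alpha$ disjoint from both multicurves and applies Proposition \ref{PGFProjBound} to a carefully chosen nonannular essential subsurface to derive unbounded subsurface projections for $G$-translates of components of $A_1 \cup A_2$, contradicting the uniform projection bound. I anticipate that the main obstacle will be making this clustering-to-contradiction step precise, in particular verifying that fineness truly obstructs the configuration of infinitely many peripheral cone points near a single vertex.
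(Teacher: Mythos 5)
Your first case (the two multicurves are disjoint) is essentially sound, modulo one misstated justification: two distinct twist groups with the same associated multicurve need \emph{not} share an infinite multitwist subgroup (e.g.\ $\langle \tau_\alpha\tau_\beta\rangle$ and $\langle \tau_\alpha\tau_\beta^{2}\rangle$ intersect trivially), so your ``injectivity of the multicurve-to-peripheral correspondence'' needs a different argument --- it is true, and fixable, because multitwists on a common multicurve commute, so an element conjugating one such peripheral to another commutes with an infinite subset of it and then almost malnormality plus Lemma~\ref{lem:distincePeripherals} force membership. The genuine gaps are in your two remaining cases, and they concern the actual crux of the lemma. In the shared-component case, the principle you hope for --- that fineness together with Lemma~\ref{lem:PeripheralsFellowTravel} forbids infinitely many distinct peripheral cone points from clustering in a bounded region of $\widehat{G}$ --- is simply false: for any two distinct infinite peripheral subgroups $P_1,P_2$, the cosets $hP_2$ with $h\in P_1$ are pairwise distinct for infinitely many $h$ (almost malnormality), and all of their cone points lie within $d_{\widehat{G}}$-distance $1$ of $\nu(P_1)$; Lemma~\ref{lem:PeripheralsFellowTravel} is a statement about neighborhoods in the word metric $d_G$, not in $d_{\widehat{G}}$, so it gives no obstruction. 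In the intersecting-but-not-filling case, the proposed contradiction with Proposition~\ref{PGFProjBound} has no source of unboundedness: by Corollary~\ref{InfOrderpA} the infinite-order elements of a PGF group are pseudo-Anosovs or conjugated multitwists, so the only proper subsurfaces to which $G$-orbits of curves have unbounded projection are annuli about components of associated multicurves, which are exactly the subsurfaces that Proposition~\ref{PGFProjBound} excludes; a curve disjoint from $A_1\cup A_2$ does not produce large projections to any admissible subsurface.

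What is missing is the key mechanism that handles all the nontrivial cases at once: produce a nonperipheral infinite-order element in the subgroup generated by multitwists $f_1,f_2$ taken from the two distinct peripheral subgroups. Such an element is loxodromic on $\widehat{G}$ by Proposition~\ref{NonperiLoxodromic}, hence pseudo-Anosov via the equivariant QI embedding (Corollary~\ref{InfOrderpA}); but if $A_1$ and $A_2$ shared a component or failed to fill, there would be a curve fixed by both $f_1$ and $f_2$ and hence by this element, a contradiction. The paper itself does not reprove the lemma (it cites Lemma~5.3 of \cite{L21}), but its RGF analogue, Lemma~\ref{FillingSubsurfaces}, runs exactly this argument, using Proposition~\ref{DGORotatingSubgroups} together with residual finiteness of $\text{MCG}(\Sigma)$ to show that a suitable power $f_1^kf_2^k$ is nonperipheral. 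Without this ingredient (or a ping-pong substitute for it), your shared-component and non-filling cases remain unproved.
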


\subsection{Large projections and admissible sequences}\label{SubsectionLargeProjectionsLtoGProperty}
We make the following definition to provide a general framework for the proofs of the results of Section \ref{SectionProofMainThemOtherCases}.
\begin{definition} \label{AdmissibleSequence}
    Fix $L\geq 0$ and $n\geq 1$. A sequence of multicurves $A_0, B_1, B_2, \ldots, B_n, A_n$ in $C(\Sigma)$ is called $L$-\textit{admissible}, or just \textit{admissible}, if 
    \begin{enumerate}[label=(\alph*)]
    \item The multicurve $B_i$ is sparse for all $i=1,\ldots n$.
    \item The multicurves $A_0, A_n$ share no components with $B_1, B_n$, respectively. Also, every component of $\Sigma \setminus B_1$ intersects at least one component of $A_0$, and similarly every component of $\Sigma \setminus B_n$ intersects at least one component of $A_n$.
    \item For $1<i<n$, the multicurve $B_i$ shares no components with $B_{i-1}$ and $B_{i+1}$, and every component of $\Sigma \setminus B_i$ intersects at least one component of $B_{i-1}$ and $B_{i+1}$.
    \item If $n=1$, then for all components $S$ of $\Sigma\setminus B_1$, 
    $$d_S(A_{0}, A_1) \geq L$$
    If $n\geq 2$, then for all components $S$ of $\Sigma \setminus B_1$,
    $$d_S(A_{0}, B_{2}) \geq L$$
    and similarly for all components $S$ of $\Sigma \setminus B_n$,
    $$d_S(B_{n-1}, A_{n}) \geq L.$$
    If $n\geq 3$, then we further assume for $i=2,\ldots, n-1$ that for all components $S$ of $\Sigma \setminus B_i$,
    $$d_S(B_{i-1}, B_{i+1}) \geq L.$$
\end{enumerate}
\end{definition}
\par 
In particular, Lemma \ref{FillingMC} shows that sequences of multicurves following a path in the Bass--Serre tree of a normalized PGF graph of groups with $L$-local large projections satisfies these properties. That is, we have the following lemma. 
\begin{lemma} \label{AdmissiblePGFSequence}
    Suppose $(\mathcal{G}, \phi)$ has $L$-local large projections, and let $T$ denote the Bass--Serre tree of $\mathcal{G}$. Take a sequence of PGF vertices $v_0, \ldots, v_n$ along the geodesic $[v_0, v_n]$ in $T$ so that $[v_i, v_{i+1}]$ contains no other PGF vertices other than its endpoints, for $i=0,\ldots, n-1$. Let $G_0$ and $G_n$ denote the stabilizers of $v_0$ and $v_n$ respectively. Let $B_{i}$ be the multicurve associated to the $\phi$ image of the edge group of the edge before $v_i$ on $[v_0, v_n]$, for $i=1, \ldots, n$, and let $A_0\neq B_1$ and $A_n\neq B_n$ denote multicurves associated to $\phi(G_0)$ and $\phi(G_n)$, respectively. Then $A_0, B_1, \ldots, B_n, A_n$ is an $L$-admissible sequence of multicurves.
\end{lemma}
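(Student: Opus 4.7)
The plan is to verify each of the four conditions (a)--(d) of Definition \ref{AdmissibleSequence} separately, leaning on Lemma \ref{FillingMC} together with the structural constraints imposed by Definition \ref{PGFGraph} and the $L$-local large projections hypothesis. Condition (a) is immediate from Definition \ref{PGFGraph}, which requires every edge group of a normalized PGF graph of groups to be a twist group with sparse associated multicurve; since each $B_i$ is by construction the multicurve of the $\phi$-image of such an edge group, each $B_i$ is sparse.

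For (b) and (c), the key observation is that the relevant pairs of multicurves are all associated to a common PGF group. The pair $(A_0, B_1)$ is associated to $\phi(G_0)$ because the edge before $v_1$ has edge group conjugating into $G_0$ along the initial segment of the geodesic, and analogously $(A_n, B_n)$ is associated to $\phi(G_n)$. For $1 < i < n$, both $B_{i-1}$ and $B_i$ come from edges of $[v_0, v_n]$ incident to $v_{i-1}$ (possibly separated from $v_{i-1}$ by an extension vertex, in which case the ``new multitwists'' of Definition \ref{PGFGraph}(2) fix the base multicurve so the conclusion is unchanged), hence both are associated to $\phi(G_{v_{i-1}})$. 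Distinctness $A_0 \neq B_1$ and $A_n \neq B_n$ is hypothesized, while distinctness $B_{i-1} \neq B_i$ (and $B_i \neq B_{i+1}$) requires the normalization condition Definition \ref{PGFGraph}(a), discussed below. Granted distinctness, Lemma \ref{FillingMC} gives that each such pair shares no components and fills $\Sigma$; combined with sparseness from (a), filling upgrades to the statement that every component of $\Sigma \setminus B_i$ is crossed by some component of the partner multicurve, since otherwise an essential curve in that component would avoid both multicurves, contradicting filling. For (d), I apply the $L$-local large projections hypothesis to each consecutive pair $(v_{i-1}, v_i)$ of PGF vertices on the geodesic: the base twist vertex between them has $\phi$-image with associated multicurve exactly $B_i$, and $B_{i-1}$ and $B_{i+1}$ (or $A_0$ and $A_n$ at the ends) are multicurves associated to $\phi(G_{v_{i-1}})$ and $\phi(G_{v_i})$ distinct from $B_i$, so Definition \ref{LocalLarge} yields $d_S(B_{i-1}, B_{i+1}) \geq L$ on every component $S$ of $\Sigma \setminus B_i$, and likewise at the endpoints.

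The main obstacle is the distinctness of consecutive $B_i$'s as multicurves in $\Sigma$, as opposed to distinctness of the corresponding peripheral subgroups of $G$ (which is automatic from the tree structure). The two edges in the Bass--Serre tree producing $B_{i-1}$ and $B_i$ are distinct edges incident either to $v_{i-1}$ or to an extension vertex adjacent to $v_{i-1}$; in the latter case, the extension vertex has degree one in $\mathcal{G}$ by Definition \ref{PGFGraph}(2) and its multitwist factor stabilizes the base multicurve pointwise, so the argument reduces to the former. Those two edges at $v_{i-1}$ either project to the same edge in $\mathcal{G}$, in which case their stabilizers are distinct peripheral cosets of a common twist subgroup type in $G_{v_{i-1}}$ and hence correspond to distinct twist subgroups of $\phi(G_{v_{i-1}})$ with distinct multicurves, or they project to distinct edges of $\mathcal{G}$, in which case Definition \ref{PGFGraph}(a) directly forces their edge-group images in $G_{v_{i-1}}$ to be distinct peripheral subgroups, again with distinct associated multicurves by the PGF peripheral structure. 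Carefully unifying these subcases, and invoking the convention that distinct peripheral subgroups of a PGF group have distinct associated multicurves, is the crux of the argument.
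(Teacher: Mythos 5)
Your proof is correct and follows essentially the same route as the paper's: sparseness of the edge-group multicurves gives Definition \ref{AdmissibleSequence}(a), Lemma \ref{FillingMC} applied to pairs of distinct multicurves associated to a common PGF vertex stabilizer gives (b) and (c), and the $L$-local large projections hypothesis applied to each consecutive pair of PGF vertices gives (d). The only difference is that the paper simply asserts the distinctness of consecutive multicurves ($B_{i-1}\neq B_i$) that you devote your last paragraph to, so your extra care there (tracking extension vertices and Definition \ref{PGFGraph}(a)) elaborates on, rather than diverges from, the paper's argument.
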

\begin{proof}
    By the definition of normalized PGF graphs of groups, each $B_i$ is sparse, so (a) is satisfied. As the pair $B_{i-1}$ and $B_i$, as well as the two pairs $A_0$ and $B_1$, $B_n$ and $A_n$, are distinct multicurves in the same PGF group (a different group for each pair), this sequence satisfies properties (b) and (c) by Lemma \ref{FillingMC}. Condition (d) is satisfied by the assumption of $L$-local large projections. See Figure \ref{FigureLtoGPic} for a schematic of this case.
\end{proof}

\begin{figure}[H]
    \centering
    \includegraphics[scale=.70]{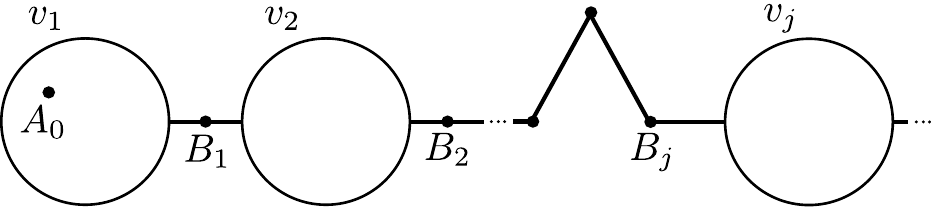}
    \caption{A schematic for the multicurves in Lemma \ref{AdmissiblePGFSequence}. Here we have blown up the PGF vertices of $T$ and drawn the multicurves $B_i$ on the edges of the groups they correspond to. There may be places as pictured where the path runs through a vertex of an extension group as in Definition \ref{PGFGraph}(2), but the multicurve $B_j$ is associated to the base twist group, not the extension group.}
     \label{FigureLtoGPic}
\end{figure}

From now until the end of the section, we will fix an $L$-admissible sequence of multicurves $A_0, B_1, \ldots, B_n, A_n$ in $C(\Sigma)$, with $L$ chosen sufficiently large as needed. Note that any subsequence of neighboring terms is also an $L$-admissible sequence. That is, $B_i, B_{i+1},\ldots, B_{j-1}, B_j$ is $L$-admissible for $i \leq j$. If $i=1$ we can also add $A_0$ to the start and obtain an $L$-admissible sequence, and if $j=n$ then a similar claim holds for $A_n$.

The next lemma is one of the main tools in proving the combination theorems in this paper. Applying it with Proposition \ref{MMBGI} gives a very strong control over the geometry of the images of the relevant graphs in $C(\Sigma)$ and this is ultimately what allows us to show that these graphs actually quasi-isometrically embed.

\begin{lemma}\label{LocalToGlobal}
    Suppose $L\geq 18$. For all components $S$ of $\Sigma \setminus B_{i}$, $i=1, \ldots n$, we have
$$d_{S}(A_0, A_{n})\geq L-8.$$
\end{lemma}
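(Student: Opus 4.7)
The plan is to combine three ingredients: (i) the admissibility bound $d_S(B_{i-1}, B_{i+1}) \geq L$, where for uniformity I set $B_0 := A_0$ and $B_{n+1} := A_n$ (so this packages together conditions (d) of Definition \ref{AdmissibleSequence}); (ii) the elementary observation that $\operatorname{diam}(\pi_R(B)) \leq 2$ for any multicurve $B$ and any subsurface $R$, which follows from Lemma \ref{ProjLip} applied to the pairwise disjoint components of $B$; and (iii) Behrstock's inequality (Proposition \ref{Behrstock}), applied at each of the two ends.

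The first move is to use (i) and (ii) to realize the large distance in $S$ by boundary multicurves of carefully chosen subsurfaces. Since $\pi_S(B_{i-1})$ and $\pi_S(B_{i+1})$ each have diameter $\leq 2$ while their union has diameter $\geq L > 4$, there exist components $\beta_- \subset B_{i-1}$ and $\beta_+ \subset B_{i+1}$ with points $u \in \pi_S(\beta_-)$ and $v \in \pi_S(\beta_+)$ satisfying $d_S(u,v) \geq L$. I would then choose $S_{i-1}$ to be a component of $\Sigma \setminus B_{i-1}$ having $\beta_-$ on its boundary, and $S_{i+1}$ analogously with $\beta_+ \subset \partial S_{i+1}$. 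This yields
\[
d_S(\partial S_{i-1}, \partial S_{i+1}) \geq d_S(u,v) \geq L.
\]

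The heart of the argument is then to apply Behrstock with $R_1 = S_{i-1}$, $R_2 = S$, $A = A_0$ to conclude $d_S(A_0, \partial S_{i-1}) \leq 4$, and symmetrically $d_S(A_n, \partial S_{i+1}) \leq 4$. The nonempty-projection hypotheses of Behrstock are arranged using the filling/disjointness conditions (b), (c) of admissibility together with the fact that $\partial S \subset B_i$ is disjoint from $B_{i-1}$, which allows us to refine the choice of $S_{i-1}$ if necessary so that $\pi_{S_{i-1}}(\partial S) \neq \varnothing$. The key numerical hypothesis $d_{S_{i-1}}(A_0, \partial S) \geq 10$ I would verify by induction on $i$: the base case $i = 2$ is immediate from the admissibility bound $d_{S_1}(A_0, B_2) \geq L$, using that $\partial S \subset B_2$ and (ii). For $i \geq 3$, admissibility furnishes $d_{S_{i-1}}(B_{i-2}, B_i) \geq L$, and combining this with an inductive upper bound of the form $d_{S_{i-1}}(A_0, B_{i-2}) \leq c$ (for a small absolute constant $c$, produced by a previous stage of the same argument) gives $d_{S_{i-1}}(A_0, B_i) \geq L - c$; the diameter bound (ii) applied to $B_i \supset \partial S$ then yields $d_{S_{i-1}}(A_0, \partial S) \geq L - c - 2 \geq 10$, which holds thanks to $L \geq 18$.

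Finally, the triangle inequality for projection distance (valid since all relevant projections are nonempty) assembles the pieces:
\[
d_S(A_0, A_n) \;\geq\; d_S(\partial S_{i-1}, \partial S_{i+1}) \;-\; d_S(A_0, \partial S_{i-1}) \;-\; d_S(A_n, \partial S_{i+1}) \;\geq\; L - 4 - 4 \;=\; L - 8.
\]

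\paragraph{Main obstacle.} The principal technical hurdle is the simultaneous, inductive verification of the Behrstock hypothesis and the nonempty-projection conditions, while keeping the auxiliary subsurfaces $S_{i \pm 1}$ chosen so that $\partial S_{i \pm 1}$ still witnesses the large distance from Step 1. These two requirements on the choice of $S_{i \pm 1}$ can be in tension, and balancing them against the linearly accumulating constants $c$ from the induction is precisely what forces the threshold $L \geq 18$.
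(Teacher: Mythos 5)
Your proposal is correct and uses essentially the same mechanism as the paper: the admissibility bound $d_S(B_{i-1},B_{i+1})\geq L$ (with $B_0=A_0$, $B_{n+1}=A_n$), Behrstock's inequality to convert the large projection bounds at the adjacent levels $i\mp 1$ into the upper bounds $d_S(A_0,\partial S_{i-1})\leq 4$ and $d_S(A_n,\partial S_{i+1})\leq 4$, and the triangle inequality to conclude $d_S(A_0,A_n)\geq L-8$. The only difference is bookkeeping: you propagate the adjacent-level bounds by a forward induction on the index $i$ (feeding each Behrstock output into the next step), whereas the paper inducts on the length of the admissible sequence and invokes the lemma for the two truncated sequences $A_0,B_1,\ldots,B_n$ and $B_1,\ldots,B_n,A_n$ — the same estimates in a different order, with both arguments glossing the nonempty-projection hypotheses of Behrstock at the same level of detail.
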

\begin{proof}
    We prove this by induction on the number of terms in the admissible sequence (using the fact that a subsequence of neighboring terms in an admissible sequence is admissible). The base case follows immediately from the definition of $L$-admissible sequences. Namely, we have for all components $S$ of $\Sigma \setminus B_n$,
    $$d_{S}(B_{n-1}, A_n) \geq L>L-8$$
    and for all components $S$ of $\Sigma \setminus B_1$,
    $$d_{S}(A_0, B_2) \geq L>L-8.$$
    
    For the inductive step, we may assume that for $1\leq i < j \leq n$ with and any component $S_j$ of $\Sigma\setminus B_j$, 
    $$d_{S_j}(B_i, A_n) \geq L-8$$
    and for $1\leq j < k \leq n$, 
    $$d_{S_j}(A_0, B_k) \geq L-8.$$
    as both $A_0, B_1,\ldots, B_n$ and $B_1,\ldots, B_n, A_n$ are admissible sequences.
    \par 
    As $L-8\geq 10$, Proposition \ref{Behrstock} implies for components $S_i$ and $S_k$ of $\Sigma \setminus B_i$ and $\Sigma\setminus B_k$, respectively, that
    $$d_{S_i}(B_j, A_n)\leq 4$$
    $$d_{S_k}(A_0, B_j)\leq 4.$$
    But then for all $2\leq j\leq n-1$, 
    $$d_{S_j}(A_0, A_n)\geq d_{S_j}(B_{j-1}, B_{j+1})-d_{S_j}(A_{0}, B_{j-1})-d_{S_j}(B_{j+1}, A_{n})\geq L-8.$$
     If $j=1$ then ignore $B_{j-1}$ and use $A_0$ instead, and if $j=n$ then ignore $B_{j+1}$ and use $A_n$ instead, giving an $L-4$ lower bound in both cases. 
\end{proof}

We first note the following lemma.
\begin{lemma}\label{lem:NoSharedComponentsAdmSeq}
    Suppose $L\geq 18$. Then the multicurves in the $L$-admissible sequence $A_0, B_1,\ldots, B_n, A_n$ share no common components. In particular, all components of $A_0, B_i, $ and $A_n$ have a nonempty projection to some component of $\Sigma \setminus B_j$, for $i\neq j$.
\end{lemma}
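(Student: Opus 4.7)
The plan is to first reduce the ``in particular'' clause to the main no-shared-components claim, and then prove that claim by contradiction via a subsurface projection argument combined with Lemma~\ref{LocalToGlobal}.

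For the reduction, I will observe that if no two multicurves in the sequence share a component, then any component $\alpha$ of $A_0$, some $B_i$, or $A_n$ is not in $B_j$ for $j \neq i$. Being essential and not a component of $B_j$, the curve $\alpha$ either sits entirely inside a single component $S$ of $\Sigma \setminus B_j$ (giving $\pi_S(\alpha) = \alpha$) or else essentially crosses some component of $B_j$ (giving $\pi_S(\alpha) \neq \varnothing$ for any component $S$ it meets). Either way $\alpha$ has nonempty projection to some component of $\Sigma \setminus B_j$, establishing the ``in particular'' clause.

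For the main claim I will argue by contradiction: suppose some essential curve $\alpha$ is a component of at least two multicurves in the sequence, and let $Z_1, Z_2$ be the first two, in sequence order, that contain $\alpha$. Conditions (b) and (c) of Definition~\ref{AdmissibleSequence} rule out $(Z_1, Z_2)$ being adjacent in the sequence, so there are intermediate multicurves $W_1, \ldots, W_r$ between them, and by the minimal choice of the pair $(Z_1, Z_2)$ none of the $W_j$ contains $\alpha$ as a component. Since the subsequence $Z_1, W_1, \ldots, W_r, Z_2$ consists of consecutive terms of the original sequence and inherits admissibility conditions (a)--(d) (treating $Z_1$ as the new $A_0$ and $Z_2$ as the new $A_n$), Lemma~\ref{LocalToGlobal} applied to this subsequence yields
\[
d_S(Z_1, Z_2) \geq L - 8 \geq 10
\]
for every component $S$ of every $\Sigma \setminus W_j$.

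To conclude, I will fix any such $W_j$ and choose a component $S$ of $\Sigma \setminus W_j$ with $\pi_S(\alpha) \neq \varnothing$; such $S$ exists because $\alpha \notin W_j$. Since the components of each multicurve $Z_\ell$ are pairwise disjoint, Lemma~\ref{ProjLip} yields $\mathrm{diam}(\pi_S(Z_\ell)) \leq 2$ for $\ell = 1, 2$, and since the nonempty set $\pi_S(\alpha)$ lies in both $\pi_S(Z_1)$ and $\pi_S(Z_2)$ the triangle inequality forces $d_S(Z_1, Z_2) \leq 4$, contradicting the lower bound above. The only mild obstacle is the case-by-case verification of admissibility of the subsequence, depending on whether $Z_1$ is $A_0$ or some $B_{k_1}$ and whether $Z_2$ is some $B_{k_2}$ or $A_n$; each check is routine from the statements of (a)--(d) applied to the original sequence.
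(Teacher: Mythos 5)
Your argument is correct, but it is organized differently from the paper's. The paper fixes a component $\delta_0$ of $A_0$ and runs a forward induction along the sequence: knowing $\delta_0$ meets some component $S$ of $\Sigma\setminus B_k$, Lemmas \ref{LocalToGlobal} and \ref{ProjLip} give $d_S(\delta_0,B_{k+1})\geq L-10\geq 3$, forcing $\delta_0$ to cross $B_{k+1}$; this simultaneously proves the nonempty-projection ("in particular") statement and yields non-sharing of components as a byproduct. You invert this: you prove non-sharing first, by contradiction on a minimal pair $Z_1,Z_2$ containing a common component $\alpha$, using the neat observation that $\pi_S(\alpha)\subset \pi_S(Z_1)\cap\pi_S(Z_2)$ forces $d_S(Z_1,Z_2)\leq 4$, against the bound $d_S(Z_1,Z_2)\geq L-8\geq 10$ from Lemma \ref{LocalToGlobal} applied to the intermediate subsequence; you then deduce the projection statement by the elementary remark that an essential curve not isotopic into $B_j$ projects nontrivially to some complementary component. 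Both proofs rest on the same engine (Lemma \ref{LocalToGlobal} plus Lemma \ref{ProjLip}) and on the paper's remark, stated just before Lemma \ref{LocalToGlobal}, that consecutive subsequences of an admissible sequence are admissible — your "routine case checks" are exactly that remark, and the corner cases (e.g.\ whether every component of $\Sigma\setminus B_n$ meets $B_{n-1}$, which Definition \ref{AdmissibleSequence} does not literally require) are glossed at the same level in the paper, so this is not a gap relative to the paper's own standard. The trade-off: your contradiction scheme isolates the no-shared-components claim cleanly and symmetrically in the pair (it handles, say, $B_i$ versus $B_j$ or $A_0$ versus $A_n$ without privileging $A_0$), at the cost of a separate (easy) reduction for the projection clause, whereas the paper's induction establishes the intersection/projection data constructively along the way, which is the form actually reused in later lemmas such as \ref{FarCurvesIntersect}.
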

\begin{proof}
    It suffices to show that $\delta_0\in A_0$ is not contained in $B_i$ or $A_n$ for $i=1, \ldots, n$. By Definition \ref{AdmissibleSequence}(b), $\delta_0$ intersects some component of $\Sigma \setminus B_1$. Now suppose $\delta_0$ intersects some component $S$ of $\Sigma \setminus B_k$ for $k\geq 1$. Then by Lemmas \ref{LocalToGlobal} and \ref{ProjLip}, 
     $$d_{S}(\delta_0, B_{k+1})\geq L-8-2\geq 3$$
     so in particular $\delta_0$ intersects a component of $B_{k+1}$ (as multicurves project to diameter at most $2$ sets by Lemma \ref{ProjLip}), which implies it intersects a component of $\Sigma \setminus B_{k+1}$. 
\end{proof}

In the next four lemmas, we establish two important facts. First, the ordering of the sequence of multicurves $A_0, B_1, \ldots, B_n, A_n$ is coarsely respected when we pass to the image in $C(\Sigma)$. This is the content of Lemma \ref{OrderingOnMulticurves}. Second, we find a uniform coarse Lipschitz lower bounds on distance in $C(\Sigma)$ between $A_0$ and $A_n$ in terms of $n$. This is the content of Lemma \ref{ShortSegmentsLowerBound}.

\begin{lemma}\label{FarCurvesIntersect}
 Suppose $L\geq 18$. There is a number $s\leq 2(\xi(\Sigma)+1)$ so that the following holds. Suppose  $n\geq s$. Let $\delta_0$ be a component of $A_0$. Then $\delta_0$ and $A_n$ fill $\Sigma$, and in particular $\delta_0$ intersects every component of $A_n$ and $\Sigma \setminus A_n$.
\end{lemma}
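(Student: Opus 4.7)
I proceed by contradiction and bound $n$ in terms of $\xi(\Sigma)$. First I observe that it suffices to prove that $\delta_0$ and $A_n$ fill $\Sigma$. Indeed, if they fill, then every essential simple closed curve meets $\delta_0 \cup A_n$; in particular each component $a \in A_n$ is essential, and since the components of the multicurve $A_n$ are pairwise disjoint, $a$ must intersect $\delta_0$. For pair-of-pants components $R$ of $\Sigma \setminus A_n$ (which contain no essential simple closed curves) the topological fact that a simple closed curve crossing a boundary curve of $R$ necessarily enters $R$, together with the already-established ``intersects every component of $A_n$'' conclusion, ensures $\delta_0$ enters every component of $\Sigma \setminus A_n$.

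Now suppose for contradiction $\delta_0$ and $A_n$ do not fill. Then there is an essential curve disjoint from both, giving $d_\Sigma(\delta_0, A_n) \leq 2$, so some $a_n \in A_n$ has $d_\Sigma(\delta_0, a_n) \leq 2$. For each $i$, Lemma \ref{LocalToGlobal} gives $d_S(A_0, A_n) \geq L-8$ for every component $S$ of $\Sigma \setminus B_i$. Since $\delta_0$ and $a_n$ are close in $C(\Sigma)$, Lemma \ref{ProjLip} (combined with Lemma \ref{IntersectionBound}) forces $\pi_S(\delta_0)$ and $\pi_S(a_n)$ to be close in $S$, so the large diameter of $\pi_S(A_0)\cup\pi_S(A_n)$ must be witnessed by some other component $\gamma_i$ of $A_0 \cup A_n$, distinct from $\delta_0$ and $a_n$, with $d_{S_i}(\delta_0, \gamma_i) \geq L - O(1) \geq M$ for an appropriately chosen $S_i$. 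Since $\delta_0$ and $\gamma_i$ lie in disjoint multicurves close in $\Sigma$, the geodesic $[\delta_0, \gamma_i]$ in $C(\Sigma)$ has length at most $3$, and Proposition \ref{MMBGI} produces a vertex $u_i$ on this geodesic with $\pi_{S_i}(u_i) = \varnothing$.

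The heart of the proof is then the following key projection claim: a single curve $u$ can satisfy $\pi_{S_i}(u)=\varnothing$ for at most $2$ values of $i$, and these must be consecutive. To see this, suppose $\pi_{S_i}(u) = \pi_{S_j}(u) = \varnothing$ with $j - i \geq 2$. The subsequence $B_i, B_{i+1}, \ldots, B_j$ is itself $L$-admissible, so Lemma \ref{LocalToGlobal} yields $d_S(B_i, B_j) \geq L-8$ for every component $S$ of $\Sigma \setminus B_l$ with $i < l < j$. Since $\pi_{S_i}(u) = \varnothing$ means $u$ is disjoint from $\partial S_i \subset B_i$, and similarly $u$ is disjoint from $\partial S_j \subset B_j$, Lemma \ref{ProjLip} together with the multicurve projection-diameter bound gives $d_S(u, B_i) \leq 4$ and $d_S(u, B_j) \leq 4$ for any $S$ to which $u$ projects nontrivially. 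By Lemma \ref{lem:NoSharedComponentsAdmSeq} the curve $u$ fails to project to $S$ only when $u$ is a component of $B_l$, which occurs for at most one $l$; choosing $l$ with $u$ projecting, the triangle inequality gives $d_S(B_i, B_j) \leq 8 < L-8$, a contradiction.

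Combining the key claim with a pigeonhole argument over the finitely many choices for $\gamma \in (A_0 \cup A_n) \setminus \{\delta_0, a_n\}$ and over the at most $4$ vertices of each geodesic $[\delta_0, \gamma]$, we obtain an upper bound on $n$ in terms of $\xi(\Sigma)$, bounding the admissible sequence length by the desired $s \leq 2(\xi(\Sigma)+1)$. The main obstacle will be the final bookkeeping: to attain exactly the stated bound $2(\xi(\Sigma)+1)$ (rather than a looser polynomial bound) one must exploit that a multicurve on $\Sigma$ has at most $\xi(\Sigma)+1$ components, and choose the witness $\gamma_i$ and the geodesic carefully so the pigeonhole factors collapse appropriately, rather than multiplying.
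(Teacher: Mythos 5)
The central step in your second paragraph is false. From $d_{\Sigma}(\delta_0,a_n)\leq 2$ you cannot conclude that $\pi_S(\delta_0)$ and $\pi_S(a_n)$ are close: Lemma \ref{ProjLip} applies to disjoint curves, and Lemma \ref{IntersectionBound} needs a bound on $i(\delta_0,a_n)$, which bounded curve-graph distance does not give. Indeed, if $c$ is a curve disjoint from a proper subsurface $S$, $\delta\subset S$, and $f$ is a partial pseudo-Anosov supported on $S$, then $\delta$ and $f^N(\delta)$ are both disjoint from $c$, hence at distance at most $2$, while $d_S(\delta,f^N(\delta))$ is arbitrarily large -- this is exactly the phenomenon Proposition \ref{MMBGI} is designed to handle. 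Worse, if your claim were true the rest of the argument would be superfluous: since $\pi_S(A_0)$ and $\pi_S(A_n)$ each have diameter at most $2$ by Lemma \ref{ProjLip}, closeness of $\pi_S(\delta_0)$ and $\pi_S(a_n)$ would bound $d_S(A_0,A_n)$ outright and contradict Lemma \ref{LocalToGlobal} directly, so no ``other witness'' $\gamma_i$ could exist; the hunt for one signals that the mechanism producing a large projection between two specific components is missing. Two further gaps: to apply Proposition \ref{MMBGI} for each $i$ you need a single component $S_i$ of $\Sigma\setminus B_i$ to which both $\delta_0$ and some component of $A_n$ project nontrivially, but Lemma \ref{lem:NoSharedComponentsAdmSeq} only gives projection to \emph{some} (possibly different) components, and the assertion that $\delta_0$ eventually meets \emph{every} component of $\Sigma\setminus B_i$ is essentially the statement being proved; and in your ``key projection claim'' the bound $d_S(u,B_i)\leq 4$ presumes that a component of $\partial S_i$ missed by $u$ projects nontrivially to the middle surface $S$, which is not justified.

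The paper proves the lemma by a different, direct induction rather than by contradiction: set $\Sigma_k$ to be the subsurface filled by $\delta_0$ and $B_k$ (discarding annuli about components of $B_k$ missing $\delta_0$). Using Lemma \ref{LocalToGlobal} and Lemma \ref{ProjLip} one shows $\Sigma_k\subset\Sigma_{k+1}$, and that as long as $\Sigma_k\neq\Sigma$ the inclusion $\Sigma_k\subset\Sigma_{k+2}$ is strict, so the complexity increases at least every two steps and $\Sigma_k$ exhausts $\Sigma$ after at most $2(\xi(\Sigma)+1)$ steps; the same comparison gives that $\delta_0$ and $A_n$ fill. This also confirms your own bookkeeping worry: even if the gaps above were repaired, your pigeonhole (finitely many fixed geodesics $[\delta_0,\gamma]$, at most four vertices each, each vertex serving at most two indices) yields a bound of the shape $C\,\xi(\Sigma)$ with a noticeably larger constant, not the stated $s\leq 2(\xi(\Sigma)+1)$, so the lemma as stated would not follow without the filled-subsurface growth argument or a substantially sharper count.
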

\begin{proof}
    For $1\leq k \leq n$ let $\Sigma_k$ denote the surface filled by $\delta_0$ and $B_k$, without the annuli coming from components of $B_k$ disjoint from $\delta_0$. Define $\Sigma_n'$ from $\delta_0$ and $A_n$ in the same way. We want to show that $\Sigma_k\subset \Sigma_{k+1}$ and $\Sigma_n\subset \Sigma_n'$, and as long as $\Sigma_k\neq \Sigma$, then the containment $\Sigma_k \subset \Sigma_{k+2}$ is strict. It follows that eventually $\Sigma_k=\Sigma_n'=\Sigma$, so $\delta_0$ and $A_n$ fill $\Sigma$, as desired.
    \par 
    First note that $\delta_0$ must intersect some component of $B_2$. Namely, by condition (b) of Definition \ref{AdmissibleSequence}, Lemma \ref{LocalToGlobal}, and Lemma \ref{ProjLip}, there is a component $S$ of $\Sigma\setminus B_1$ so that
    $$d_S(\delta_0,B_2) \geq d_S(A_0, B_2)-d_S(A_0, \delta_0)\geq L-8-2\geq 3$$
    so $\delta_0$ and $B_2$ intersect. Thus $\Sigma_2$ is a not an annulus, and instead contains at least one component of $B_2$. We will thus assume $k\geq 2$ (we haven't shown that $\Sigma_1 \subset \Sigma_2$, but this will follow).
\par 
We show first that $\Sigma_{k+1}$ contains $\Sigma_k$. The proof that $\Sigma_n \subset \Sigma_n'$ is similar. Let $\gamma$ be any curve intersecting $\Sigma_k$. To show that $\Sigma_k$ is contained  in $\Sigma_{k+1}$, it will suffice to show that $\gamma$ also intersects $\Sigma_{k+1}$. If $\gamma$ and $\delta_0$ intersect, this is obvious. Otherwise, by the definition of $\Sigma_k$ there is a component $S$ of $\Sigma\setminus B_k$ so that $\delta_0$ and $\gamma$ both have nonempty projection to $S$. By Lemmas \ref{LocalToGlobal} and \ref{ProjLip}, 
$$d_S(\delta_0, B_{k+1})\geq d_S(A_0, B_{k+1})-d_S(A_0, \delta_0)\geq L-8-2\geq 5.$$
 Hence $d_S(\gamma, B_{k+1})\geq d_S(\delta_0,B_{k+1})-d_S(\delta_0,\gamma)\geq 3$, so $\gamma$ and $B_{k+1}$ must intersect by Lemma \ref{ProjLip}.
\par 
Now suppose $\Sigma_k\neq \Sigma$. We consider two cases. First, we assume there a component $S$ of $\Sigma \setminus B_k$ which intersects $\delta_0$ but $S$ is not contained in $\Sigma_k$. Second, we assume that every component of $\Sigma \setminus B_k$ which $\delta_0$ intersects is contained in $\Sigma_k$.
\par 
In the first case, $S\subset \Sigma_{k+1}$, as following the same reasoning as above we see that $d_S(\delta_0, B_{k+1})\geq 5$, and thus $\delta_0$ must intersect every component of $B_{k+1}$ which enters $S$, and every essential curve in $S$ must intersect either $\delta_0$ or $B_{k+1}$. In particular, the containment $\Sigma_k \subset \Sigma_{k+1}$ is strict. 
\par 
In the second case, there must be some boundary component $\alpha \in B_k$ of a component $S'$ of $\Sigma \setminus B_k$ that also lies in $\partial \Sigma_k$. But then as above when showing that $\delta_0$ must intersect some component of $B_2$, it follows similarly that $\alpha$ must intersect some component of $B_{k+2}$. In particular, as $d_{S}(\delta_0, B_{k+2})\geq 5$, $\delta_0$ intersects the component of $B_{k+2}$ which intersects $\alpha$, and thus it follows that the containment $\Sigma_k \subset \Sigma_{k+2}$ must be strict as $\Sigma_{k+2}$ contains some of the component of $\Sigma \setminus B_k$ not equal to $S'$ which has $\alpha$ as a boundary component.
\par
The bound $s\leq 2(\xi(\Sigma)+1)$ follows as going from $\Sigma_k$ to $\Sigma_{k+2}$ strictly increases the complexity. 
\end{proof}

\begin{lemma}\label{FarCurvesFill}
    Suppose $L\geq 18$. Let $s$ be in Lemma \ref{FarCurvesIntersect}, and suppose $n \geq 2s$. Fix $\delta_0$ and $\delta_n$ components of $A_0$ and $A_n$ respectively. Then $\delta_0$ and $\delta_n$ fill $\Sigma$. In particular, every vertex in $A_0$ has distance at least $3$ from every vertex in $A_n$ in $C(\Sigma)$.
\end{lemma}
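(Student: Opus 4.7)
The plan is to argue by contradiction: assume $\delta_0$ and $\delta_n$ do not fill $\Sigma$, so there is an essential simple closed curve $\gamma$ disjoint from both, and derive a contradiction from Lemma \ref{LocalToGlobal} applied at a well-chosen component of $\Sigma\setminus B_k$ for an appropriate middle index $k$. The main obstacle is producing a single subsurface in which all three of $\delta_0, \delta_n,$ and $\gamma$ project nontrivially, so that the large-projections lower bound on $d_S(A_0,A_n)$ can be played against the disjointness of $\gamma$ from $\delta_0$ and $\delta_n$.

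First, I choose an index $k$ roughly in the middle of the admissible sequence so that the two truncated subsequences $A_0,B_1,\dots,B_{k-1},B_k$ (with $B_k$ playing the role of $A_n$) and $B_k,B_{k+1},\dots,B_n,A_n$ (reversed, with $B_k$ playing the role of $A_0$) are each admissible and have at least $s$ interior $B$-terms; the hypothesis $n\geq 2s$ is used to ensure such a $k$ exists. Applying Lemma \ref{FarCurvesIntersect} to each truncation yields that $\delta_0\cup B_k$ fills $\Sigma$, that $\delta_n\cup B_k$ fills $\Sigma$, and in each case that $\delta_0$ (respectively $\delta_n$) intersects every component of $B_k$.

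Now, since $\delta_0\cup B_k$ fills $\Sigma$ and $\gamma$ is disjoint from $\delta_0$, the curve $\gamma$ must meet $B_k$; since it is disjoint from $\delta_0$, it cannot equal a component of $B_k$, so it properly crosses $B_k$ and in particular has nonempty projection to some component $S$ of $\Sigma\setminus B_k$. The filling statement $\delta_0\cup B_k$ fills $\Sigma$ forces $\delta_0\cap S$ to fill $S$ (any essential curve in $S$ is essential in $\Sigma$ and disjoint from $B_k$, so it meets $\delta_0$), giving $\pi_S(\delta_0)\neq\varnothing$; the same argument using $\delta_n\cup B_k$ gives $\pi_S(\delta_n)\neq\varnothing$.

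Applying Lemma \ref{LocalToGlobal} at $S$ yields $d_S(A_0,A_n)\geq L-8$. Because the components of a multicurve with nonempty projection to $S$ lie within distance $2$ of each other by Lemma \ref{ProjLip}, and since $\delta_0\subset A_0$ and $\delta_n\subset A_n$ both project, the triangle inequality gives $d_S(\delta_0,\delta_n)\geq L-12\geq 6$ using $L\geq 18$. On the other hand, $\gamma$ has nonempty projection to $S$ and is disjoint from both $\delta_0$ and $\delta_n$, so Lemma \ref{ProjLip} yields $d_S(\delta_0,\delta_n)\leq d_S(\delta_0,\gamma)+d_S(\gamma,\delta_n)\leq 4$, a contradiction. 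This proves that $\delta_0$ and $\delta_n$ fill $\Sigma$, and the ``in particular'' statement then follows from the standard fact that two curves fill $\Sigma$ if and only if their distance in $C(\Sigma)$ is at least $3$.
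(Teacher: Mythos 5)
Your proof is correct and is essentially the paper's own argument: both pick a middle index $k$ with roughly $s\le k\le n-s$, apply Lemma \ref{FarCurvesIntersect} to the two truncated (and reversed) admissible subsequences to see that $\delta_0$ and $\delta_n$ meet every component of $\Sigma\setminus B_k$ (and $\delta_0$ every component of $B_k$), and then play the lower bound from Lemmas \ref{LocalToGlobal} and \ref{ProjLip} against disjointness; you merely phrase the last step as a contradiction with a hypothetical curve $\gamma$ rather than directly. The only quibble is an off-by-one in your choice of $k$ (requiring at least $s$ interior $B$-terms in each truncation literally needs $n\ge 2s+1$), which is harmless since the proof of Lemma \ref{FarCurvesIntersect} actually shows $\delta_0$ and $B_k$ fill for every $k\ge s$, exactly the way the paper itself invokes it with $n\ge 2s$.
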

\begin{proof}
    Fix $i$ with $s\leq i \leq n-s$. By Lemma \ref{FarCurvesIntersect}, $\delta_0$ and $\delta_n$ both intersect every component of $\Sigma \setminus B_i$. The distance of their projections to each such component is at least $3$ by Lemmas \ref{LocalToGlobal} and \ref{ProjLip}. In particular, every curve $\gamma$ with nonempty projection to some component of $\Sigma\setminus B_i$ (that is, any curve not in $B_i$) must intersect either $\delta_0$ or $\delta_n$. As $\delta_0$ intersects every component of $B_i$ by Lemma \ref{FarCurvesIntersect}, the first claim follows. The second claim follows since $\delta_0$ and $\delta_n$ were chosen arbitrarily from $A_0$ and $A_n$.
\end{proof}
Lemmas \ref{FarCurvesIntersect} and \ref{FarCurvesFill} are precisely the reason why we need the $B_i$'s to be sparse (and why twist vertex groups of normalized PGF graphs of groups have sparse associated multicurves). If one doesn't have this, then there may arise cases where, regardless of how large $L$ or $n$ are chosen to be, the subsurface filled by any component of $A_0$ or $A_n$ and some $A_i$ may always be a proper subsurface, as we have no way to "see" the pairs of pants via the projection data. In the proofs in Section \ref{SectionProofMainThemOtherCases}, when looking at certain paths between multicurves, one may get "stuck" forever, so lower bounds may be impossible to produce. 
\par 
We remark here that the next two lemmas are directly inspired by Lemma 4.4 of \cite{BBKL20}. In \cite{M13}, similar methods are used.

\begin{lemma}\label{OrderingOnMulticurves}
Suppose $L\geq M+18$, where $M$ is as in Proposition \ref{MMBGI}. Let $s$ be in Lemma \ref{FarCurvesIntersect}, and suppose $n \geq 2s$. Fix $\delta_0$ and $\delta_n$ components of $A_0$ and $A_n$ respectively. Fix a geodesic $[\delta_0, \delta_n]$. Then $[\delta_0, \delta_n]$ contains a vertex with distance at most $1$ from some component of $B_i$ for all $1\leq i \leq n$. 
    \par 
    Further, for $1\leq i \leq n$, let $\alpha_i$ denote the first vertex of $[\delta_0,\delta_n]$ within $1$ from some component of $B_i$, and let $\omega_i$ denote the last vertex of $[\delta_0,\delta_n]$ within $1$ from some component of $B_i$. Then for $s\leq j\leq k \leq n-s$, we have $\alpha_j \leq \omega_{k}$. If we further have that $k-j\geq 2s+2$, then $\omega_j \leq \omega_k$. Here the ordering is in the sense of the natural ordering on $[\delta_0, \delta_n]$ with $\delta_0$ the minimal element.
\end{lemma}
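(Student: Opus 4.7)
The plan is to prove the three assertions in order: the existence of a vertex of $[\delta_0, \delta_n]$ within $1$ of some component of $B_i$ for each $i$, the inequality $\alpha_j \leq \omega_k$, and finally $\omega_j \leq \omega_k$ under the stronger gap hypothesis. All three rest on combining the bounded geodesic image theorem (Proposition \ref{MMBGI}) with the large projection bounds supplied by Lemma \ref{LocalToGlobal} applied to various admissible subsequences.

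For the existence, fix $1 \leq i \leq n$. Since $n \geq 2s$, Lemma \ref{FarCurvesFill} implies that $\delta_0$ and $\delta_n$ fill $\Sigma$ and have distance at least $3$ in $C(\Sigma)$, so they intersect essentially. Letting $S$ be a component of $\Sigma \setminus B_i$ containing a transverse intersection point of representatives in minimal position, both $\delta_0$ and $\delta_n$ have nonempty projection to $S$. Lemma \ref{LocalToGlobal} then yields $d_S(A_0, A_n) \geq L - 8$, and two applications of Lemma \ref{ProjLip} give $d_S(\delta_0, \delta_n) \geq L - 12 \geq M$. Proposition \ref{MMBGI} now produces a vertex of $[\delta_0, \delta_n]$ disjoint from $\partial S \subset B_i$, which is therefore within distance $1$ in $C(\Sigma)$ of some component of $B_i$.

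For $\alpha_j \leq \omega_k$ with $s \leq j \leq k \leq n - s$ (the case $j = k$ being trivial), assume $j < k$ and $\omega_k < \alpha_j$ for contradiction. The key observation is that $v$ lies within $1$ of some component of $B_i$ iff $v$ has empty projection to some component of $\Sigma \setminus B_i$; thus one may choose a component $S_j^*$ of $\Sigma \setminus B_j$ with $\pi_{S_j^*}(\alpha_j) = \emptyset$, and similarly $S_k^*$ for $\omega_k$. Since $\alpha_j$ is the first vertex within $1$ of $B_j$, no vertex strictly before it has empty projection to $S_j^*$, so Proposition \ref{MMBGI} forces $d_{S_j^*}(\delta_0, v) < M$ for every $v \leq \omega_k$; a symmetric bound gives $d_{S_k^*}(v, \delta_n) < M$ for every $v \geq \alpha_j$. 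Applying Lemma \ref{LocalToGlobal} to the admissible subsequences $A_0, B_1, \ldots, B_k$ and $B_j, \ldots, B_n, A_n$ produces $d_{S_j^*}(A_0, B_k) \geq L - 8$ and $d_{S_k^*}(B_j, A_n) \geq L - 8$. Chaining these with the BGI bounds and Lemma \ref{ProjLip} (to pass from the multicurves $B_k, B_j$ to their subsets $\partial S_k^*, \partial S_j^*$) shows that, for $L \geq M + 18$, any vertex $v$ of $[\omega_k, \alpha_j]$ with nonempty projection to both $S_j^*$ and $S_k^*$ satisfies $d_{S_j^*}(v, \partial S_k^*) \geq 10$ and $d_{S_k^*}(v, \partial S_j^*) \geq 10$ simultaneously, contradicting the Behrstock inequality (Proposition \ref{Behrstock}).

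For $\omega_j \leq \omega_k$ under $k - j \geq 2s + 2$, choose an intermediate index $\ell$ (such as $\ell = j + s + 1$) satisfying $\ell - j \geq s + 1$ and $k - \ell \geq s + 1$; the preceding claim immediately gives $\alpha_\ell \leq \omega_k$. A symmetric version of that argument, using Lemma \ref{FarCurvesIntersect} applied to the subsequence $B_j, \ldots, B_\ell$ to ensure $B_j$ and $B_\ell$ fill $\Sigma$, then yields $\omega_j \leq \alpha_\ell$, so $\omega_j \leq \alpha_\ell \leq \omega_k$. The main obstacle throughout is tracking the various additive slacks (the diameter-$2$ bounds from Lemma \ref{ProjLip}, the BGI constant $M$, and the Behrstock threshold $10$) so that all of the relevant subsurface projection distances exceed $10$ simultaneously; this is exactly what the hypothesis $L \geq M + 18$ provides.
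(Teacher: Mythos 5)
Your first claim is fine (a mild variant of the paper's argument, which uses Lemma \ref{FarCurvesIntersect} together with Lemma \ref{lem:NoSharedComponentsAdmSeq} in place of an intersection point of $\delta_0$ and $\delta_n$), but the ordering claims have genuine gaps. Your ``key observation'' is only an implication, not an equivalence: a vertex within $1$ of some component of $B_i$ is disjoint from that component but may still project nontrivially to \emph{every} component of $\Sigma\setminus B_i$, so the component $S_j^*$ with $\pi_{S_j^*}(\alpha_j)=\varnothing$ need not exist. More seriously, the Behrstock step does not go through as written: Proposition \ref{Behrstock} requires $\pi_{S_j^*}(\partial S_k^*)\neq\varnothing$ and $\pi_{S_k^*}(\partial S_j^*)\neq\varnothing$, and nothing in Definition \ref{AdmissibleSequence} or Lemma \ref{LocalToGlobal} guarantees this for the particular submulticurves $\partial S_j^*\subset B_j$ and $\partial S_k^*\subset B_k$ (Lemma \ref{LocalToGlobal} only ensures \emph{some} component of $B_k$ projects to $S_j^*$, which need not lie in $\partial S_k^*$); moreover, if $\omega_k$ and $\alpha_j$ are adjacent there may be no vertex $v$ strictly between them, and the endpoints need not project to both subsurfaces. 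Finally the constants fail: chaining $d_{S_j^*}(A_0,B_k)\geq L-8$ with $d_{S_j^*}(\delta_0,v)\leq M$ and the two diameter-$2$ losses from Lemma \ref{ProjLip} gives only $L-M-12=6$ when $L=M+18$, below the Behrstock threshold $10$, so your route needs $L$ larger than the lemma allows. The paper avoids Behrstock here entirely: it picks a component $S$ of $\Sigma\setminus B_j$ to which $\omega_k$ projects (after checking $\omega_k\notin B_j$) and contradicts $d_S(\delta_0,B_k)\geq L-10$ against $d_S(\delta_0,\omega_k)\leq M$ (Proposition \ref{MMBGI}, using the definition of $\alpha_j$) plus $d_S(\omega_k,B_k)\leq 4$.

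The last step is also a real gap. The inequality $\omega_j\leq\alpha_\ell$ for $\ell=j+s+1$ is not a ``symmetric version'' of $\alpha_j\leq\omega_k$: swapping the roles of $\delta_0$ and $\delta_n$ just reproduces the same statement, and $\omega_j\leq\alpha_\ell$ (last $B_j$-vertex before the \emph{first} $B_\ell$-vertex) is strictly stronger than anything established. The natural attempt to imitate the argument fails because vertices after $\alpha_\ell$ can still be within $1$ of components of $B_\ell$ (that is controlled by $\omega_\ell$, not $\alpha_\ell$), and with a gap of only $s+1$ you cannot invoke Lemma \ref{FarCurvesFill} to force components of $B_j$ and $B_\ell$ to be distance $\geq 3$ apart, which is exactly what makes the paper's argument run. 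The paper proves $\omega_j\leq\omega_k$ directly: assuming $\omega_j>\omega_k$ with $k-j\geq 2s+2$, Lemma \ref{FarCurvesFill} applied to the admissible sequence $B_j,\ldots,B_k$ puts $\omega_j$ at distance $\geq 2$ from every component of $B_k$, and then for $S$ a component of $\Sigma\setminus B_k$ the chain $L-10\leq d_S(B_j,\delta_n)\leq d_S(B_j,\omega_j)+d_S(\omega_j,\delta_n)\leq 4+M$ gives the contradiction within the stated constant $L\geq M+18$.
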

\begin{proof}
    The first claim follows from Lemmas \ref{lem:NoSharedComponentsAdmSeq} and \ref{FarCurvesIntersect}. Namely, $\delta_0$ and $\delta_n$ must simultaneously intersect some component $S$ of $\Sigma \setminus B_i$. For if $i\geq s$, then $\delta_0$ intersects every component by Lemma \ref{FarCurvesIntersect}, and $\delta_n$ intersects at least one component by Lemma \ref{lem:NoSharedComponentsAdmSeq}. If $i\leq n-s$ we may flip $\delta_0$ and $\delta_n$ in the above argument. Thus by Lemma \ref{LocalToGlobal} and \ref{ProjLip}, for any $1\leq i \leq n$ and any component $S$ of $\Sigma \setminus B_i$ that both $\delta_0$ and $\delta_n$ intersect,
     $$d_S(\delta_0, \delta_n)\geq L-8-2-2\geq M$$
     so Proposition \ref{MMBGI} gives the desired vertex of $[\delta_0, \delta_n]$ for every $i$. In particular the $\alpha_i$'s and $\omega_i$'s are well defined.
    \par 
    We begin with showing that $\alpha_j \leq \omega_k$ for all $s\leq j \leq k \leq n-s$. This is immediate when $j=k$. Assume for contradiction that there for some $j$ and $k$ with $s\leq j < k \leq n-s$ so that $\omega_{k}< \alpha_j$. First, note that $\omega_{k}$ is not a component of $B_j$. If it were, then there would be some component of $\Sigma \setminus B_{k}$ so that $\omega_{k}$ and $\delta_n$ have projections to this component which are at least $M$ apart, by Lemmas \ref{LocalToGlobal} and \ref{ProjLip}. By Proposition \ref{MMBGI}, this would give a vertex other than $\omega_{k}$ on $[\omega_{k}, \delta_n] \subset [\delta_0, \delta_n]$ disjoint from a component of $B_{k}$, contradicting the definition of $\omega_{k}$.
    \par 
    We then have the following inequality for any component $S$ of $\Sigma \setminus B_j$ that $\omega_{k}$ has nonempty projection to. Note that $\delta_0$ has nonempty intersection to this component by Lemma \ref{FarCurvesIntersect}.
    $$d_{S}(\delta_0, B_{k}) \leq d_{S}(\delta_0, \omega_{k})+d_{S}(\omega_{k}, B_{k}).$$
    %
       % Omega picture
\begin{figure}[H]
    \centering
    \includegraphics[scale=.85]{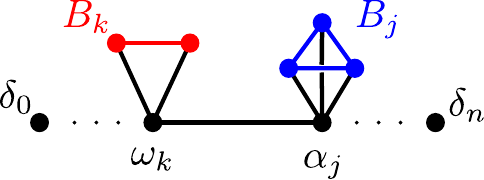}
    \caption{The hypothetical scenario discussed in the above proof. Lemma \ref{LocalToGlobal} says that $B_{k}$ and $\delta_0$ have far apart projections to every component of $\Sigma\setminus B_{j}$, which by Proposition \ref{MMBGI} and the definition of the $\omega$ curves shows that $B_{k}$ cannot be to the "left" of $B_j$, giving the contradiction.}
     \label{FigureOmegaExample}
\end{figure}
     The left hand side at least $L-8-2\geq M+5$ by Lemmas \ref{LocalToGlobal} and \ref{ProjLip}. On the other hand,  $d_{S}(\omega_{k}, B_{k})\leq 4$ as $\omega_{k}$ is disjoint from some component of $B_{k}$, and since $[\delta_0, \omega_{k}]\subset [\delta_0,\delta_n]$ contains no vertices disjoint from $S$ by the definition of $\alpha_{j}$ and the assumption that $\omega_{k}< \alpha_j$, $d_S(\delta_0, \omega_{k})\leq M$ by the contrapositive of Proposition \ref{MMBGI}. This is a contradiction.
     \par 
     Now suppose $\omega_j >\omega_k$ with $s \leq j < k \leq n-s$ and $k-j\geq 2s+2$. The sequence $B_j, B_{j+1}, \ldots, B_k$ is an $L$-admissible sequence, with at least $s$ terms between $B_j$ and $B_k$. By Lemma \ref{FarCurvesFill} and the triangle inequality, $\omega_j$ is at least distance $2$ in $C(\Sigma)$ from every component of $B_k$ as every component of $B_j$ is at least distance $3$ from every component of $B_k$, so for any component $S$ of $\Sigma \setminus B_k$,
     $$d_S(B_j, \delta_n)\leq d_S(B_j, \omega_j)+d_S(\omega_j, \delta_n).$$
     The left hand side is at least $L-10\geq M+5$ by Lemmas \ref{LocalToGlobal} and \ref{ProjLip}, while the first term on the right is at most $4$ and the second is at most $M$ as $[\omega_j, \delta_n]$ contains no vertices disjoint from $S$ by the definition of $\omega_k$. This again is a contradiction, so $\omega_j\leq \omega_k$.
\end{proof}

\begin{lemma}\label{ShortSegmentsLowerBound}
     Suppose $L\geq M+18$, where $M$ is as in Proposition \ref{MMBGI}. Let $s$ be as in Lemma \ref{FarCurvesIntersect}. There is a constant $E$ so that the following holds. For all components $\delta_0$ of $A_0$ and $\delta_n$ of $A_n$
$$d_{\Sigma}(\delta_0,\delta_n)\geq \frac{1}{E}n-E.$$
\end{lemma}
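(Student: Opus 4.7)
The plan is to produce a linear-in-$n$ chain of distinct, linearly-ordered vertices along a geodesic $[\delta_0,\delta_n]$ in $C(\Sigma)$, which then forces its length to grow at least like $n$. The two main tools are the coarse ordering of the $\omega_i$'s given in Lemma \ref{OrderingOnMulticurves} and the filling (hence distance-$\geq 3$) conclusion of Lemma \ref{FarCurvesFill}, applied to suitably-spaced subsequences of the admissible sequence.

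First I would dispose of the small-$n$ case: if $n<2s$ the inequality holds vacuously once $E$ is chosen large enough. So assume $n\geq 2s$, fix any geodesic $[\delta_0,\delta_n]$ in $C(\Sigma)$, and for each $1\leq i\leq n$ let $\omega_i$ be the last vertex of $[\delta_0,\delta_n]$ lying within distance $1$ of a component of $B_i$, as provided by Lemma \ref{OrderingOnMulticurves}.

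The main step is to select indices widely enough spaced that the corresponding $\omega_i$'s are both ordered and pairwise distinct. Take the stride $2s+2$ and set $j_m = s + m(2s+2)$ for $m=0,1,2,\ldots$ as long as $j_m\leq n-s$; this yields at least $\lfloor (n-2s)/(2s+2)\rfloor$ valid indices in the interval $[s,n-s]$. For consecutive indices the second clause of Lemma \ref{OrderingOnMulticurves} gives $\omega_{j_m}\leq \omega_{j_{m+1}}$, since $j_{m+1}-j_m=2s+2$. To upgrade this to a strict inequality, suppose $\omega_{j_m}=\omega_{j_{m+1}}$; then this common vertex is within $1$ of some component $b_m$ of $B_{j_m}$ and within $1$ of some component $b_{m+1}$ of $B_{j_{m+1}}$, so $d_{\Sigma}(b_m,b_{m+1})\leq 2$ by the triangle inequality. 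On the other hand, the subsequence $B_{j_m},B_{j_m+1},\ldots,B_{j_{m+1}}$ is itself $L$-admissible with $2s+1\geq 2s$ middle terms, so Lemma \ref{FarCurvesFill} (applied with $B_{j_m}$ and $B_{j_{m+1}}$ in the roles of $A_0$ and $A_n$) forces $d_{\Sigma}(b_m,b_{m+1})\geq 3$, a contradiction. Hence $\omega_{j_0}<\omega_{j_1}<\cdots$ is a strictly increasing chain on $[\delta_0,\delta_n]$.

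Finally, since the vertices $\omega_{j_m}$ are distinct and linearly ordered along $[\delta_0,\delta_n]$, the length of this geodesic is at least the number of such $m$ minus one, and this count is $\succeq n/(2s+2)$. Choosing $E$ large enough to absorb the constants $2s+2$, $2s$, and the small-$n$ cases yields
\[
d_{\Sigma}(\delta_0,\delta_n)\;\geq\;\frac{1}{E}n-E,
\]
as required. I do not anticipate any serious obstacle here: all the heavy geometric input (Behrstock inequality, bounded geodesic image, and the filling/ordering consequences) was already extracted in Lemmas \ref{LocalToGlobal}, \ref{FarCurvesFill}, and \ref{OrderingOnMulticurves}; the only real care is in choosing a single stride that is simultaneously large enough for the ordering clause of Lemma \ref{OrderingOnMulticurves} and for the filling clause of Lemma \ref{FarCurvesFill}.
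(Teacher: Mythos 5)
Your proposal is correct and follows essentially the same route as the paper: both decompose the geodesic using the $\omega$-vertices at indices spaced $2s+2$ apart, use the ordering clause of Lemma \ref{OrderingOnMulticurves} to order them along $[\delta_0,\delta_n]$, and use Lemma \ref{FarCurvesFill} (applied to the $L$-admissible subsequence between consecutive chosen indices) plus the triangle inequality to force consecutive $\omega$'s to be at least distance $1$ apart, yielding the linear lower bound. The only differences are cosmetic (offsetting the indices by $s$ and phrasing the conclusion as a strictly increasing chain rather than as subsegments of length at least $1$).
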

\begin{proof}
     If $n\leq 2s+2$, then clearly such an $E$ can be found, for example $E= 2s+2$ suffices. Thus we may assume $n\geq 2s+2$. Fix a geodesic $[\delta_0, \delta_n]$. We may split $[\delta_0, \delta_n]$ into subsegments intersecting only at their endpoints as follows.
     $$[\delta_0, \delta_n]=[\delta_0, \omega_{2s+2}]\cup [\omega_{2s+2}, \omega_{4s+4}]\cup \cdots \cup [\omega_{2s(m-1)}, \delta_n].$$
     Here, $m$ is the integer part of $\frac{n}{2s+2}$. Since $2k(s+1)-2(k-1)(s+1)=2s+2>s$, the claim about these subsegments intersecting only at their endpoints follows from Lemma \ref{OrderingOnMulticurves}.
     \par 
     By applying Lemma \ref{FarCurvesFill}, the triangle inequality, and the fact that the sequence $B_{(2s+2)k}, B_{(2s+2)k+1}, \dots B_{(2s+2)(k+1)}$ is $L$-admissible with $2s$ terms between the first and last multicurve, every segment except for perhaps the last has length at least $1$. This follows as $\omega_{(2s+2)k}$, $\omega_{(2s+2)(k+1)}$ are distance $1$ from some element of $B_{(2s+2)k}$, $B_{(2s+2)(k+1)}$, respectively, and each element of these two multicurves are at least $3$ from each other. In particular,
     $$d_{\Sigma}(\delta_0, \delta_n) \geq m-1.$$
     But we have
     $$m\geq \frac{1}{2s+2}n-(2s+2)$$
     giving the desired $E$.
\end{proof}

\section{Combination Theorems}\label{SectionProofMainThemOtherCases}

In the following lemma, we provide the underlying method for proving the combination theorems, including Theorem \ref{ComboQIEmb}. It is a technical result which gives a way to combine the QI lower bounds coming from the vertex groups of a graph of groups, and the lower bounds that result from Lemma \ref{ShortSegmentsLowerBound}.

\begin{lemma}\label{lem:admSequenceQIlowerbound}
    Fix $C\geq 1$ and an $L$-admissible sequence of multicurves $A_0, B_1, \ldots, B_n, A_n$ with $L\geq M+18$. Then there is a constant $K=K(C, E)$, where $E$ is the constant from Lemma \ref{ShortSegmentsLowerBound}, so that the following holds. Suppose $a_0, b_1, \ldots, b_n, a_n$ is a sequence of positive integers so that for any $\gamma_0\in A_0, \delta_i\in B_i, \gamma_n \in A_n$, with $i=1,\ldots, n-1$, we have for all $i$ that

    \begin{equation}\label{eq:AdmissNeighborLowerBounds1}
        d_{\Sigma}(\gamma_0,\delta_1)\geq \frac{1}{C}a_0-C 
    \end{equation}

    \begin{equation}\label{eq:AdmissNeighborLowerBounds2}
        d_{\Sigma}(\delta_i,\delta_{i+1})\geq \frac{1}{C}b_i-C 
    \end{equation}

    \begin{equation}\label{eq:AdmissNeighborLowerBounds3}
        d_{\Sigma}(\delta_n,\gamma_n)\geq \frac{1}{C}a_n-C. 
    \end{equation}

    Then we can conclude that
    $$d_{\Sigma}(\gamma_0, \gamma_n)\geq \frac{a_0+a_n+\sum_{i=1}^{n-1} b_i}{K}-K. $$
\end{lemma}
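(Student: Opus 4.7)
Write $d = d_\Sigma(\gamma_0,\gamma_n)$ and fix a geodesic $[\gamma_0,\gamma_n]$ in $C(\Sigma)$. Lemma~\ref{ShortSegmentsLowerBound} immediately gives $n \leq E(d+E)$, a bound that will be used to absorb linear-in-$n$ error terms later. Let $s$ be the constant from Lemma~\ref{FarCurvesIntersect}. The heart of the argument is a telescoping along $[\gamma_0,\gamma_n]$: without this, naively summing the individual lower bounds yields $(n+1)$ max-type estimates, which combined with $n = O(d)$ is quadratic in $d$ and useless.

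Assume first $n \geq 2s$, so that Lemma~\ref{OrderingOnMulticurves} supplies ordered vertices $\alpha_i \leq \omega_i$ on the geodesic within distance $1$ of components of $B_i$, for each $1 \leq i \leq n$, with $\alpha_j \leq \omega_k$ whenever $s \leq j \leq k \leq n-s$. For each interior index $s \leq i \leq n-s-1$, choose $\delta_i \in B_i$ and $\delta_{i+1} \in B_{i+1}$ within $1$ of $\alpha_i$ and $\omega_{i+1}$ respectively. Since $\alpha_i \leq \omega_{i+1}$ lie on a common geodesic, hypothesis~\eqref{eq:AdmissNeighborLowerBounds2} gives
\[
d_\Sigma(\gamma_0,\omega_{i+1}) - d_\Sigma(\gamma_0,\alpha_i) = d_\Sigma(\alpha_i,\omega_{i+1}) \geq d_\Sigma(\delta_i,\delta_{i+1}) - 2 \geq \frac{b_i}{C} - C - 2.
\]
The components of $B_{i+1}$ are pairwise disjoint, so $d_\Sigma(\alpha_{i+1},\omega_{i+1}) \leq 3$ and hence $d_\Sigma(\gamma_0,\alpha_{i+1}) \geq d_\Sigma(\gamma_0,\omega_{i+1}) - 3$. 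Substituting and telescoping from $i=s$ to $i=n-s-1$,
\[
d \;\geq\; d_\Sigma(\gamma_0,\alpha_{n-s}) - d_\Sigma(\gamma_0,\alpha_s) \;\geq\; \frac{1}{C}\sum_{i=s}^{n-s-1} b_i \;-\; (C+5)(n - 2s),
\]
so that $\sum_{i=s}^{n-s-1} b_i \leq Cd + C(C+5)(n - 2s)$.

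For the endpoints and boundary indices, use the hypotheses together with the fact that $\alpha_i$ lies on the geodesic for each $1 \leq i \leq n$: the triangle inequality gives $a_0 \leq C(d + 1 + C)$ and $a_n \leq C(d + 1 + C)$, while $b_i \leq C(d + 2 + C)$ for each of the at most $2s - 1$ boundary indices in $\{1,\ldots,s-1\} \cup \{n-s,\ldots,n-1\}$. Summing and invoking $n \leq E(d+E)$ to absorb the term $C(C+5)(n - 2s)$, we obtain $a_0 + a_n + \sum_{i=1}^{n-1} b_i \leq K_1 d + K_2$ for constants $K_1, K_2$ depending only on $C$ and $E$, which rearranges to the claimed bound with $K = \max(K_1, K_2/K_1, 1)$. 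The case $n < 2s$ is handled by a separate direct argument: $n$ is then bounded by a constant depending only on $\Sigma$, and a case analysis using Lemma~\ref{LocalToGlobal} and Proposition~\ref{MMBGI} either produces a vertex on the geodesic within $1$ of each $B_i$ (when $\gamma_0,\gamma_n$ share a projection component of $\Sigma \setminus B_i$, as forced by admissibility for $L$ large), or else forces the corresponding $a_0,a_n,b_i$ to be individually bounded by constants. The main obstacle is exactly the passage from maximum-type to additive bounds: the ordering $\alpha_i \leq \omega_{i+1}$ from Lemma~\ref{OrderingOnMulticurves}, together with the diameter-$3$ bound on vertices near $B_{i+1}$, is what permits the telescoping sum that converts the naive quadratic estimate into a linear one.
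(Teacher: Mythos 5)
Your argument for the main case ($n\geq 2s$) is correct, but it is a genuinely different route from the paper's. The paper also splits off $[\gamma_0,\alpha_s]$ and $[\omega_{n-s},\gamma_n]$, but on the middle segment it cannot simply telescope the bounds $d_\Sigma(\alpha_i,\omega_{i+1})\geq \frac{1}{C}b_i-C-4$, because the additive constants accumulate linearly in $n$; it therefore sorts the segments into $D$-long ones (where the multiplicative bound can swallow the additive error, using that the $b_i$ are positive integers) and maximal unions of $D$-short ones (where Lemma \ref{ShortSegmentsLowerBound} is applied locally), and recombines. You instead accept the linear-in-$n$ error from naive telescoping, convert everything into the equivalent upper bound $a_0+a_n+\sum b_i\leq K_1 d+K_2$, and absorb the error term with a single global application of Lemma \ref{ShortSegmentsLowerBound}, namely $n\leq E(d+E)$; the boundary indices in $\{1,\dots,s-1\}\cup\{n-s,\dots,n-1\}$ and the endpoints are handled by the crude bound $b_i\leq C(d+O(1)+C)$, there being only $O(s)$ of them. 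This is shorter, avoids the long/short bookkeeping entirely, and does not even use positivity of the $b_i$; what it gives up is only the cleaner multiplicative form $d_\Sigma(\alpha_s,\omega_{n-s})\geq\frac{1}{C''}\sum b_i$ that the paper extracts, which is not needed for the stated conclusion. Your uses of Lemma \ref{OrderingOnMulticurves} (existence of $\alpha_i,\omega_i$ for all $i$ and the ordering $\alpha_i\leq\omega_{i+1}$ for $s\leq i\leq n-s-1$) and of the freedom to choose different components $\delta_i\in B_i$ at different steps are all legitimate, since the hypotheses quantify over all such choices.

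One small repair is needed in your sketch of the $n<2s$ case: the dichotomy is not "shared projection component, or else $a_0,a_n,b_i$ bounded by constants." The correct alternative (and the one the paper uses) is that either $\gamma_0$ or $\gamma_n$ is disjoint from some component of $B_i$ — in which case that endpoint itself is the required vertex within distance $1$ of $B_i$ — or both intersect every component of $B_i$, hence share every complementary component, and then Lemmas \ref{lem:NoSharedComponentsAdmSeq}, \ref{LocalToGlobal}, \ref{ProjLip} and Proposition \ref{MMBGI} produce an interior vertex of $[\gamma_0,\gamma_n]$ disjoint from a component of $B_i$. In either case each of the at most $2s+1$ quantities $a_0,b_i,a_n$ is bounded by $C(d+4+C)$ (not by a constant), and summing gives the same affine bound as in the main case. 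With that adjustment your proof is complete.
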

We remark before the proof that the constants $a_0, b_1, \ldots, b_n, a_n$ will in practice come from distances in the coned off Cayley graph of the fundamental group of the graph of groups in the theorems below.
\begin{proof}
    Assume first that $n\leq 2s$. Then $[\gamma_0, \gamma_n]$ contains a vertex $\beta_1$ at most $2$ from $\delta_1$. Indeed, if $\gamma_0$ is disjoint from some component of $B_1$, then $\beta_1=\gamma_0$ itself is such a vertex. Otherwise, by Lemma \ref{lem:NoSharedComponentsAdmSeq} there is some component $S$ of $\Sigma \setminus B_1$ that $\gamma_n$ intersects, and hence by Lemmas \ref{LocalToGlobal} and \ref{ProjLip} we have that $d_S(\gamma_0, \gamma_n)\geq M$, and the existence of the required vertex $\beta_1$ follows from Proposition \ref{MMBGI} (in fact, in this case we can choose $\beta_1$ to be at distance $1$ from $\delta_1$). In either case, we have
    $$d_{\Sigma}(\gamma_0, \gamma_n)=d_{\Sigma}(\gamma_0, \beta_1)+d_{\Sigma}(\beta_1, \gamma_n)\geq d_{\Sigma}(\gamma_0, \delta_1)+d_{\Sigma}(\delta_1, \gamma_n)-2.$$
    One can continue this inductively (using that for all $1\leq k\leq n-1$, $B_k, B_{k+1}, \ldots, A_n$ is an $L$-admissible sequence) to show that
    \begin{equation*}
        d_{\Sigma}(\gamma_0, \gamma_n)\geq d_{\Sigma}(\gamma_0, \delta_1)+d_{\Sigma}(\delta_n, \gamma_n)+\sum_{i=1}^{n-1}\bigg[d_{\Sigma}(\delta_i, \delta_{i+1})\bigg]-4s
    \end{equation*}
    where the $4s$ term appears as $n\leq 2s.$ Using the inequalities \ref{eq:AdmissNeighborLowerBounds1}, \ref{eq:AdmissNeighborLowerBounds2}, and \ref{eq:AdmissNeighborLowerBounds3}, this becomes
    \begin{equation*}
        d_{\Sigma}(\gamma_0, \gamma_n)\geq \frac{a_0+a_n+\sum_{i=1}^{n-1} b_i}{C}-2sC-4s. 
    \end{equation*}
    so in this case letting $K=2sC+4s$ suffices.
    \par 
    Otherwise, if $n\geq 2s$, we consider the vertices $\alpha_i$ and $\omega_i$ for the geodesic $[\gamma_0, \gamma_n]$ from Lemma \ref{OrderingOnMulticurves}. By Lemma \ref{OrderingOnMulticurves}, $\alpha_s\leq \omega_{n-s}$, so we can write
    \begin{equation*}
        [\gamma_0, \gamma_n]=[\gamma_0, \alpha_s]\cup [\alpha_s, \omega_{n-s}]\cup [\omega_{n-s}, \gamma_n]
    \end{equation*}
    where the geodesics on the right only intersect at endpoints. We obtain coarse Lipschitz lower bounds for the first and last geodesic segment in this equality using the case when $n\leq 2s$ above. Thus it will suffice to focus on the middle term.
    \par 
    For $s\leq i \leq n-s-1$, each $\alpha_i$ and $\omega_i$ has distance at most $2$ from $\delta_i$. Thus
    \begin{equation}\label{eq:alphaomegaLowerBound}
        d_{\Sigma}(\alpha_i, \omega_{i+1})\geq d_{\Sigma}(\delta_i, \delta_{i+1})-4\geq \frac{1}{C}b_i-C-4.
    \end{equation}
    For $s\leq i \leq n-s-1$ and a fixed $D>0$, we will call a geodesic $[\alpha_i, \omega_{k+1}]$ $D$-\textit{long} if 
    \begin{equation*}
        \frac{1}{C}b_i-D>0
    \end{equation*}
    If a segment is not $D$-long it will be called $D$-\textit{short}. We remark that when the $b_i$'s are given by distances in a coned off Cayley graph, this terminology makes more sense as one can obtain bounds on $d_{\Sigma}(\alpha_i, \omega_{i+1})$ in terms of $D$. 
    \par 
    We then split $[\alpha_s, \omega_{n-s}]$ into subsegments that are either $D$-long or maximal connected unions of $D$-short segments, where $D>C+2E+21$ with $E$ the constant in Lemma \ref{ShortSegmentsLowerBound}. Specifically, there is some $s\leq m\leq n-s$ and a strictly increasing function $N:\{s,\ldots, m\} \to \{s,\ldots, n-s\}$ with $N(s)=s$ and $N(m)=n-s$ so that we can write
    $$[\alpha_s, \omega_{n-s}]=[\alpha_{N(s)}, \omega_{N(s+1)}]\cup [\alpha_{N(s+1)}, \omega_{N(s+2)}]\cup \cdots \cup [\alpha_{N(m-1)}, \omega_{N(m)}]$$
    where $[\alpha_{N(j)}, \omega_{N(j+1)}]$ is either a $D$-long segment or a maximal union of neighboring $D$-short segments (i.e. either neighboring segment of this union is $D$-long). Here, neighboring subsegments are not typically disjoint, and may overlap on a subsegment of length at most $3$ as $[\alpha_k, \omega_k]$ has length at most $3$ for all $1\leq k \leq n$. 
    \par
     For all $D$-long segments $[\alpha_i, \omega_{i+1}]$ there is a $C'=C'(C, D)$ so that
     \begin{equation*}
         \frac{1}{C}b_i-C-2E-21\geq \frac{1}{C'}b_i.
     \end{equation*}
    This is because there is a uniform positive lower bound for all $i$ on the left hand side as each $b_i$ is a positive integer. On the other hand, for any union of neighboring $D$-short segments $[\alpha_{N(i)}, \omega_{N(i+1)}]$, we have
    \begin{equation*}
        \sum_{k=N(i)}^{N(i+1)-1} b_i \leq CD(N(i+1)-N(i))
    \end{equation*}
    by the definition of $D$-short. In particular, using Lemma \ref{ShortSegmentsLowerBound} and the triangle inequality, we obtain
    \begin{multline}\label{eq:alphaomegaShortSegmentLowerb}
        d_{\Sigma}(\alpha_{N(i)}, \omega_{N(i+1)})\geq d_{\Sigma}(\delta_{N(i)}, \delta_{N(i+1)})-4
        \\ \geq \frac{1}{E}(N(i+1)-N(i))-E-4\geq \frac{1}{CDE}\biggl(\sum_{k=N(i)}^{N(i+1)-1}b_i\biggl)-E-4.
    \end{multline}
    \par 
    If there are no long segments, inequality \ref{eq:alphaomegaShortSegmentLowerb} gives the desired lower bound for the $[\alpha_s, \omega_{n-s}]$ segment. Otherwise, let $J_{\ell}$ denote the indices of $\{s, \ldots, m\}$ so that $j\in J_{\ell}$ implies $[\alpha_{N(j)}, \omega_{N(j+1)}]$ is $D$-long, and $J_{s}$ the indices so that $j\in J_{s}$ implies $[\alpha_{N(j)}, \omega_{N(j+1)}]$ is a maximal union of neighboring short segments. Define $C''=\max \{C', CDE\}$. Combining all of this together, we obtain the following.
    \begin{multline*}
        d_{\Sigma}(\alpha_s, \omega_{n-s}) \geq \sum_{j\in J_{\ell}} \big(d_{\Sigma}(\alpha_{N(j)}, \omega_{N(j+1)})-3 \big ) +\sum_{j\in J_s}\big(d_{\Sigma}(\alpha_{N(j)}, \omega_{N(j+1)})-3\big)
        \\
        \geq \sum_{j\in J_{\ell}}\biggl(\frac{1}{C}b_{N(j)}-C-7\biggl) + \sum_{j\in J_s} \biggl( \frac{1}{CDE}\Bigl{(} \sum_{k=N(j)}^{N(j+1)-1} b_k\Bigl{)}-E-7\biggl)
        \\
        \geq \sum_{j\in J_{\ell}}\biggl(\frac{1}{C}b_{N(j)}-C-2E-21\biggl) +\sum_{j\in J_{s}} \frac{1}{CDE}\sum_{k=N(j)}^{N(j+1)-1}b_k\geq \frac{1}{C''}\sum_{i=s}^{n-s-1} b_i
    \end{multline*}
    The first inequality follows by breaking $[\alpha_s, \omega_{n-s}]$ into its $D$-long and unions of $D$-short segments, accounting for the overlaps of neighboring segments. The second results from applying inequalities \ref{eq:alphaomegaLowerBound} and \ref{eq:alphaomegaShortSegmentLowerb}. The third inequality is found by moving the additive constants from the outer second sum into the terms of the first, noting that the constants in the first will only increase by at most $2E+14$. This is because there is at most twice as many unions of $D$-short segments than there are $D$-long segments (typically there would only be at most one more). Finally, the last follows by applying the definition of $C'$ to the first sum, which replaces each term by $\frac{1}{C'}b_{N(j)}$, and then applying the definition of $C''$ and combining all the terms into one sum.
    \par 
    Letting $K=\text{max}\{C'', 4sC+8s+4\}$, we can then combine this with the $n\leq 2s$ case to obtain
    \begin{align*}
        &d_{\Sigma}(\gamma_0, \gamma_n)=d_{\Sigma}(\gamma_0, \alpha_s)+d_{\Sigma}(\alpha_s, \omega_{n-s})+d_{\Sigma}(\omega_{n-s}, \gamma_n)
        \\
        &\geq d_{\Sigma}(\gamma_0, \delta_s)+d_{\Sigma}(\alpha_s, \omega_{n-s})+d_{\Sigma}(\delta_{n-s}, \gamma_n)-4
        \\
        &\geq \frac{a_0+\sum_{i=1}^{s-1}b_i}{C}-2sC-4s+\frac{\sum_{i=s}^{n-s-1} b_i}{C''}+\frac{\sum_{i=n-s}^{n}b_i+a_n}{C}-2sC-4s-4
        \\
        &\geq \frac{a_0+a_n+\sum_{i=1}^{n-1} b_i}{K}-K
    \end{align*}
    as desired.
\end{proof}

We now give a proof of Theorem \ref{ComboQIEmb}. The QI lower bound part of the proof essentially comes down to an immediate application of Lemma \ref{lem:admSequenceQIlowerbound}, after some setup.

\begin{proof}[Proof of Theorem \ref{ComboQIEmb}]
Proposition \ref{ComboRelHyp} shows that $G$ is relatively hyperbolic, relative to the desired collection of twist subgroups. Thus it suffices to show that the map $\Psi$ defined before Lemma \ref{ConePointImages2} is a quasi-isometric embedding, and that $\phi$ is injective, which will follow easily from Lemma \ref{EquivQIEmb} and will be done at the end of the proof. Once we show these are true, it follows that $\phi(G)$ is a PGF group, as the equivariant embedding of $\widehat{\phi(G)}$ in $C(\Sigma)$ is the same as that of $\widehat{G}$ by the definition of $\Psi$.
    \par 
    It suffices by Lemma \ref{ConePointImages2} to show that $\Psi$ restricted to the cone points of $\widehat{G}$ is a quasi-isometric embedding. We have shown in Lemma \ref{PGFGraphEmbLipschitz} that $\Psi$ is a coarse Lipschitz map, and by the definition of PGF groups and Lemma \ref{ConePointImages}, $\Psi$ restricted to the vertex groups of $T$ is a $(C,C)$-quasi-isometric embedding for some fixed $C\geq 1$.
    \par 
    Thus we need to find a $K\geq 1$ so that
    \begin{equation}\label{PsiCLLowerBound}
        d_{\Sigma}(\Psi(p), \Psi(p'))\geq \frac{1}{K}d_{\widehat{G}}(p, p')-K
    \end{equation}
    for all cone points $p, p'$ of $\widehat{G}$. We apply Lemma \ref{lem:admSequenceQIlowerbound} by choosing an admissible sequence in the following way. Let $P_0$ and $P_n$ be peripheral subsets of $G$ so that $p\in \nu(P_0)$ and $p' \in \nu(P_n)$ with a fixed sequence of vertices $v_0,\ldots, v_n$ along a geodesic in $T$ with each $v_i$ a PGF vertex, and so that $v_i$ and $v_{i+1}$ are separated by only twist vertices with $P_0$ in the vertex group of $v_0$ and $P_n$ in the vertex group of $v_n$.  For $1\leq i \leq n$ let $Q_i$ denote the peripheral subset associated to the edge of $[v_0,v_n]$ before $v_i$, and fix $q_i\in \nu(Q_i)$. We may assume $Q_1\neq P_0$ and $Q_n\neq P_n$. Then the corresponding sequence of multicurves given by the associated multicurves of $\phi(P_0), \phi(Q_1),\ldots, \phi(Q_n), \phi(P_n)$ is $L$-admissible by Lemma \ref{AdmissiblePGFSequence}.
    \par 
    In the notation of Lemma \ref{lem:admSequenceQIlowerbound}, we let $\gamma_0=\Psi(p), \gamma_n=\Psi(p')$ and $\delta_i=\Psi(q_i)$ for $i=1,\ldots, n$. Further, we define $a_0=d_{\widehat{G}}(p, q_1), a_n=d_{\widehat{G}}(q_n, p')$ and $b_i=d_{\widehat{G}}(q_i, q_{i+1})$ for $i=1,\ldots n-1$.
    \par 
    Then by the triangle inequality,
    \begin{equation*}
        a_0+a_n+\sum_{i=1}^{n-1}b_i\geq d_{\widehat{G}}(p, p').
    \end{equation*}
    As $\Psi$ restricted to each vertex group is a $(C,C)$ quasi-isometric embedding, the inequalities (\ref{eq:AdmissNeighborLowerBounds1})(\ref{eq:AdmissNeighborLowerBounds2})(\ref{eq:AdmissNeighborLowerBounds3}) all hold, so Lemma \ref{lem:admSequenceQIlowerbound} and the previous inequality imply that
    \begin{equation*}
        d_{\Sigma}(\Psi(p), \Psi(p'))=d_{\Sigma}(\gamma_0, \gamma_n)\geq \frac{1}{K}d_{\widehat{G}}(p, p')-K
    \end{equation*}
    which is precisely the lower bound desired.
    \par 
    The final claim about pseudo-Anosovs follows from Lemma \ref{EquivQIEmb}. The only concern for injectivity left then are finite order elements not conjugate into any twist groups. But such elements are contained in a single vertex stabilizer as finite groups acting isometrically on trees always have fixed points and there are no edge inversions. Since each vertex group injects, by Lemma \ref{EquivQIEmb} $\phi$ is an injection.
\end{proof}

\subsection{Other combination theorems}\label{SubsectionComboConvexCocompact}

In the following three results, we will see how the language of admissible sequences can be used to provide other combination theorems of PGF and other related groups. The proofs of all three theorems follow that of Theorem \ref{ComboQIEmb}. We fix a sparse multicurve $A$, and let $\mathcal{S}_A$ denote the set of components of $\Sigma\setminus A$.

%Theorem 5.1
\begin{theorem}\label{ConvexCocompactCombo}    
Let $G_1$ and $G_2$ be PGF subgroups of $\text{MCG}(\Sigma)$ relative to $\mathcal{H}_1$ and $\mathcal{H}_2$ respectively. Assume that for all nontrivial $g_1\in G_1$ and $g_2\in G_2$, $g_1(A)$ and $g_2(A)$ share no components with $A$, and that for all $S\in \mathcal{S}_A$,
    $$d_{S}(g_1(A), g_2(A))\geq M+18$$
    with $M$ as in Proposition \ref{MMBGI}. Then the natural homomorphism of $G_1*G_2$ to $\text{MCG}(\Sigma)$ is injective and its image is a PGF group relative to $\mathcal{H}_1\cup\mathcal{H}_2$.
    \end{theorem}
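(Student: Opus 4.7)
The strategy is to mirror the proof of Theorem \ref{ComboQIEmb}, with the sparse multicurve $A$ and its $\phi$-translates playing the role that twist-vertex multicurves play for a normalized PGF graph of groups. First I would establish that $G_1 * G_2$ is hyperbolic relative to $\mathcal{H}_1 \cup \mathcal{H}_2$; this is a standard combination result for free products of relatively hyperbolic groups, and can alternatively be deduced from Proposition \ref{DahmaniCombination} by viewing the free product as an amalgamation over the trivial subgroup (after artificially adjoining it to the peripheral structure). I then define the equivariant, coarsely Lipschitz map $\Psi: \widehat{G_1 * G_2} \to C(\Sigma)$ exactly as in the paragraph before Lemma \ref{ConePointImages2}, and reduce via that lemma to producing a coarse Lipschitz lower bound on $\Psi$ restricted to the cone points.

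The central step is constructing an admissible sequence for each pair of cone points $p, p'$. The Bass--Serre tree $T$ of the free product decomposition is bipartite, with every vertex stabilized by a conjugate of $G_1$ or $G_2$ and every edge of trivial stabilizer. Along the geodesic $v_0, v_1, \ldots, v_n$ in $T$ between the vertices containing the peripheral representatives of $p$ and $p'$, I represent each edge $[v_{i-1}, v_i]$ by a group element $f_i \in G_1 * G_2$ and set $B_i = \phi(f_i)(A)$. I then let $A_0, A_n$ be multicurves of $\phi(G_{v_0}), \phi(G_{v_n})$ associated to the peripherals of $p$ and $p'$, respectively.

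To verify $L$-admissibility for $L = M + 18$: sparseness of each $B_i$ follows from sparseness of $A$ and mapping class invariance. For the interior conditions, equivariance converts the comparison between $B_j$ and $B_{j \pm 1}$ into a comparison between $A$ and $\phi(h)(A)$ for some nontrivial $h$ alternating between $G_1$ and $G_2$; the hypothesis then directly gives both the no-shared-components property and the $\geq M + 18$ projection lower bounds on components of $\Sigma \setminus A$, and hence on the $\phi(f_j)$-images. With admissibility in hand I apply Lemma \ref{lem:admSequenceQIlowerbound}, taking $C$ from the PGF quasi-isometric embeddings of $G_1, G_2$ (Lemma \ref{ConePointImages}) and letting $a_0, b_i, a_n$ be the relevant $\widehat{G_1 * G_2}$-distances between $p, p'$ and cone-point preimages of the $B_i$'s. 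Together with the triangle inequality this yields the required coarse Lipschitz lower bound, showing the image is PGF relative to $\mathcal{H}_1 \cup \mathcal{H}_2$. Injectivity then follows from Lemma \ref{EquivQIEmb}: peripheral subgroups inject by hypothesis, finite-order elements of a free product are conjugate into a factor, and infinite-order nonperipheral elements map to pseudo-Anosovs by Corollary \ref{InfOrderpA} applied to the image.

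The main obstacle will be the endpoint conditions of the admissible sequence. The hypothesis is phrased only in terms of translates of $A$ by $G_1$ or $G_2$, but $A_0$ and $A_n$ are multicurves associated to the peripherals of $\phi(G_1), \phi(G_2)$ that need not themselves be translates of $A$. Verifying that $A_0$ shares no components with $B_1$ and that every component of $\Sigma \setminus B_1$ meets $A_0$ (and symmetrically for $A_n$) will require combining the hypothesis with Lemma \ref{FillingMC} inside each PGF factor, so that after applying $\phi(f_1)^{-1}$ the conditions reduce to controlled relations between $A$ and a peripheral multicurve of $G_1$.
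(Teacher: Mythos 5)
Your overall strategy---admissible sequences built from translates of $A$ along the Bass--Serre tree of the free product, fed into Lemma \ref{lem:admSequenceQIlowerbound}---is essentially the paper's, but the step you yourself flag as the main obstacle is a genuine gap, and the repair you suggest does not close it. With your choice of endpoints, condition (d) of Definition \ref{AdmissibleSequence} requires $d_S(A_0,B_2)\geq M+18$ for every component $S$ of $\Sigma\setminus B_1$, where $A_0$ is a multicurve associated to a peripheral subgroup of $\phi(G_1)$ and $B_2$ is a translate of $A$. The hypothesis of the theorem only controls $d_S(g_1(A),g_2(A))$, that is, projections of translates of $A$; it places no constraint whatsoever on where the multicurves of $\mathcal{H}_1\cup\mathcal{H}_2$ project relative to those translates, so $\pi_S(A_0)$ may lie arbitrarily close to $\pi_S(B_2)$ (and similarly the filling-type part of condition (b) is not directly available). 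Lemma \ref{FillingMC} compares two multicurves associated to the \emph{same} PGF group, so it cannot produce these endpoint bounds; $L$-admissibility of your sequence cannot be verified from the stated hypotheses.

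The paper sidesteps the issue by never putting the peripheral multicurves into the admissible sequence. For a reduced word $g=f_1g_1f_2\cdots f_ng_n$ it uses the sequence $A,\ f_1(A),\ f_1g_1(A),\ \ldots,\ g(A)$, whose endpoints are themselves translates of $A$; every admissibility condition, including the endpoint ones, then follows from the hypothesis by equivariance. Taking $a_0=|f_1|_{\widehat{G_1}}$, the $b_i$'s the coned-off lengths of the syllables, and $a_{2n+1}=|g_n|_{\widehat{G_2}}$, Lemma \ref{lem:admSequenceQIlowerbound} shows that the orbit map of a component $\gamma_0\in A$, from $G_1*G_2$ with the coned-off metric to $C(\Sigma)$, is a quasi-isometric embedding. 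The cone points are handled afterwards, as in Lemma \ref{ConePointImages}: the image of a cone point over a coset of a peripheral is a component of the $\phi$-translate of one of the finitely many multicurves of $\mathcal{H}_1\cup\mathcal{H}_2$, hence lies within uniformly bounded distance of the corresponding orbit point of $\gamma_0$, so the orbit-map lower bound transfers to $\widehat{G_1*G_2}$ with only a bounded additive loss and no projection control on peripheral multicurves is ever needed. Injectivity then follows from Lemma \ref{EquivQIEmb} exactly as you describe. If you rebuild your admissible sequence with endpoints $A$ and $g(A)$ in this way, the rest of your outline goes through.
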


\begin{proof}
    The proof of this theorem mirrors the proof of Theorem \ref{ComboQIEmb}. Instead of using the subsurface components of the complements of multicurves associated to twist groups, we use the translates of $S\in \mathcal{S}_A$ by elements of $G_1*G_2$. We will explicitly show using Lemma \ref{lem:admSequenceQIlowerbound} that the orbit map of $\gamma_0\in A$ under the group $G_1*G_2$ with the coned off metric is a quasi-isometric embedding. Once this is done it will follow that $\widehat{G_1*G_2}$ quasi-isometrically embeds as well as any equivariant choice of extension to the cone points will still be a quasi-isometric embedding. This follows from the same sort of argument as in Lemma \ref{ConePointImages}.
    \par 
    First note that once we know that orbits maps of $G_1*G_2$ on $C(\Sigma)$ are quasi-isometric embeddings, injectivity will follow immediately by Lemma \ref{EquivQIEmb}. Thus it suffices to study the orbit map.
    \par 
    To mimic the proof of Theorem \ref{ComboQIEmb}, we need to construct admissible sequences. Fix a reduced word $g=f_1g_1f_2\cdots f_ng_n$ in $G_1* G_2$, with $f_i\in G_1$ and $g_i \in G_2$, where $f_1$ or $g_n$ are possibly trivial. We study the sequence
    $$A, f_1(A), f_1g_1(A), f_1g_1f_2(A),\ldots, f_1g_1\cdots f_n(A), g(A)$$
    where we throw out any duplicates if $f_1$ or $g_n$ are trivial. By equivariance and our assumptions about how $G_1$ and $G_2$ act on $A$, it is straightforward to see that this sequence is $(M+18)$-admissible. 
    \par 
    In particular, we can apply Lemma \ref{lem:admSequenceQIlowerbound} in a similar way as in the proof of Theorem \ref{ComboQIEmb}. We assume for notational simplicity that $f_1$ and $g_n$ are not trivial. Let $\delta_i=f_1g_1\cdots f_i(\gamma_0)$, $\delta_{i+1}=f_1g_1\cdots f_ig_i(\gamma_0)$ for $i$ an odd integer between $1$ and $2n-1$, and finally $\gamma_{2n+1}=g(\gamma_0)$. We let $a_0=|f_1|_{\widehat{G_1}}$, $a_{2n+1}=|g_n|_{\widehat{G_2}}$, $b_i=|g_i|_{\widehat{G_2}}$ and $b_{i+1}=|f_{i+1}|_{\widehat{G_1}}$ for $i$ an odd integer between $1$ and $2n-1$. Then by the triangle inequality
    \begin{equation*}
        a_0+a_{2n+1}+\sum_{i=1}^{2n}b_i\geq |g|_{\widehat{G_1*G_2}}
    \end{equation*}
    and as the embedding restricted to both $\widehat{G_1}$ and $\widehat{G_2}$ is a $(C,C)$-quasi-isometric embedding for some $C\geq 1$, inequalities \ref{eq:AdmissNeighborLowerBounds1}, \ref{eq:AdmissNeighborLowerBounds2}, and \ref{eq:AdmissNeighborLowerBounds3} hold. Thus Lemma \ref{lem:admSequenceQIlowerbound} gives the QI lower bounds.

\end{proof}

We now describe a method for adding a free factor twist group to a PGF group. 

\begin{theorem}\label{ConvexCoCptTwists}
    Fix $G$ a PGF subgroup of $MCG(\Sigma)$ relative to $\mathcal{H}$, and let $H$ be any twist group in MCG$(\Sigma)$. Assume for all nontrivial $g\in G$ and $\tau\in H$ that $g(A)$ and $\tau(A)$ share no components with $A$, and that for all $S\in \mathcal{S}_A$,  
$$d_{S}(g(A), \tau(A))\geq M+18$$
    with $M$ as in Proposition \ref{MMBGI}. Then the natural homomorphism of $G*H$ to $\text{MCG}(\Sigma)$ is injective and its image is a PGF group relative to $\mathcal{H}\cup\{H\}$.
\end{theorem}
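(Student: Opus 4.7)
The plan is to mirror the proof of Theorem \ref{ConvexCocompactCombo}, with small adjustments to account for the factor $H$ being a twist group rather than a PGF group. The relative hyperbolicity of $G*H$ with respect to $\mathcal{H}\cup\{H\}$ is standard: $H$ is trivially hyperbolic relative to $\{H\}$, and the free product of two relatively hyperbolic groups is relatively hyperbolic with respect to the union of the two peripheral collections. It therefore suffices to show that the natural equivariant map $\Psi:\widehat{G*H}\to C(\Sigma)$ given by the orbit of a fixed component $\gamma_0$ of $A$ (extended to cone points equivariantly, sending cones above each $gH$ into components of $g(A)$ and cones above peripherals of $G$ into components of the associated multicurves) is a quasi-isometric embedding. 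The argument of Lemma \ref{PGFGraphEmbLipschitz} gives that $\Psi$ is coarse Lipschitz, and by Lemma \ref{ConePointImages2} it suffices to verify a coarse Lipschitz lower bound on cone points.

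To obtain this lower bound, fix a reduced word $g = f_1\tau_1 f_2\tau_2\cdots f_n\tau_n \in G*H$ with $f_i\in G$ and $\tau_i\in H$ (possibly with $f_1$ or $\tau_n$ trivial). I would apply Lemma \ref{lem:admSequenceQIlowerbound} to the sequence of multicurves
$$A,\; f_1(A),\; f_1\tau_1(A),\; f_1\tau_1 f_2(A),\; \ldots,\; g(A),$$
after discarding any duplicates at the endpoints. By equivariance, the large subsurface projection condition in property (d) of Definition \ref{AdmissibleSequence} reduces to $d_{S'}(g'(A),\tau'(A))\geq M+18$ for some nontrivial $g'\in G$, $\tau'\in H$, and $S'\in\mathcal{S}_A$, which holds by hypothesis. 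The no-shared-components and filling conditions (a)--(c) follow similarly, using that the hypothesis forces any nontrivial $G$- or $H$-translate of $A$ to have nonempty projection to every $S\in\mathcal{S}_A$, and hence to intersect every component of $\Sigma\setminus A$.

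Now set $a_0 = |f_1|_{\widehat G}$, let $a_n = 1$ (the terminal $\tau_n$ cone hop), and let the $b_i$'s alternate between $|f_k|_{\widehat G}$ (for a $G$-step) and $1$ (for an $H$-step, each a single cone hop in $\widehat{G*H}$). The triangle inequality gives $a_0 + a_n + \sum b_i \geq |g|_{\widehat{G*H}}$. Since $G$ is PGF, its orbit map on $\gamma_0$ is a $(C,C)$-quasi-isometric embedding of $\widehat{G}$, which supplies the per-step lower bounds (\ref{eq:AdmissNeighborLowerBounds1}), (\ref{eq:AdmissNeighborLowerBounds2}), (\ref{eq:AdmissNeighborLowerBounds3}) on the $G$-steps; the $H$-steps satisfy them trivially with $b_i = 1$. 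Lemma \ref{lem:admSequenceQIlowerbound} then delivers the desired coarse Lipschitz lower bound. Once the QI embedding is established, injectivity of $G*H \to \text{MCG}(\Sigma)$ follows from Lemma \ref{EquivQIEmb}: by Bass--Serre theory every torsion element of $G*H$ is conjugate into $G$ or $H$, and the restrictions of the natural map to $G$ and $H$ are inclusions into $\text{MCG}(\Sigma)$. The image is then PGF relative to $\mathcal{H}\cup\{H\}$ by definition.

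The main obstacle is the careful bookkeeping for the admissible sequence: the alternating assignment of $a_i$, $b_i$ must correctly reflect the fact that each appearance of an $H$-element contributes only a single coned-off unit of length, while each appearance of a $G$-element contributes its $\widehat{G}$ word length. This is a straightforward variation on the proof of Theorem \ref{ConvexCocompactCombo} and requires no essentially new ideas.
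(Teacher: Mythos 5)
Your proposal is correct and follows essentially the same route as the paper, which proves this result by repeating the argument of Theorem \ref{ConvexCocompactCombo} verbatim with $G_2$ replaced by $H$: form the admissible sequence of translates of $A$ along a reduced word, assign $\widehat{G}$-lengths to the $G$-syllables and length $1$ to the $H$-syllables, and apply Lemma \ref{lem:admSequenceQIlowerbound}, with injectivity via Lemma \ref{EquivQIEmb}. Your explicit bookkeeping of the $H$-steps as single coned-off units matches the paper's treatment (compare the constants in Theorem \ref{LoaAnalog}), so no new ideas are needed beyond what you wrote.
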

\begin{proof}
    The proof is essentially the same as Theorem \ref{ConvexCocompactCombo}. We again obtain an admissible sequence via the translates of $A$ by elements of $G*H$ defined in the exact same way, replacing $G_1$ with $G$ and $G_2$ with $H$. The constants $a_0, b_1,\ldots, b_{2n}, a_{2n+1}$ and also defined in the same way.  Injectivity follows similarly as well.
\end{proof}

We end with a result analogous to (but distinct from) Proposition \ref{LoaCombo}. 

\begin{theorem} \label{LoaAnalog}
    Fix twist groups $H_1$ and $H_2$. Suppose that for all nontrivial $h_1\in H_1$ and $h_2\in H_2$, $h_1(A)$ and $h_2(A)$ share no components with $A$. If for all $S\in \mathcal{S}_A$, we have
    $$d_S(h_1(A), h_2(A))\geq M+18$$
    with $M$ as in Proposition \ref{MMBGI}, then the natural homomorphism of $G=H_1*H_2$ to MCG$(\Sigma)$ is injective and its image is a PGF group relative to $\{H_1, H_2\}$.
\end{theorem}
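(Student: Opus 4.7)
The plan is to mirror the proofs of Theorems \ref{ConvexCocompactCombo} and \ref{ConvexCoCptTwists}, the only novelty being that both ``vertex groups'' $H_1, H_2$ are now themselves single peripheral subgroups rather than PGF groups. Set $G = H_1 * H_2$, which is hyperbolic relative to $\{H_1, H_2\}$ (either by direct inspection of its Bass--Serre tree, or as a degenerate instance of Proposition \ref{DahmaniCombination}). As in Theorem \ref{ComboQIEmb}, it suffices to prove that the natural equivariant orbit map $\Psi : \widehat{G} \to C(\Sigma)$, sending $g \mapsto \phi(g)(\gamma_0)$ for a fixed $\gamma_0 \in A$ and extended to cone points equivariantly, is a quasi-isometric embedding, and that $\phi$ is injective. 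The coarse Lipschitz upper bound on $\Psi$ follows exactly as in Lemma \ref{PGFGraphEmbLipschitz}, so the real work is the lower bound.

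For the lower bound, fix a reduced word $g = h_1 h_2 \cdots h_n \in G$ with syllables alternating between $H_1$ and $H_2$. The key step is to verify that the sequence of multicurves
$$A, \; h_1(A), \; h_1 h_2(A), \; \ldots, \; h_1 h_2 \cdots h_n(A)$$
is $(M+18)$-admissible. Sparsity of each term follows from that of $A$, and consecutive terms share no components because, by equivariance, this reduces to $A$ and $h_i(A)$ sharing no components (which is the hypothesis, since each $h_i$ is nontrivial). For the projection condition in Definition \ref{AdmissibleSequence}(d), any component $S$ of $\Sigma \setminus h_1 \cdots h_i(A)$ satisfies, by two applications of equivariance,
$$d_S\bigl(h_1 \cdots h_{i-1}(A),\, h_1 \cdots h_{i+1}(A)\bigr) \;=\; d_{S'}\bigl(h_i^{-1}(A),\, h_{i+1}(A)\bigr)$$
with $S' = (h_1 \cdots h_i)^{-1}(S) \in \mathcal{S}_A$; since $h_i^{-1}$ and $h_{i+1}$ lie in opposite factors, the hypothesis gives the required lower bound of $M+18$. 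The end and filling conditions (b),(c) of Definition \ref{AdmissibleSequence} are handled identically, using that the hypothesized projection distances being defined and large forces nonempty projections and hence nontrivial intersections with every component of the relevant complement.

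With admissibility in hand, I would apply Lemma \ref{lem:admSequenceQIlowerbound} with $\delta_i = \phi(h_1 \cdots h_i)(\gamma_0)$ and $a_0 = a_{n-1} = b_i = 1$: each syllable $h_i$ belongs to a single peripheral subgroup and hence has coned-off length at most $1$ in $\widehat{G}$, so the triangle inequality gives $a_0 + a_{n-1} + \sum_{i=1}^{n-2} b_i = n \geq |g|_{\widehat{G}}$, while the required per-step bounds (\ref{eq:AdmissNeighborLowerBounds1})--(\ref{eq:AdmissNeighborLowerBounds3}) hold trivially for any sufficiently large $C$. Lemma \ref{lem:admSequenceQIlowerbound} then yields
$$d_\Sigma\bigl(\gamma_0,\, \phi(g)(\gamma_0)\bigr) \;\geq\; \tfrac{1}{K}|g|_{\widehat{G}} - K,$$
which together with the upper bound proves $\Psi$ is a QI embedding on $G$; equivariance extends this to all cone points as in Lemma \ref{ConePointImages}.

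Injectivity follows from Lemma \ref{EquivQIEmb}: $\phi$ restricted to each $H_i$ is inclusion (hence injective on every peripheral subgroup), and every finite-order element of a free product is conjugate into one of its factors and is therefore peripheral. Lemma \ref{EquivQIEmb} also supplies that every infinite-order element not conjugate into $H_1$ or $H_2$ maps to a pseudo-Anosov. The main technical obstacle is the admissibility verification above; once it is handled via equivariance of subsurface projection and the defined-and-large projection hypothesis, the remainder of the argument is a direct repetition of the previous two theorems.
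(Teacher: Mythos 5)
Your proposal is correct and follows essentially the same route as the paper: the paper's proof of Theorem \ref{LoaAnalog} is exactly the argument of Theorem \ref{ConvexCocompactCombo} with $H_1, H_2$ in place of $G_1, G_2$, using the admissible sequence of translates of $A$ along a reduced word and taking $a_0 = b_1 = \cdots = a_{2n+1} = 1$, in which case Lemma \ref{lem:admSequenceQIlowerbound} reduces to a direct application of Lemma \ref{ShortSegmentsLowerBound}. Your admissibility verification via equivariance and your use of Lemma \ref{EquivQIEmb} for injectivity match the paper's treatment.
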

\begin{proof}
    This proof follows similarly to Theorem \ref{ConvexCocompactCombo} as well, replacing $G_1$ with $H_1$ and $G_2$ with $H_2$. In this case, $a_0=a_{2n+1}=b_1=\cdots=b_{2n}=1$, and the lower bound essentially comes down to applying Lemma \ref{ShortSegmentsLowerBound} directly.
\end{proof}

One thing of interest to note with Theorem \ref{LoaAnalog} is that it gives PGF groups that are free products of multitwist groups whose multicurves are much closer together than the $D_0$ bound given in Proposition \ref{LoaCombo}, see Corollary \ref{ComboMultitwistGroups} and discussion after it.

\section{Undistorted subgroups of MCG$(\Sigma)$}\label{SectionStatementsProofsUndistorted}

\subsection{Generalizing PGF Groups}\label{SubsectionGeneralizePGFGroups}
A finitely generated subgroup $H$ of a finitely generated group $G$ is \textit{undistorted} if the inclusion map of $H$ into $G$ induces a QI embedding for some (and hence all) choices of finite generating sets of both $H$ and $G$. 
\par 
Before discussing the proof of that PGF groups are undistorted as subgroups of mapping class groups, we introduce a wider class of subgroups (see Definition \ref{RGFGroups}). Fix a closed orientable surface $\Sigma$.

\begin{definition}\label{ReducibleConvexCocompact}
    An infinite finitely generated subgroup $H<\text{MCG}(\Sigma)$ is a \textit{pure} reducible subgroup if every element of $H$ is pure reducible, in the sense of Definition \ref{pureRed}.
    \par 
    Given a pure subgroup $H$, we define the set $\Omega_H$ to be the set of subsurfaces $R$ of $\Sigma$ so that some $h\in H$ stabilizes $R$, and $h|_R$ acts loxodromically on $C(R)$. Note that this is actually the collection of the components of the supports of elements of $H$.
    \par
    A pure reducible subgroup $H$ will be called \textit{strongly undistorted} if for all markings $\mu$ of $\Sigma$, there is a $\sigma_0\geq 0$ so that if $\sigma\geq \sigma_0$, then there is a $\kappa\geq 1$ so that for all $h \in H$,

    \begin{equation}\label{DistanceFormulaSupports}
        d_H(1, h) \approx_{\kappa} \sum_{R\in \Omega_H} \{\!\{d_R(\mu, h\mu)\}\!\}_{\sigma}
    \end{equation}
     A subgroup $H$ is a \textit{virtually strongly undistorted pure reducible} subgroup if it contains a finite index strongly undistorted pure reducible subgroup. To every virtually strongly undistorted pure reducible subgroup $H$ there is an \textit{associated subsurface} that is the complement of the maximal multicurve $A$ so that every finite index pure reducible subgroup fixes every element of $A$, along with neighborhoods of the curve components of $A$ where every finite index pure reducible subgroup has canonical representatives which restrict to nontrivial Dehn twists in a neighborhood of this curve.

\end{definition} 

Note that by results of Ivanov \cite{I96}, every pure reducible subgroup $H$ has an associated multicurve $A$ so that every element of $H$ preserves $A$ and every component of $\Sigma\setminus A$, and so that every $R\in \Omega_H$ lies in $S\setminus A$ or is a component of $A$.

\begin{remark}\label{WhyStronglyUndistorted}
    Using Proposition \ref{MMDistance} one can see that strongly undistorted pure reducible subgroups are actually undistorted. Our assumption of "strongly undistorted" (that is, assuming \eqref{DistanceFormulaSupports} holds) is more restrictive than one might hope for. The assumption on the distance formula \eqref{DistanceFormulaSupports} for strongly undistorted pure reducible subgroups is due to a certain technical issue in the proofs below. If one drops \eqref{DistanceFormulaSupports}, then a distance formula for these groups still holds via applying Proposition \ref{MMDistance}. The sum however involves more subsurfaces, namely subsurfaces that are not components of supports of elements of $H$. These subsurfaces are a nuisance when attempting to prove a more general version of Proposition \ref{FarPeripheral->FarSubsurface}, see Remark \ref{WhyStronglyUndistorted2} for more details about this. Also, the obvious analog of Lemma \ref{FillingSubsurfaces} is false, see \cite{M18} for some discussion in this direction.
    \par 
    Assuming \eqref{DistanceFormulaSupports} lets us ignore these surfaces in all arguments, making the proofs much easier. It seems likely that Theorem \ref{Undistorted} still holds without assuming \eqref{DistanceFormulaSupports}, but the proof is currently out of reach. There is another assumption we need to make about the pure reducible subgroups too. Namely, one needs to ensure that any no element of the collection of supports of one pure reducible subgroup properly contains a component of a support of another pure reducible subgroup. see Definition \ref{RGFGroups}, as well as the proof of Lemma \ref{FillingSubsurfaces} for why this extra assumption is needed. 
\end{remark}

\begin{example}
\begin{enumerate}
    \item Twist groups are examples, as a finite index multitwist subgroup restricts to some power of a Dehn twist on the components of the associated multicurve. Multitwist groups are strongly undistorted by \cite{MM00}.
    \item If $f\in \text{MCG}$ is a reducible element with reducing multicurve $A$ there is a $k\in \Z$ so that $f^k$ fixes $A$ and stabilizes every surface component of $\Sigma \setminus A$. Thus, $\<f^k\>$ is a pure reducible subgroup. Let $R$ either be a annulus with core curve in $A$, or a component of $\Sigma\setminus A$, so that in either case the action of $f^k$ on $R$ is nontrivial. Then either the action is by a power of a Dehn twist, or by a pseudo-Anosov on the subsurface. In either case, the union of all such $R$ with nontrivial action is the subsurface associated to $\<f\>$. It is well known that cyclic subgroups of mapping class groups are strongly undistorted \cite{MM00}. The distance formula \eqref{DistanceFormulaSupports} holds in this case as $\<f^k\>$ quasi-isometrically embeds into $C(S)$ for each component $S$ of $\Sigma \setminus A$ that $f^k$ acts nontrivially on, and also into the curve graphs of annuli with core curves components of $A$ that are acted nontrivially on. 
    \item More generally, given a pure reducible subgroup $H$ with associated subsurface $S\setminus A=S_1\cup \cdots \cup S_n$, if the image of every restriction homomorphism from $H$ to MCG$(S_i)$ has convex cocompact image, then $H$ is strongly undistorted. This follows directly from the definition of convex cococompact groups and Proposition \ref{MMDistance}. 
    \item Other examples of strongly undistorted pure reducible subgroups can be built from the work of \cite{CLM12} and \cite{R20}.
\end{enumerate}
\end{example}

We now define the generalization of PGF groups that we will work with. 

\begin{definition}\label{RGFGroups}
    A group $G<\text{MCG}(\Sigma)$ is a \textit{reducible geometrically finite} group, or an RGF group, if 
    \begin{enumerate}
        \item $G$ is hyperbolic relative to a finite collection of subgroups $\{H_1,\ldots, H_n\}$ that are virtually strongly undistorted pure reducible subgroups. Further, for any distinct virtually strongly undistorted pure reducible subgroups $H$ and $H'$ in $G$, no connected component of an element of $\Omega_H$ properly contains a connected component of an element of $\Omega_{H'}$.
        \item $\widehat{G}$ admits a $G$ equivariant quasi-isometric embedding into $C(\Sigma)$. Here for every peripheral subset $P=gH_i$ with $A$ the reducing system for some finite index pure reducible subgroup of $H_i$, there are $|A|$ cone points connected to the vertices of $P$. 
    \end{enumerate}
    In this case we say that $G$ is RGF relative to the collection $\{H_1,\ldots, H_n\}$. 
\end{definition}
Again, just as with PGF groups, the reason why we potentially include more than $1$ cone point is for the sake of equivariance.
\par 
We remark that the assumption on supports in the first part of the definition automatically holds in a variety of cases. For example, it holds for PGF groups as every component of the support is an annulus. More generally it holds if the homeomorphism types of the connected components of the supports are all the same. It also holds if the components of the support of every pure reducible element is a connected subsurface and a collection of annuli on the boundary components of this subsurface. This follows from Lemma \ref{FillingSubsurfaces}, as if there was proper nesting of connected proper subsurfaces, there is no way they could fill the surface. It is unclear if there are any examples of groups which satisfy all the other parts of the definition of RGF groups, but the nesting assumption fails.
\subsection{Proving Undistortion}\label{SubsectionProvingUndistortion}

We need the following result, which can be thought of as an analog of Corollary \ref{InfOrderpA} and Lemma \ref{FillingMC} for RGF groups. It is a corollary of Proposition \ref{DGORotatingSubgroups} applied to RGF groups.
\begin{lemma}\label{FillingSubsurfaces}
    Consider an RGF group $G$ relative to a collection $\mathcal{H}$. Fix two distinct peripheral subgroups $H_1$, $H_2$ of $G$ and associated subsurfaces $S_1$, $S_2$, respectively. Suppose $f_i\in H_i$ are nontrivial for $i=1,2$ with support $R_i\subset S_i$. Then there is a $k$ so that $f_1^kf_2^k$ is pseudo-Anosov on $\Sigma$. In particular, $R_1$ and $R_2$ fill $\Sigma$. Further, for any marking $\mu$ of $\Sigma$ there is a $D$ only depending on $\mu$ and $H_2$ so that for any component $R_1'$ of $R_1$,
    $$d_{R_1'}(\mu, h\mu)\leq D$$
    for any $h\in H_2$. 
\end{lemma}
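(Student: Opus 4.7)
The plan is to derive the three conclusions from Proposition \ref{DGORotatingSubgroups}, transferring loxodromic behavior via the equivariant QI embedding $\widehat G \hookrightarrow C(\Sigma)$, and then using the strongly undistorted and non-nesting hypotheses from Definition \ref{RGFGroups} to control subsurface projection for the final bound.

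First I would choose $k$. Let $F_1, F_2$ be the finite subsets of $H_1, H_2$ produced by Proposition \ref{DGORotatingSubgroups}. Since $f_i$ has infinite order (it acts loxodromically on $C(R_i)$), I can take $k$ large enough so that the normal closure $N_i := \langle\!\langle f_i^k \rangle\!\rangle_{H_i}$ in $H_i$ is disjoint from $F_i$ for both $i=1,2$; only finitely many $H_i$-conjugates of $f_i^k$ can fall into the finite set $F_i$, so any sufficiently divisible $k$ works. Proposition \ref{DGORotatingSubgroups} then gives that the smallest normal subgroup of $G$ containing $N_1 \cup N_2$ decomposes as a free product of $G$-conjugates of the $N_i$, and every element is either $G$-conjugate into some $N_i$ or acts loxodromically on $\widehat G$. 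The element $f_1^k f_2^k$ is a reduced word of length two in this free product, hence is not conjugate into any single $N_i$, and so it acts loxodromically on $\widehat G$. The equivariant QI embedding $\widehat G \to C(\Sigma)$ preserves loxodromic behavior, so $f_1^k f_2^k$ acts loxodromically on $C(\Sigma)$ and is therefore pseudo-Anosov by Theorem \ref{NTClassification}. Since a pseudo-Anosov has support equal to $\Sigma$ and the support of $f_1^k f_2^k$ is contained in $R_1 \cup R_2$, these subsurfaces fill $\Sigma$.

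For the distance bound, I would pass to a finite-index strongly undistorted pure reducible subgroup $H_2' \le H_2$, whose elements preserve the associated multicurve $A_2$ and each component of $\Sigma\setminus A_2$ setwise; the finite coset correction contributes only a bounded additive term to $D$. When $R_1'$ is preserved setwise by $H_2'$, equivariance of subsurface projection combined with $hA_2 = A_2$ for $h\in H_2'$ yields $d_{R_1'}(\mu,h\mu) \le 2\,d_{R_1'}(\mu,A_2)$, which by Lemma \ref{IntersectionBound} is bounded by a function of the fixed intersection number $i(\mu, A_2)$, depending only on $\mu$ and $H_2$. The non-nesting assumption in Definition \ref{RGFGroups} forbids components of $\Omega_{H_1}$ from being properly contained in components of $\Omega_{H_2}$, which rules out the most awkward configurations. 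The remaining cases, where $R_1'$ is not preserved by $H_2'$ (for example an annular $R_1'$ whose core meets $A_2$), are to be handled by applying Behrstock's inequality (Proposition \ref{Behrstock}) between $R_1'$ and the relevant component of $\Omega_{H_2}$, together with a uniform bound on the projection of $A_2$ to arbitrary subsurfaces in the spirit of Proposition \ref{PGFProjBound}.

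The hard part will be the third conclusion, especially identifying and treating the configurations of $R_1'$ relative to $\Omega_{H_2}$ that the non-nesting hypothesis permits; this is where the bulk of the technical work lies. The first two conclusions follow cleanly from the DGO machinery once the initial choice of $k$ avoiding the finite sets $F_i$ has been made.
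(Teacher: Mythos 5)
Your overall strategy for the pseudo-Anosov statement is the same as the paper's (Proposition \ref{DGORotatingSubgroups} plus the equivariant QI embedding), and your shortcut of invoking the ``further'' clause of that proposition instead of nonperipherality plus Proposition \ref{NonperiLoxodromic} is fine. But your choice of the $N_i$ has a genuine gap: you set $N_i=\langle\!\langle f_i^k\rangle\!\rangle_{H_i}$ and justify $N_i\cap F_i=\varnothing$ by saying only finitely many $H_i$-conjugates of $f_i^k$ can hit $F_i$. The condition needed is that the \emph{entire normal closure} misses $F_i$, and that closure consists of arbitrary products of conjugates of $f_i^{\pm k}$, not just conjugates; for a general strongly undistorted pure reducible subgroup $H_i$ (which need not be abelian, free, or hyperbolic) there is no reason that taking $k$ large or divisible forces all such products to avoid a fixed finite set. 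The paper sidesteps this entirely: since $\mathrm{MCG}(\Sigma)$ is residually finite, one can choose $N_i\lhd H_i$ of \emph{finite index} with $N_i\cap F_i=\varnothing$, and then pick $k$ with $f_i^k\in N_i$ (the finite-index property is also what lets one rule out $f_1^kf_2^k$ being peripheral, though your route via the ``further'' clause makes that step unnecessary). As written, your choice of $k$ does not deliver the hypotheses of Proposition \ref{DGORotatingSubgroups}.

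The third conclusion is where your proposal is genuinely incomplete, as you acknowledge, and the sketch points in a direction that will not close. Your ``main'' case, where $H_2'$ preserves $R_1'$ setwise, is in fact the dangerous configuration, and the bound $d_{R_1'}(\mu,h\mu)\le 2\,d_{R_1'}(\mu,A_2)$ silently assumes $\pi_{R_1'}(A_2)\neq\varnothing$ -- exactly what fails when $R_1'$ is disjoint from $S_2$ or sits inside a component of $S_2$, which are the cases one must exclude (if, say, $R_1'$ coincided with a support component of $H_2$, the left side would be unbounded). The paper's mechanism is different and uses the first part of the lemma essentially: if $\partial S_2$ had empty projection to $R_1'$, then $R_1'$ would be disjoint from $S_2$ or, by the non-nesting hypothesis, a component of $S_2$, and in either case $f_1^kf_2^k$ would preserve $\partial R_1'$, contradicting that it is pseudo-Anosov. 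Hence some component of $\partial S_2$ projects to $R_1'$; choosing $\mu$ to contain the $H_2$-invariant multicurve $\partial S_2$ among its base curves, both $\pi_{R_1'}(\mu)$ and $\pi_{R_1'}(h\mu)$ contain $\pi_{R_1'}(\partial S_2)$, so Lemma \ref{ProjLip} gives $D=4$, and an arbitrary marking is handled by the triangle inequality and Lemma \ref{IntersectionBound}. Your proposed Behrstock argument for the ``remaining cases'' is not developed, and without the key fact that an $H_2$-invariant multicurve cuts $R_1'$ it has no uniform input to work from.
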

\begin{proof}
    By Proposition \ref{DGORotatingSubgroups}, for every $i$ there is a finite set $F_i\subset H_i$ so that if $N_i \lhd H_i$ and $N_i\cap F_i=\varnothing$, then the smallest normal subgroup $N$ in $G$ generated by the $N_i$'s is a free product of some collection of conjugates of the $N_i$'s. Since MCG$(\Sigma)$ is residually finite \cite{FM11}, we can find such normal subgroups $\{N_1,\ldots, N_n\}$ so that $N_i$ is also finite index in $H_i$. It follows that there is a $k\geq 1$ so that $f_1^k$ and $f_2^k$ are in two distinct conjugates of some element(s) of $\{N_1,\ldots, N_n\}$. Therefore, no power of $f_1^kf_2^k$ is conjugate into a free factor of $N$. If $f_1^kf_2^k$ were peripheral, then some power of it would lie in a conjugate of some $N_i$, as $N_i$ is finite index in $H_i$. But then this power of $f_1^kf_2^k$ would lie in a conjugate of a free factor of $N$, which is a contradiction.
    \par 
    It follows that $f_1^kf_2^k$ is nonperipheral. Thus $f_1^kf_2^k$ acts loxodromically on $\widehat{G}$ by Proposition \ref{NonperiLoxodromic}, and by the definition of RGF groups it also acts loxodromically on $C(\Sigma)$, which implies that $f_1^kf_2^k$ must be pseudo-Anosov.
    \par 
    Then $R_1$ and $R_2$ fill $\Sigma$ or else $f_1^kf_2^k$ could not be pseudo-Anosov. To prove the inequality, suppose first that $\mu$ is a marking that contains $\partial S_2$ in its collection of base curve. Note that some component of $\partial S_2$ has to have nonempty projection to $R_1'$. If not, then $S_2$ and $R_1'$ are either disjoint or $R_1'\subset S_2$. If they are disjoint, $f_1^kf_2^k$ would fix $\partial R_1'$, which it can't as it is pseudo-Anosov. If $R_1'\subset S_2$, then by part (1) of Definition \ref{RGFGroups} $R_1'$ is actually a component of $S_2$. But then again $f_1^kf_2^k$ would fix $\partial R_1'$.
    \par 
    Then as any $h\in H_2$ permutes the components of $\partial S_2$, and some component of $\partial S_2$ has nonempty projection to $R_1'$, the bound follows in this case (for example, $D=4$ suffices by Lemma \ref{ProjLip}). For a general marking $\mu'$, we have that
    $$d_{R_1'}(\mu', h\mu')\leq d_{R_1'}(\mu, h\mu)+d_{R_1'}(\mu, \mu')+d_{R_1'}(h\mu, h\mu').$$
    by the triangle inequality. The first term on the right is less than or equal to $4$ by above, the the second and third terms are bounded depending only on $H_2$ (due to the choice of $\mu$) and $\mu'$ by Lemma \ref{IntersectionBound}. Note that this shows that $D$ does not depend on the choice of $H_1$ or $R_1'$.
\end{proof}
\par 
We remark that one could slightly extend the class of groups being considered in the proof of Theorem \ref{Undistorted} to a variety of groups that don't have the "no proper nesting" assumption we made in Definition \ref{RGFGroups}, but are otherwise defined in the same way. Namely, if one simply assumes the bounded diameter projections in Lemma \ref{FillingSubsurfaces}, then the proof would go through for such a group. This would work, for example, if one assumes that the restriction of every pure reducible subgroup to a component of its support so that this component properly nests a component of the support of something in another pure reducible subgroup is a convex cocompact group. This follows from a modification of Proposition \ref{PGFProjBound} (which for convex cocompact groups holds even when the surface being projected to is an annulus). We don't include this possibility as being a part of the definition of RGF groups as the current definition is already sufficiently cumbersome.
\par
Before continuing, we note the following useful simplification. Fix an RGF group $G$. While we have defined RGF groups in terms of virtual strongly undistorted pure reducible subgroups, by intersecting $G$ with the kernel of the action of MCG$(\Sigma)$ on $H_1(\Sigma, \Z/3)$ (see \cite{I96} for more about this group), we obtain a finite index subgroup whose peripherals are just strongly undistorted pure reducible subgroups. Showing that this finite index subgroup is undistorted in MCG$(\Sigma)$ will imply that $G$ is undistorted as well. We thus make the following assumption: \textit{Every peripheral subgroup of any RGF group will be assumed to be an strongly undistorted pure reducible subgroup}.
\par

We now fix an RGF group $G$ relative to $\mathcal{H}$, a collection of strongly undistorted pure reducible subgroups, with relative generating set $X$. We have the following proposition which contains the main work in showing that $G$ is undistorted in MCG$(\Sigma)$.

\begin{proposition}\label{FarPeripheral->FarSubsurface}
Fix a peripheral subset $P=fH$ with $H\in \mathcal{H}$ and $f\in G$. For every marking $\mu$ of $\Sigma$ there is a $\sigma_0$ so that for all $\sigma\geq \sigma_0$ there is a constant $\kappa$ independent of $P$ so that for all $g\in G$,
$$d_P(id, g) \preceq_{\kappa} \sum_{R\in f(\Omega_H)}\{\!\{d_R(\mu, g(\mu))\}\!\}_{\sigma}$$

\end{proposition}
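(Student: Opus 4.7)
The strategy is to lift $d_P(\mathrm{id},g)$ to a word distance inside the peripheral subgroup $H$, then apply the strongly undistorted distance formula \eqref{DistanceFormulaSupports} for $H$ to reduce to a sum of subsurface projections over $\Omega_H$, and finally use $f$-equivariance of subsurface projections to transport this sum into one over $f(\Omega_H)$ for the markings $\mu$ and $g\mu$.

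First, choose representatives $fh_1 \in \pi_P(\mathrm{id})$ and $fh_2 \in \pi_P(g)$. Since projection sets in a relatively hyperbolic group have uniformly bounded diameter in the peripheral word metric, $d_P(\mathrm{id},g) \leq d_H(h_1,h_2) + C_0$ for a universal $C_0$. Applying \eqref{DistanceFormulaSupports} to the shifted marking $\nu := f^{-1}\mu$ yields
$$ d_H(h_1,h_2) = d_H(1,h_1^{-1}h_2) \preceq_{\kappa_H} \sum_{R\in\Omega_H} \{\!\{d_R(\nu, h_1^{-1}h_2\nu)\}\!\}_{\sigma_H}. $$
Using the identity $d_R(f^{-1}\alpha,f^{-1}\beta) = d_{fR}(\alpha,\beta)$, each term rewrites as $d_{fR}(\mu,\tilde h\mu)$ where $\tilde h := f h_1^{-1}h_2 f^{-1} \in fHf^{-1}$, and the reindexing $R \mapsto fR$ produces a sum over $R'\in f(\Omega_H)$. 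The constants $\kappa_H,\sigma_H$ can be chosen uniformly in $f$ for a fixed $\mu$: two markings yield distance formulas for $H$ differing by additive constants controlled by pairwise intersection numbers via Lemma \ref{IntersectionBound}.

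The main obstacle is the final step: for each $R'\in f(\Omega_H)$, the comparison term $d_{R'}(\tilde h\mu, g\mu)$ must be bounded by a constant depending only on $\mu$ and $H$, uniformly in $f$ and $g$. Intuitively, $\tilde h$ captures the ``inside $P$'' segment of a coarse $G$-decomposition $\mathrm{id} \to fh_1 \to fh_2 \to g$, while the error element $\tilde h^{-1}g$ encodes the two exterior entry/exit segments between $\{\mathrm{id},g\}$ and $P$. These exterior segments should not contribute to projections onto subsurfaces in $f(\Omega_H)$, because each such $R'$ has $\partial R'$ contained in the multicurve $fA$ associated to $fHf^{-1}$, and Lemma \ref{SistoEndpointBound} together with Lemma \ref{SistoBGI} ensure that the relevant $\widehat G$-geodesics between $\mathrm{id}$ and $fh_1$, and between $fh_2$ and $g$, can be taken to avoid the cone points $\nu(P)$. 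Transporting this control to $C(\Sigma)$ via the equivariant QI embedding and Proposition \ref{Morse}, then applying Proposition \ref{MMBGI} relative to $\partial R' \subset fA$, should produce a uniform bound on both $d_{R'}(\mu,fh_1\mu)$ and $d_{R'}(fh_2\mu, g\mu)$, yielding the claim by the triangle inequality. Both the strongly undistorted hypothesis and the no-proper-nesting condition of Definition \ref{RGFGroups} play essential roles here, ensuring via Lemma \ref{FillingSubsurfaces} that the supports $f(\Omega_H)$ are compatible with the peripheral coset structure and that the relevant constants depend only on $H$ rather than on the coset $fH$.

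With the bounded-error claim in hand, Lemma \ref{DoubleBracketQIBounds} absorbs the error into the threshold: enlarging $\sigma$ by a fixed amount converts $\sum_{R'}\{\!\{d_{R'}(\mu,\tilde h\mu)\}\!\}_{\sigma_H}$ into $\preceq \sum_{R'}\{\!\{d_{R'}(\mu,g\mu)\}\!\}_{\sigma}$ with uniform constants. Combining this with the initial reduction $d_P(\mathrm{id},g) \leq d_H(h_1,h_2) + C_0$ yields the proposition, with $\kappa$ depending only on $\mu$ and $H$ and not on the specific coset $P = fH$.
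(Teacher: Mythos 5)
Your overall skeleton matches the paper's (reduce $d_P(\mathrm{id},g)$ to a peripheral word length, apply the strongly undistorted distance formula \eqref{DistanceFormulaSupports}, compare projections of $\mu$ and $g\mu$ with those of intermediate points, and absorb errors via Lemma \ref{DoubleBracketQIBounds}), but the step you yourself flag as the main obstacle is exactly where the real work lies, and the mechanism you propose for it does not work. You claim that $d_{R'}(\mu, fh_1\mu)$ and $d_{R'}(fh_2\mu,g\mu)$ are uniformly bounded because the entry/exit $\widehat{G}$-geodesics avoid $\nu(P)$, so that after transporting to $C(\Sigma)$ one can apply Proposition \ref{MMBGI} relative to $\partial R'$. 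But the contrapositive of Proposition \ref{MMBGI} requires the relevant $C(\Sigma)$-geodesic to avoid the $1$-neighborhood of $\partial R'$, and your entry segment terminates \emph{at} a point $fh_1$ of $P$: its image lies in a bounded (but not $1$-) neighborhood of the multicurve $fA\supset \partial R'$, since the $P$-orbit of any curve stays a bounded distance from $fA$. Avoiding the cone point in $\widehat{G}$ gives no control here, and nothing prevents the geodesic $[\mu, fh_1\mu]$ in $C(\Sigma)$ from entering the $1$-neighborhood of $\partial R'$ near its end, so MMBGI yields no bound on $d_{R'}(\mu, fh_1\mu)$. The paper gets around precisely this by decomposing $g=g_1g_2g_3g_4g_5$ with buffer subwords $g_2,g_4$ of uniformly bounded (but sufficiently long) $\widehat{G}$-length: the outer pieces $g_1,g_5$ stay genuinely far from $P$, so their images stay outside the $1$-neighborhood of $\partial R'$ and MMBGI applies, while the buffers are handled by a completely different argument — a syllable decomposition $g_2=f_1\cdots f_\ell$ with $\ell$ uniformly bounded, where each syllable is either a generator (bounded translation of $\mu$) or an element of a \emph{different} peripheral subgroup, whose projection to a support component of $fHf^{-1}$ is bounded by the constant $D$ of Lemma \ref{FillingSubsurfaces}. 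Your proposal cites Lemma \ref{FillingSubsurfaces} only in passing and never uses it to control the near-$P$ portion; without this (or an equivalent device) the crucial uniform bound is unsupported, since applying the syllable argument to the whole entry path fails (its syllable count is unbounded) and applying MMBGI to the whole entry path fails for the reason above.

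A secondary gap: you apply \eqref{DistanceFormulaSupports} to the shifted marking $\nu=f^{-1}\mu$ and assert the constants can be taken uniform in $f$ because different markings change the formula by intersection-number-controlled constants (Lemma \ref{IntersectionBound}). That argument does not give uniformity, since $i(\mu,f^{-1}\mu)$ is unbounded as $f$ ranges over $G$. The paper avoids this by applying the distance formula for $H$ with the \emph{fixed} marking $\mu$ (so the constants are genuinely uniform) and then translating the subsurfaces by $g_1g_2=fh$, which is also the route you should take; the price is that the markings appearing in the translated sum are $g_1g_2\mu$ and $g_1g_2g_3\mu$ rather than $\mu$ and $g\mu$, and reconciling them is again exactly the uniform projection bound discussed above.
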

\begin{proof}
 We may assume $\kappa \geq L$, where $L$ is as in Lemma \ref{SistoBGI}, so it suffices to assume $d_P(id, g)\geq L$. By Lemma \ref{SistoBGI}, every geodesic in $\widehat{G}$ from $e$ to $g$ passes through $\nu(P)$. Let $\gamma$ be a lift of one such geodesic to $G$. 
\par 
Write the word label of $\gamma$ as $g=g_1g_2g_3g_4g_5$, chosen as follows. The middle word $g_3\in H$ labels the segment of $\gamma$ in $P$, and $g_2$ and $g_4$ label segments of $\gamma$ that are sufficiently long in $\widehat{G}$ but uniformly bounded (what suffices as sufficiently long will be determined, see the discussion around inequality \ref{eq:MMBoundInitialSegment}). Finally, $g_1$ and $g_5$ are the initial and terminal segment labels left on $\gamma$ after choosing $g_2$ and $g_4$. See Figure \ref{FigureWordDecomp}. We then have the following by the triangle inequality. 
\begin{equation}\label{PeripheralBound}
d_P(id,g)\leq d_P(id, g_1g_2)
+d_P(g_1g_2,g_1g_2g_3)+d_P(g_1g_2g_3, g).
\end{equation}
For any $R\in f(\Omega_H)$, we also have
\begin{multline}\label{SurfaceBound}
d_R(\mu,g\mu)-d_R(\mu,g_1\mu)-d_R(g_1\mu,g_1g_2\mu)-d_R(g_1g_2g_3\mu,g_1g_2g_3g_4\mu)
\\-d_R(g_1g_2g_3g_4\mu,g\mu)
\\ 
\leq d_R(g_1g_2\mu,g_1g_2g_3\mu) 
\\ 
\leq d_R(\mu,g\mu)+d_R(\mu,g_1\mu)+d_R(g_1\mu,g_1g_2\mu)+d_R(g_1g_2g_3\mu,g_1g_2g_3g_4\mu)
\\+d_R(g_1g_2g_3g_4\mu,g\mu)
\end{multline}

By Lemma \ref{SistoEndpointBound}(b), $\pi_P(id), \pi_P(g)$ are within $E$ from $g_1g_2, g_1g_2g_3$ respectively. In particular, by the triangle inequality we obtain 
\begin{equation}\label{PeripheralBoundModified}
     d_P(id, g)\leq d_P(g_1g_2,g_1g_2g_3)+2E
\end{equation}
We also have
\begin{equation} \label{TranslatedDistanceFormula}
    d_P(g_1g_2,g_1g_2g_3)\approx \sum_{R\in f(\Omega_H)}\{\!\{d_R(g_1g_2\mu,g_1g_2g_3(\mu))\}\!\}_{\sigma}
\end{equation}

Indeed, this follows since we may write $g_1g_2=fh$ with $h\in H$, and
$$d_P(g_1g_2,g_1g_2g_3)=d_{H}(h,hg_3)=d_{H}(id,g_3)$$
and by \eqref{DistanceFormulaSupports},
$$d_{H}(id, g_3)\approx \sum_{R\in \Omega_H}\{\!\{d_R(\mu,g_3\mu)\}\!\}_{\sigma}$$ 
  By reversing the argument above, the right hand side of this is equal to the right hand side of \eqref{TranslatedDistanceFormula}. 
  \par 
  Our end goal is to uniformly bound every expression in \eqref{SurfaceBound} besides $d_R(\mu, g\mu)$ and $d_R(g_1g_2\mu, g_1g_2g_3\mu)$. Once this is done, it will follow that there is a constant $C$ which we may assume is larger than $\sigma_0$ so that
  $$d_R(g_1g_2\mu, g_1g_2g_3\mu) \approx_{1, C} d_R(\mu, g\mu).$$
  Applying Lemma \ref{DoubleBracketQIBounds}, we obtain
  \begin{equation}\label{DistanceFormComparison}
      \sum_{R\in f(\Omega_H)}\{\!\{d_R(g_1g_2\mu,g_1g_2g_3(\mu))\}\!\}_{2C} \preceq \sum_{R\in f(\Omega_H)}\{\!\{d_R(\mu, g(\mu))\}\!\}_{C}
  \end{equation}
   The right hand side is exactly the term we are interested in with $\sigma=C$, so by combining \eqref{PeripheralBoundModified} with \eqref{TranslatedDistanceFormula} and  \eqref{DistanceFormComparison} the statement of the lemma follows.

\par 
We now fix a subsurface $R$ as in  \eqref{SurfaceBound}, and obtain the uniform bounds for the desired terms of \eqref{SurfaceBound}. We first work with $d_R(\mu, g_1\mu)$, and note that the bound for $d_R(g_1g_2g_3g_4\mu, g\mu)$ is handled similarly. We may assume $g_1$ is nontrivial. Pick a base curve $\alpha$ of $\mu$ with nonempty projection to $R$. If no such curve exists, then $R$ is an annulus and we instead take $\alpha$ to be a curve whose projection to $R$ is a component of the transversal of $\mu$ at $R$, and whose distance to the components of $\mu$ is at most $2$ (any uniform choice of distance will do).
% Decomp of g
\begin{figure}[H]
    \centering
    \includegraphics[scale=.60]{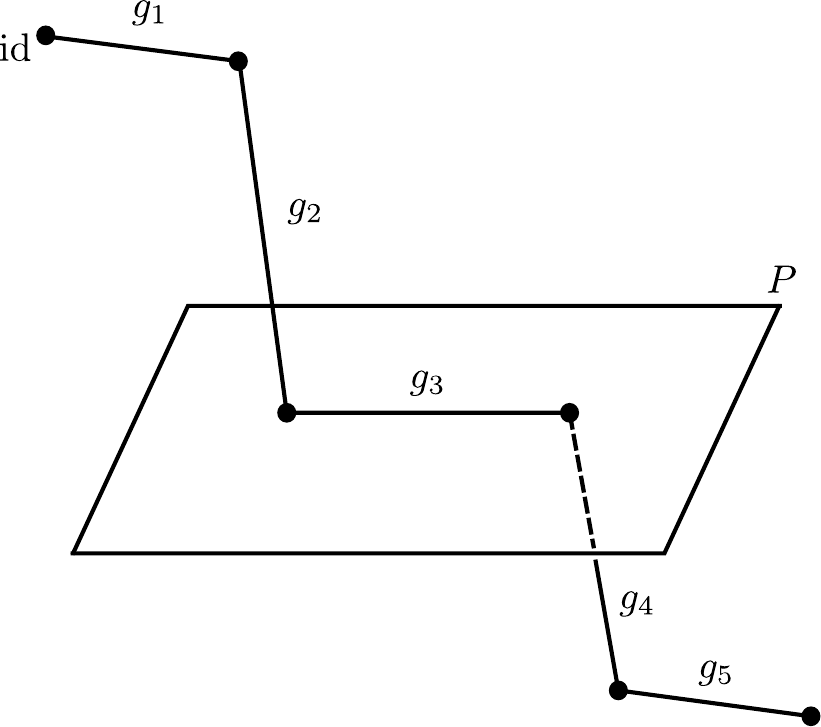}
    \caption{The decomposition of the element $g$.}
    \label{FigureWordDecomp}
\end{figure}
\par 
By assuming the $\widehat{G}$-length of $g_2$ is sufficiently long, we see that, since $\widehat{G}$ quasi-isometrically embeds in $C(\Sigma)$, the image of $[id,g_1]$, which is a uniform quasi-geodesic connecting $\alpha$ and $g_1\alpha$ for $\alpha$ a curve component of $\mu$ is sufficiently far from the image of $P$ in $C(\Sigma)$. The image of $P$ itself is in a bounded neighborhood (with bound depending only on $\mu$ and not on the peripheral $P$) of the multicurve $\partial R$. This is because the orbit of $\alpha$ under $P$ is the $f$ image of the $H$ orbit of $\alpha$, and $f^{-1}(\partial R)$ is disjoint from the multicurve associated to $H$. But the curves in $H\cdot f^{-1}(\partial R)$ are also disjoint from this multicurve, so the distance of the elements of $H\cdot \alpha$ from $f^{-1}(\partial R)$ is bounded in terms of the distance from $\alpha$ to $f^{-1}(\partial R)$. Hence the distance from $P\cdot \alpha$ to $\partial R$ is bounded similarly.  In particular, using the Proposition \ref{Morse}, we can apply Proposition \ref{MMBGI} to $[\alpha, g_1\alpha]$ to obtain 
\begin{equation}\label{eq:MMBoundInitialSegment}
    d_R(\alpha, g_1\alpha)\leq M
\end{equation}
as the geodesic $[\alpha, g_1\alpha]$ does not pass within the $1$ neighborhood of $\partial R$ as it is far from the image of $P$. By Lemma \ref{ProjLip}, this gives a uniform bound on $d_R(\mu, g_1\mu)$.
\par 
The argument of the previous two paragraphs only requires the $\widehat{G}$-length of $g_2$ and $g_4$ to be sufficiently large, only depending on the quasi-isometry constants of the embedding of $\widehat{G}$ into $C(\Sigma)$. In particular, this length is independent of the element $g$ chosen. (Recall if $g_2$ cannot be chosen to be sufficiently long, then $g_1$ is empty and the above bound is trivial).
\par 
Now, to obtain a bound on $d_R(g_1\mu, g_1g_2\mu)$ (for $d_R(g_1g_2g_3\mu, g_1g_2g_3g_4\mu)$, the argument once again goes the same way), we first write $R'=(g_1g_2)^{-1}(R)$, which is a element of $\Omega_H$, and then obtain
$$d_R(g_1\mu, g_1g_2\mu)=d_R(g_1g_2g_2^{-1}\mu, g_1g_2\mu)=d_{R'}(g_2^{-1}\mu, \mu)=d_{g_2(R')}(\mu, g_2\mu).$$

Let us also write
$$g_2=f_1\cdots f_{\ell}$$
where $f_i$ either an element of the relative generating set $X$ that is nonperipheral or is the label of a maximal subsegment of $g_2$ with label in some element of $\mathcal{H}$. Then writing 
$$h_j=f_1\cdots f_j$$
we obtain
\begin{equation}\label{BoundWordLength}
    d_{g_2(R')}(\mu, g_2\mu)\leq \sum_{j=0}^{\ell-1} d_{g_2(R')}(h_{j}(\mu), h_{j+1}(\mu)) = \sum_{j=0}^{\ell-1} d_{h_j^{-1}g_2(R')}(\mu, f_{j+1}(\mu)).
\end{equation}
But for $0\leq j\leq \ell-1$, $f_{j+1}\notin h_j^{-1}g_2Hg_2^{-1}h_j$. Indeed, if $f_{j+1}$ is nonperipheral then this automatically true. If $f_{j+1}$ is peripheral and $f_{j+1}\in h_j^{-1}g_2Hg_2^{-1}h_j$, then we write
$$f_{j+1}=h_j^{-1}g_2f'_{j+1}g_2^{-1}h_j$$
for some $f'_{j+1}\in H$. But $h_j^{-1}g_2=f_{j+1}\cdots f_{\ell}$, so the $\widehat{G}$ length of $h_j^{-1}g_2f'_{j+1}g_2^{-1}h_j$ is $2(\ell-j+1)+1$, while $|f_{j+1}|_{\widehat{G}}=1$. If $j<\ell-1$ this is a contradiction. If $j=\ell-1$ and $f_{\ell}\in h_{\ell-1}^{-1}g_2Hg_2^{-1}h_{\ell-1}=f_{\ell}Hf_{\ell}^{-1}$, then $f_{\ell}\in H$. But this is impossible since the endpoint of the subsegment of $\gamma$ labeled $g_2$ is assumed to be the first point of $\gamma$ in $P$, and if $f_{\ell}\in H$ this cannot be true.
\par 
Thus, each term on the right hand side of \eqref{BoundWordLength} is bounded by some $D'$, which depends on the collection of constants $D$ in Lemma \ref{FillingSubsurfaces} from each choice of element of $\mathcal{H}$, as well as the relative generating set $X$. Indeed, if $f_{j+1}$ is peripheral then the above argument shows that $f_{j+1}$ is not in the peripheral subgroup $h_j^{-1}g_2Hg_2^{-1}h_j$, so since $h_j^{-1}g_2(R')\in \Omega_{h_j^{-1}g_2Hg_2^{-1}h_j}$ the corresponding term in \eqref{BoundWordLength} is at most the constant $D$ depending on which element of $\mathcal{H}$ that $f_{j+1}$ is in, by Lemma \ref{FillingSubsurfaces}. If $f_{j+1}$ is nonperipheral, then it is an element of the relative generating set $X$. There are only finitely many elements in $X$, so there is a $D'$ which we can assume to be greater than all the $D$ constants from the elements of $\mathcal{H}$ so that for all $g\in X$,
$$d_Y(\mu, g\mu)\leq D'$$
for any subsurface $Y\subset S$. This follows from Lemmas \ref{IntersectionBound} and \ref{ProjLip}. As $\ell$ is uniformly bounded over all $g\in G$ we are done as we obtained the bound
$$d_R(g_1\mu, g_1g_2\mu)\leq D'\ell$$
with $D'$ and $\ell$ independent of the choice of $g$.
\end{proof}

\begin{remark}\label{WhyStronglyUndistorted2}
    Note that to obtain \eqref{DistanceFormComparison}, we require the bound on the relevant terms in \eqref{SurfaceBound} so that we can apply Lemma \ref{DoubleBracketQIBounds}. This bound uses the assumption that the subsurface $R$ is in $\Omega_H$, due to the application of Lemma \ref{FillingSubsurfaces} giving the bound $D$. The proof fails as is when the collection of subsurfaces is not restricted to $\Omega_H$, which shows as discussed in Remark \ref{WhyStronglyUndistorted} why we assume that the distance formula \eqref{DistanceFormulaSupports} holds. 
\end{remark}

\begin{theorem}\label{Undistorted}
Every RGF group $G$ of $\Sigma$ is undistorted in $\text{MCG}(\Sigma)$.
\end{theorem}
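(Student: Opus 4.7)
The inclusion $G\hookrightarrow\text{MCG}(\Sigma)$ is automatically coarsely Lipschitz for any finite generating sets, so the plan is to establish the reverse inequality $d_G(1,g)\preceq d_{\text{MCG}(\Sigma)}(1,g)$ for all $g\in G$. I will do this by directly comparing Sisto's distance formula (Proposition \ref{SistoDistance}) for $G$ with the Masur--Minsky distance formula (Proposition \ref{MMDistance}) for $\text{MCG}(\Sigma)$, using Proposition \ref{FarPeripheral->FarSubsurface} as the bridge between peripheral distances in $G$ and subsurface projection distances in $\Sigma$. Fixing a marking $\mu$ of $\Sigma$ and a threshold $\sigma$ large enough for both formulas to apply, it is then enough to dominate $d_{\widehat{G}}(1,g)+\sum_{P}\{\!\{d_P(1,g)\}\!\}_\sigma$ by a uniform constant multiple of $\sum_R\{\!\{d_R(\mu,g\mu)\}\!\}_\sigma$, where $P$ ranges over peripheral subsets of $G$ and $R$ over essential subsurfaces of $\Sigma$.

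For the $d_{\widehat{G}}$--term, after choosing the basepoint of the defining equivariant QI embedding $\widehat{G}\hookrightarrow C(\Sigma)$ to be a base curve $\alpha$ of $\mu$, we obtain $d_{\widehat{G}}(1,g)\preceq d_{C(\Sigma)}(\alpha,g\alpha)\leq d_\Sigma(\mu,g\mu)$, and the last quantity is the $R=\Sigma$ summand on the right-hand side. For the peripheral sum, Proposition \ref{FarPeripheral->FarSubsurface} supplies for each peripheral subset $P=fH$ the bound $d_P(1,g)\preceq\sum_{R\in f(\Omega_H)}\{\!\{d_R(\mu,g\mu)\}\!\}_\sigma$; a suitable application of Lemma \ref{DoubleBracketQIBounds} absorbs the threshold on the left, and summing over $P$ produces the double sum $\sum_P\sum_{R\in f(\Omega_H)}\{\!\{d_R(\mu,g\mu)\}\!\}_\sigma$.

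The main obstacle is then to dominate this double sum by the corresponding single sum, for which I would establish that each essential subsurface $R$ of $\Sigma$ belongs to $f(\Omega_H)$ for at most one peripheral subset $P=fH$. If $R\in f_1(\Omega_{H_1})\cap f_2(\Omega_{H_2})$ and $f_1H_1=f_2H_2$ as cosets, then the subgroups $H_1$ and $H_2$ coincide and the two translates $f_i(\Omega_{H_i})$ are equal by $H_i$--invariance of $\Omega_{H_i}$. Otherwise, by Lemma \ref{lem:distincePeripherals} the peripheral subgroups $f_iH_if_i^{-1}$ are distinct, and each contains an element $h_i$ that stabilizes $R$ and acts loxodromically on $C(R)$. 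Lemma \ref{FillingSubsurfaces} then yields a $k$ so that $h_1^kh_2^k$ is pseudo-Anosov on $\Sigma$; yet this composition still stabilizes the proper subsurface $R$, which is proper because pure reducible subgroups contain no pseudo-Anosov element, so $\Sigma\notin\Omega_{H_i}$. A pseudo-Anosov on $\Sigma$ cannot stabilize any proper essential subsurface (a power would fix its boundary multicurve, contradicting infinite curve orbits), giving a contradiction. With this uniqueness in hand, the chain of inequalities above gives $d_G(1,g)\preceq d_{\text{MCG}(\Sigma)}(1,g)$, proving the theorem.
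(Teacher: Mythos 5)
Your proposal is correct and takes essentially the same route as the paper: it bounds $d_G$ by $d_{\mathrm{MCG}}$ by comparing Sisto's relative distance formula with the Masur--Minsky formula, using Proposition \ref{FarPeripheral->FarSubsurface} as the bridge and ruling out repeated subsurfaces across distinct peripheral subsets via Lemma \ref{lem:distincePeripherals} and Lemma \ref{FillingSubsurfaces}. The only cosmetic differences are that you invoke the pseudo-Anosov conclusion of Lemma \ref{FillingSubsurfaces} (a pseudo-Anosov cannot preserve the common proper subsurface $R$) where the paper uses its filling conclusion, and you absorb the additive constants with Lemma \ref{DoubleBracketQIBounds} where the paper raises the threshold to $\sigma'$ by hand -- both are the same bookkeeping.
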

\begin{proof}
 Since RGF groups are finitely generated, the triangle inequality shows that there is coarse Lipschitz upper bound on the distance in the mapping class group compared to the distance in the PGF group. It thus suffices to show that 
$$d_{MCG}(id,g) \succeq d_G(id,g).$$
\par 
To do this, we compare the distance formula of $G$ as a relatively hyperbolic group to the Masur-Minsky distance formula. Let $\sigma_0$ be at least as large as both $\sigma_0$'s as in Propositions \ref{MMDistance} and \ref{SistoDistance}. Fix $\sigma\geq \sigma_0$ and a marking $\mu$. We then have

\begin{equation*}\label{DistanceFormulaComparison}
    d_{MCG}(id,f)\approx \sum_{R\subset \Sigma} \big{\{}\!\big{\{}d_{R}(\mu, f(\mu))\big{\}}\!\big{\}}_{\sigma}
    \geq
    d_{\Sigma}(\mu, f(\mu))+\sum_{R}\big{\{}\!\big{\{}d_R(\mu, f(\mu))\big{\}}\!\big{\}}_{\sigma}
\end{equation*}
where the later sum is over all subsurfaces $R$ that are in $\Omega_H$ for some peripheral subgroup $H$. The QI equivalence is Proposition \ref{MMDistance}, and the inequality follows from dropping all proper subsurfaces not in some $\Omega_H$. 
\par 
One can write
\begin{equation}\label{AllSurfacesSumRewrite}
    \sum_{R}\big{\{}\!\big{\{}d_R(\mu, f(\mu))\big{\}}\!\big{\}}_{\sigma} = \sum_{H} \sum_{R\in \Omega_H}\big{\{}\!\big{\{}d_R(\mu, f(\mu))\big{\}}\!\big{\}}_{\sigma}
\end{equation}
where the outer sum on the right hand side is over all peripheral subgroups $H$ of $G$, and the inner sum is as stated. This follows as there are no repeating terms on the right hand side, which follows from Lemma \ref{FillingSubsurfaces} as if two supports of elements in distinct peripherals contain a common component, then they could not fill $\Sigma$.
\par 
By the definition of RGF groups and Lemma \ref{ProjLip},
$$d_{\Sigma}(\mu, f(\mu)) \approx d_{\widehat{G}}(id, f).$$
If we write $H=fH'f^{-1}$ for $H'\in \mathcal{H}$, we denote by $P$ the corresponding peripheral subset $fH'$. By Proposition \ref{FarPeripheral->FarSubsurface}, there is a $K \geq 1$ so that, 
$$ \sum_{R\in \Omega_H}\big{\{}\!\big{\{}d_R(\mu, f(\mu))\big{\}}\!\big{\}}_{\sigma}\succeq_{K, 0} \{\!\{d_P(id, f)\}\!\}_{\sigma'}$$
for some large constant $\sigma'\geq \sigma$. Namely, we are saying that we can take the additive constant from Proposition \ref{FarPeripheral->FarSubsurface} to be $0$ by taking $\sigma'$ to be sufficiently large while also applying $\{\!\{\cdot\}\!\}_{\sigma'}$ to $d_P(id, f)$. This is because by assuming $\sigma'$ is sufficiently large, either $d_P(id, f)\leq \sigma'$, or the left hand side of the above expression must contain at least $1$ nonzero term, or else the conclusion of Proposition \ref{FarPeripheral->FarSubsurface} could not hold. Thus such a $K$ exists, 
$$\sum_H\sum_{R\in \Omega_H}\big{\{}\!\big{\{}d_R(\mu, f(\mu))\big{\}}\!\big{\}}_{\sigma}\succeq_{K, 0} \sum_P\{\!\{d_P(id, f)\}\!\}_{\sigma'}.$$
By Lemma \ref{lem:distincePeripherals}, distinct peripheral subgroups have distinct corresponding peripheral subsets. Thus there are no repeat terms on the right hand side.
\par 
 Combining everything together and applying Proposition \ref{SistoDistance}, we have
$$d_{MCG}(id,f)\succeq d_{\widehat{G}}(id, f)+\sum_{P}\{\!\{d_P(id, f)\}\!\}_{\sigma'} \approx d_G(id,f).$$
\end{proof}

In particular, Theorem \ref{Undistorted} implies Theorem 1.2 of \cite{L21} and Theorem 1.1 of \cite{T21} (noting Proposition \ref{TangVeechPGF}), giving an alternate proof of these results.

\section{Examples and applications}\label{SectionExamplesApplications}

\subsection{Constructing Compatible Homomorphisms}\label{SubsectionConstructingCompatibleHomomorphisms}
Following the techniques of \cite{LR06}, we can construct many examples of applications of Theorem \ref{ComboQIEmb}. These ideas allow one to build many new PGF groups from old ones. 
\par 
  Let us start first with any normalized graph of PGF groups $\mathcal{G}$ that is a finite tree. Denote by $G$ the fundamental group of $\mathcal{G}$. We assume here that there are no extension vertices. We will construct a compatible homomorphism for this tree of groups that has $L$-local large projections for $L$ arbitrarily large, under certain assumptions about the twist groups of $\mathcal{G}$.
%Proposition 7.1
\begin{proposition}\label{PGFTreeCompatibleHom}
    Let $\mathcal{G}$ be as above, and assume further that for all twist vertex groups $H$ of $\mathcal{G}$ with associated multicurve $A$, there is a partial pseudo-Anosov with reducing system $A$ centralizing $H$. For all $L$, there exists a compatible homomorphism $\phi$ so that $(\mathcal{G},\phi)$ satisfies the $L$-local large projections property. In particular, the fundamental group of every such tree of groups injects into MCG$(\Sigma)$ with image a group that is PGF relative to the $\phi$ images of the finitely many twist subgroups of each vertex group, with some twist groups identified according to $\mathcal{G}$.
\end{proposition}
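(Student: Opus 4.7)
The plan is to build $\phi$ inductively along the tree $\mathcal{G}$, using high powers of the centralizing partial pseudo-Anosovs to push multicurves associated to different PGF vertex groups arbitrarily far apart on each component of the intervening twist multicurve complement. Because there are no extension vertices, Definition \ref{PGFGraph} forces every edge of $\mathcal{G}$ to be of type (1), so $\mathcal{G}$ alternates between PGF and twist vertices. I would root $\mathcal{G}$ at a PGF vertex $v_0$ and set $\phi|_{G_{v_0}}$ to be the given inclusion. The edge maps of Definition \ref{PGFGraph}(1) rigidly determine $\phi$ on each adjacent twist vertex group, while for the next PGF vertex $v'$ across a twist vertex $w$ I am free to pre-compose the given inclusion of $G_{v'}$ into $\text{MCG}(\Sigma)$ with conjugation by any element of the centralizer of $\phi(H_w)$; in particular by a power $\psi_w^{n_w}$ of the given partial pseudo-Anosov $\psi_w$ with reducing system $A_w$. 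Iterating this determines $\phi$ up to the choice of powers $\{n_w\}$, one per twist vertex.

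The key projection estimate I would establish is the following: if $G$ is a PGF group with a peripheral twist subgroup $H$ with multicurve $A$, and $\psi$ is a partial pseudo-Anosov with reducing system $A$ centralizing $H$, then for each component $S$ of $\Sigma \setminus A$ and each $L$, provided $n$ is large enough, any multicurve $B$ associated to $\psi^n G \psi^{-n}$ with $B \neq A$ and any multicurve $B'$ associated to $G$ with $B' \neq A$ satisfy $d_S(B, B') \geq L$. Since $A$ is sparse, $S$ is a nonannular essential subsurface, and Proposition \ref{PGFProjBound} gives a uniform bound $K$ on the diameter of $\pi_S(G \cdot \mathcal{A}_G)$ for any finite set $\mathcal{A}_G$ of components of multicurves associated to $G$; hence every multicurve associated to $G$ projects into a single $K$-ball in $C(S)$. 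On the other hand $\psi|_S$ is pseudo-Anosov and hence loxodromic on $C(S)$, so conjugating by $\psi^n$ moves this entire $K$-ball arbitrarily far in $C(S)$, and the triangle inequality yields the claim.

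With this tool, I would traverse $\mathcal{G}$ in breadth-first order from $v_0$; at each twist vertex $w$ the power $n_w$ is chosen large enough that every pair of multicurves $B_1, B_2$ (distinct from $A_w$) associated to the $\phi$-images of the two PGF vertices adjacent to $w$ in $\mathcal{G}$ satisfies $d_S(B_1, B_2) \geq L$ for every component $S$ of $\Sigma \setminus A_w$. The finiteness of $\mathcal{G}$ and the uniform diameter bounds of Proposition \ref{PGFProjBound} permit this with finitely many choices. Lemma \ref{LocalLargeEquiv} then promotes this orbit-level estimate to the $L$-local large projections property in the Bass--Serre tree (the extension case is vacuous by hypothesis, so $\tau$ may be taken to be the identity). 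The final sentence of the proposition follows by taking $L \geq M + 18$ and invoking Theorem \ref{ComboQIEmb}.

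The main obstacle will be that conjugating a PGF vertex group $G_{v'}$ by $\psi_w^{n_w}$ moves every multicurve associated to $\phi(G_{v'})$ simultaneously, including those attached to twist vertices further out in the tree. Hence when the induction passes to the next twist vertex $w'$ adjacent to $v'$, the multicurve $A_{w'}$ to be used in the subsequent estimate is the $\psi_w^{n_w}$-image of the original, and the centralizing partial pseudo-Anosov must be replaced by the corresponding conjugate $\psi_w^{n_w} \psi_{w'} \psi_w^{-n_w}$. Because this conjugate still has reducing system $\psi_w^{n_w}(A_{w'})$ and still centralizes the new $\phi(H_{w'})$, the induction carries through; the role of Proposition \ref{PGFProjBound} is precisely to ensure that the projection diameter bounds are uniform across all such conjugations, so that the powers $n_w$ can be chosen sequentially without the estimates degrading.
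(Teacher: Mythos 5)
Your overall strategy is the same as the paper's: root the tree at a PGF vertex, conjugate each subsequent PGF vertex group by a high power of the centralizing partial pseudo-Anosov, use Proposition \ref{PGFProjBound} (valid since the twist multicurves are sparse, so the complementary components are nonannular) together with the loxodromic action on $C(S)$ to push the associated multicurves $L$-far apart, propagate the conjugations down the tree (your last paragraph correctly identifies that later multicurves and their centralizing partial pseudo-Anosovs must be conjugated along), and then conclude via Lemma \ref{LocalLargeEquiv} and Theorem \ref{ComboQIEmb}. The verification that the resulting assignment is a genuine homomorphism, which the paper carries out explicitly using the centralizing property and the edge conjugators $f_e f_{e'}^{-1}$, is only implicit in your phrase about pre-composing with centralizer elements, but the idea is the right one.

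There is, however, a genuine gap in your bookkeeping: you allocate one power $n_w$ per twist vertex and only verify the projection condition between ``the two PGF vertices adjacent to $w$.'' A normalized PGF tree of groups may have a twist vertex with three or more PGF neighbors (this is exactly the left-hand example of Figure \ref{FigurePGFExamples}). Any two such sibling PGF vertices lift to PGF vertices of the Bass--Serre tree with only the twist vertex between them, so Definition \ref{LocalLarge} demands that their associated multicurves be $L$-far apart on every component of the complement of the twist multicurve. But if you conjugate all the children across $w$ by the \emph{same} power $\psi_w^{n_w}$, their mutual subsurface projection distances are unchanged by equivariance, so no choice of $n_w$, however large, produces the required separation between siblings. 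The fix is the paper's parameterization: choose a separate conjugating element for each PGF vertex (one per edge into the twist vertex), selected sequentially so that each newly attached vertex group has its multicurves pushed far both from the parent's and from those of all previously placed siblings; this is possible precisely because Proposition \ref{PGFProjBound} bounds the projection diameter of each group's multicurve system uniformly, so a sufficiently large additional power works at every stage.
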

\begin{proof}
To define the homomorphism $\phi$, we will need to associate maps to each vertex an element of MCG$(\Sigma)$. These elements will be exactly the conjugating elements that define the homomorphism $\phi$ when restricted to to corresponding vertex group. Let us first describe the technique for how we will do this in the simplest case. Fix a vertex $v$ in $\mathcal{G}$ with vertex group $G_{v}$. Consider a PGF vertex $v'$ with vertex group $G_{v'}$ with $v''$ being the only vertex between $v$ and $v'$. Let $H_{v''}$ be its vertex group, which is a twist group. Suppose $f_{e}, f_{e'}\in \text{MCG}(\Sigma)$ are the corresponding elements defining the edge map from $H_{v''}$ into $G_{v}$ and $G_{v'}$ respectively. (On a first reading it may be easiest to assume that $f_e=f_{e'}=id$, which means that $H_{v''}$ is an actual subgroup of $G_{v}$ and $G_{v'}$). Let $A_{v''}$ denote the multicurve associated to $H_{v''}$, and suppose $h_{v'}\in \text{MCG}(\Sigma)$ is a partial pseudo-Anosov with reducing system $f_e(A_{v''})$ centralizing $f_eH_{v''}f_{e'}^{-1}$. Such a map exists by the assumptions of the proposition. 
\par 
Since $f_{e'}H_{v''}f_{e'}^{-1}<G_{v'}$, it follows that $f_{e}H_{v''}f_{e}^{-1}<f_ef_{e'}^{-1}G_{v'}f_{e'}f_{e}^{-1}$, and thus $f_{e}H_{v''}f_{e}^{-1}<h_{v'}f_ef_{e'}^{-1}G_{v'}f_{e'}f_{e}^{-1}h_{v'}^{-1}$ because $h_{v'}$ centralizes $f_eH_{v''}f_e^{-1}$. As we also have that $f_eH_{v''}f_e^{-1}<G_v$ by the definition of $f_e$. Given any $L>0$, we may assume (potentially by replacing $h_{v'}$ with some large power) that for all multicurves $B_{v}$ and $B_{v'}$ associated to $G_v$ and $h_{v'}f_ef_{e'}^{-1}G_{v'}f_{e'}f_{e}^{-1}h_{v'}^{-1}$ respectively, with $B_v, B_{v'}\neq f_e(A_{v'})$, that we have the following inequality.
\begin{equation}\label{BaseCaseFarProjections}
    d_S(B_v, B_{v'})\geq L.
\end{equation}
To do this, apply Proposition \ref{PGFProjBound} to the projections of the multicurves of both $G_v$ and $f_ef_{e'}^{-1}G_{v'}f_{e'}f_{e}^{-1}$ to $S$ to get that they are bounded subsets of $C(S)$, and then use the fact that $h_{v'}$ acts loxodromically on $C(S)$. Note that we are also applying Lemma \ref{FillingMC} which implies that $\pi_S(B_v)$ and $\pi_S(B_{v'})$ are nonempty.
\par 
For any PGF vertex $v'$ so that there are no PGF vertices between $v$ and $v'$, we choose a map $h_{v'}$ as defined above. Our main goal is to define a map $h_{v'}$ associated to any PGF vertex $v'$ of $\mathcal{G}$. We think about $v$ as a basepoint for this construction. For convenience, we define $h_v=id$. Given any vertex $u$ of $\mathcal{G}$, we let $p_u$ denote the embedded path from $v$ to $u$.
\par
Now, given a vertex $u$ so that every map $h_{u'}$ is defined for every PGF vertex $u'$ on $p_u$, we define $h_{p_u}\in \text{MCG}(\Sigma)$ as follows. We let $h_{p_v}=id$. For $u\neq v$ a PGF vertex, we write $p_u=e_1e_1'\cdots e_{m-1}e_{m-1}'$. Then define $h_{p_u}$ as
$$h_{p_u}=h_{v_2}f_{e_1}f_{e_1'}^{-1}\cdots h_{u}f_{e_{m-1}}f_{e_{m-1}'}^{-1}$$
If $u$ is a twist vertex, and we write $p_u=e_1e_1'\cdots e_{m-1}$, then
$$h_{p_u}= h_{v_2}f_{e_1}f_{e_1'}^{-1}\cdots h_{v_{m-2}}f_{e_{m-2}}f_{e_{m-2}'}^{-1}f_{e_{m-1}}.$$
\par
To construct the maps $h_{v'}$ for all PGF vertices $v'$ of $\mathcal{G}$, we perform an inductive argument starting at $v$. Label the PGF vertices of $p_{v'}$ in order as $v=v_1, v_2, \ldots, v_{n-1}, v_n=v'$, and let $v_{i,i+1}$ denote the twist vertex between $v_i$ and $v_{i+1}$. We also label the edge between $v_i$ and $v_{i,i+1}$ as $e_i$, and the edge between $v_{i,i+1}$ and $v_{i+1}$ as $e_i'$. Let $G_{i}$ denote the PGF vertex group of $v_i$, and $H_{i, i+1}$ the twist vertex group of $v_{i,i+1}$ with associated multicurve $A_{i, i+1}$. Let $f_{e_i}$ and $f_{e_i'}$ denote the conjugating maps defining the edge maps from $H_{i, i+1}$ into $G_i$ and $G_{i+1}$, respectively. Note that with this notation we have
\begin{equation}\label{VertexConjugatorRelation1}
   h_{p_{v_{i-1,i}}}=h_{p_{v_{i-1}}}f_{e_{i-1}} 
\end{equation}
\begin{equation}\label{VertexConjugatorRelation2}
    h_{p_{v_i}}=h_{p_{v_{i-1}}}h_{v_i}f_{e_{i-1}}f_{e_{i-1}'}^{-1}
\end{equation}
\par 
Assume that we have inductively constructed $h_{v_2},\ldots, h_{v_{n-1}}$ with the following properties (note that the base case $i=2$ was done at the beginning of the proof). For every $2\leq i \leq n-1$ we have that
\begin{enumerate}[label=(\alph*)]
    \item $h_{v_i}$ is a partial pseudo-Anosov with reducing system $h_{p_{v_{i-1,i}}}(A_{i-1, i})$ centralizing $h_{p_{v_{i-1,i}}}H_{i-1, i}h_{p_{v_{i-1,i}}}$.
    \item Given a multicurve $B_{i-1}$ associated to $h_{p_{v_{i-1}}}G_{i-1}h_{p_{v_{i-1}}}^{-1}$
    and a multicurve $B_{i}$ associated to $h_{p_{v_{i}}}G_{i}h_{p_{v_{i}}}^{-1}$ both of which are not equal to $h_{p_{v_{i-1,i}}}(A_{i-1, i})$, for any component $S$ of $\Sigma \setminus h_{p_{v_{i-1,i}}}(A_{i-1, i})$ we have that
    $$d_S(B_{i-1}, B_i)\geq L.$$
\end{enumerate}
Then we can define $h_{v_n}$ by constructing it in the same way as we constructed $h_{v'}$ in the beginning of the proof. Namely, we mimic the construction, but instead starting with the PGF groups $h_{p_{v_{n-1}}}G_{n-1}h_{p_{v_{n-1}}}^{-1}$ and $h_{p_{v_{n}}}G_{n}h_{p_{v_{n}}}^{-1}$, and twist group $h_{p_{v_{n-1}}}H_{n-1, n}h_{p_{v_{n-1}}}^{-1}$. We choose a partial pseudo-Anosov $h_{v_n}$ with reducing system $h_{p_{v_{n-1}}}f_{e_{n-1}}(A_{n-1,n})=h_{p_{v_{n-1,n}}}(A_{n-1,n})$ centralizing $h_{p_{v_{n-1,n}}}H_{n-1, n}h_{p_{v_{n-1,n}}}^{-1}$. Such a map exists by the assumption on the twist groups of $\mathcal{G}$ given in the proposition. We may then further assume, possibly by replacing $h_{v_n}$ with a power, that given a multicurve $B_{n-1}$ associated to $h_{p_{v_{n-1}}}G_{n-1}h_{p_{v_{n-1}}}^{-1}$
    and a multicurve $B_{n}$ associated to $h_{p_{v_{n}}}G_{n}h_{p_{v_{n}}}^{-1}$ both of which are not equal to $h_{p_{v_{n-1,n}}}(A_{n-1, n})$, for any component $S$ of $\Sigma \setminus h_{p_{v_{n-1,n}}}(A_{n-1, n})$ we have that
    $$d_S(B_{n-1}, B_n)\geq L.$$
    To see why we can assume this, it suffices by equivariance (conjugating by $h_{p_{v_{n-1}}}^{-1}$) to choose $h_{v_n}$ so that for all multicurves $B_{n-1}'$ associated to $G_{n-1}$ and all multicurves $B_n'$ associated to $h_{v_n}f_{e_{m-1}}f_{e_{n-1}'}^{-1}G_n f_{e_{n-1}'}f_{e_{n-1}}^{-1}h_{v_n}^{-1}$ with $B_{n-1}', B_n'\neq f_{e_{n-1}}(A_{n-1,n})$, and any component $S'$ of $\Sigma\setminus f_{e_{n-1}}(A_{n-1,n})$, we have
    $$d_{S'}(B_{n-1}', B_n')\geq L.$$
    Note that the multicurve $f_{e_{n-1}}(A_{n-1,n})$ comes from applying \eqref{VertexConjugatorRelation1} to get $h_{p_{v_{n-1}}}^{-1}h_{p_{v_{n-1,n}}}=f_{e_{n-1}}$, and the group $h_{v_n}f_{e_{m-1}}f_{e_{n-1}'}^{-1}G_n f_{e_{n-1}'}f_{e_{n-1}}^{-1}h_{v_n}^{-1}$ comes from applying  \eqref{VertexConjugatorRelation2} in a similar way to the group $h_{p_{v_{n}}}G_{n}h_{p_{v_{n}}}^{-1}$. Applying the same argument used to obtain inequality \eqref{BaseCaseFarProjections}, we see that we can find a map $h_{v_n}$ satisfying (a) and (b) above with $i=n$, completing the induction.
    \par 
    At this point there is some ambiguity for how $h_{v'}$ is defined, as there may be multiple embedded paths containing $v'$ starting at $v$. However, there is no extra work to be done for us to simply assume that each choice for $h_{v'}$ is the same for any embedded path. Thus we have associated a well defined pure reducible mapping class $h_{v'}$ to every PGF vertex $v'$ of $\mathcal{G}$, and for any vertex $u$ we have a well defined mapping class $h_{p_u}$ associated to the path $p_u$ from $v$ to $u$.
\par 
    Finally, we can now define the map which will be our desired compatible homomorphism. It suffices to define the map on the vertex groups first. On a vertex group $G_u$ of the vertex $u$, we define for $g\in G_u$
$$\phi(g)=h_{p_u}gh_{p_u}^{-1}$$
Once we show that $\phi$ is a homomorphism, it will follow by construction and Lemma \ref{LocalLargeEquiv} that $\phi$ is a compatible homomorphism for $\mathcal{G}$ so that $(\mathcal{G},\phi)$ satisfies the $L$-local large projections property, finishing the proof. Note that applying Lemma \ref{LocalLargeEquiv} is straightforward in this case, as we can lift the entire tree $\mathcal{G}$ to its Bass--Serre tree homeomorphically.
\par 
Thus it suffices to check that $\phi$ is a homomorphism. To do this, it suffices to check that the relations coming from edges still hold. Consider a pair of PGF vertices $u$ and $u'$ with twist vertex $u''$ between them. Assume $p_{u'}$ contains $u$, and write $p_{u'}=e_1e_1'\cdots e_{n-1}e_{n-1}'$. Then we need to check for $h\in H_{u''}$, the twist vertex group of $u''$, that we have
$$\phi(h)=\phi(f_{e_{n-1}}hf_{e_{n-1}}^{-1})=\phi(f_{e_{n-1}'}hf_{e_{n-1}'}^{-1})$$
By definition, $\phi(h)=h_{p_{u''}}hh_{p_{u''}}^{-1}$. Since $$\phi(f_{e_{n-1}}hf_{e_{n-1}}^{-1})=h_{p_{u}}f_{e_{n-1}}hf_{e_{n-1}}h_{p_{u}}^{-1}=h_{p_{u''}}hh_{p_{u''}}^{-1}$$
by \eqref{VertexConjugatorRelation1}, the first equality holds. Similarly, 
\begin{multline*}
    \phi(f_{e_{n-1}'}hf_{e_{n-1}'}^{-1})=h_{p_{u'}}f_{e_{n-1}'}hf_{e_{n-1}'}^{-1}h_{p_{u'}}^{-1}
    \\
    =h_{p_{u}}h_{u'}f_{e_{n-1}}f_{e_{n-1}'}^{-1}f_{e_{n-1}'}hf_{e_{n-1}'}^{-1}f_{e_{n-1}'}f_{e_{n-1}}h_{u'}^{-1}h_{p_{u}}^{-1}
    \\
    =h_{p_{u}}h_{u'}f_{e_{n-1}}hf_{e_{n-1}}h_{u'}^{-1}h_{p_{u}}^{-1}=h_{p_{u}}f_{e_{n-1}}hf_{e_{n-1}}h_{p_{u}}^{-1}=h_{p_{u''}}hh_{p_{u''}}^{-1}
\end{multline*}
where the second equality is \eqref{VertexConjugatorRelation2}, the fourth is the fact that $h_{u'}$ centralizes $f_{e_{n-1}}H_{u''}f_{e_{n-1}}$, and the fifth is \eqref{VertexConjugatorRelation1}. Thus $\phi$ is a homomorphism. The latter statement of the proposition follows by applying Theorem \ref{ComboQIEmb}, taking $L$ to be sufficiently large.
\end{proof}

Next, we show how to construct HNN extensions and twist group rank increases (as in Definition \ref{PGFGraph}(2)). We fix two normalized PGF graphs of groups $\mathcal{G}_{HNN}$ and $\mathcal{G}_{Ext}$. The first is a graph with a two vertices and two edges, one with vertex group a PGF group $G$ and the other a twist group $H$ with sparse associated multicurve. The edge groups are both equal to $H$, with edge maps to the $H$ vertex group given by the identity, and edge maps to $G$ defined via conjugation by elements of MCG$(\Sigma)$ onto two nonconjugate (in $G$) twist subgroups that $G$ is PGF relative to. Similarly, $\mathcal{G}_{Ext}$ consists of three vertices arranged in a line segment with two edges. One of the valence one vertices is a PGF vertex with vertex group $G$, the middle vertex is a base vertex with vertex group $H$, and the other valence one vertex is an extension vertex with vertex group $H'$ with associated multicurve $A'$. The edge groups are both equal to $H$. We may assume by a conjugation that both edge maps into $G$ and $H'$ are just inclusions of $H$ into both groups. In both cases, let $A$ denote the multicurve associated to $H$. 

\begin{proposition}\label{PGFHNNCompatibleHom}
    Consider the normalized PGF graph of groups $\mathcal{G}_{HNN}$ as above, and suppose there is a partial pseudo-Anosov $h$ with reducing system $A$ with $h$ centralizing $H$. For all $L$, there exists a compatible homomorphism $\phi$ so that $(\mathcal{G}_{HNN},\phi)$ satisfies the $L$-local large projections property. In particular, the fundamental group of $\mathcal{G}$ injects into MCG$(\Sigma)$ with image a group that is PGF relative to the $\phi$ images of the twist groups of $G$, with one of the two twist groups that are images of $H$ under the edge maps of $\mathcal{G}$ removed from the collection.
\end{proposition}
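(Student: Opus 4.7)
The plan is to apply Theorem~\ref{ComboQIEmb}. Let $g_1,g_2\in\text{MCG}(\Sigma)$ define $\phi^{e_1^+}$ and $\phi^{e_2^+}$ by conjugation, and let $h$ denote the given partial pseudo-Anosov centralizing $H$ with reducing system $A$. I would set $\phi|_G=\text{id}_G$ and $\phi|_H(x)=g_1xg_1^{-1}$ for $x\in H$, which is consistent with the amalgamation along $e_1$. Taking the spanning tree of $\mathcal{G}_{HNN}$ to be $\{e_1\}$, let $t$ denote the stable letter associated to $e_2$; the fundamental group relation is then $t(g_1xg_1^{-1})t^{-1}=g_2xg_2^{-1}$. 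Defining $\phi(t):=g_2h^kg_1^{-1}$ for a parameter $k$ to be chosen, a direct computation gives $\phi(t)(g_1xg_1^{-1})\phi(t)^{-1}=g_2h^kxh^{-k}g_2^{-1}=g_2xg_2^{-1}$, since $h^k\in Z(H)$; thus $\phi$ is a well-defined compatible homomorphism for every $k$.

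The main task is verifying the $L$-local large projections property, for which it suffices by Lemma~\ref{LocalLargeEquiv} to check the single star centered at a lift $\tilde u$ of the twist vertex with $\phi$-stabilizer $g_1Hg_1^{-1}$, so that $B_{12}=g_1(A)$. Choosing the orientation of $t$ so that the HNN relation $\phi(t)^{-1}(g_2Hg_2^{-1})\phi(t)=g_1Hg_1^{-1}$ consistently identifies the edge stabilizer of the $e_2$-edge as $g_1Hg_1^{-1}$ inside both adjacent PGF stabilizers, the two PGF vertex stabilizers in the star have $\phi$-images $G$ and $\phi(t)^{-1}G\phi(t)$, with associated multicurve sets $\mathcal{A}_1=G\cdot\{A_i\}$ and $\mathcal{A}_2=\phi(t)^{-1}\mathcal{A}_1$. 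Since $A$ is sparse, every component $S'$ of $\Sigma\setminus A$ is non-annular, so by Proposition~\ref{PGFProjBound} the sets $\mathcal{B}':=\pi_{S'}(g_1^{-1}\mathcal{A}_1\setminus\{A\})$ and $\mathcal{B}:=\pi_{S'}(g_2^{-1}\mathcal{A}_1\setminus\{A\})$ are bounded in $C(S')$ with diameter uniform in $S'$.

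The key computation is as follows. For any $B_1\in\mathcal{A}_1\setminus\{B_{12}\}$ and $B_2=\phi(t)^{-1}B_2'\in\mathcal{A}_2\setminus\{B_{12}\}$ (so $B_2'\in\mathcal{A}_1\setminus\{g_2(A)\}$), equivariance of projections together with $g_1^{-1}\phi(t)^{-1}=h^{-k}g_2^{-1}$ yields
$$d_{g_1(S')}(B_1,B_2)=d_{S'}(g_1^{-1}B_1,\,h^{-k}g_2^{-1}B_2').$$
Here $\pi_{S'}(g_1^{-1}B_1)\in\mathcal{B}'$, and since $h$ stabilizes $S'$ we have $\pi_{S'}(h^{-k}g_2^{-1}B_2')=h^{-k}\pi_{S'}(g_2^{-1}B_2')\in h^{-k}(\mathcal{B})$. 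As $h$ restricts to a pseudo-Anosov on $S'$ and hence acts loxodromically on $C(S')$, fixing a reference curve and applying the triangle inequality produces $d_{S'}(\mathcal{B}',h^{-k}(\mathcal{B}))\ge k\lambda(S')-C(S')$ for $k$-independent constants $\lambda(S')>0$ and $C(S')$. Taking $k$ sufficiently large (uniformly over the finitely many $S'$) yields $d_{g_1(S')}(B_1,B_2)\ge L$ for all admissible $B_1,B_2$ and all components $S'$.

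Finally, I would invoke Theorem~\ref{ComboQIEmb} with $L\ge M+18$: this gives that $\phi$ is injective and its image is PGF relative to the $\phi$-images of the twist subgroups of $G$, with one of the pair $\{g_1Hg_1^{-1},g_2Hg_2^{-1}\}$ removed (as they are fused by $\phi(t)$) in line with Proposition~\ref{DahmaniCombination}(3). The main obstacle throughout is that $\phi(t)$ sends $g_1(S')$ to $g_2(S')\ne g_1(S')$ rather than stabilizing it, so one cannot directly interpret $\phi(t)$ as a loxodromic isometry of a single $C(g_1(S'))$; the argument above circumvents this by using equivariance of subsurface projection to strip off the mapping classes $g_1$ and $g_2$, leaving only the $H$-centralizing factor $h^{\pm k}$—which does preserve each component $S'$—to act on the projections, at which point its loxodromicity on $C(S')$ provides the required separation.
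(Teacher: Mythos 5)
Your proposal is correct and follows essentially the same route as the paper's proof: the paper normalizes so one edge map is inclusion and sets $\phi(t)=h^n f^{-1}$, whereas you keep both conjugators and set $\phi(t)=g_2h^kg_1^{-1}$, but the homomorphism check via centrality of $h$, the use of Proposition \ref{PGFProjBound} together with the loxodromic action of $h$ on each $C(S')$ to force $L$-local large projections, and the final appeal to Lemma \ref{LocalLargeEquiv} and Theorem \ref{ComboQIEmb} are identical in substance. Your explicit equivariance computation stripping off $g_1,g_2$ just spells out what the paper leaves implicit.
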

\begin{proof}
    We may assume $H$ is equal to one of its $G$ images, and the edge map into $G$ to the corresponding subgroup of $G$ is the identity. Denote the image of the other edge homomorphism by $H'$, and let $f\in \text{MCG}(\Sigma)$ be a conjugating element inducing the edge isomorphism $\psi:H\to H'$. The construction here is essentially a simpler version of that in Theorem \ref{PGFTreeCompatibleHom}. Let $t$ denote the stable letter of $\mathcal{G}_{HNN}$. We define the homomorphism $\phi:G*_{\psi}\to \text{MCG}(\Sigma)$ by sending $G$ to itself identically, and sending $t$ to $h^nf^{-1}$, where $n$ will be determined later. To see that this is a homomorphism, it suffices to check for $g \in H$ that
    $$\phi(t)^{-1}g\phi(t)=fgf^{-1}.$$
    But this is immediate from the definition of $\phi(t)$ and the fact that $h$ commutes with $g$. 
    \par 
    To obtain $L$-local large projections, it suffices to choose $n$ so that for all multicurves $B_1$, $B_2$ associated to $G$ with $B_1\neq A$ and $B_2\neq \phi(t)^{-1}(A)$ and all components $S$ of $\Sigma \setminus A$,
    $$d_S(B_1, \phi(t)(B_2))\geq L.$$
    This is possible by Proposition \ref{PGFProjBound} and the fact that $h$ acts loxodromically on $C(S)$.
    \par 
    The final claim then follows from applying Lemma \ref{LocalLargeEquiv} and Theorem \ref{ComboQIEmb} by taking $L$ sufficiently large.
\end{proof}
Finally, we deal with rank extensions of twist groups. Before we can begin the argument, we will need a lemma. First, a result of Loa \cite{L21}. We fix a compact surface $S$, and a multicurve $A$ of $S$. We also write $\delta$ for a hyperbolicity constant for $C(\Sigma)$, and let $N>M+5$ where $M$ is as in Proposition \ref{MMBGI}.
\begin{lemma}[{\cite[Lemma 4.2]{L21}}]\label{LoaFarTwists}
    Take $\beta\in C(\Sigma)$ with $d_S(\beta, A)\geq D$ for some constant $D$. Then for all nontrivial multitwists $\tau$ on $A$,
    $$d_S(\beta, \tau(\beta))\geq 2D-2((N+1)\delta+2).$$
\end{lemma}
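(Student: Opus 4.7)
The plan is to argue that any geodesic from $\beta$ to $\tau(\beta)$ in the curve graph must be forced to ``go to $A$ and come back,'' and then conclude by the triangle inequality. The argument has three stages, and the middle one is the real work.

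First, exploit equivariance. Since $\tau$ is a multitwist on $A$, it fixes each component of $A$, so $\tau(A) = A$. By equivariance of subsurface projection (Remark 2.16), this gives
\[
d_S(\tau(\beta), A) \;=\; d_S(\tau(\beta), \tau(A)) \;=\; d_S(\beta, A) \;\geq\; D.
\]
Now choose a component $\alpha$ of $A$ whose $S$-projection is closest to $\pi_S(\beta)$; the coarse Lipschitz estimate of Lemma~\ref{ProjLip} gives $d_S(\beta,\alpha) \geq D - 2$, and by the displayed identity above the same bound holds for $\tau(\beta)$.

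The main step is to prove that every geodesic in $C(\Sigma)$ (respectively $C(S)$, as appropriate) joining $\beta$ and $\tau(\beta)$ passes through a vertex $v$ with $d(v,\alpha) \leq (N+1)\delta + 2$. This is where both $\delta$-hyperbolicity and Proposition~\ref{MMBGI} enter. The point is that the coarse closest-point projections of $\beta$ and $\tau(\beta)$ to $A$ land on the same component $\alpha$ (because $\tau$ fixes $\alpha$ and projection is coarsely equivariant), so one expects the geodesic between them to ``backtrack'' through a neighborhood of $\alpha$. To turn this heuristic into the explicit bound $(N+1)\delta + 2$, one considers the annular subsurface with core $\alpha$ and applies Proposition~\ref{MMBGI}: nontriviality of $\tau$ together with the hypothesis $d_S(\beta,A)\geq D$ forces a witness of $\alpha$ on the geodesic, and a hyperbolicity argument iterating over at most $N$ thin triangles (using $N > M + 5$) then pins down the witness to within $(N+1)\delta+2$ of $\alpha$ itself.

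Finally, once such a vertex $v$ is produced, the conclusion follows mechanically: along the geodesic one has
\[
d_S(\beta,\tau(\beta)) \;=\; d_S(\beta,v) + d_S(v,\tau(\beta)) \;\geq\; \bigl(d_S(\beta,\alpha) - d_S(\alpha,v)\bigr) + \bigl(d_S(\alpha,\tau(\beta)) - d_S(\alpha,v)\bigr)
\]
\[
\geq\; 2D \;-\; 2\bigl((N+1)\delta + 2\bigr),
\]
which is exactly the claimed inequality. The hard part is unambiguously the middle step: a pure $\delta$-thin-triangle argument on the isosceles configuration $\beta,\tau(\beta),\alpha$ is not by itself enough (the Gromov product $(\beta\mid \tau(\beta))_\alpha = D - L/2$ can be large, placing the midpoint of the geodesic arbitrarily far from $\alpha$), so one genuinely needs the nontriviality of $\tau$ on the annular projection together with the BGI propagation, with the constant $N > M+5$ chosen precisely so that this iteration closes up.
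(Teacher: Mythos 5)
Your overall scaffolding is reasonable, but note first that the paper gives no proof of this statement at all: it is quoted from \cite{L21} (Loa's Lemma 4.2), so the question is whether your middle step can actually be carried out, and as written it cannot. You assert that ``nontriviality of $\tau$ together with $d_S(\beta,A)\geq D$ forces a witness of $\alpha$ on the geodesic'' via Proposition \ref{MMBGI}. But the Bounded Geodesic Image theorem only produces such a vertex when the annular projection distance $d_\alpha(\beta,\tau(\beta))$ exceeds the threshold $M$, and nontriviality of $\tau$ alone gives only a bound of the shape $|n_\alpha|+2$, which is $3$ for a primitive Dehn twist and bears no relation to $M$ (the paper treats $M$ as a possibly large unspecified constant). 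So applying Proposition \ref{MMBGI} to the endpoint pair $(\beta,\tau(\beta))$ yields nothing, and the ``hyperbolicity argument iterating over at most $N$ thin triangles'' that is supposed to rescue this is never described; you give no construction, no induction, no inequality. Worse, the step you defer is not a technical refinement but the entire lemma: since $d_S(\alpha,\beta)=d_S(\alpha,\tau(\beta))$, the conclusion $d_S(\beta,\tau(\beta))\geq 2D-2((N+1)\delta+2)$ is (up to an error of order $\delta$) \emph{equivalent} to the statement that the Gromov product $(\beta\mid\tau(\beta))_\alpha$ is bounded by roughly $(N+1)\delta+2$, i.e.\ that geodesics from $\beta$ to $\tau(\beta)$ pass uniformly close to $\alpha$. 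You correctly observe that thin triangles alone cannot rule out a large Gromov product, but then the mechanism you name in its place does not close either, so the proposal proves only the easy bookending (equivariance at the start, the triangle inequality at the end).

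Two smaller points. You choose $\alpha$ to be the component of $A$ whose projection is closest to $\beta$; for any argument using the annular coefficient at $\alpha$ you must instead take $\alpha$ to be a component on which $\tau$ has nonzero twisting power (this costs nothing, since all components of $A$ are mutually disjoint and hence uniformly close in $C(S)$, but your selection criterion shows that the place where nontriviality of $\tau$ must enter has not been pinned down). Also the constants do not match: starting from $d_S(\beta,\alpha)\geq D-2$, your final display gives $2D-4-2((N+1)\delta+2)$, not the stated bound. These are minor next to the main issue: a complete proof has to supply an actual argument bounding $(\beta\mid\tau(\beta))_\alpha$ for \emph{every} nontrivial multitwist, including a single Dehn twist, and that argument --- the content of Loa's Lemma 4.2 --- is absent from your proposal.
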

%Lemma 7.7
\begin{lemma}\label{FarMulticurvesFarTranslation}
    Fix a compact surface $S$, and let $\mathcal{B}\subset C(S)$ be a collection of curves of finite diameter $K$. Suppose $d_S(\beta, A)\geq D$ for some constant $D$ and all $\beta\in \mathcal{B}$. Then for all nontrivial multitwists $\tau$ on $A$ and any $\beta, \beta' \in \mathcal{B}$,
    $$d_S(\beta, \tau(\beta'))\geq 2D-2((N+1)\delta+2))-K.$$
\end{lemma}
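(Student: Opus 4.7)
The plan is to combine Lemma \ref{LoaFarTwists} with a single application of the triangle inequality, using the bounded-diameter hypothesis on $\mathcal{B}$ to pass from the ``$\beta, \tau(\beta)$'' version to the ``$\beta, \tau(\beta')$'' version.

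First, I would apply Lemma \ref{LoaFarTwists} directly to $\beta' \in \mathcal{B}$. Since the hypothesis $d_S(\beta', A) \geq D$ holds by assumption, we immediately obtain
$$d_S(\beta', \tau(\beta')) \geq 2D - 2((N+1)\delta+2)$$
for every nontrivial multitwist $\tau$ on $A$. This gives the desired lower bound, but with $\beta'$ on both sides rather than a mixed pair.

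Next, I would use the finite-diameter hypothesis on $\mathcal{B}$ to convert this to a bound on $d_S(\beta, \tau(\beta'))$. Since $\beta, \beta' \in \mathcal{B}$ and $\mathcal{B}$ has diameter $K$ in $C(S)$, we have $d_S(\beta, \beta') \leq K$. The triangle inequality then gives
$$d_S(\beta', \tau(\beta')) \leq d_S(\beta', \beta) + d_S(\beta, \tau(\beta')) \leq K + d_S(\beta, \tau(\beta')),$$
so rearranging and combining with the inequality from the previous paragraph yields
$$d_S(\beta, \tau(\beta')) \geq d_S(\beta', \tau(\beta')) - K \geq 2D - 2((N+1)\delta+2) - K,$$
which is exactly the claim.

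There is no real obstacle here: the lemma is essentially a two-line consequence of Loa's result plus the triangle inequality, and the role of the $-K$ term simply accounts for the bounded ``spread'' of $\mathcal{B}$ when comparing two different points inside it instead of the same point on both sides. The only thing to be careful about is that Lemma \ref{LoaFarTwists} is applied with $\beta'$ (not $\beta$) so that the $K$-penalty appears on the $\beta$-side via the triangle inequality, but this is purely cosmetic.
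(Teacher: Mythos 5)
Your proof is correct and is essentially the paper's argument: a single application of Lemma \ref{LoaFarTwists} followed by one triangle inequality, with the $-K$ term absorbing the diameter of $\mathcal{B}$. The only (cosmetic) difference is that the paper applies Loa's lemma to $\beta$ and then swaps $\tau(\beta)$ for $\tau(\beta')$ using $d_S(\tau(\beta),\tau(\beta'))=d_S(\beta,\beta')\leq K$, whereas you apply it to $\beta'$ and swap $\beta'$ for $\beta$ directly.
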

\begin{proof}
    This follows by applying Lemma \ref{LoaFarTwists} to every $\beta\in \mathcal{B}$ first, and then using the triangle inequality to replace $\tau(\beta)$ with $\tau(\beta')$, which are at most $K$ from each other.
\end{proof}

\begin{proposition}\label{PGFRankExtCompatibleHom}
    Consider the graph $\mathcal{G}_{Ext}$, and suppose there is a partial pseudo-Anosov $h$ with reducing system $A$ centralizing $H$. Then for all $L>0$, there is a compatible homomorphism $\phi$ so that $(\mathcal{G}_{Ext},\phi)$ satisfies the $L$-local large projections property. In particular, the fundamental group of $\mathcal{G}_{Ext}$ injects into MCG$(\Sigma)$ with image a group that is PGF relative to the twist groups of $G$, with $H$ replaced with $H'$.
\end{proposition}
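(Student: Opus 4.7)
My plan is to define $\phi$ to be the identity on $G$ and conjugation by a sufficiently high power of $h$ on $H'$, and then verify the $L$-local large projections property at the unique base twist vertex of $\mathcal{G}_{Ext}$ by using the pseudo-Anosov action of $h$ on each component of $\Sigma\setminus A$ together with Lemma \ref{FarMulticurvesFarTranslation}.

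First, after replacing $h$ with a power so that $h$ fixes every component of $A$ and every component of $\Sigma\setminus A$, I define $\phi\colon G *_H H' \to \mathrm{MCG}(\Sigma)$ by $\phi|_G = \mathrm{id}$ and $\phi|_{H'}(x) = h^n x h^{-n}$, where $n$ will be chosen large. Because $h$ (and hence $h^n$) centralizes $H$, these two definitions agree on $H$, so $\phi$ is a well-defined homomorphism on the amalgamated product. Next I would invoke Lemma \ref{LocalLargeEquiv}. The star at the unique base twist vertex contains exactly one PGF vertex (with stabilizer $G$) together with one extension vertex, so the only content of the lemma that requires checking involves taking $\tau$ to be a nontrivial word in the new multitwists of $H'$. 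Writing $H_{\mathrm{new}}$ for the complementary multitwist factor of $H$ inside $H'$, with associated multicurve $A'' = A' \setminus A$, what remains is: for every $\tau \in H_{\mathrm{new}} \setminus \{\mathrm{id}\}$, every component $S$ of $\Sigma \setminus A$, and all multicurves $B_1, B_2$ associated to $G$ distinct from $A$ with $\phi(\tau)(B_2) \neq A$, we must have $d_S(B_1, \phi(\tau)(B_2)) \geq L$.

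By Lemma \ref{FillingMC}, each of $B_1$, $B_2$, and $\phi(\tau)(B_2)$ projects nontrivially to $S$, and by Proposition \ref{PGFProjBound} the set $\mathcal{B}$ of all projections to $S$ of multicurves associated to $G$ other than $A$ has diameter at most some $K$ independent of $S$. On $C(S)$ one has $\phi(\tau)|_S = h|_S^n \cdot \tau|_S \cdot h|_S^{-n}$, with $\tau|_S$ a nontrivial multitwist on the fixed multicurve $A''|_S := A'' \cap S$ by Definition \ref{PGFGraph}(b). Translating by the isometry $h|_S^{-n}$ and then applying Lemma \ref{FarMulticurvesFarTranslation} with collection $\mathcal{B}' = h|_S^{-n}(\mathcal{B})$ and multicurve $A''|_S$ yields
\[
d_S(\mathcal{B}, \phi(\tau)(\mathcal{B})) = d_S(\mathcal{B}', \tau|_S(\mathcal{B}')) \geq 2\,d_S(\mathcal{B}, h|_S^n(A''|_S)) - 2((N+1)\delta + 2) - K,
\]
using equivariance to rewrite $d_S(\mathcal{B}', A''|_S)$ as $d_S(\mathcal{B}, h|_S^n(A''|_S))$. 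Since $h|_S$ acts as a pseudo-Anosov on $C(S)$, this last quantity grows without bound in $n$, uniformly over the finitely many components $S$ of $\Sigma\setminus A$.

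Choosing $n$ sufficiently large then forces $d_S(B_1, \phi(\tau)(B_2)) \geq L$ for all relevant $\tau$, $S$, $B_1$, $B_2$, so $(\mathcal{G}_{Ext}, \phi)$ has the $L$-local large projections property. Theorem \ref{ComboQIEmb} then yields injectivity and the PGF conclusion, with $\phi(H')$ replacing $H$ in the peripheral structure. The main subtlety is the uniformity in $\tau$: the crucial point is that the multicurve $A''|_S$ being twisted along is independent of the particular $\tau \in H_{\mathrm{new}}$ chosen, so a single choice of large $n$ that pushes $\mathcal{B}$ sufficiently far from $h|_S^n(A''|_S)$ inside $C(S)$ works for every $\tau$ simultaneously.
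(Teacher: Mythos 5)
Your proposal is correct and follows essentially the same route as the paper: the same homomorphism $\phi|_G=\mathrm{id}$, $\phi|_{H'}=h^n(\cdot)h^{-n}$, the same reduction via Lemma \ref{LocalLargeEquiv} to controlling $d_S(B_1,\phi(\tau)(B_2))$ for $\tau$ a nontrivial word in the new multitwists, and the same combination of Proposition \ref{PGFProjBound}, Lemma \ref{FarMulticurvesFarTranslation}, the loxodromic action of $h$ on each $C(S)$, and Definition \ref{PGFGraph}(b), concluding with Theorem \ref{ComboQIEmb}. Your only deviation is cosmetic bookkeeping (conjugating the bounded collection by $h^{-n}$ rather than translating the twisting multicurve by $h^{n}$), which is equivalent to what the paper does.
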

\begin{proof}
    We define $\phi: G*_H H'\to \text{MCG}(\Sigma)$ by sending $G$ to itself identically, and sending $h'\in H'$ to $h^nh'h^{-n}$ for some $n$ to be determined. As $h$ centralizes $H$, this map is a homomorphism. We also let $A'$ denote the multicurve associated to $H'$.
    \par 
    By Lemma \ref{LocalLargeEquiv}, to obtain $L$-local large projections, it suffices to show that for all nontrivial multitwists $\tau$ in the new multicurves of $h^nH'h^{-n}$ (which are multitwists on $h^n(A')\setminus A$), all multicurves $B$ associated to $G$ with $B\neq A$ and all components $S$ of $\Sigma \setminus A$,
    $$d_S(B, \tau(B))\geq L.$$
    But utilizing the projection bounds from Proposition \ref{PGFProjBound} and applying Lemma \ref{FarMulticurvesFarTranslation} and the fact that $h$ acts loxodromically on $C(S)$, this is immediate by taking $n$ large enough so that the components of the multicurve $h^n(A')\setminus A$ which are in $S$ are sufficiently far from the projections of the multicurves of $G$. Note that by Definition \ref{PGFGraph}(c), such components of $h^n(A')\setminus A$ actually exist, and further $\tau$ twists on at least one of them. 
    \par 
    The final claim then follows from applying Lemma \ref{LocalLargeEquiv} and Theorem \ref{ComboQIEmb} by taking $L$ sufficiently large.
\end{proof}
Combining all of these together, we obtain the following result.
\begin{theorem}\label{ComboApplication}
    Let $\mathcal{G}$ be a normalized PGF graph of groups so that for every base twist vertex group with associated multicurve $A$, there is a partial pseudo-Anosov $h$ with reducing system $A$ centralizing $H$. Then there is an injective compatible homomorphism $\phi$ whose image is PGF relative to the twists groups of the PGF vertex groups of $\mathcal{G}$, where any base twist group $H$ is replaced with the extension $H'$, and some twist groups are removed if they are identified with another as in Proposition \ref{DahmaniCombination}(3). 
\end{theorem}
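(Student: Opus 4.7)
The plan is to combine Propositions \ref{PGFTreeCompatibleHom}, \ref{PGFHNNCompatibleHom}, and \ref{PGFRankExtCompatibleHom} in a staged construction, building the compatible homomorphism $\phi$ one structural feature of $\mathcal{G}$ at a time. First I would decompose the graph: let $\mathcal{G}_0$ be the subgraph obtained from $\mathcal{G}$ by deleting all extension vertices, choose a maximal spanning tree $\mathcal{T}\subset \mathcal{G}_0$, and list the remaining edges of $\mathcal{G}_0$ (the ``back edges'' that produce HNN factors) together with the extension vertices in some fixed order. Via Bass--Serre theory, $\pi_1(\mathcal{G})$ is obtained from $\pi_1(\mathcal{T})$ by iteratively applying HNN extensions (one for each back edge) and then amalgamating base twist groups with their extensions (one for each extension vertex), and I would build $\phi$ inductively following this decomposition.

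Stage 1 is the base case: apply Proposition \ref{PGFTreeCompatibleHom} directly to $\mathcal{T}$, which has no extension vertices, to obtain a compatible homomorphism $\phi_0:\pi_1(\mathcal{T})\to \text{MCG}(\Sigma)$ with $L_0$-local large projections; the value of $L_0$ will be fixed as a very large number at the end of the argument. In Stage 2 I process the back edges in turn, using the construction of Proposition \ref{PGFHNNCompatibleHom} to extend $\phi$ across each one by setting the new stable letter equal to $h^nf^{-1}$, where $h$ is a centralizing partial pseudo-Anosov furnished by the hypothesis and $n$ is chosen large enough that the new star introduced in the Bass--Serre tree satisfies the local large projections inequalities. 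In Stage 3 I process the extension vertices using Proposition \ref{PGFRankExtCompatibleHom}, conjugating each extension group $H'$ by a sufficiently high power of the centralizing partial pseudo-Anosov of its base $H$ so that, via Lemma \ref{FarMulticurvesFarTranslation}, the new multitwists move any PGF-associated multicurve far away in every component of $\Sigma\setminus A$.

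The main obstacle is ensuring that the local large projections bound survives all three stages at the required threshold $L\geq M+18$. The key observation is that Lemma \ref{LocalLargeEquiv} reduces this to a finite check over the stars of $\mathcal{G}$, and each new stage only imposes finitely many additional constraints of the form $d_S(B_1,\tau(B_2))\geq L$, with $\tau$ ranging over a finite list of multitwist-coset representatives. By Proposition \ref{PGFProjBound}, the projections to $C(S)$ of the associated multicurves of each PGF factor are uniformly bounded in diameter, so the loxodromic action on $C(S)$ of a high power of the centralizing partial pseudo-Anosov (combined with Lemma \ref{FarMulticurvesFarTranslation} in the extension case) drives these distances above any prescribed threshold. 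I would therefore carry out the induction greedily: at each step, fix all previously chosen exponents and enlarge the new one until the finite list of newly-introduced inequalities is satisfied at the level $L\geq M+18$. Once $\phi$ is constructed satisfying the $L$-local large projections hypothesis, Theorem \ref{ComboQIEmb} delivers injectivity and the PGF conclusion, and the description of the peripheral collection follows by combining Proposition \ref{DahmaniCombination}(2) (each base twist group $H$ is replaced by its extension $H'$) and Proposition \ref{DahmaniCombination}(3) (twist groups identified along back edges are removed from the list), exactly as asserted.
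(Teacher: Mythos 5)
Your proposal follows essentially the same route as the paper: apply Proposition \ref{PGFTreeCompatibleHom} to a tree containing the PGF vertices, then handle the remaining loops via Proposition \ref{PGFHNNCompatibleHom} and the extension vertices via Proposition \ref{PGFRankExtCompatibleHom}, choosing each new partial pseudo-Anosov power greedily (the previously built group is sent to itself identically at each later stage, so earlier constraints are untouched), and finish with Theorem \ref{ComboQIEmb}; the paper's only structural difference is that it collapses the tree to a single PGF vertex before processing loops, which is what your staged induction does implicitly.

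One point you pass over: to invoke Proposition \ref{PGFHNNCompatibleHom} for a back edge, the two edge maps must land on \emph{nonconjugate} (in the accumulated PGF group $G$) twist subgroups among those $G$ is PGF relative to, and this is not automatic once the tree has been absorbed into a single PGF group -- twist subgroups that were distinct vertex groups of $\mathcal{G}$ could a priori become conjugate in $G$. The paper spends a paragraph ruling this out using Definition \ref{PGFGraph}(a): if two such edge-group images were conjugate after the collapse, there would be a loop in $\mathcal{G}$ all of whose edge groups are identified, contradicting condition (a). Your argument should include this verification (or an equivalent one) before each application of Proposition \ref{PGFHNNCompatibleHom}; with it added, the proof is complete and matches the paper's.
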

\begin{proof}
    Apply Proposition \ref{PGFTreeCompatibleHom} to a subtree of $\mathcal{G}$ that contains all PGF vertices. This gives a PGF group $G$ which is the fundamental group of this tree of groups. By collapsing this tree we obtain a new graph of groups $\mathcal{G}'$ with only one PGF vertex $v$ with vertex group $G$. The underlying graph is isomorphic to a wedge of circles with each circle subdivided by a twist vertex, potentially along with edges from the center vertex to a base vertex, which themselves are connected to a degree $1$ extension vertex. 
    \par 
    By Definition \ref{PGFGraph}(a), the edge groups of any pair of edges of a circle cannot map to conjugate twist groups in the vertex group of the PGF vertex. Indeed, Definition \ref{PGFGraph}(a) ensures such examples did not already exist in $\mathcal{G}$, and that no such examples are made after the collapse of the tree. If such an example were made, then there would have to be a loop in the underlying graph of $\mathcal{G}$ (such as in Figure \ref{FigurePGFNonExample}, for example) where all the edge groups are identified in the fundamental group of the graph of groups $\mathcal{G}$. This contradicts Definition \ref{PGFGraph}(a). We can thus inductively apply Proposition \ref{PGFHNNCompatibleHom} to each circle, along with Proposition \ref{PGFRankExtCompatibleHom} to each nonloop edge pair to obtain the desired result. 
\end{proof}
If all twist groups are actually multitwist groups, then the centralizing partial pseudo-Anosovs will always exist and Theorem \ref{ComboApplication} is easily applied. More generally, one can produce these centralizing pseudo-Anosov elements if the quotient orbifolds coming from the action of a twist group on the complementary components of its associated multicurve admit pseudo-Anosovs. 
\par 
Note that we gave explicit bounds for $L$ and the constant $s$ from Lemma \ref{FarCurvesIntersect} in Sections \ref{SectionWorkingTowardsComboTheorem} and \ref{SectionProofMainThemOtherCases} independent of the PGF vertices of $\mathcal{G}$. Using this, one could formulate an effective version of Theorem \ref{ComboApplication} taking into account the bounds from Proposition \ref{PGFProjBound}.

\subsection{Explicit applications}\label{SubsectionApplicationsConvexCocompact}

We begin first with a straightforward application of Theorem \ref{ComboApplication}. First we give a definition.
\begin{definition}\label{BooksofIBundles}
    Begin first with a $2$-complex $X$ that consists of attaching a finite number of compact surfaces with a single boundary component along their common boundary component. Fix an embedding of $X$ into $\R^3$, and consider a regular neighborhood $M$. Such a $3$-manifold $M$ is called a \textit{book of I-bundles}. This name comes from the fact that the neighborhoods of each surface are $I$-bundles over the given surface.
\end{definition}
See \cite{CM04} for more about such spaces. We note that as $M$ deformation retracts to $X$,  $\pi_1(M)\cong \pi_1(X)$, which is a iterated free product with amalgamation of free groups along certain cyclic subgroups. The fundamental groups of books of $I$-bundles are natural generalizations of surface groups. We then have the following result.
\begin{corollary}\label{BookOfIBundleCombo}
    Fix a book of $I$-bundles $M$ so that each $I$-bundle is over a surface of genus $g$. Then $\pi_1(M)$ embeds in MCG$(\Sigma)$ as a PGF group.
\end{corollary}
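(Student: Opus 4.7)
Proof plan. I realize $\pi_1(M)$ as the fundamental group of a normalized PGF graph of groups $\mathcal{G}$ in the mapping class group of a suitable closed surface $\Sigma$ and then apply Theorem~\ref{ComboApplication}. The natural graph-of-groups structure on $M$ is a ``star'': one central \emph{binding} vertex with vertex group $\mathbb{Z}$ generated by the shared boundary circle of the pages, joined by $n$ edges to $n$ \emph{page} vertices whose vertex groups are the fundamental groups of the page surfaces (each isomorphic to the free group $F_{2g}$, since each $I$-bundle deformation retracts to its base). Every edge group is $\mathbb{Z}$ and embeds into the page vertex group as the boundary class while identifying with the binding generator.

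First I fix a closed surface $\Sigma$ of sufficiently large genus and a nonseparating simple closed curve $\alpha$ so that $\Sigma \setminus \alpha$ has high complexity; set $H = \langle \tau_\alpha\rangle$ as the central twist vertex group, which has sparse associated multicurve $\{\alpha\}$. For each page, I must construct a PGF subgroup $G_i \leq \text{MCG}(\Sigma)$ of the correct abstract isomorphism type, PGF relative to a single cyclic peripheral equal to $H$, and such that $\tau_\alpha$ represents the boundary class of the page surface under the isomorphism. This is the core technical step; it follows the Leininger--Reid template of \cite{LR06}, combining Veech subgroups of $\text{MCG}(\Sigma)$ (which are PGF by Proposition~\ref{TangVeechPGF}) that share $H$ as a common parabolic, using the combination theorems to realize the right abstract isomorphism type, and passing to a finite-index subgroup if necessary. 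I then arrange the $G_i$ by conjugating with large powers of a partial pseudo-Anosov supported in $\Sigma \setminus \alpha$ so that their non-$H$ peripheral multicurves lie pairwise very far apart in $C(\Sigma \setminus \alpha)$, yielding the normalized PGF structure on $\mathcal{G}$ required by Definition~\ref{PGFGraph}.

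Finally, the hypothesis of Theorem~\ref{ComboApplication} is satisfied: any pseudo-Anosov supported in $\Sigma \setminus \alpha$ (which exists by our choice of $\Sigma$) is a partial pseudo-Anosov with reducing system $\{\alpha\}$ and centralizes $H$. Theorem~\ref{ComboApplication} then yields an injective compatible homomorphism $\pi_1(M) \hookrightarrow \text{MCG}(\Sigma)$ whose image is PGF, which is the desired conclusion.

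\textbf{Main obstacle.} The real difficulty lies in constructing the page groups $G_i$ with the correct peripheral structure: the boundary class of a page surface is a non-primitive element of $F_{2g}$, so it cannot be realized simply as a free factor via Theorem~\ref{ConvexCoCptTwists} (which would force $\tau_\alpha$ to correspond to a primitive element of the ambient free group). One must instead emulate the more delicate amalgamation argument of Leininger--Reid, carefully selecting Veech building blocks so that the resulting amalgamated group has exactly the right abstract isomorphism type and matches the graph-of-groups structure of $\pi_1(M)$.
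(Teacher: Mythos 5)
Your overall reduction is the same as the paper's: view $\pi_1(M)$ as a star-shaped normalized PGF graph of groups with a central twist vertex and one PGF vertex per page, and feed it to Theorem~\ref{ComboApplication} (equivalently Proposition~\ref{PGFTreeCompatibleHom}), using a partial pseudo-Anosov centralizing the multitwist group to get local large projections. However, the step you yourself flag as the main obstacle — producing, for each page, a PGF subgroup of $\mathrm{MCG}(\Sigma)$ isomorphic to $\pi_1(S_{g,1})$ whose peripheral subgroup is the cyclic twist group generated by the image of the \emph{boundary class} — is exactly the essential content, and your proposal does not supply it. The paper gets it for free: Veech's construction \cite{V89}, as recalled in Section 5 of \cite{LR06}, gives Veech subgroups of $\mathrm{MCG}(\Sigma)$ isomorphic to $\pi_1(S_{g,1})$ in which the boundary element is sent to a positive multitwist on a sparse multicurve, and these are PGF relative to that maximal parabolic by Proposition~\ref{TangVeechPGF}. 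No combination theorem is needed to build the page groups themselves.

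Moreover, the route you sketch for filling the gap cannot work as described. Any group produced by the combination theorems of this paper (Theorems~\ref{ConvexCocompactCombo}, \ref{ConvexCoCptTwists}, \ref{LoaAnalog}, or \ref{ComboQIEmb}) is PGF relative to a prescribed collection of twist subgroups, and by Corollary~\ref{InfOrderpA} every infinite-order element not conjugate into one of those twist subgroups is pseudo-Anosov. The boundary class of $S_{g,1}$ is a product of commutators, hence not conjugate into a free or peripheral factor in any of the free-product realizations those theorems produce; it would therefore map to a pseudo-Anosov, never to $\tau_\alpha$ or any multitwist. So "combining Veech subgroups that share $H$ as a common parabolic" and "passing to a finite-index subgroup" cannot yield a copy of $\pi_1(S_{g,1})$ whose boundary is peripheral — you genuinely need the specific Veech (flat-surface) realization, which is the point of the citation to \cite{V89}. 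A smaller issue: you insist the central twist group be $\langle\tau_\alpha\rangle$ for a single nonseparating curve $\alpha$, but nothing in the paper's toolbox realizes $\pi_1(S_{g,1})$ with that exact peripheral; the known examples have peripheral generated by a multitwist on a sparse multicurve, and that is all the normalized PGF graph of groups structure requires.
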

\begin{proof}
    This is a straightforward generalization of Corollary 1.2 of \cite{LR06}. Specifically, Section 5 of that same paper discusses examples of Veech groups originally constructed in \cite{V89} which are isomorphic to $\pi_1(S_{g,1})$, the fundamental group of the surface of genus $g$ with $1$ boundary component, and whose peripheral subgroup is generated by a positive multitwist on a sparse multicurve, which corresponds to the element of $\pi_1(S_{g,1})$ that is the boundary of $S_{g,1}$. 
    \par 
    Then $\pi_1(M)$ can be formed by amalgamating a collection of Veech groups along their peripheral subgroups. As Veech groups are PGF groups (Proposition \ref{TangVeechPGF}), this means we can apply Proposition 7.1 to obtain the desired embedding of $\pi_1(M)$ as a PGF group.
\end{proof}

Next we give an example which can be thought of as a generalization of Proposition \ref{LoaCombo}.

%Example 7.3
\begin{example}\label{FreeProductMultitwists}
    Take two multicurves $A_1$ and $A_2$ with $A_i$ having $k_i$ components. If $d_{\Sigma}(A_1, A_2)$ is sufficiently large, we may apply Proposition \ref{LoaCombo} to get a PGF group isomorphic to $\Z^{k_1'} *\Z^{k_2'}$, where $k_i'\leq k_i$ and each free abelian factor is generated by multitwists on $A_i$. Further, this group is PGF relative to the two free abelian factors. Using Proposition \ref{PGFTreeCompatibleHom}, we may then construct PGF groups that are isomorphic to $\Z^{k_1'}*\Z^{k_2'}*\cdots *\Z^{k_n'}$, where each $\Z^{k_i'}$ factor is generated by multitwists on a multicurve with $k_i$ components, with $k_i'\leq k_i$, and the group is PGF relative to these factors, and so that at least one multicurve is sparse (to allow Theorem \ref{ComboQIEmb} to actually be applied). 
    \par 
    One may also drop the sparse factor, as it is clear that the subgroup given by doing so is still PGF. That is, dropping the sparse factor gives a relatively hyperbolic group, and it is clear that the "sub coned off graph" of the group obtained by dropping this factor quasi-isometrically embeds into the original coned off graph, and thus into $C(\Sigma)$. Thus we obtain as PGF groups all the isomorphism classes of groups of the form $\Z^{k_1'}*\Z^{k_2'}*\cdots *\Z^{k_n'}$ where $1\leq k_i'\leq 3g-3$, each factor is a multitwist group, and that are PGF relative to the factors of the free product. Any element that is not conjugate into one of the free factors is pseudo-Anosov. The fact that these groups are undistorted can be thought of as complementary to the results found in \cite{R20}.
    \par 
    We remark that it is possible to also obtain groups which look like those above purely from Proposition \ref{LoaCombo}. Namely, one uses that result to get a PGF group isomorphic to $\Z^{3g-3}*\Z^{3g-3}$. One can obtain subgroups of this isomorphic to $G=\Z^{k_1'}*\Z^{k_2'}*\cdots *\Z^{k_n'}$ where $1\leq k_i'\leq 3g-3$ by taking finite index subgroups and dropping generators. The action of this subgroup on a Bass--Serre tree $T$ for $\Z^{3g-3}*\Z^{3g-3}$ induces the splitting of the subgroup. The minimal invariant subtree of $T$ for this subgroup is quasi-isometric to the coned off graph of $G$ (with the free factors as the peripherals), and this subtree quasi-isometrically embeds into $C(\Sigma)$, implying $G$ is PGF.
\end{example}

We also prove an easy lemma which is something of a converse to Example \ref{FreeProductMultitwists}. Specifically, we are interested in right angled Artin groups that are subgroups of MCG$(\Sigma)$ whose vertex elements are multitwists, and that are PGF groups relative to subgroups generated by vertices. Given a graph $\Gamma$, we let $A(\Gamma)$ denote the corresponding right angled Artin group, or RAAG.

%Lemma 7.5
\begin{lemma}\label{GenericPGFRAAGS}
    Let $G=A(\Gamma)$ be a RAAG, and suppose it is embedded in MCG$(\Sigma)$ so that every vertex element is sent to a multitwist on some multicurve in $\Sigma$. Suppose $G$ is PGF relative to a collection of subgroups $\{H_i\}_{i=1}^n$, where $H_i=A(\Gamma_i)$ with $\Gamma_i$ a full subgraph of $\Gamma$. Then each $\Gamma_i$ is a complete graph, and $\Gamma$ is the disjoint union of the $\Gamma_i$'s. In particular, 
    $$G\cong  A(\Gamma_1)*\cdots *A(\Gamma_n).$$
\end{lemma}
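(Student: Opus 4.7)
The plan is to prove that $\Gamma$ is the disjoint union of the subgraphs $\Gamma_i$ and that each $\Gamma_i$ is a complete graph; the conclusion $G \cong A(\Gamma_1) * \cdots * A(\Gamma_n)$ is then immediate from the standard RAAG presentation, since a graph which is a disjoint union gives a RAAG which is a free product of the RAAGs on its components. The first observation is that each $\Gamma_i$ must be a clique: since $H_i$ is a twist group, it contains a finite-index abelian (multitwist) subgroup, but $A(\Gamma_i)$ is virtually abelian if and only if $\Gamma_i$ is complete. Indeed, any two non-adjacent vertices of $\Gamma_i$ would generate a free subgroup of rank two inside $A(\Gamma_i)$, and a free group of rank two admits no finite-index abelian subgroup.

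Next I would show every vertex $v$ of $\Gamma$ actually lies in some $\Gamma_i$. The element $v \in G$ has infinite order and maps to a multitwist in $\text{MCG}(\Sigma)$, hence it is reducible and not pseudo-Anosov. By Corollary \ref{InfOrderpA}, $v$ must therefore be conjugate into some peripheral, i.e.\ $v = g h g^{-1}$ for some $g \in G$ and $h \in H_i = A(\Gamma_i)$. Passing to the abelianization $G^{\mathrm{ab}} \cong \Z^{|V(\Gamma)|}$, which has the vertex set of $\Gamma$ as a basis, the image of $v$ is the basis vector $e_v$, while the image of $h$ lies in the sublattice spanned by $\{e_u : u \in V(\Gamma_i)\}$. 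Linear independence of the basis then forces $v \in V(\Gamma_i)$.

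Finally I would rule out both shared vertices and cross-edges between distinct $\Gamma_i$ and $\Gamma_j$ using Lemma \ref{lem:PeripheralsFellowTravel}. If $u \in V(\Gamma_i) \cap V(\Gamma_j)$ for $i \neq j$, then $\langle u\rangle$ sits inside both peripheral subsets $H_i$ and $H_j$; since $\{u^n\}_{n \in \Z}$ has infinite diameter in $G$, Lemma \ref{lem:PeripheralsFellowTravel} forces $H_i = H_j$ as peripheral subsets, contradicting the distinctness of the peripherals in the defining collection. Similarly, if $v \in V(\Gamma_i)$ commutes with $w \in V(\Gamma_j)$ then $v = wvw^{-1} \in H_i \cap wH_iw^{-1}$; setting $h := w^{-1}vw \in H_i$ gives $v^n w = w h^n \in wH_i$, so the $|w|$-neighborhoods of the peripheral subsets $H_i$ and $wH_i$ have infinite diameter intersection. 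Lemma \ref{lem:PeripheralsFellowTravel} then forces $H_i = wH_i$, hence $w \in H_i$, so $w \in H_i \cap H_j$ and we are back to the previous case. The main subtlety is the fellow-traveling computation with the translate $wH_i$ in the commuting case; once that is in hand, everything reduces to a single application of Lemma \ref{lem:PeripheralsFellowTravel}.
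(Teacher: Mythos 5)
Your proposal is correct and follows essentially the same route as the paper: completeness of each $\Gamma_i$ from virtual abelianness of twist groups, Corollary \ref{InfOrderpA} to force each (multitwist, hence non-pseudo-Anosov) vertex generator to be conjugate into some $H_i$, and Lemma \ref{lem:PeripheralsFellowTravel} applied to the cosets $H_i$ and $wH_i$ to exclude edges (and shared vertices) between distinct $\Gamma_i$'s. You in fact supply details the paper's brief proof glosses over, such as the abelianization step upgrading "conjugate into $H_i$" to "is a vertex of $\Gamma_i$" and the explicit treatment of the shared-vertex case.
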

\begin{proof}
    The first claim, that the $\Gamma_i$'s are complete graphs, just follows from definition, as the $H_i$'s must be groups of commuting multitwists. If there were an edge between $\Gamma_i$ and $\Gamma_j$ with $i\neq j$, let $a_i$ and $a_j$ denote the corresponding commuting group elements. Then clearly the cosets $H_i$ and $a_jH_i$ would have neighborhoods with infinite diameter intersection, which is impossible. Finally, the fact that there are no other generators follows from Corollary \ref{InfOrderpA}, as such generators could not be multitwists.
\end{proof}

In Example \ref{GeneralPGFRAAGS} we will also give examples with a free factor which is a free convex cocompact group. In fact, by the results on RAAGs in \cite{BDM09}, any RAAG that can be realized as a PGF group must be of the form $F*T$, where $F$ is a free convex cocompact group, and $T$ is a free product of free abelian subgroups of distinct twist groups. Namely, \cite{BDM09} shows that no RAAG whose graph is connected and not a point can be relatively hyperbolic. Further, if some component of the defining graph is not a isolated vertex, then it must be a peripheral subgroup. In particular, as the peripheral subgroups of a PGF RAAG must be abelian, it follows that every component of the defining graph is a complete graph on finitely many vertices. Note that the nonperipheral free group $F$ generated by all the nonperipheral elements (which correspond to a collection of isolated vertices in the defining graph) is indeed convex cocompact as $F$ equivariantly quasi-isometrically embeds into the coned off graph of $F*T$. 

We end with applications of Theorems \ref{ConvexCocompactCombo}, \ref{ConvexCoCptTwists}, and \ref{LoaAnalog}. Note that, given a nonseparating curve $\alpha\in C(\Sigma)$, if $g\in \text{MCG}(\Sigma)$ is such that $g(\alpha)\neq \alpha$ then $g(\alpha)$ intersects every component of $\Sigma \setminus \alpha$. This is trivial as there is only one component. We state this explicitly now as the hypotheses of Theorems \ref{ConvexCocompactCombo}, \ref{ConvexCoCptTwists}, and \ref{LoaAnalog} require this intersection to be nonempty, and we will utilize this in the following corollaries.

% Theorem 7.10
\begin{theorem}\label{SuperIndependentpA}
    Let $f_1,\ldots f_n\in \text{MCG}(\Sigma)$ be any pseudo-Anosov elements. Then there exists a reducible $f\in \text{MCG}(\Sigma)$ and numbers $K_2, \ldots, K_n\geq 0$ with $k_i\geq k_{i-1}+K_i$ for $i=2,\ldots n$ and $k_1=0$ so that the natural map
    $$\<f^{k_1}f_1f^{-k_1}\> * \cdots * \<f^{k_n}f_nf^{-k_n}\>\to \text{MCG}(\Sigma)$$
    is injective with convex cocompact image.
\end{theorem}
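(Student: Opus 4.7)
First I would choose a sparse multicurve $A$ on $\Sigma$ generically, so that for every $i\in\{1,\ldots,n\}$ and every $m\neq 0$ the multicurve $f_i^m(A)$ shares no components with $A$, and $A$ together with $f_i^m(A)$ fill $\Sigma$ (so every component of $\Sigma\setminus A$ has nonempty projection from $f_i^m(A)$). These are countably many thin conditions on the components of $A$, so a generic sparse $A$ satisfies them. Then take the promised $f$ to be a pure partial pseudo-Anosov with canonical reducing system exactly $A$ whose restriction to each $S\in\mathcal S_A$ is pseudo-Anosov; such $f$ exists because sparseness of $A$ gives $\xi(S)\geq 1$ for all $S\in\mathcal S_A$. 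The geometric input I will need is that for every $i$ and every $S\in\mathcal S_A$, the orbit $\{\pi_S(f_i^mA):m\in\mathbb Z\}$ has uniformly bounded diameter $D_i^S$ in $C(S)$. This is a standard consequence of $f_i$ being pseudo-Anosov on $\Sigma$, since as $m\to\pm\infty$ the multicurves $f_i^mA$ accumulate in the space of laminations on the filling stable/unstable laminations of $f_i$, whose projections to $S$ are well-defined bounded sets in $C(S)$.

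Next, because $f$ acts on each $C(S)$ as a loxodromic isometry with translation length $\ell_S>0$, I can choose $K$ large enough that for all $i\neq j$, all $|k|\geq K$, all $m_1,m_2\in\mathbb Z$, and all $S\in\mathcal S_A$,
\[
d_S\bigl(\pi_S(f_i^{m_1}A),\; f^k\pi_S(f_j^{m_2}A)\bigr)\geq L,
\]
where $L=M+18$ with $M$ as in Proposition \ref{MMBGI}. Setting $K_i:=K$ for $i=2,\ldots,n$ and $k_i:=(i-1)K$ gives $|k_i-k_j|\geq K$ for all $i\neq j$. Writing $g_i:=f^{k_i}f_if^{-k_i}$ and $\Gamma:=\<g_1\>*\cdots*\<g_n\>$, and noting that $f^{k_i}$ preserves $A$ and each $S\in\mathcal S_A$ so that $g_i^m(A)=f^{k_i}f_i^m(A)$, I would mimic the proof of Theorem \ref{ConvexCocompactCombo} with $n$ factors. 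For any reduced word $g=g_{i_1}^{m_1}\cdots g_{i_r}^{m_r}\in\Gamma$, consider the sequence
\[
A,\; g_{i_1}^{m_1}(A),\; g_{i_1}^{m_1}g_{i_2}^{m_2}(A),\;\ldots,\; g(A).
\]
By equivariance of subsurface projections, the axioms of Definition \ref{AdmissibleSequence} reduce to single-generator conditions on each $g_i^m$ (sharing no components with $A$, and filling, both handled by the genericity of $A$) together with two-generator projection inequalities between consecutive distinct generators $g_{i_l}$ and $g_{i_{l+1}}$, which are precisely the displayed inequality applied with $k=k_{i_{l+1}}-k_{i_l}$.

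Finally, I would apply Lemma \ref{lem:admSequenceQIlowerbound} with $a_0,b_l,a_n$ equal to the absolute values of the exponents $|m_j|$ appearing in $g$ (which are the word lengths in the cyclic factors), together with the observation that each cyclic factor $\<g_i\>$ embeds quasi-isometrically into $C(\Sigma)$ with uniform constants via the orbit of a fixed component $\alpha\in A$ (since $g_i$ is pseudo-Anosov). This yields a coarse Lipschitz lower bound $d_\Sigma(\alpha,g\alpha)\succeq|g|_\Gamma$, so the orbit map $\Gamma\to C(\Sigma)$ is a quasi-isometric embedding. Since $\Gamma$ is torsion-free, Lemma \ref{EquivQIEmb} then gives that the natural homomorphism $\Gamma\to\text{MCG}(\Sigma)$ is injective with convex cocompact image. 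The main obstacle is establishing the bounded-projection fact in the first paragraph; although standard, it is the one nontrivial input not already proved in the paper, and a careful writeup would either cite it from the literature or give a direct argument via North-South dynamics and continuity of subsurface projections.
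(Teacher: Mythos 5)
Your outline is essentially the paper's own argument: conjugate the cyclic pseudo-Anosov groups by large powers of a partial pseudo-Anosov supported in the complement of a fixed reducing multicurve, use boundedness of the subsurface projections of the orbits $\langle f_i\rangle\cdot A$ to push these bounded sets at least $M+18$ apart in each complementary component, and then run the admissible-sequence machinery (Lemma \ref{lem:admSequenceQIlowerbound} via Theorem \ref{ConvexCocompactCombo}) together with Lemma \ref{EquivQIEmb} for injectivity. Two remarks. First, the step you single out as the main obstacle --- uniform boundedness of $\{\pi_S(f_i^m(A))\}_{m\in\Z}$ --- is not an external input: $\langle f_i\rangle$ is convex cocompact, hence PGF relative to the empty collection, so Proposition \ref{PGFProjBound} gives exactly this bound, and that is precisely what the paper's proof cites; no appeal to laminations or outside literature is needed. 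Second, your opening choice of a ``generic'' sparse multicurve $A$ with $A\cup f_i^m(A)$ filling for all $m\neq 0$ is not justified as written: the set of multicurves is discrete, so ``countably many thin conditions'' is not an argument, and for small $m$ the filling condition can genuinely fail for a particular $A$. The paper sidesteps this by taking $A$ to be a single nonseparating curve $\alpha$ (which is a sparse multicurve): then $\Sigma\setminus\alpha$ is connected, and $f_i^m(\alpha)\neq\alpha$ (automatic for pseudo-Anosovs) already gives nonempty projection to the unique complementary component, which is all the admissibility conditions require. Replacing your genericity claim by that choice repairs the gap. Beyond this, your simultaneous choice $k_i=(i-1)K$ with a uniform gap, checked through an $n$-factor version of Theorem \ref{ConvexCocompactCombo}, is a harmless variant of the paper's induction two factors at a time (which applies Proposition \ref{PGFProjBound} to the growing free product at each stage), and the remaining steps --- uniform QI constants for the cyclic factors, Lemma \ref{lem:admSequenceQIlowerbound}, and injectivity via Lemma \ref{EquivQIEmb} since the free product is torsion free with empty peripheral collection --- are correct.
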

\begin{proof}
     Fix any choice of nonseparating curve $\alpha$. We can let $f$ be any partial pseudo-Anosov of $\Sigma$ on $\Sigma \setminus \alpha$. Then we can choose $k_2$ so that for $m_1, m_2\neq 0$,
     $$d_{\Sigma\setminus \alpha}(f_1^{m_1}(\alpha), f^{k_2}f_2^{m_2}f^{-k_2}(\alpha))\geq M+18$$    
     where $M$ is as in Proposition \ref{MMBGI}. This follows from Proposition \ref{PGFProjBound} applied to $\Sigma \setminus \alpha$ giving a bound on the projections of $\<f_1\>\cdot \alpha$ and $\<f^{k_2}f_2f^{-k_2}\>\cdot \alpha$ to $\Sigma \setminus \alpha$. Applying Theorem \ref{ConvexCocompactCombo} shows that $\<f_1\>*\<f^{k_2}f_2f^{-k_2}\>\to \text{MCG}(\Sigma)$ is injective with convex cocompact image.
     \par 
     The general statement follows from induction, applying Proposition \ref{PGFProjBound} to the groups $\<f^{k_1}f_1f^{-k_1}\>*\cdots \<f^{k_i}f_if^{-k_i}\>$ and $\<f_{i+1}\>$ and then conjugating the latter group so that the required projection bounds hold to apply Theorem \ref{ConvexCocompactCombo}.
\end{proof}

 One can think of this result as an analog of Theorem 1.4 of \cite{FM02}, where instead of using independent pseudo-Anosovs with sufficiently high powers, we take any collection of pseudo-Anosovs that are "sufficiently independent" (relative to some subsurface). 
\par 
For the rest of this section, we will first state more abstract results which follow quickly from Theorems \ref{ConvexCocompactCombo}, \ref{ConvexCoCptTwists}, and \ref{LoaAnalog}, which are then followed by more concrete applications. Their proofs follow from the same formula as Theorem \ref{SuperIndependentpA}. Many of these rely on the following proposition to guarantee that the twists groups involved (either the twist groups associated to a PGF group or some twist group which will be a free factor of a free product) don't fix a particular curve. This result can be found in Lemma 4.2 of \cite{I96}.

%Proposition 7.11
\begin{proposition}\label{IvanovIntersectionBound}
    Fix a multicurve $A$ in $\Sigma$ with components $\alpha_1,\ldots \alpha_m$. Let $\tau_i$ denote the Dehn twist on $\alpha_i$. Then for all integers $n_1,\ldots, n_m$ and all $\alpha, \beta \in C(\Sigma)$, if $\tau=\tau_1^{n_1}\cdots \tau_m^{n_m}$,
    \begin{multline*}
        \sum_{i=1}^m\big( (|n_i|-2)i(\alpha, \alpha_i)i(\alpha_i, \beta)\big)-i(\alpha,\beta) \leq i(\tau(\beta), \alpha) 
        \\
        \leq \sum_{i=1}^m \big(|n_i|i(\beta, \alpha_i)i(\alpha_i, \alpha)\big)+i(\beta,\alpha). 
    \end{multline*}
    If $n_i\geq 0$ or $n_i\leq 0$ for all $i$, then the left most expression can be taken to be
    $$\sum_{i=1}^m \big(|n_i|i(\beta, \alpha_i)i(\alpha_i, \alpha)\big)-i(\beta,\alpha).$$
\end{proposition}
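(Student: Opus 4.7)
The plan is to isotope $\alpha$, $\beta$, and $A = \alpha_1 \cup \cdots \cup \alpha_m$ into simultaneous pairwise minimal position, and to realize the multitwist $\tau$ by a homeomorphism $\widetilde{\tau}$ supported in a disjoint union of thin annular neighborhoods $N_1, \ldots, N_m$ of the $\alpha_i$, each $N_i$ chosen so that $\alpha \cap N_i$ consists of $i(\alpha, \alpha_i)$ properly embedded arcs each crossing the core $\alpha_i$ transversely exactly once, and similarly for $\beta \cap N_i$. Outside $\bigcup N_i$ we have $\widetilde{\tau}(\beta) = \beta$. Inside a given $N_i$, each of the $i(\beta, \alpha_i)$ arcs of $\beta \cap N_i$ is replaced by an arc making $|n_i|$ half-twists around $\alpha_i$, and such an arc can be isotoped inside $N_i$ so that it crosses each arc of $\alpha \cap N_i$ at most $|n_i|$ times. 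This produces at most $|n_i|\, i(\beta, \alpha_i)\, i(\alpha_i, \alpha)$ crossings inside $N_i$, and the $i(\beta, \alpha)$ original crossings outside the annuli are unaltered; summing gives the upper bound on $i(\tau(\beta), \alpha)$ because geometric intersection number is defined as the minimum and this representative realizes a valid count.

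For the general lower bound, I would argue that the crossings just produced cannot all be erased by bigon moves between $\widetilde{\tau}(\beta)$ and $\alpha$. Within each $N_i$, consecutive crossings of one twisted arc with one arc of $\alpha \cap N_i$ are genuinely forced by the winding around $\alpha_i$ and cannot form a bigon with $\alpha$ that stays inside $N_i$. The only crossings that can be removed via bigons leaving $N_i$ are the outermost crossings at the two endpoints of each twisted arc; this costs at most $2\, i(\alpha_i, \alpha)$ crossings per arc of $\beta \cap N_i$, hence at most $\sum_i 2\, i(\beta,\alpha_i)\, i(\alpha_i,\alpha)$ in total, and absorbing the original $i(\alpha,\beta)$ outside intersections gives the claimed lower bound with coefficient $(|n_i|-2)$. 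For the improved same-sign lower bound, I would exploit that when the $n_i$ all share a sign, the twisted arcs all wind about their respective $\alpha_i$'s with the same orientation; this rules out the endpoint cancellations just described, since such a cancellation would produce a bigon whose two sides wind with opposite orientations, so only the original $i(\alpha,\beta)$ crossings can be lost.

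The main obstacle is making the bigon analysis rigorous across multiple annuli: in the mixed-sign case one must verify that bigons cannot cascade from one $N_i$ into a neighboring $N_j$ to produce losses larger than $2\, i(\alpha_i,\alpha)$ per arc, and in the same-sign case one must check carefully the orientation bookkeeping to rule out cross-annular cancellations entirely. I would handle this by working in the universal cover of a regular neighborhood of $\alpha \cup A$ and lifting $\widetilde{\tau}(\beta)$ to a collection of quasi-straight arcs whose essential crossings with lifts of $\alpha$ can be counted directly, reducing the analysis to the standard single-twist case on each annular piece.
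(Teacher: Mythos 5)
This proposition is not proved in the paper at all: it is quoted verbatim from Ivanov, cited as Lemma 4.2 of \cite{I96}, so there is no in-paper argument to compare against. Judged on its own terms, your upper bound is fine: realizing $\tau$ by a homeomorphism supported in disjoint annular neighborhoods of the $\alpha_i$, with everything in pairwise minimal position, produces an explicit representative of $\tau(\beta)$ meeting $\alpha$ in at most $\sum_i |n_i|\, i(\beta,\alpha_i)\, i(\alpha_i,\alpha) + i(\alpha,\beta)$ points, and since $i(\cdot,\cdot)$ is a minimum over representatives this gives the right-hand inequality.

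The lower bounds, however, are where all the difficulty of the proposition lives, and your argument there is asserted rather than proved. The claims that a bigon between $\widetilde{\tau}(\beta)$ and $\alpha$ can only erase the two outermost crossings of each twisted arc, that no bigon can ``cascade'' through several annuli and complementary regions, and that in the same-sign case the common orientation of winding forbids even the endpoint cancellations, are exactly the content of the inequality; none of them is established. The underlying problem is that minimal position of $\tau(\beta)$ with $\alpha$ is a global condition: the representative you built need not be in minimal position, and an innermost bigon between $\tau(\beta)$ and $\alpha$ may have sides running through several of the $N_i$ and through complementary components, interacting with the arcs of $A$ as well, so one cannot simply ``reduce to the standard single-twist case on each annular piece'' --- that reduction is precisely what must be justified. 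Your final paragraph names a plausible repair (lifting to annular covers or to the universal cover of a neighborhood of $\alpha\cup A$ and counting essential crossings of lifted arcs there, which is essentially how Ivanov and Fathi--Laudenbach--Po\'enaru argue), but it is only named, not carried out, and the orientation bookkeeping needed for the sharpened same-sign bound is likewise left as a declaration. As it stands the proposal establishes only the upper bound; for the lower bounds the honest course is either to execute the covering-space count in detail or to do what the paper does and cite Ivanov's Lemma 4.2.
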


We now state the application of Theorem \ref{ConvexCocompactCombo}. 

\begin{theorem}\label{CocptFreeProductApplicationAbstract}
    Suppose $G_1$, $G_2$ are PGF groups relative to $\mathcal{H}_1$, $\mathcal{H}_2$, respectively. If there is a curve $\alpha$ so that for all nontrivial $g_1\in G_1$, $g_2\in G_2$, $g_1(\alpha)$ and $g_2(\alpha)$ intersect every component of $\Sigma\setminus \alpha$, then there is a reducible $f\in \text{MCG}(\Sigma)$ so that the natural map $fG_1f^{-1}*G_2\to \text{MCG}(\Sigma)$ is injective with PGF image relative to the union of the $f$ conjugates of $\mathcal{H}_1$ along with the elements of $\mathcal{H}_2$.
\end{theorem}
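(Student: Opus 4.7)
The plan is to mimic the proof of Theorem \ref{SuperIndependentpA} and reduce to an application of Theorem \ref{ConvexCocompactCombo} with the one-curve multicurve $A=\{\alpha\}$. Note that $\{\alpha\}$ is sparse, since cutting a closed surface of genus $g\geq 2$ along a single essential curve never produces a pair of pants; in particular every component of $\Sigma\setminus\alpha$ is a nonannular proper essential subsurface of complexity at least $1$. I will take $f$ to be a pure reducible element of $\text{MCG}(\Sigma)$ with reducing system $\{\alpha\}$ that preserves each component of $\Sigma\setminus\alpha$ and acts as a pseudo-Anosov on every such component; such $f$ exists by the complexity remark. I will then replace $f$ by a sufficiently large power.

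First, I would observe that $\alpha$ is not a component of any multicurve associated to $G_1$ or $G_2$: otherwise some nontrivial element of the corresponding peripheral twist group would fix $\alpha$, contradicting the hypothesis that $g_i(\alpha)$ meets every component of $\Sigma\setminus\alpha$. Applying Proposition \ref{PGFProjBound} to each $G_i$ with $\mathcal{A}=\{\alpha\}$ and to each component $S$ of $\Sigma\setminus\alpha$, I obtain that $\pi_S(G_i\cdot\alpha)$ is a nonempty subset of $C(S)$ whose diameter is uniformly bounded, independently of $S$. Since $f$ acts loxodromically on each $C(S)$, I can choose an exponent $k$ large enough that
$$d_S\bigl(f^k g_1(\alpha),\, g_2(\alpha)\bigr)\geq M+18$$
for every nontrivial $g_1\in G_1$, $g_2\in G_2$, and every component $S$ of $\Sigma\setminus\alpha$; the left-hand side equals $d_S\bigl(f^k g_1 f^{-k}(\alpha), g_2(\alpha)\bigr)$ because $f^k(\alpha)=\alpha$.

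Setting $f:=f^k$, the pair $(fG_1f^{-1}, G_2)$ satisfies the distance hypothesis of Theorem \ref{ConvexCocompactCombo}. The "no shared components with $A$" hypothesis also holds: for nontrivial $h_1=fg_1f^{-1}\in fG_1f^{-1}$ one has $h_1(\alpha)=f(g_1(\alpha))\neq\alpha$, since $g_1(\alpha)\neq\alpha$ and $f(\alpha)=\alpha$, and similarly $g_2(\alpha)\neq\alpha$; because $\alpha$ is a single curve, distinctness is exactly the condition of sharing no component. Theorem \ref{ConvexCocompactCombo} therefore yields the injectivity of $fG_1f^{-1}*G_2\to\text{MCG}(\Sigma)$ and identifies the image as a PGF group relative to $f\mathcal{H}_1 f^{-1}\cup \mathcal{H}_2$, which is the required conclusion.

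The main obstacle in this argument is packaged inside Proposition \ref{PGFProjBound}: the uniform bound on the diameter of $\pi_S(G_i\cdot\alpha)$ across all choices of $S$ is what makes it possible to push all of the infinitely many translates in $G_1\cdot\alpha$ away from all of $G_2\cdot\alpha$ simultaneously using a single element $f^k$. With that bound in hand, the remaining steps are a direct application of the combination theorem.
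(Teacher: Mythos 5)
Your proposal is correct and follows essentially the same route as the paper: apply Proposition \ref{PGFProjBound} to bound $\pi_S(G_i\cdot\alpha)$ for each component $S$ of $\Sigma\setminus\alpha$, take $f$ a partial pseudo-Anosov on $\Sigma\setminus\alpha$ with sufficiently large translation length (your large power $f^k$) to force $d_S(fg_1f^{-1}(\alpha),g_2(\alpha))\geq M+18$, and then invoke Theorem \ref{ConvexCocompactCombo}. The extra checks you include (sparseness of $\{\alpha\}$, nonemptiness of the relevant projections, $g_i(\alpha)\neq\alpha$) are details the paper leaves implicit, and they are argued correctly.
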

\begin{proof}
    By applying Proposition \ref{PGFProjBound} to both $G_1$ and $G_2$, we may choose $f$ to be a partial pseudo-Anosov on $\Sigma \setminus \alpha$ with sufficiently large translation length in the curve graphs of the components of $\Sigma \setminus \alpha$ so that for each component $S$ of $\Sigma\setminus \alpha$ and all nontrivial $g_1\in G_1, g_2\in G_2$,
    $$d_S(fg_1f^{-1}(\alpha), g_2(\alpha))\geq M+18$$
    where $M$ is as in Proposition \ref{MMBGI}. The result then follows by applying Theorem \ref{ConvexCocompactCombo}.
\end{proof}

In particular, we obtain the following corollary. We state it in a somewhat specific form, though our applications will use a slightly generalized form of it. 

\begin{corollary}\label{PGFFreeProductApplication}
    Suppose $G$ is a torsion free PGF group relative to $\mathcal{H}$. Then there exists a sequence of reducible elements $f_i\in \text{MCG}(\Sigma)$ with $f_1=\text{id}$ for $i=1,\ldots n$ so that the natural map
    $$f_1Gf_1^{-1} * \cdots * f_nGf_n^{-1}\to \text{MCG}(\Sigma)$$
    is injective with PGF image, relative to the union of the conjugates of the elements of $\mathcal{H}$ by $f_i$ for $i=1,\ldots, n$.
\end{corollary}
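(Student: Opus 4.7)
The plan is to induct on $n$, with the base case $n=1$ trivial upon taking $f_1 = \text{id}$. For the inductive step, assume reducible elements $f_1 = \text{id}, f_2, \ldots, f_{n-1}$ have been constructed so that $\Gamma := f_1 G f_1^{-1} * \cdots * f_{n-1} G f_{n-1}^{-1}$ embeds in $\text{MCG}(\Sigma)$ as a PGF group relative to $\mathcal{H}' := \bigcup_{i=1}^{n-1} f_i \mathcal{H} f_i^{-1}$. Being a free product of torsion-free groups, $\Gamma$ is itself torsion-free. I will invoke Theorem \ref{CocptFreeProductApplicationAbstract} with $G_1 = G$ and $G_2 = \Gamma$; the resulting reducible element will be $f_n$.

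That theorem requires a curve $\alpha$ such that $g_1(\alpha)$ and $g_2(\alpha)$ have nonempty projection to every component of $\Sigma \setminus \alpha$ for every nontrivial $g_1 \in G$ and $g_2 \in \Gamma$. Taking $\alpha$ to be nonseparating, so that $\Sigma \setminus \alpha$ is connected, reduces this to the condition that no nontrivial element of $G$ or $\Gamma$ fixes $\alpha$. By Corollary \ref{InfOrderpA} and torsion-freeness, every such element is either pseudo-Anosov (hence fixes no curve) or lies in a conjugate twist subgroup. A nontrivial element of a twist subgroup has a nontrivial multitwist power, and a nontrivial multitwist on a multicurve $A$ fixes $\alpha$ precisely when $\alpha$ is disjoint from or equal to each component of $A$ on which the exponent is nonzero. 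Thus it suffices to arrange that $\alpha$ has distance at least $2$ in $C(\Sigma)$ from every component of every multicurve associated to $G$ or to $\Gamma$.

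Such an $\alpha$ is produced by adapting the proof of Corollary \ref{PGFNotQuasiDense} to both groups simultaneously. Fix any nonannular connected essential proper subsurface $S \subset \Sigma$, and let $\mathcal{M}_G, \mathcal{M}_\Gamma$ denote the collections of all multicurves associated to $G$ and $\Gamma$, respectively. Proposition \ref{PGFProjBound} applied to each group separately gives uniform bounds $K_G, K_\Gamma$ on the diameters of $\pi_S(\mathcal{M}_G)$ and $\pi_S(\mathcal{M}_\Gamma)$ in $C(S)$. Starting from a nonseparating curve $\alpha_0$ of large distance from $\partial S$ in $C(\Sigma)$ and with nonempty $S$-projection, I apply a sufficiently high power of a partial pseudo-Anosov supported in $S$ to produce a nonseparating $\alpha$ whose $S$-projection is at distance at least $K_G + K_\Gamma + 2M + 3$ from both $\pi_S(\mathcal{M}_G)$ and $\pi_S(\mathcal{M}_\Gamma)$, where $M$ is the constant of Proposition \ref{MMBGI}. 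If some component $\gamma$ of a multicurve in $\mathcal{M}_G \cup \mathcal{M}_\Gamma$ were within distance $1$ of $\alpha$ in $C(\Sigma)$, any geodesic from $\alpha$ to $\gamma$ would avoid the $1$-neighborhood of $\partial S$, and by the contrapositive of Proposition \ref{MMBGI} and the triangle inequality we would get $d_S(\alpha, \gamma) \leq M+2$, contradicting the choice of $\alpha$.

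With $\alpha$ in hand, Theorem \ref{CocptFreeProductApplicationAbstract} delivers a partial pseudo-Anosov $f_n$ of $\Sigma \setminus \alpha$ (which is reducible, as it fixes $\alpha$) so that $f_n G f_n^{-1} * \Gamma$ injects into $\text{MCG}(\Sigma)$ with PGF image relative to $f_n \mathcal{H} f_n^{-1} \cup \mathcal{H}' = \bigcup_{i=1}^{n} f_i \mathcal{H} f_i^{-1}$; a reordering of the free factors yields the form stated in the corollary, completing the induction. The main obstacle is the joint construction of $\alpha$ meeting the intersection condition against two distinct PGF multicurve collections while remaining nonseparating; once this is achieved, everything else reduces to direct applications of Theorem \ref{CocptFreeProductApplicationAbstract}, Corollary \ref{InfOrderpA}, and standard facts about torsion-free PGF groups.
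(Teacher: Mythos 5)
Your proposal is correct and follows essentially the same route as the paper: induct on the number of factors and apply Theorem \ref{CocptFreeProductApplicationAbstract} with $G_1=G$ and $G_2$ the previously built free product, choosing a nonseparating curve that intersects every component of every relevant associated multicurve so that (via Corollary \ref{InfOrderpA}, torsion-freeness, and the twist/intersection estimate of Proposition \ref{IvanovIntersectionBound}) no nontrivial element fixes it. The only real difference is how that curve is produced: the paper applies Corollary \ref{PGFNotQuasiDense} to the inductively built group alone and uses that, since $f_1=\mathrm{id}$, the multicurves associated to $G$ are already among those of the free product, whereas you re-derive a two-collection version of that corollary inline from Proposition \ref{PGFProjBound}, Proposition \ref{MMBGI}, and a partial pseudo-Anosov supported in a fixed subsurface, which also works.
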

\begin{proof}
    We take a nonseparating curve $\alpha_2$ which intersects every component of every multicurve associated to $G$. Such a curve exists by Corollary \ref{PGFNotQuasiDense}. In particular, for any nontrivial element $g\in G$, $\pi_{\Sigma\setminus \alpha_2}(g(\alpha_2))\neq \varnothing$. This is immediate if $g$ is pseudo-Anosov. Suppose $g$ is peripheral. Then choose an $n$ so that $g^n$ is a multitwist. Further increase $n$ so that some Dehn twist appearing in $g^n$ has the absolute value of its power at least $3$. By Proposition \ref{IvanovIntersectionBound} (with $\alpha=\beta=\alpha_2$), $g^n(\alpha_2)$ must intersect $\alpha_2$, so in particular $g(\alpha_2)\neq \alpha_2$. 
    \par 
    Applying Theorem \ref{CocptFreeProductApplicationAbstract}, we find a reducible element $f_2$ so that the natural map $G*f_2Gf_2^{-1}\to \text{MCG}(\Sigma)$ is injective with PGF image relative to the union of $\mathcal{H}$ and the $f_2$ conjugates of $\mathcal{H}$. 
    \par 
    The result then follows by induction. Here, it is possible that the curve $\alpha_i$ used to define $f_i$ does not intersect every component of every multicurve associated to the group $G*\cdots f_iGf_i^{-1}$. Instead we choose, using Corollary \ref{PGFNotQuasiDense}, a curve $\alpha_{i+1}$ which intersects every component of every multicurve associated to $G*\cdots f_iGf_i^{-1}$, and thus also intersects every multicurve associated to $G$. We then perform the same argument as above in order to apply Theorem \ref{CocptFreeProductApplicationAbstract}.
\end{proof}

\begin{example}\label{PGFFreeProductSurfaceGrp}
    Let $G$ be a PGF surface group as in Corollary \ref{BookOfIBundleCombo}. This group is torsion free, and by a slight extension of Corollary \ref{PGFFreeProductApplication}, we obtain examples of PGF groups which are free products of surface groups where each free factor is isomorphic to any finite index subgroup of $G$. This follows as the use of Corollary \ref{PGFNotQuasiDense} in both the base case and the inductive step in Corollary \ref{PGFFreeProductApplication} can be applied as the collection of multicurves associated to a finite index subgroup of a PGF group is the same as the collection associated to the full group.
    \par 
    Given a torsion free convex cocompact group $F$, one can then further apply a slight extension of Corollary \ref{PGFFreeProductApplication} along with Corollary \ref{PGFNotQuasiDense} to find PGF groups isomorphic to free products of $F$ and groups as in the previous paragraph (this is because $F$ will fix no curves, so the use of Corollary \ref{PGFNotQuasiDense} only needs to be applied to the free product of surface groups).
\end{example}

\begin{example}\label{GeneralPGFRAAGS}
    We can now find examples of PGF groups of the form $G=F*T$ where $F$ is a free convex cocompact group of arbitrary rank, and $T$ is any group as in Example \ref{FreeProductMultitwists}. This completes the discussion after Lemma \ref{GenericPGFRAAGS}.
    \par 
    To do this, one can apply of Theorem \ref{ConvexCocompactCombo} directly, as we may choose a nonseparating curve $\alpha$ intersecting every component of every multicurve associated to $T$ using Corollary \ref{PGFNotQuasiDense}. As no nontrivial element of $F$ will fix $\alpha$, the theorem applies (using Proposition \ref{IvanovIntersectionBound} as in the proof of Corollary \ref{PGFFreeProductApplication} to show that no nontrivial element of $T$ fixes $\alpha$ either).
\end{example}

Next we give the application of Theorem \ref{ConvexCoCptTwists}.

\begin{theorem}\label{CocptTwistsApplicationAbstract}
    Suppose $G$ is a PGF group relative to $\mathcal{H}$ and $H$ is an arbitrary twist group. If there is a curve $\alpha$ so that for all nontrivial $g\in G$, $h\in H$, $g(\alpha)$ and $h(\alpha)$ intersect every component of $\Sigma \setminus \alpha$, then there is a reducible $f\in \text{MCG}(\Sigma)$ so that the natural map $fGf^{-1}*H\to \text{MCG}(\Sigma)$ is injective with PGF image relative to $\mathcal{H}\cup\{H\}$. 
\end{theorem}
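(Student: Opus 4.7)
The plan is to mimic the proof of Theorem \ref{CocptFreeProductApplicationAbstract}, using Theorem \ref{ConvexCoCptTwists} in place of Theorem \ref{ConvexCocompactCombo}. The goal is to choose a reducible $f \in \text{MCG}(\Sigma)$ so that for every $S \in \mathcal{S}_\alpha$ and all nontrivial $g \in G$, $\tau \in H$ we have $d_S(fgf^{-1}(\alpha), \tau(\alpha)) \geq M+18$, with $M$ as in Proposition \ref{MMBGI}, after which Theorem \ref{ConvexCoCptTwists} will apply directly.

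First I would obtain uniform diameter bounds on the two orbits $\pi_S(G\cdot\alpha)$ and $\pi_S(H\cdot\alpha)$ in each component $S$ of $\Sigma\setminus\alpha$ (assuming $\alpha$ sparse, so these components have complexity at least one and in particular are not annuli). For the PGF orbit $G\cdot\alpha$ this is exactly Proposition \ref{PGFProjBound}, which yields a bound independent of $S$. For $H\cdot\alpha$, I would view $H$ as hyperbolic relative to itself with empty relative generating set and apply Lemma \ref{InductiveProjBound2} with $R$ taken large enough that $\widehat{B}(e,R)$ contains all of $H$; since each $S\in\mathcal{S}_\alpha$ is not an annulus, the hypothesis of the lemma is satisfied and it yields a uniform bound on $\mathrm{diam}(\pi_S(H\cdot\alpha))$ independent of $S$.

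Next, let $f_0$ be a partial pseudo-Anosov with reducing system $\alpha$, so that $f_0$ fixes $\alpha$ and acts loxodromically on $C(S)$ for every $S\in\mathcal{S}_\alpha$, and set $f = f_0^n$. As $n$ grows, $f_0^n$ translates the bounded set $\pi_S(G\cdot\alpha)$ an arbitrary distance along its axis in each $C(S)$, pushing it uniformly far from the bounded set $\pi_S(H\cdot\alpha)$. Since $f$ fixes $\alpha$, we have $(fgf^{-1})(\alpha) = f(g(\alpha))$, and choosing $n$ sufficiently large therefore ensures
$$d_S(fgf^{-1}(\alpha),\tau(\alpha)) \geq M + 18$$
for all $S\in\mathcal{S}_\alpha$ and all nontrivial $g\in G$, $\tau\in H$.

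Finally, the no-shared-components hypothesis of Theorem \ref{ConvexCoCptTwists} follows immediately from the standing hypothesis on $\alpha$: for nontrivial $g$ and $\tau$, both $f(g(\alpha))$ and $\tau(\alpha)$ intersect every component of $\Sigma\setminus\alpha$ and hence differ from $\alpha$; since $\alpha$ is a single curve, they share no components with $\alpha$. Theorem \ref{ConvexCoCptTwists} then produces the desired injective map $fGf^{-1}\ast H\to\text{MCG}(\Sigma)$ with PGF image relative to $\mathcal{H}\cup\{H\}$. The main obstacle is the uniform diameter bound on $\pi_S(H\cdot\alpha)$, which does not follow from Proposition \ref{PGFProjBound} since $H$ is not PGF, but which the application of Lemma \ref{InductiveProjBound2} to $H$ with itself as peripheral cleanly resolves.
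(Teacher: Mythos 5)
Your proposal is correct and follows essentially the same route as the paper: apply Proposition \ref{PGFProjBound} to bound $\pi_S(G\cdot\alpha)$, take $f$ a partial pseudo-Anosov supported on $\Sigma\setminus\alpha$ with sufficiently large translation length on each $C(S)$, and then invoke Theorem \ref{ConvexCoCptTwists} with the collection $\{\Sigma\setminus\alpha\}$. Your extra step of bounding $\mathrm{diam}(\pi_S(H\cdot\alpha))$ via Lemma \ref{InductiveProjBound2} (viewing $H$ as generated relative to $\{H\}$, so a ball of bounded $\widehat{G}$-radius contains all of $H$) is a valid and welcome justification of a point the paper leaves implicit.
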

\begin{proof}
    The theorem follows by combining Theorem \ref{ConvexCoCptTwists} using the collection $\{\Sigma \setminus \alpha\}$ along with Proposition \ref{PGFProjBound} applied to $G$ as before, and letting $f$ be a partial pseudo-Anosov on $\Sigma\setminus \alpha$ with sufficiently high translation length. 
\end{proof}

%Proposition 7.12
\begin{corollary}\label{CocptTwistsApplication}
    Fix a torsion free PGF group $G$ relative to $\mathcal{H}$ and a torsion free twist group $H$. Then there is a reducible $f\in \text{MCG}(\Sigma)$ so that the natural map $fGf^{-1}*H\to \text{MCG}(\Sigma)$ is injective with PGF image relative to the union of $H$ and the $f$ conjugates of elements of $\mathcal{H}$. 
\end{corollary}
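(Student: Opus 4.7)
The plan is to reduce directly to Theorem \ref{CocptTwistsApplicationAbstract} by exhibiting a single curve $\alpha$ satisfying its hypothesis, mirroring the argument of Corollary \ref{PGFFreeProductApplication} but with the twist group $H$ in place of the second PGF factor. Let $A_H$ be the multicurve associated to $H$, and let $\mathcal{A}$ denote the collection of all components of all multicurves associated to $G$. By Corollary \ref{PGFNotQuasiDense}, $\mathcal{A}$ is not quasi-dense in $C(\Sigma)$; since $A_H$ has only finitely many components, adjoining these to $\mathcal{A}$ yields a set that is still not quasi-dense. I would then pick a nonseparating $\alpha \in C(\Sigma)$ at distance at least $3$ in $C(\Sigma)$ from every element of $\mathcal{A} \cup A_H$. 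This ensures $\alpha$ intersects every component of every multicurve associated to $G$ as well as every component of $A_H$.

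Next I would verify that $g(\alpha) \neq \alpha$ for every nontrivial $g \in G$ and $h(\alpha) \neq \alpha$ for every nontrivial $h \in H$. For $g \in G$, Corollary \ref{InfOrderpA} says either $g$ is pseudo-Anosov (hence fixes no curve) or $g$ is conjugate into a twist subgroup of $G$; in the latter case, some power $g^n$ is a multitwist on a $G$-associated multicurve $A$. Replacing $n$ by a sufficiently large multiple so that every twist exponent of $g^n$ has absolute value at least $3$, Proposition \ref{IvanovIntersectionBound} applied with $\alpha = \beta$ (and using that $\alpha$ meets every component of $A$) gives a strictly positive lower bound for $i(g^n(\alpha), \alpha)$, so $g^n(\alpha) \neq \alpha$ and thus $g(\alpha) \neq \alpha$. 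Since $H$ is torsion free, any nontrivial $h \in H$ has a power $h^n$ that is a (nontrivial, fully reduced) multitwist on a sub-multicurve of $A_H$, and the same argument, now using that $\alpha$ intersects every component of $A_H$, gives $h(\alpha) \neq \alpha$.

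Finally, because $\alpha$ is nonseparating, $\Sigma \setminus \alpha$ has a single component, so any curve distinct from $\alpha$ automatically intersects every component of $\Sigma \setminus \alpha$. Hence the hypothesis of Theorem \ref{CocptTwistsApplicationAbstract} is satisfied, producing the required reducible $f \in \mathrm{MCG}(\Sigma)$ and showing that $fGf^{-1} * H \to \mathrm{MCG}(\Sigma)$ is injective with PGF image relative to the stated collection.

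The routine but slightly delicate step is the construction of $\alpha$: the main thing to check is that non-quasi-density of $\mathcal{A}$ is preserved when the finitely many curves of $A_H$ are added, so that $\alpha$ can be chosen simultaneously far from $\mathcal{A}$ and transverse to every component of $A_H$. Once this is in hand, the rest is a direct application of the previously developed machinery, and I do not anticipate a substantive obstacle.
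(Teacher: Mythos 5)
Your proposal is correct and follows essentially the same route as the paper: choose (via Corollary \ref{PGFNotQuasiDense}) a nonseparating curve $\alpha$ meeting every component of every multicurve associated to $G$ and of the multicurve of $H$, use torsion-freeness together with Corollary \ref{InfOrderpA} and Proposition \ref{IvanovIntersectionBound} to see that no nontrivial element of $G$ or $H$ fixes $\alpha$, and then invoke Theorem \ref{CocptTwistsApplicationAbstract}. The only difference is that you spell out the (correct) detail that adjoining the finitely many components of $A_H$ preserves non-quasi-density, which the paper leaves implicit.
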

\begin{proof}
 Using Corollary \ref{PGFNotQuasiDense} we can choose a nonseparating curve $\alpha$ intersecting every component of every multicurve associated to $G$, as well as every component of the multicurve associated to $H$. Using Proposition \ref{IvanovIntersectionBound} as before we have for all nontrivial $g\in G$ and $h\in H$ that $g(\alpha)$ and $h(\alpha)$ have nonempty projection to $\Sigma \setminus \alpha$. The result then follows by Theorem \ref{CocptTwistsApplicationAbstract}.
\end{proof}

Using this we can obtain more general free products of PGF groups. The downside of this as compared to Corollary \ref{PGFFreeProductApplication} is that we have less control over the conjugating elements.
\begin{corollary}
    Fix torsion free PGF groups $G_1,\ldots, G_n$, relative to $\mathcal{H}_1, \ldots \mathcal{H}_n$, respectively. Then there are elements $f_1,\cdots, f_n\in \text{MCG}(\Sigma)$ (not necessarily reducible) so that $f_1G_1f_1^{-1}*\cdots*f_nG_nf_n^{-1}$ is PGF relative to the union of the $f_i$ conjugates of elements of $\mathcal{H}_i$, for $i=1,\ldots, n$.
\end{corollary}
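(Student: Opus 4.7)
The plan is to induct on $n$. The base case $n=1$ is immediate by setting $f_1=\text{id}$, since $G_1$ itself is already PGF. For the inductive step, suppose we have constructed elements $f_1,\dots,f_i$ so that the free product $K=f_1G_1f_1^{-1}*\cdots*f_iG_if_i^{-1}$ embeds as a PGF group relative to the stated collection. The goal is to find $h\in\text{MCG}(\Sigma)$ and a reducible $f\in\text{MCG}(\Sigma)$ so that $fKf^{-1}*hG_{i+1}h^{-1}$ is PGF relative to the union of the $f$-conjugates of peripherals of $K$ and the peripherals of $hG_{i+1}h^{-1}$; relabeling then $f_j^{\text{new}}=ff_j$ for $j\leq i$ and $f_{i+1}^{\text{new}}=h$ gives the required form.

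To produce $h$, I would first apply Corollary \ref{PGFNotQuasiDense} to $K$ to obtain a nonseparating curve $\alpha$ whose distance in $C(\Sigma)$ from every component of every multicurve associated to $K$ is at least $3$; in particular $\alpha$ intersects every such component. Applying Corollary \ref{PGFNotQuasiDense} again to the separate group $G_{i+1}$ yields a nonseparating curve $\beta$ with the analogous property for multicurves associated to $G_{i+1}$. Since $\alpha$ and $\beta$ are both nonseparating, there exists $h\in\text{MCG}(\Sigma)$ with $h(\beta)=\alpha$; by equivariance of subsurface projections, $\alpha$ then intersects every component of every multicurve associated to $hG_{i+1}h^{-1}$.

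The next step is to verify that $\alpha$ is not fixed by any nontrivial element of $K$ or of $hG_{i+1}h^{-1}$. By Corollary \ref{InfOrderpA}, any nontrivial element of a torsion free PGF group is either pseudo-Anosov (and fixes no curve) or lies in a conjugate of a peripheral twist subgroup. For an element $g$ of the latter type, some power $g^N$ is a multitwist on one of the associated multicurves $A'$ of the ambient group. Taking a further power if necessary to make the twist exponents large, Proposition \ref{IvanovIntersectionBound} applied with $\beta=\alpha$ and any component $\alpha_i$ of $A'$ that $\alpha$ intersects gives $i(g^{Nk}(\alpha),\alpha)>0$, hence $g(\alpha)\neq\alpha$. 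Since $\alpha$ is nonseparating, $\Sigma\setminus\alpha$ is connected, so the translate $g(\alpha)\neq\alpha$ automatically intersects every component of $\Sigma\setminus\alpha$. Thus the hypotheses of Theorem \ref{CocptFreeProductApplicationAbstract} are verified for the pair $(K,hG_{i+1}h^{-1})$ with curve $\alpha$, producing the desired reducible $f$ and the PGF free product; relabeling as above completes the induction.

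The main obstacle is precisely the one noted by the author following Corollary \ref{PGFNotQuasiDense}: we have no statement guaranteeing a single curve simultaneously far from the multicurves of finitely many distinct PGF groups, so we cannot directly produce a common $\alpha$ for both $K$ and $G_{i+1}$. The trick is to exploit the freedom that $f_{i+1}$ need not be reducible by introducing the extra (generally nonreducible) element $h$, which transports the ``good'' curve $\beta$ for $G_{i+1}$ onto the ``good'' curve $\alpha$ for $K$; this is essentially why the conclusion allows arbitrary $f_i$'s rather than reducible ones as in Corollary \ref{PGFFreeProductApplication}.
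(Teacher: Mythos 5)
Your argument is correct, but it takes a genuinely different route from the paper. The paper's proof fixes a single auxiliary curve $\gamma$, uses Corollary \ref{CocptTwistsApplication} to produce PGF groups $f_i'G_if_i'^{-1}*\langle\tau_{\gamma}\rangle$ for each $i$, then amalgamates all of these along the common cyclic twist factor $\langle\tau_{\gamma}\rangle$ via the graph-of-groups machinery of Theorem \ref{ComboApplication} (this is where the extra, possibly non-reducible, conjugation by a partial pseudo-Anosov in $\Sigma\setminus\gamma$ enters), and finally drops the $\langle\tau_{\gamma}\rangle$ free factor, observing that the resulting subgroup is still PGF. You instead induct on the number of factors and only ever invoke the two-factor statement, Theorem \ref{CocptFreeProductApplicationAbstract}: at each step you use Corollary \ref{PGFNotQuasiDense} separately for the previously built group $K$ and for $G_{i+1}$, and then exploit the change-of-coordinates principle to find $h$ carrying the good curve for $G_{i+1}$ onto the good curve for $K$, which is exactly how you sidestep the absence of a simultaneous non-quasi-density statement for several groups (the obstacle the author flags after Corollary \ref{PGFNotQuasiDense}); your verification that no nontrivial element fixes the chosen nonseparating curve (Corollary \ref{InfOrderpA} plus Proposition \ref{IvanovIntersectionBound} applied to a high power of a peripheral element) mirrors the paper's own argument in Corollary \ref{PGFFreeProductApplication}. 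What each approach buys: the paper's route packages the statement as an instance of its general combination theorem and needs only one new conjugator per factor beyond the reducible ones, while yours is more self-contained at this point in the paper (no auxiliary twist group, no appeal to Theorem \ref{ComboApplication}) and makes transparent why the $f_i$ cannot be taken reducible, at the mild cost of composing conjugators along the induction. One cosmetic point: Corollary \ref{PGFNotQuasiDense} produces a curve far from the associated multicurves, and you should pass to a nearby nonseparating curve (as the paper does implicitly in Corollary \ref{PGFFreeProductApplication}); this is harmless since nonseparating curves are $1$-dense in $C(\Sigma)$.
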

\begin{proof}
    Fix a curve $\gamma$, and use Corollary \ref{CocptTwistsApplication} to find PGF groups of the form $f_i'G_if_i'^{-1}*\<\tau_{\gamma}\>$ for some reducible $f_i'\in \text{MCG}(\Sigma)$. We may then apply Theorem \ref{ComboApplication} using $\<\tau_{\gamma}\>$ as the common twist subgroup to give a PGF group of the form $f_1G_1f_1^{-1}*\cdots*f_nG_nf_n^{-1}*\<\tau_{\gamma}\>$ where the elements $f_i$ no longer are guaranteed to be reducible as the PGF factors are further conjugated by a partial pseudo-Anosov in $\Sigma\setminus\gamma$. The group obtained by dropping the $\<\tau_{\gamma}\>$ factor is still PGF, completing the proof.
\end{proof}

\par 
Finally we give applications of Theorem \ref{LoaAnalog}.
\begin{theorem} \label{ComboTwistGroups}
    Fix two twist groups $H_1$ and $H_2$. If there is a curve $\alpha$ so that for all nontrivial $h_i\in H_i$, $h_i(\alpha)$ intersects every component of $\Sigma \setminus \alpha$ for $i=1, 2$, then there is a reducible $f\in \text{MCG}(\Sigma)$ so that the natural map $fH_1f^{-1}*H_2\to \text{MCG}(\Sigma)$ is injective with PGF image relative to $\{fH_1f^{-1}, H_2\}$. 
\end{theorem}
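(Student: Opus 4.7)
The plan is to apply Theorem \ref{LoaAnalog} with $A = \alpha$ after conjugating $H_1$ by an appropriate partial pseudo-Anosov $f$ supported on $\Sigma \setminus \alpha$. First observe that $\alpha$ is sparse: since $\Sigma$ is closed of genus $g \geq 2$, every component of $\Sigma \setminus \alpha$ has complexity at least $1$, so no component is a pair of pants. Hence the sparsity hypothesis in Theorem \ref{LoaAnalog} is automatic.

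The key preliminary step is to bound the diameter of $\pi_S(H_i \cdot \alpha)$ in each component $S \in \mathcal{S}_\alpha$. The hypothesis that $h_i(\alpha)$ meets every component of $\Sigma \setminus \alpha$ for nontrivial $h_i \in H_i$ forces $\alpha$ to not lie in the multicurve $A_i$ associated to $H_i$, so the components of $\Sigma \setminus \alpha$ are not annuli with core curve in $A_i$; indeed, by sparseness they are not annuli at all. Since each $H_i$ is a twist group, the coned-off graph $\widehat{H_i}$ (formed by coning off the finite-index multitwist subgroup) has uniformly bounded diameter, so Lemma \ref{InductiveProjBound2} applied to $H_i$ with its multitwist subgroup as the sole peripheral yields a constant $K_i$ with $\mathrm{diam}\,\pi_S(H_i \cdot \alpha) \leq K_i$, uniformly in $S$, and with $\pi_S(h_i \alpha) \neq \varnothing$ for all nontrivial $h_i$ by hypothesis.

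Next, choose $f \in \text{MCG}(\Sigma)$ to be a partial pseudo-Anosov supported on $\Sigma \setminus \alpha$, acting as a pseudo-Anosov on each component of $\Sigma \setminus \alpha$; in particular $f$ is reducible. Then $f(\alpha) = \alpha$, $f$ stabilizes each component $S$, and $f$ acts loxodromically on $C(S)$. Fix reference elements $h_i^{\ast} \in H_i$ and points $x_i^{\ast} \in \pi_S(h_i^{\ast} \alpha)$. By equivariance of projections under $f$, we have $\pi_S(fh_1f^{-1}(\alpha)) = f \cdot \pi_S(h_1 \alpha)$, and combining this with the triangle inequality and the diameter bound gives
\begin{equation*}
    d_S(fh_1f^{-1}(\alpha), h_2(\alpha)) \geq d_S(f x_1^{\ast}, x_2^{\ast}) - K_1 - K_2.
\end{equation*}
Replacing $f$ with a sufficiently large power (the finite maximum over the components $S$ of $\Sigma \setminus \alpha$), loxodromicity on each $C(S)$ guarantees $d_S(f x_1^{\ast}, x_2^{\ast}) \geq M + 18 + K_1 + K_2$, hence $d_S(fh_1f^{-1}(\alpha), h_2(\alpha)) \geq M + 18$. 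Finally, $fh_1f^{-1}(\alpha) = fh_1(\alpha) \neq f(\alpha) = \alpha$ since $h_1(\alpha) \neq \alpha$ by hypothesis, and similarly $h_2(\alpha) \neq \alpha$, so the ``no shared components'' condition holds.

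The main obstacle is the uniform projection bound in the second paragraph; once $\pi_S(H_i \cdot \alpha)$ is known to have bounded diameter for every component $S$, the remaining ``push apart'' argument using a loxodromic partial pseudo-Anosov is routine and mirrors the proofs of Theorems \ref{CocptFreeProductApplicationAbstract} and \ref{CocptTwistsApplicationAbstract}. Applying Theorem \ref{LoaAnalog} to the pair $fH_1f^{-1}$, $H_2$ then produces an injective homomorphism $fH_1f^{-1} * H_2 \to \text{MCG}(\Sigma)$ whose image is PGF relative to $\{fH_1f^{-1}, H_2\}$, as desired.
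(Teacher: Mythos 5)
Your proposal is correct and takes essentially the same route as the paper, whose proof is exactly an application of Theorem \ref{LoaAnalog} to the collection $\{\Sigma\setminus\alpha\}$ with $f$ a partial pseudo-Anosov on $\Sigma\setminus\alpha$ of sufficiently large translation length. Your additional step of bounding $\mathrm{diam}\,\pi_S(H_i\cdot\alpha)$ via Lemma \ref{InductiveProjBound2} correctly supplies the uniform projection bound that the paper leaves implicit, and the remaining verifications (sparsity of $\alpha$, nonempty projections, and the ``no shared components'' condition) are all accurate.
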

\begin{proof}
    This is just applying Theorem \ref{LoaAnalog} to the collection $\{\Sigma \setminus \alpha\}$ with $f$ a partial pseudo-Anosov on $\Sigma\setminus \alpha$ with large translation length. 
\end{proof}

Again we obtain the following concrete application. Its proof follows in the same way as Corollary \ref{CocptTwistsApplication}.

\begin{corollary} \label{ComboMultitwistGroups}
    Fix two torsion free twist groups $H_1$ and $H_2$. Then there is a reducible $f\in \text{MCG}(\Sigma)$ so that the natural map $fH_1f^{-1}*H_2\to \text{MCG}(\Sigma)$ is injective with PGF image relative to $\{fH_1f^{-1}, H_2\}$.
\end{corollary}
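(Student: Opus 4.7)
The plan is to derive this from Theorem \ref{ComboTwistGroups} in exactly the way Corollary \ref{CocptTwistsApplication} is derived from Theorem \ref{CocptTwistsApplicationAbstract}, with $H_1$ playing the role previously played by the torsion free PGF group $G$. So I need to exhibit a single nonseparating curve $\alpha$ so that $h_i(\alpha)$ has nonempty projection to $\Sigma \setminus \alpha$ for every nontrivial $h_i \in H_i$ and $i = 1,2$; equivalently, so that $h_i(\alpha) \neq \alpha$, since $\alpha$ is nonseparating and hence $\Sigma \setminus \alpha$ is connected.

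First I would choose a nonseparating simple closed curve $\alpha$ which intersects every component of $A_1 \cup A_2$, where $A_i$ is the multicurve associated to $H_i$. This is elementary: $A_1 \cup A_2$ is a finite collection of simple closed curves on a closed surface of genus at least $2$, and one can always build a nonseparating curve that meets each prescribed component. Note that here, unlike in Corollary \ref{CocptTwistsApplication}, there is no need to invoke Corollary \ref{PGFNotQuasiDense}, because each $H_i$ has only one associated multicurve (rather than infinitely many $G$-translates).

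Next I would verify the intersection condition using Proposition \ref{IvanovIntersectionBound}. Let $h_i \in H_i$ be nontrivial. Since $H_i$ is torsion free, $h_i$ has infinite order, so some power $h_i^n$ lies in the finite index multitwist subgroup of $H_i$ and is a nontrivial multitwist on a nonempty subset of components of $A_i$. By replacing $n$ with a sufficiently large multiple, I can assume every nonzero twist coefficient in $h_i^n$ has absolute value at least $3$. Viewing $h_i^n$ as a multitwist on the sub-multicurve of $A_i$ on which it has nonzero twist, and noting that $\alpha$ meets every component of this sub-multicurve by choice, Proposition \ref{IvanovIntersectionBound} with $\alpha = \beta$ gives
\[
i(h_i^n(\alpha), \alpha) \;\geq\; \sum_{j : m_j \neq 0} (|m_j| - 2)\, i(\alpha, \alpha_j)^2 \;\geq\; 1,
\]
so $h_i^n(\alpha) \neq \alpha$, and therefore $h_i(\alpha) \neq \alpha$.

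Finally, Theorem \ref{ComboTwistGroups} applies and produces the desired reducible $f$ (a sufficiently high power of a partial pseudo-Anosov on $\Sigma \setminus \alpha$), the injectivity of $fH_1f^{-1} * H_2 \to \mathrm{MCG}(\Sigma)$, and the PGF structure of the image relative to $\{fH_1f^{-1}, H_2\}$. The only step involving any real content is the intersection check via Proposition \ref{IvanovIntersectionBound}, and the only mild subtlety there is the passage from general torsion free twist group elements to genuine multitwists by taking powers; everything else is immediate from the hypotheses and the prior corollary/theorem structure.
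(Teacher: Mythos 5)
Your proposal is correct and is essentially the paper's own argument: the paper likewise takes a curve meeting every component of the multicurves associated to $H_1$ and $H_2$, uses Proposition \ref{IvanovIntersectionBound} (after passing to a power lying in the multitwist subgroup) to see that nontrivial elements move the curve, and then applies the abstract combination result (Theorem \ref{ComboTwistGroups}, i.e.\ Theorem \ref{LoaAnalog} with the conjugating partial pseudo-Anosov). Your explicit choice of a nonseparating $\alpha$ and the observation that Corollary \ref{PGFNotQuasiDense} is unnecessary here are just slightly more detailed versions of what the paper does implicitly.
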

\begin{proof}
    We can take any curve $\alpha$ which intersects all the components of the multicurves associated to $H_1$ and $H_2$. By applying Proposition \ref{IvanovIntersectionBound} as before, we can apply Theorem \ref{LoaAnalog} which gives the result.
\end{proof}

\begin{example}\label{PGFFreePrdocutsTwistGroupExamples}
    In the notation of Corollary \ref{ComboMultitwistGroups}, letting $H=H_1=H_2$ and choosing the curve $\alpha$ for the proof of Corollary \ref{ComboMultitwistGroups} to be exactly $2$ from some component of the multicurve $A$ of $H$ and at least $2$ from every other, we obtain PGF groups which are free products of multitwist groups whose multicurves have components that are at most $4$ from each other. This is much smaller than the $D_0$ given in Proposition \ref{LoaCombo}. Using Corollary \ref{CocptTwistsApplication}, one can then produce PGF free groups whose generators are Dehn twists so that every pair of twisting curves of these generators are distance at most $4$ from each other. There are examples where the curves are all distance $3$ from each other as well. Consider a genus $2$ surface, and let $\alpha$ in the proof of Corollary \ref{ComboMultitwistGroups} be a separating curve. Letting $H$ be a cyclic group generated by the Dehn twist on a curve $\gamma$ intersecting $\alpha$ twice, it is easy to see that, for a partial pseudo-Anosov $f$ fixing $\alpha$, the collection of curves $\{f^n(\gamma)\}_{n\in \Z}$ are all distance $3$ from each other.

\end{example}
We end with a remark about applying the combination theorems inside subgroups of $\text{MCG}(\Sigma)$. Fix $N<\text{MCG}(\Sigma)$ a normal subgroup and a collection of PGF groups and twist groups which we wish combine (in the sense of Theorems \ref{ComboQIEmb}, \ref{ConvexCocompactCombo}, \ref{ConvexCoCptTwists}, and \ref{LoaAnalog}) which are contained in $N$. Then the group produced by whichever theorems we use will still be contained in $N$. This is because in all the applications of these theorems, each group is only modified by a conjugation. For example, this gives many examples of PGF groups in the Torelli group. One can also extend this idea to subgroups which are not normal, but instead have a sufficiently large normalizer so that one can choose conjugating partial pseudo-Anosovs which preserve the subgroup. This allows for many examples of PGF groups in the handlebody group, for example.

\end{document}